\documentclass[10pt,amsfonts, epsfig]{amsart}
\usepackage{amsmath, amscd, amssymb}
\usepackage{graphpap, color}
\usepackage{mathrsfs}
\usepackage{pstricks}
\usepackage{color}
\usepackage{cancel}
\usepackage[mathscr]{eucal}
\usepackage{pstricks}
\usepackage{color}
\usepackage{cancel}
\usepackage{verbatim}
\usepackage{latexsym}
\usepackage[all]{xy}

\def\sing{\mathrm{sing}}

\topmargin 0.0in
\setlength{\textheight}{19cm} 
\setlength{\evensidemargin}{\oddsidemargin}

%\addtolength{\oddsidemargin}{.0in}
%\addtolength{\evensidemargin}{.0in}
%\addtolength{\textwidth}{-1cm}
%\textheight 9.5in

%%\setlength{\textwidth}{45pc}
%%\setlength{\topmargin}{-.1in} 
%%\setlength{\oddsidemargin}{.25in}

\usepackage{lipsum}% http://ctan.org/pkg/lipsum
\makeatletter
\g@addto@macro{\endabstract}{\@setabstract}
\newcommand{\authorfootnotes}{\renewcommand\thefootnote{\@fnsymbol\c@footnote}}%
\makeatother

%%%%%%%%%%%%%%%%%%%%%%

\usepackage{upgreek}

 \usepackage{wasysym}
\usepackage{ esint }

\def\index{{\mathbb I}}

\def\vdim{\mathrm{vir}.\dim}

\def\virt{^{\vir}}
\def\virtloc{\virt\loc}

\numberwithin{equation}{section}

\def\gm{\GG_m}

\def\loc{_1^\circ}

\def\sO{{\mathscr O}}
\def\sC{{\mathscr C}}
\def\sB{{\mathscr B}}
\def\sM{{\mathscr M}}
\def\sN{{\mathscr N}}
\def\sL{{\mathscr L}}

\def\sO{\mathscr{O}}

\def\sE{\mathscr{E}}

\def\sV{\mathscr{V}}
\def\sA{\mathscr{A}}
\def\sU{\mathscr{U}}

\def\sX{\mathscr{X}}

\newcommand{\CC}{\mathbb{C}}

 %{\mathbb{L}}

\newcommand{\PP}{\mathbb{P}}
\newcommand{\QQ}{\mathbb{Q}}

\newcommand{\ZZ}{\mathbb{Z}}
\newcommand{\GG}{\mathbb{G}}

%% mathbf

\newcommand{\bt}{\mathbf{t}}
\newcommand{\fw}{{\mathfrak w}}

\def\redd{{\mathrm{red}}}

\def\Gm{{\bG_m}}
 
\def\upmo{^{-1}}

\newcommand{\ev}{ \mathrm{ev} }
%\newcommand{\Ext}{ \mathrm{Ext} }
%\newcommand{\Hom}{ \mathrm{Hom} }

 %{\vec{f}}

%\newcommand{\vd}{\vec{d}}

%% mathcal

 %{\cW_{\vGa}^\fix}

\newcommand{\cal}{\mathcal}

\def\cB{{\cal B}}

\def\cC{{\cal C}}
\def\cD{{\cal D}}

\def\cH{{\cal H}}

\def\cL{{\cal L}}
\def\cM{{\cal M}}
\def\cN{{\cal N}}
\def\cO{{\cal O}}
\def\cP{{\cal P}}

\def\cW{{\cal W}}

 %{\cal X}}

%% mathfrak

\def\ft{\mathfrak{t}}

%% tilde, Greek

%% tilde, English

\def\v1{{\vec{1}}}

\def\sP{{\mathscr P}}

%% hat

%% check

%% moduli

\newcommand{\Mbar}{\overline{\cM}}

\def\mapright#1{\,\smash{\mathop{\lra}\limits^{#1}}\,}

%\def\Mw{\cM_{\chi,\vd,\vmu}^\bu(W\urel,L)}

%% vector

%% vec

%% superscript

\def\dual{^{\vee}}

\def\sta{^\ast}

\def\virt{^{\mathrm{vir}}}
\def\upmo{^{-1}}
\def\sta{^{\ast}}

\def\sta{^*}

\def\sB{{\mathscr B}}

%% subscript

\def\lra{\longrightarrow}

\def\lsta{_{\ast}}

%% Greek

\newcommand{\Si}{\Sigma}

\newcommand{\si}{\sigma}
\def\be{{\mathbf e}}

%% abbreviation

\def\wti{\widetilde}
\def\begeq{\begin{equation}}
\def\endeq{\end{equation}}
\def\and{\quad{\rm and}\quad}
\def\bl{\bigl(}
\def\br{\bigr)}
\def\defeq{:=}

\def\sub{\subset}
\def\Ao{{\mathbb A}^{\!1}}

\def\Po{{\mathbb P^1}}
\def\and{\quad\text{and}\quad}

\def\lalp{_\alpha}

%% operator

\DeclareMathOperator{\pr}{pr}

 \DeclareMathOperator{\Aut}{Aut}

\DeclareMathOperator{\spec}{Spec}

\def\lggd{_{g,\gamma,\bd}}
\def\cWg{\cW\lggd}

\def\cWgg{\cW\lggd}
\newcommand{\pre}{ {\mathrm{pre}} }

%% newtheorem

\newtheorem{prop}{Proposition}[section]
\newtheorem{proposition}[prop]{Proposition}
\newtheorem{theo}[prop]{Theorem}
\newtheorem{lemm}[prop]{Lemma}
\newtheorem{coro}[prop]{Corollary}

\newtheorem{exam}[prop]{Example}
\newtheorem{defi}[prop]{Definition}
\newtheorem{definition}[prop]{Definition}

\newtheorem{subl}[prop]{Sublemma}

\newtheorem{defi-prop}[prop]{Definition-Proposition}
\newtheorem{defi-theo}[prop]{Definition-Theorem}
%\newtheorem{comm}[prop]{Comment}

%%%%%%%%%%%%%%%%%%%%%%%%%%%%%%%%%%%%%%%%%%%%%%%%%def-new

\def\Ob{\cO b}

\def\loc{_{\mathrm{loc}}}

\def\bul{^\bullet}

\def\ev{\text{ev}}

\def\sta{^\ast}

\def\sO{{\mathscr O}}

\def\sD{{\mathscr D}}

\def\beq{\begin{equation}}
\def\eeq{\end{equation}}

\def\vsp{\vskip5pt}
\def\Pf{{\PP^4}}

\def\bee{\begin{equation}}
\def\eeq{\end{equation}}

\def\sC{{\mathscr C}}

\def\bd{{\mathbf d}}

\def\ti{\tilde}

\def\Lam{{\Lambda}}

\def\barM{{\overline{M}}}

%\def\Wfix{\bl\cWg^-\br^\Gm}

 % {{\cP_{g,d}^p}}

 %{\cN_{g}^p}}

% \def\cpgp{\cpdp}
%\def\cvg{{\cV_g}}
 %\def\cvgp{{\cV_g^p}}

%\let\lab=\label

%\def\lab#1{\label{#1}[{#1}]\  }
%\let\lab=\label

\def\mapright#1{\,\smash{\mathop{\lra}\limits^{#1}}\,}

\def\mufive{{\boldsymbol\mu_5}}

\def\mof{^{\vee\otimes 5}}

\def\Gm{T}

\begin{document}

\title[Mixed-Spin-P fields of Fermat quintic polynomials]{
Mixed-Spin-P fields of Fermat quintic polynomials}
%Theory of Gromov-Witten Invariants of Quintic Calabi-Yau Threefolds\\

\author[Huai-Liang Chang]{Huai-Liang Chang$^1$}
\address{Department of Mathematics, Hong Kong University of Science and Technology, Hong Kong} \email{mahlchang@ust.hk}
\thanks{${}^1$Partially supported by  Hong Kong GRF grant 600711}

\author[Jun Li]{Jun Li$^2$}
\address{Shanghai Center for Mathematical Sciences, Fudan University, China; \hfil\newline 
\indent Department of Mathematics, Stanford University,
USA} \email{jli@math.stanford.edu}
\thanks{${}^2$Partially supported by  NSF grant DMS-1104553 and DMS-1159156.}

\author[Wei-Ping Li]{Wei-Ping Li$^3$}
\address{Department of Mathematics, Hong Kong University of Science and Technology, Hong Kong} \email{mawpli@ust.hk}
\thanks{${}^3$Partially supported by by Hong Kong GRF grant 602512 and HKUST grant FSGRF12SC10}

\author[Chiu-Chu Melissa Liu]{Chiu-Chu Melissa Liu$^4$}
\address{Mathematics Department, Columbia University}
 \email{ccliu@math.columbia.edu}
\thanks{${}^4$Partially supported by  NSF grant DMS-1206667 and DMS-1159416.
}

\maketitle

\begin{abstract}  This is the first part of the project toward an effective algorithm to evaluate all genus Gromov-Witten invariants
of quintic Calabi-Yau threefolds. In this paper, we introduce the notion of Mixed-Spin-P fields, construct their moduli spaces, 
and construct the virtual cycles of these moduli spaces.  
\end{abstract}

\section{Introduction}
Explicitly solving all genus Gromov-Witten invariants (in short GW invariants) of Calabi-Yau threefolds is one of the major goals in the subject of Mirror Symmetry. For quintic Calabi-Yau threefolds, the mirror formula of genus-zero GW invariants was conjectured in \cite{CdGP} and proved in \cite{Gi, LLY}. The mirror formula of genus-one GW invariants was
conjectured in \cite{BCOV} and proved in \cite{LZ,Zi}. A complete
determination of all genus GW invariants based on degeneration
is provided in \cite{MP} and plays a crucial role in the 
proof of the GW/Pairs correspondence \cite{PP}. 
However, the mirror prediction on genus $g$ GW invariants for $2\leq g\leq 51$ in \cite{HKQ} is still open, even in 
the $g=2$ case.

The mirror prediction in \cite{HKQ} includes both 
GW invariants of quintic threefolds and 
FJRW invariants of the Fermat quintic.
In this paper, we introduce the notion of Mixed-Spin-P fields (in short MSP fields) of the Fermat quintic polynomial, construct their moduli spaces, and establish basic properties of these moduli spaces. This class of moduli spaces will be employed in the sequel of this paper \cite{CLLL} toward developing an effective theory evaluating 
all genus GW invariants of quintic threefolds 
and all genus FJRW invariants of the Fermat quintic.

The theory of MSP fields provides a transition between 
FJRW invariants \cite{FJR1} and GW invariants of stable maps with p-fields \cite{CL}. 
It is known that  the FJRW invariants of the Fermat quintic is the LG theory taking values
in $[\CC^5/\mufive]$ (via spin fields), and the GW invariants of stable maps with p-fields is the 
LG theory taking values in the canonical line bundle $K_{\Pf}$ (via P-fields). 
As one can use the master space technique to study the two GIT quotients $[\CC^5/\mufive]$ and $K_{\Pf}$ 
of $[\CC^6/\gm]$, 
MSP fields is a field theory taking values in this master space, which provides a
geometric transition between the LG theories of the two GIT quotients of $[\CC^6/\gm]$.

%Our theory is based on a vanishing of the virtual cycles of the moduli of MSP fields which provides relations
%among GW invariants of quintics and the FJRW invariants of the Fermat quintic polynomial. % $x_1^5+\cdots+x_5^5$.

An MSP field is a collection
\beq\label{MSP0}
\xi=( {\Si^\sC} \subset \sC, \sL, \sN,\varphi,\rho, \nu),
\eeq
consisting of a pointed twisted curve $\Sigma^\sC\sub\sC$, two fields $\varphi\in H^0(\sC,\sL^{\oplus 5})$ 
and $\rho\in H^0(\sC, \sL^{\vee\otimes5}\otimes \omega^{\log}_{\sC})$, and a %projectivized spin 
gauge field
$\nu=(\nu_1,\nu_2)\in H^0(\sC, \sL\otimes\sN\oplus \sN)$. % satisfying some non-degeneracy condition.
The numerical invariants of an MSP field are the genus of $\sC$, the
monodromy $\gamma_i$ of $\sL$ at the marking $\Si^\sC_i$ (of $\Si^\sC$), 
and the bi-degrees $d_0=\deg(\sL\otimes\sN)$ and $d_\infty=\deg \sN$.
For a choice of 
$g$, $\gamma=(\gamma_1,\cdots,\gamma_\ell)$ and $\bd=(d_0,d_\infty)$, we form the moduli $\cW_{g,\gamma,\bd}$ of equivalence classes of 
stable MSP fields of numerical data $(g,\gamma, \bd)$. It is a separated DM stack, locally of finite
type, though usually not proper.

The stack $\cW\lggd$ comes with a perfect (relative) obstruction theory, with 
%of virtual dimension (in case $\gamma=\emptyset$)
%\beq\label{vdim}
$$\delta(g,\emptyset,\bd)\defeq \vdim \cW_{g,\gamma=\emptyset,\bd}=d_0+d_\infty-g+1.
$$ %\eeq
Its relative obstruction sheaf comes with a cosection 
$$\sigma:\Ob_{\cW\lggd}\lra \sO_{\cW\lggd},
$$
using the quintic polynomial $\fw_5=x_1^5+\cdots+x_5^5$.
Furthermore, we have a $T=\gm$ action on $\cW\lggd$ making it
%\beq\label{Gm}
%\bl {\Si^\sC} \subset \sC, \sL, \sN,\varphi,\rho, (\nu_1,\nu_2)\br^t
%=\bl {\Si^\sC} \subset \sC, \sL, \sN,\varphi,\rho, (t\nu_1,\nu_2)\br,
%\quad t\in \Gm.
%\eeq
a $\Gm$-stack with the mentioned relative perfect obstruction
theory and the  $T$-equivariant cosection $\sigma$.
%One shortcoming of the stack $\cW\lggd$ is that it is usually not proper.

We define the degeneracy locus of $\sigma$ to be
$$\cW\lggd^-=\{\xi\in \cW\lggd\mid  \sigma|_\xi=0\}.
$$
%Despite the fact that $\cW\lggd$ usually is non-proper, the degeneracy locus  
%(cf. \eqref{deg-loci}) is proper. 
Applying the theory of cosection localized virtual cycles of \cite{KL},
we obtain a cosection localized $T$-equivariant virtual cycle 
of $\cW\lggd$.

%We construct a properly supported virtual cycle of $\cW\lggd$ by 
%constructing a cosection $\sigma$ of its obstruction sheaf
%and applying the theory of cosection localized virtual cycles of \cite{KL}. 
%It is a cycle supported in the degeneracy locus $\cW\lggd^-=(\sigma=0)_{\text{red}}$ (cf. \eqref{deg-loci}).
%where $\cW\lggd^-$ is a closed (reduced) substack characterized by
%\beq\label{MSP-}
%\cW\lggd^-(\CC)=\{\xi\mid (\varphi_1^5+\cdots+\varphi_5^5=\rho=0)\cup(\varphi=0)=\sC\}.
%\eeq
%
%We prove in Section 3 and 4 the following structure result:

\begin{theo} \label{main-1}
The stack $\cW\lggd$ is a separated DM stack, locally of finite type. 
The $\sigma$-degeneracy locus $\cW\lggd^-$ is  closed,  proper 
and of finite type. 
The cosection localized virtual cycle of $(\cW\lggd,\sigma)$ is a $T$-equivariant cycle
$$[\cW\lggd]\virtloc\in A^{\Gm}\lsta (\cW\lggd^-)^T.
$$ 
\end{theo}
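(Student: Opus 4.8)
The plan is to establish the three assertions in order: that $\cW\lggd$ is a separated Deligne--Mumford stack locally of finite type; that the $\sigma$-degeneracy locus $\cW\lggd^-$ is closed, proper and of finite type; and, granting these, to invoke the cosection localized virtual cycle machinery of Kiem--Li \cite{KL}.

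First I would build $\cW\lggd$ in layers. Over the smooth Artin stack parametrizing $\ell$-pointed genus-$g$ twisted curves together with two line bundles $(\sL,\sN)$ of the prescribed degrees $\bd=(d_0,d_\infty)$ and monodromies $\gamma$ --- smooth because deformations of a twisted curve and of a line bundle on a curve are unobstructed --- one adjoins the fields $\varphi,\rho,\nu$; since these are sections of vector bundles on the universal curve, adjoining them is an affine morphism, and imposing the stability condition then cuts out an open substack. This already shows $\cW\lggd$ is algebraic and locally of finite type, and that the stated perfect obstruction theory, which is relative to this smooth base, induces an absolute one on $\cW\lggd$. For the Deligne--Mumford property one checks that a stable MSP field has no infinitesimal automorphisms: the twisted curve contributes only a finite group, while stability --- an ampleness condition on a suitable twist of $\omega_\sC^{\log}$ by powers of $\sL,\sN$ together with non-vanishing requirements on $(\varphi,\rho,\nu)$ --- rigidifies the $\GG_m^2$ scaling $(\sL,\sN)$. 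Separatedness I would obtain from the valuative criterion: given two extensions over a discrete valuation ring of a family over its fraction field, uniqueness of limits of twisted curves, together with the fact that stability pins down the central fiber and the extensions of $(\sL,\sN)$ and of the fields, forces the two extensions to agree.

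Next I would analyze $\cW\lggd^-$. Closedness is immediate, as $\sigma$ is a map of coherent sheaves and $\cW\lggd^-$ is the support of its cokernel. For the finiteness statements one must identify $\{\sigma|_\xi=0\}$: since $\sigma$ is built from $\fw_5=x_1^5+\cdots+x_5^5$, a computation in the spirit of the cosection for stable maps with $p$-fields (cf.\ \cite{CL}) shows that $\sigma|_\xi=0$ precisely when $\rho\cdot\partial_i\fw_5(\varphi)=0$ for all $i$ --- equivalently, since $\partial_i\fw_5=5x_i^4$, when the locus in $\sC$ where $\rho\neq 0$ is disjoint from the locus where $\varphi\neq 0$. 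Hence an MSP field in $\cW\lggd^-$ splits its curve into a $\varphi$-supported part carrying stable-map-type data to $\Pf$ (with $\rho=0$ there) and a $\rho$-supported part carrying FJRW-type ($\mu_5$-spin) data, meeting only at nodes with the gauge field $\nu$ as glue; the degeneracy condition and stability permit only finitely many combinatorial types for fixed $(g,\gamma,\bd)$, and each corresponding stratum is of finite type, so $\cW\lggd^-$ is of finite type. Properness I would then prove by the valuative criterion, the content being existence of limits: given a family over the fraction field of a discrete valuation ring with $\sigma=0$, one extends the twisted curve by semistable reduction, then the line bundles, then the fields --- the last step succeeding after finitely many rounds of bubbling precisely because, on the degeneracy locus, the $\varphi$- and $\rho$-parts behave like the fields underlying the proper moduli of stable maps to $\Pf$ and of FJRW-type data and so stay bounded; the limit then automatically lies in the closed substack $\cW\lggd^-$, and uniqueness is the separatedness already established.

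Finally, $\cW\lggd$ is a separated Deligne--Mumford stack carrying a $\Gm$-equivariant (absolute) perfect obstruction theory and a $\Gm$-equivariant cosection $\sigma$ of its obstruction sheaf whose degeneracy locus $\cW\lggd^-$ is proper, so the hypotheses of \cite{KL} are met: the intrinsic normal cone of $\cW\lggd$ satisfies the Kiem--Li compatibility with $\sigma$ (a property built into the construction of $\sigma$ from $\fw_5$), so the cosection localized Gysin map applies and yields a class supported on $\cW\lggd^-$. Carrying the construction out $\Gm$-equivariantly and using that $\cW\lggd^-$ is proper, one obtains the $\Gm$-equivariant cosection localized virtual cycle $[\cW\lggd]\virtloc\in A^{\Gm}_\ast\big((\cW\lggd^-)^T\big)$, as claimed. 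The main obstacle throughout is the properness of $\cW\lggd^-$: the total space $\cW\lggd$ is non-proper because $\rho$ and the gauge field range over vector spaces, and what forces the degeneracy locus to be proper is exactly that the cosection separates the $\varphi$-supported and $\rho$-supported parts of the curve, so that the mixed Gromov--Witten/FJRW data glued along $\nu$ stays bounded with unique limits --- arranging the bubbling so that semistable reduction terminates is the delicate point.
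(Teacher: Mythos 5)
Your overall skeleton (relative construction over the stack of twisted curves with two line bundles, valuative criteria for separatedness and properness, then the Kiem--Li machinery) is the same as the paper's, but two of your key steps have genuine gaps. First, a local error: from the cosection $\sigma(\xi)(\dot\varphi,\dot\rho,\dot\nu_1,\dot\nu_2)=5\rho\sum\varphi_i^4\dot\varphi_i+\dot\rho\sum\varphi_i^5$ the vanishing condition is not just $\rho\,\partial_i\fw_5(\varphi)=0$; the $\dot\rho$-term forces in addition $\varphi_1^5+\cdots+\varphi_5^5=0$ on the part where $\varphi\neq 0$, so the degeneracy locus is $(\varphi=0)\cup(\sum\varphi_i^5=\rho=0)=\sC$, not merely the disjointness of $\{\varphi\ne0\}$ and $\{\rho\ne0\}$. (This happens not to sink your properness strategy, since the larger locus $(\varphi=0)\cup(\rho=0)=\sC$ is itself proper and the true degeneracy locus is closed in it, but your stated identification of $\cW\lggd^-$ is wrong.)

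The serious gap is the existence half of properness. Your argument is that the $\varphi$-supported part is stable-map--like and the $\rho$-supported part is FJRW--like, so limits exist because those two moduli spaces are proper. That covers only the extreme families (e.g. $\rho_*=0$, which does reduce to stable maps to $\Pf$ with a $p$-field, or $\varphi_*=\nu_{2*}=0$). It says nothing about the genuinely mixed families in which $\varphi_*=0$ while both $\rho_*$ and $\nu_{2*}$ are nonzero: there the generic fiber lies in neither limiting theory, and properness of stable maps or of spin curves gives no limit. For such families one can only extend $\rho$ and $\nu_2$ up to divisors supported on the central fiber, and the naive extension violates the nondegeneracy conditions ($(\rho,\nu_2)$, $(\varphi,\nu_1)$, $(\nu_1,\nu_2)$ nowhere vanishing); the paper's proof has to record this data combinatorially (the ``baskets''), modify it by repeated blowups and base changes until the vanishing divisors of $\rho$ and $\nu_2$ become disjoint, perform a degree-$5$ base change and introduce $\mufive$-stacky structure at the new nodes to extract fifth roots of the relevant divisor classes, then contract the unstable rational components, and finally glue the componentwise limits along $\mufive$-gerbes, matching $\sL,\sN$ and the fields across the gluing. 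None of these ideas appear in your proposal, and ``bubbling terminates because the pieces are bounded'' does not substitute for them. Relatedly, your separatedness argument assumes uniqueness of the limit twisted curve, which is not available here because the underlying pointed curve of a stable MSP field need not be stable; the paper must compare two extensions through a common resolution of their coarse models and use the explicit stability criterion to rule out exceptional components, and similarly the finite-type claim requires an actual bound on the dual graphs (obtained by adding auxiliary legs weighted by $\deg\sL$ and $\deg\sN$ and excluding long chains of semistable vertices), not just the assertion that finitely many combinatorial types occur.
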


In the sequel of this paper \cite{CLLL}, we will apply the virtual localization formula to derive a doubly indexed
polynomial relations among the GW invariants of quintic threefolds and the FJRW invariants of the
Ferman quintic polynomial $\fw_5$. These relations provide an effective algorithm in evaluating all genus GW invariants
of quintics in terms of all genus FJRW invariants of $\fw_5$ (with the insertion $2/5$), and provide ample relations 
among all genus FJRW invariants of $\fw_5$ (with the insertion $2/5$).

This work is inspired by Witten's vision that the ``Landau-Ginzburg looks like the analytic continuation of Calabi-Yau
to negative Kahler class." (See \cite[3.1]{GLSM}.) One interpretation of this analytic continuation is that the 
LG theory of $[\CC^5/\ZZ_5]$ and that of $K_{\Pf}$ differ by a fields version of ``wall-crossing". 
The MSP fields introduced
can be viewed as a geometric construction to realize this ``wall-crossing".

Here we mention the recent approaches for high genus LG/CY correspondence \cite{CK,FJR2}.

%The ultimate goal of this project is to use this field version of master space to uncover the mystery of high
%genus GW invariants of quintics.
%The first phase of this project consists of constructing the moduli space of MSP fields, constructing their "virtual cycles",
%and deriving a collection of vanishings that give effective algorithm determining GW from FJRW invariants of quintics. 
%This paper addresses the first part. The second part will be addressed in \cite{CLLL}.

This paper is organized as follows. In Section one, we will introduce the notion of Mixed-Spin-P fields of the
Fermat quintic polynomial; construct the moduli spaces of stable Mixed-Spin-P fields, and construct the cosection localized 
virtual cycles of these moduli spaces. These cycles lie in the degeneracy loci of the cosection mentioned.
In Section two and three, we will prove that these degeneration loci are proper, separated and of finite type. This allows us
to use these cycles to define numerical invariants of the Mixed-Spin-P fields.

In this paper, we work over the field of complex numbers $\CC$. All schemes are of finite type over $\CC$ unless
otherwise is mentioned.

\medskip

{\sl Acknowledgement}. The third author thanks the Stanford University for several months visit there in the spring of 2011 where the project started.  The second and the third author thank the Shanghai Center for Mathematical Sciences at Fudan University for many visits. 
The authors thank Y.B. Ruan for stimulating discussions on the FJRW invariants. 

\section{The moduli of Mixed-Spin-P fields}

In this section, we introduce the notion of MSP (Mixed-Spin-P) fields,  construct their moduli stacks,
and form their cosection localized virtual cycles. We introduce the $\Gm$-structure on it.
%and derive the vanishing result \eqref{van-1} using an analogue of virtual localization formula of \cite{GP}. 
The proof of the localization formula of cosection localized virtual cycles will appear in
\cite{CoVir}.

\def\fo{\frac{1}{5}}
\let\ofth=\fo
\def\lred{_{\mathrm{red}}}

\def\sm{{\mathrm{sm}}}
\def\bmu{{\boldsymbol \mu}}
\def\sch{{\mathrm{sch}}}

\subsection{Twisted curves and invertible sheaves}\label{Sub2.1}

We recall the basic notions and properties of twisted curves with representable invertible sheaves on them. 
The materials are drawn from \cite{ACV, AJ, AF, A-G-V, Cad}. 

A prestable twisted curve with $\ell$-markings is a one-dimensional proper, separated connected DM stack $\sC$, with at
most nodal singularities, together with a collection of disjoint closed substacks $\Sigma_1,\cdots,\Si_\ell$ of smooth locus
of $\sC$ such that the open locus $\sC^\sm-\cup_i \Si_i$ is a scheme, and each node is a balanced node.

Here an index $r$ balanced node looks like the following  model 
$$\sV_r\defeq \left[ \spec\bl \CC[u,v]/(uv)\br\big/ \bmu_r\right],\quad \zeta\cdot(u,v)=(\zeta u,\zeta\upmo v).
$$
%where $\bmu_r$ acts via $(u,v)^\zeta=(\zeta u,\zeta\upmo v)$.
Similarly, an index $r$ marking of a twisted curve looks like the model
$$\sU_r\defeq \left[ \spec\CC[u]\big/ \bmu_r\right],\quad \zeta\cdot u=\zeta u.
$$

Denote by
\beq\label{Apir}\pi_r: \sV_r\to V_r\defeq \spec\bl \CC[x,y]/(xy)\br \and
\pi_r: \sU_r\to U_r\defeq \spec \CC[x]
\eeq
defined by $x\mapsto u^r$ and $y\mapsto v^r$, their coarse moduli spaces. 
Note that $\sV_r$ contains two subtwisted curves $\sU_{r,u}=\left[ \spec\CC[u]\big/ \bmu_r\right]$
and $\sU_{r,v}=\left[ \spec\CC[v]\big/ \bmu_r\right]\sub \sV_r$, meeting at the node of $\sV_r$. The process of $\sV_r\mapsto \sU_{r,u}\coprod \sU_{r,v}$
is called the decomposation of  $\sV_r$ along its node. The reverse process is called the gluing, which can be defined via the push out as follows.

Let $0_u$ be the origin in $\spec\CC[u]$ and $N_u$ be the normal bundle of $0_u$ in it. Then the stacky point
of $\sU_{r,u}$ is $\sqrt[r]{N_u/0_u}$ (see \cite{A-G-V} for the notation). Similarly, the stacky point of $\sU_{r,v}$ is $\sqrt[r]{N_v/0_v}$.
Let $\sqrt[r]{N_u/0_u}\cong \sqrt[r]{N_v/0_v}$ be induced by $N_u\otimes N_v\cong \CC$, where the later is
the isomorphism induced by the projection $\spec\CC[u,v]\to\spec \CC[t]$ via $t\mapsto uv$. Then
$\sV_r$ is the push out via the square
$$\begin{CD}
\sqrt[r]{N_u/0_u}\cong \sqrt[r]{N_v/0_v} @>>> \sU_{r,v}\\
@VVV @VVV\\
\sU_{r,u} @>>> \sV_r
\end{CD}
$$
We denote by $[0]$ the stacky node of $\sV_r$.

\vsp
Invertible sheaves on a twisted curve $\sC$ are invertible sheaves on $\sC^\sm-\cup_i\Si_i^\sC$ whose extension to
$\sC$ in the model case $\sC=\sV_r$ is a $\bmu_r$-module $\sM_{m}$ as follows. 
When $0<m<r$, we have 
$$
\sM_{m}\defeq u^{-(r-m)}\CC[u]\oplus_{[0]} v^{-m}\CC[v]\defeq \ker\{u^{-(r-m)}\CC[u]\oplus v^{-m}\CC[v]\to\CC_{m}\},
$$
where  the arrow is a homomorphism of $\bmu_r$-modules, and
$\bmu_r$ leaves $1\in \CC[u]$ and $1\in \CC[v]$ fixed and acts on $1\in \CC_{m}\cong \CC$ via $\zeta\cdot1=\zeta^{m}1$,
and both $u^{-(r-m)}\CC[u]\to \CC_{m}$ and $v^{-m}\CC[v]\to\CC_{m}$ are surjective.
When $m=0$, $$
\sM_{0}\defeq \CC[u]\oplus_{[0]} \CC[v]\defeq \ker\{\CC[u]\oplus \CC[v]\to\CC\},
$$
 where maps are defined similarly as the case of $m\neq 0$. 
Note that the isomorphism classes are indexed by $m\in \{0,\cdots,r-1\}$. Here we use $u^{-m}$ and $v^{-(r-m)}$
because then (for $m\ne 0$)
\beq\label{Adirect}
\pi_{r\ast}\sM_{m}=\CC[x]\oplus \CC[y],
\eeq
in the convention \eqref{Apir}. To be more precise, if we let $1_u\in \CC[u]$ be the element $1\in\CC\sub\CC[u]$, and let
$1_v$, $1_x$ and $1_y$ be similarly defined elements, then $\pi_{r\ast}$ in \eqref{Adirect} sends $1_u$ and $1_v$
to $1_x$ and $1_y$, respectively.

Similarly, invertible sheaves on $\sC$ near an index $r$ marking in the model case $\sC=\sU_r$ looks like 
the $\bmu_r$-module
$$\sM_m\defeq u^{-(r-m)}\CC[u].
$$

Let $\zeta_r=\text{exp}(2\pi i/r)\in \bmu_r$. Because under $u\mapsto \zeta_r u$, the generator $u^{-(r-m)}1_u\mapsto \zeta_r^m u^{-(r-m)}1_u$,
we call $\zeta_r^m$ the monodromy of $\sM_m$ at the marking and $m$ the monodromy index at the marking. 
Note that for $\sM_m$ over $\sV_r$,
$\sM_m$ restricted to $\sU_{r,u}$ and to $\sU_{r,v}$ have monodromies $\zeta_r^m$
and $\zeta_r^{-m}$, at their respective stacky points.

%:
%:
\begin{defi} We call $\sM_m$ representable if $(m,r)=1$.%\footnote{There is a typo in the Definition 2.7 of \cite{CLL}. Instead of ``if the monodromy representations ... are injective", it should be ``if any projection to its factor $(\gm)^n\to\gm$ of the monodromy representations of marked sections and nodesis injective (representable)". It doesn't affect results in that paper since that paper used this correct definition. }
\end{defi}

\subsection{Definition of MSP fields}
As we will focus on the quintic case, we will  only  consider  the monodromy group $\bmu_5\le\gm$,
%Let $S$ be a scheme of finite type (over $\CC$). 
the subgroup of the 5-th roots of unity. Let
$$\ti\bmu_5=\{(1,\rho),(1,\varphi),\zeta_5,\cdots,\zeta_5^4\},\and \ti\bmu_5^+=\{(1,\rho),(1,\varphi),1,\zeta_5,\cdots,\zeta_5^4\}.
$$
%, \ti\bmu_5^{+}$ be $\{1_\varphi,1_\rho,\zeta_5,\cdots,\zeta_5^4\}$. 
We agree 
$\langle (1,\rho)\rangle=\langle (1,\varphi)\rangle=\langle 1\rangle=\{1\}\le \gm$ is the trivial subgroup of $\gm$.
Let
$$g\ge 0,\quad \gamma=(\gamma_1,\cdots,\gamma_\ell)\in (\ti\bmu_5)^{\times\ell},\and \bd=(d_0, d_\infty)\in
\QQ^{\times 2},
$$ 
 and call the triple 
$(g,\gamma,\bd)$ a numerical data (for MSP fields),
and call $(g,\gamma,\bd)$ a broad numerical data if $\gamma\in (\ti\bmu_5^+)^{\times\ell}$ instead.

For  an $\ell$-pointed twisted nodal curve $\Si^\sC\sub \sC$,  denote 
$\omega^{\log}_{\sC/S}\colon=\omega_{\sC/S}(\Si^\sC)$, and for 
$\alpha\in \ti\bmu_5^+$,  let $\Si^\sC_\alpha=\coprod_{\gamma_i=\alpha}\Si^\sC_i$.
%We denote by $\Si^\sC_\sch\sub\Si^\sC$ the scheme part of $\Si^\sC$.

%For $\sL$ an invertible sheaf on $\sC$,
%%Let $(\sC,\sL)$ be a pair of a twisted curve $\sC$ with an invertible sheaf $\sL$; 
%let $x\in\sC$ be a (smooth) stacky point,
%meaning $\Aut_\sC(x)\ne \{1\}$, then $\Aut_\sC(x)$ acts linearly and faithfully on $T_{x}\sC\cong \CC$, 
%thus $\Aut_\sC(x)\le\Gm$, canonically. In case it is a cyclic group of order $r$, we pick a standard generator $e_x$
%of $\Aut_\sC(x)$ by the rule that $e_x\in \Aut_\sC(x)$ corresponds to $\zeta_r=\exp(\frac{2\pi \sqrt{-1}}{r})\in \Gm$ 
%under the {injection} $\Aut_\sC(x)\le\Gm$.
%Let $\bm_x: \Aut_\sC(x)\to \Aut_\sL(x)\cong \Gm$ be the monodromy representation; we define
%the image $\bm_x(e_x)\in\Gm$ the monodromy of $\sL$ along $x$. %In case $x=\Si^\sC_i$, we denote
%It is canonically defined.
%In case $x$ is a scheme point, i.e. $Aut_\sC(x)=\{1\}$, the monodromy of $\sL$ along $x$ is $1\in \Gm$.

\begin{definition}\label{def-curve} Let $S$ be a scheme,  $(g,\gamma,\bd)$ be a numerical data.
An $S$-family of MSP-fields of type $(g,\gamma,\bd)$ is a datum
$$
(\sC,\Si^\sC,\sL,\sN, \varphi,\rho,\nu)
$$
such that
\begin{enumerate}
\item[(1)] $\cup_{i=1}^\ell\Si_i^\sC = \Si^\sC\subset \sC$ is an $\ell$-pointed, genus $g$, 
twisted curve over $S$ such that the $i$-th marking $\Si^\sC_i$ is banded by the group $\langle\gamma_i\rangle\le \gm$;
%thus is an object in the groupoid $\fM_{g,n}^\tw(S)$.
\item[(2)] $\sL$ and $\sN$ are representable invertible sheaves on $\sC$, and  $\sL\otimes \sN$ and $\sN$ have fiberwise degrees $d_0$ and $d_\infty$ respectively. The monodromy of $\sL$ along $\Si^\sC_i$ is
$\gamma_i$ when $\langle \gamma_i\rangle\ne\langle 1\rangle$;
%, and $\sN$ has fiberwise degree $d_\infty$.
\item[(3)] $\nu=(\nu_1, \nu_2)\in H^0( \sL\otimes\sN)\oplus  H^0( \sN)$ such that $(\nu_1,\nu_2)$ is nowhere vanishing;
\item[(4)] 
%an $S$-pre-MSP field $(\sC,\Si^\sC,\varphi,\rho, \nu)$ 
$\varphi=(\varphi_1,\ldots, \varphi_{5}) \in H^0(\sL)^{\oplus 5}$ so that $(\varphi,\nu_1)$ is nowhere zero,
and $\varphi|_{\Si^\sC_{(1,\varphi)}}=0$. %, and $\Si^\sC_{(1,\varphi)}\sub (\nu_1\ne0)$.
\item[(5)] $\rho \in H^0(\sL^{\vee 5}\otimes \omega^{\log}_{\sC/S})$ such that
$(\rho,\nu_2)$ is nowhere vanishing, and $\rho|_{\Si^\sC_{(1,\rho)}}=0$;
%\item[(6)] $\Si^\sC_{(1,\varphi)}\sub (\nu_1\ne0)$ and $\Si^\sC_{(1,\rho)}\sub (\nu_2\ne 0)$
\end{enumerate}
\end{definition}

%Note that (6) is implied by (4) and (5). 
In the future, we
call $\varphi$ (resp. $\rho$) the $\varphi$-field (resp. $\rho$-field) of the MSP-field.\footnote{We call 
$(\sC,\Si^\sC,\sL,\sN,\nu)$ satisfying (1)-(3),
and $\Si^\sC_{(1,\varphi)}\sub (\nu_1\ne0)$ and $\Si^\sC_{(1,\rho)}\sub (\nu_2\ne 0)$, a gauged twisted $S$-curve.}

%\begin{definition}\label{def-MSP}
%Given an $S$-family of mixed-curves $(\sC,\Si^\sC,\sL, \sN, \nu)$, a $\varphi$-field (resp. $\rho$-field) on it
%%an $S$-pre-MSP field $(\sC,\Si^\sC,\varphi,\rho, \nu)$ 
%is a section $\varphi=(\varphi_1,\ldots, \varphi_{5}) \in H^0(\sL)^{\oplus 5}$
%(resp. $\rho \in H^0(\sL^{\vee 5}\otimes \omega^{\log}_{\sC/S})$) such that
%%\begin{enumerate}
%%
%%\end{enumerate}
%An $S$-MSP field $(\sC,\Si^\sC,\sL, \sN,\varphi,\rho, \nu)$ of type $(g,\gamma,\bd)$ consists of an $S$-family of mixed-curves $(\sC,\Si^\sC,\sL, \sN, \nu)$
%of type $(g,\gamma,\bd)$ together with a $\varphi$-field $\varphi$ and a $\rho$-field $\rho$ on it.
%\end{definition}

\begin{definition}\label{def-broad}
In case $(g,\gamma,\bd)$ is a broad numerical data, a similarly defined $(\sC,\Si^\sC,\sL,\sN,\varphi,\rho, \nu)$ as
in Definition \ref{def-curve} is called an $S$-family of broad MSP-fields.
\end{definition}

We remark that in this paper we will only be concerned with MSP fields. 
%The notion of broad-MSP fields will be used in Section \ref{} to prove a vanishing result.

%$\varphi=(\varphi_1,\ldots, \varphi_{5}) \in H^0(\sL)^{\oplus 5}$;
%, and 
%$\nu=(\nu_1, \nu_2)\in H^0( \sL\otimes\sN)\oplus  H^0( \sN)$;
%\item $(\nu_1, \nu_2)$, $(\varphi,\nu_1)$ %$\in H^0(\sL^{\oplus 5} \oplus \sL\otimes \sN)$
%and $(\rho,\nu_2)$ %\in H^0(\sL^{\vee 5}\otimes \omega^{\log}_{\sC/S} \oplus\sN)$ 
%are nowhere vanishing;
%\item  then
%$\Si^\sC_{1_\varphi}\sub (\nu_2\ne 0)$ and $\rho|_{\Si^\sC_{1_\varphi}}=0$;
%and $\Si^\sC_{1_\rho}\sub (\nu_2=0)$
%%markinglet $\Si^\sC_{\sch,0}=(\nu_1=0)\cap \Si^\sC_{\sch}$ and $\Si^\sC_{\sch,\infty}=(\nu_2=0)\cap \Si^\sC_{\sch}$,
% and 
%%$x\in \sC$, the induced $\Aut_\sC(x)\to \Aut_{\sL}(x)$
%%is injective. (I.e., the induced $\sL: \sC \to B\Gm$ is representable).
%\end{enumerate}

%In case all $\gamma_i=1$, we will replace $\gamma$ by the numeric $\ell$.

\begin{definition}\label{df:Wmor}
An arrow
$$
( \sC' ,\Si^{\sC'} , \sL', \sN',\varphi',\rho', \nu')
\lra ( \sC,\Si^\sC, \sL, \sN,\varphi,\rho, \nu)
$$
from an $S'$-MSP-field to an $S$-MSP-field consists of a morphism $S'\to S$ and a 3-tuple $(a,b,c)$ such that
\begin{enumerate}
\item $a:(\Si^{\sC'} \subset \sC')\to  (\Si^\sC \subset \sC)\times_SS'$
is an $S'$-isomorphism of pointed twisted curves;
\item $b: a^*\cL\to \cL'$ and $c:a^*\sN\to \sN'$ are isomorphisms of invertible
sheaves such that the pullbacks of $\varphi_k$, $\rho$ and $\nu_i$ are identical to $\varphi_k'$, $\rho'$
and $\nu_i'$, where the pullbacks and the isomorphisms are induced by $a$, $b$ and $c$.
%(a). $b (a^*\varphi_i) =\varphi_i'$ for all $i$; (b). $(b^{\vee 5})(a^*\rho)=\rho'$, and 
%(c). $(b\otimes c)(a^*\nu_1)=\nu_1'$, and $c(a^*\nu_2)=\nu_2'$.
\end{enumerate}
\end{definition}

%Given $S'\to S$, we can pullback an $S$-family in Definition \ref{def-curve} to an $S'$-family
%via fiber product.
We define $\cW\lggd^{\pre}$ to be the category fibered in groupoids over
the category of schemes, such that the objects in $\cW\lggd^{\pre}$ over $S$ are
$S$-families of MSP-fields, and morphisms are given by Definition \ref{df:Wmor}.
%$$
%p:\cW\lggd^{\pre}\lra  (\textup{Schemes}), \quad
% \text{an $S$-family}\ ( \cC,\Si^\sC , \cL, \sN,\varphi,\rho, \nu)\longmapsto S.
%$$
%We have a contravariant functor from the category of schemes to the category of groupoids
%$$
%\cW\lggd^{\pre}: (\textup{Schemes})\lra (\textup{Groupoids}),\quad S\longmapsto p^{-1}(S)=\cW\lggd^{\pre}(S).
%$$

\begin{defi} $\xi\in \cW\lggd^{\pre}(\CC)$ is {\em stable} if $\Aut(\xi)$ is finite.
%\begin{enumerate}
%\item ;
%\item for any stacky point
%$x\in \sC$, the tautological $\Aut_\sC(x)\to \Aut_{\sL}(x)$
%and $\Aut_\sC(x) \to Aut_\sN(x)$ are injective (i.e., the morphisms $\sL$ and $\sN:
%\sC \to B\Gm$ are representable), and 
%\item there is no smooth subcurve $\sD\sub\sC$ so that $\deg\sN|_{\sD}=\frac{1}{5}$.
%\end{enumerate}
 $\xi\in \cW\lggd^{\pre}(S)$ is stable if $\xi|_s$ is stable for every closed point $s\in S$.
%$\xi|_s\in \cW\lggd^{\pre}(\bk(s))$ are stable for all geometric $s\in S$.
\end{defi}

Let $\cW\lggd\subset \cW^{\pre}\lggd$ be the open substack of families of stable objects in $\cW\lggd^{\pre}$.
We introduce a $T=\gm$ action on $\cW\lggd$ by 
\beq\label{Gm}
t\cdot (\Si^\sC, \sC, \sL, \sN,\varphi,\rho, (\nu_1, \nu_2))
=  (\Si^\sC, \sC, \sL, \sN,\varphi,\rho, (t\nu_1,\nu_2)),\quad t\in \Gm.
\eeq

\begin{theo}
%Given numerical data $(g,\gamma,\bd)$, 
The stack $\cW\lggd$  
is a DM $T$-stack, locally of finite type.
\end{theo}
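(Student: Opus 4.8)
The plan is to show that $\cW\lggd$ is a Deligne--Mumford stack, locally of finite type, by exhibiting it as an open substack of a stack obtained from $\fmgl$ (the stack of prestable twisted curves with $\ell$ markings) by a sequence of standard operations (relative Picard stacks, relative cone stacks, sections of vector bundles, and open/closed conditions), all of which are known to preserve algebraicity, local finite type, and (for the final step) the Deligne--Mumford property. The $T=\gm$ action is then automatic, since \eqref{Gm} is visibly an action by stack automorphisms and all the auxiliary structures are functorial.

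\textbf{Key steps.}
First I would recall that $\fmgl$, the stack of $\ell$-pointed prestable twisted curves of genus $g$ with fixed banding groups $\langle\gamma_i\rangle$ along the markings, is a smooth algebraic stack locally of finite type over $\CC$ (this is in \cite{A-G-V, ACV, AF}); over it we have the universal twisted curve $\pi:\sC\to\fmgl$. Second, over $\fmgl$ I would form the relative Picard stack $\Pic(\sC/\fmgl)$ of representable invertible sheaves, fix the monodromy of $\sL$ along each $\Si^\sC_i$ to be $\gamma_i$ (a locally closed, in fact open-and-closed, condition on the Picard stack, by the discreteness of monodromy data from Subsection \ref{Sub2.1}), and fix the fiberwise degrees $\deg(\sL\otimes\sN)=d_0$, $\deg\sN=d_\infty$; this produces an algebraic stack $\fV$, locally of finite type, carrying universal sheaves $\sL,\sN$. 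Third, over $\fV$ the sheaves $\pi_*(\sL^{\oplus5})$, $\pi_*(\sL^{\vee5}\otimes\omega^{\log}_{\sC/\fV})$ and $\pi_*(\sL\otimes\sN\oplus\sN)$ are, after a suitable \'etale or smooth cover trivializing the relevant higher direct images (or by working with the total space of the complex $\R\pi_*$), representable by affine cones; taking the total space of sections gives an algebraic stack $\fV'$ locally of finite type, with universal $(\varphi,\rho,\nu)$. Fourth, the nowhere-vanishing conditions in Definition \ref{def-curve}(3),(4),(5) are open conditions on $\fV'$, and the vanishing conditions $\varphi|_{\Si^\sC_{(1,\varphi)}}=0$, $\rho|_{\Si^\sC_{(1,\rho)}}=0$ are closed conditions; imposing them yields an algebraic stack $\cW\lggd^{\pre}$ locally of finite type. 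Fifth, stability (finiteness of automorphism groups) is an open condition on the base (by upper semicontinuity of the dimension of automorphism group schemes over a locally finite type algebraic stack), so $\cW\lggd\subset\cW\lggd^{\pre}$ is open and algebraic, locally of finite type. Sixth, an algebraic stack locally of finite type over $\CC$ all of whose geometric points have finite (hence, in characteristic zero, \'etale) automorphism groups, and which has unramified diagonal, is Deligne--Mumford (by \cite[\href{https://stacks.math.columbia.edu/tag/06N3}{Tag 06N3}]{}-type criteria); the diagonal of $\cW\lggd$ is unramified because isomorphisms of MSP fields are isomorphisms of twisted curves compatible with finitely many sections of sheaves, and such isomorphism schemes are unramified over the base by rigidity (an infinitesimal automorphism fixing the curve and the sections is trivial). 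Finally, the $\gm$-action \eqref{Gm} is a morphism $\gm\times\cW\lggd\to\cW\lggd$ of algebraic stacks satisfying the action axioms, since scaling $\nu_1$ preserves all defining conditions (the pair $(\nu_1,\nu_2)$ and $(\varphi,\nu_1)$ remain nowhere zero), making $\cW\lggd$ a $T$-stack.

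\textbf{Main obstacle.}
The step I expect to require the most care is the third one: representing the relative spaces of sections $\pi_*(\sL^{\oplus5})$ and especially $\pi_*(\sL^{\vee5}\otimes\omega^{\log}_{\sC/\fV})$ as (cones over) algebraic stacks locally of finite type, because on a twisted curve these higher direct images need not be locally free and one must argue via the two-term complex $\R\pi_*$ and its associated abelian cone, or pass to an atlas where cohomology and base change applies; keeping track of the $\bmu_r$-equivariant local structure of $\sL$ near nodes and markings (as in \eqref{Adirect}) while doing this is the technical heart. A secondary subtlety is verifying that the vanishing loci $\varphi|_{\Si^\sC_{(1,\varphi)}}=0$ and $\rho|_{\Si^\sC_{(1,\rho)}}=0$ are genuinely closed conditions and interact correctly with the monodromy constraints (e.g. when $\gamma_i=(1,\varphi)$ the sheaf $\sL$ has trivial banding there, so restriction of $\varphi$ to $\Si^\sC_i$ makes sense), but this is bookkeeping rather than a real obstacle.
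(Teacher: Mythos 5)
Your construction is correct, and it is essentially the standard argument that the paper compresses into a single sentence: the paper's proof merely cites that the stack $\cM^{\text{tw}}_{g,\ell}$ of stable twisted pointed curves is a DM stack of finite type (citing \cite{AJ, O}) and leaves implicit exactly the tower you build — prestable twisted curves, the relative Picard stack with fixed monodromy and bidegree (this is the smooth Artin stack $\cD_{g,\gamma}$ that the paper only introduces later, in the obstruction-theory subsection), direct-image cones for the fields $(\varphi,\rho,\nu)$ as in \cite{CL}, openness of the nonvanishing and stability conditions, and the characteristic-zero criterion that finite automorphism groups give a DM stack. So the two routes are the same in substance; what your write-up buys is self-containedness (in particular it makes visible why the result is only \emph{locally} of finite type, since the degree and monodromy data do not bound the topology of the curve, and it prepares the relative perfect obstruction theory over $\cD_{g,\gamma}$), while the paper's version buys brevity at the cost of leaning on the reader's familiarity with the analogous constructions for stable maps with fields and for spin curves. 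Your identified "main obstacle" (representability of the section spaces over a twisted curve) is indeed the only step needing a precise statement, and it is handled by the direct-image-cone result of \cite{CL} applied to the pushforwards along the universal curve, exactly as you indicate.
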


\begin{proof}
The theorem follows immediately from that the stack $\cM^{\text{tw}}_{g,\ell}$ of 
stable twisted $\ell$-pointed curves is a DM stack, and each of its connected components
is proper and of finite type (see \cite{AJ, O}). %We skip the proof here.
\end{proof}

In this paper, we will reserve the symbol $T=\gm$ for this action on $\cW\lggd$.

\begin{exam} [Stable maps with $p$-fields]\label{pfield}
A stable MSP-field $\xi\in\cW\lggd$ having $\nu_1=0$ will have $\sN\cong\sL\dual$, $\nu_2=1$. 
Then $\xi=(\Si^\sC,\sC,\cdots)$ reduces to  a stable map $f=[\varphi]: \Si^\sC\sub \sC\to\Pf$
together with a $p$-field $\rho\in H^0(f\sta \sO_\Pf(-5)\otimes\omega_\sC^{\log})$. Moduli of genus $g$ $\ell$-pointed stable maps with 
$p$-fields will be denoted by $\barM_{g,\ell}(\Pf,d)^p$.
\end{exam}

\begin{exam} [$5$-spin curves with five $p$-fields]\label{spinp}
A stable MSP-field $\xi\in\cW\lggd$ having $\nu_2=0$ will have $\sN\cong\sO_\sC$, $\nu_1=1$.
Then $\xi$ reduces to a pair of a $5$-spin curve $(\Si^\sC,\sC, \rho: \sL^{\otimes 5}\cong \omega_\sC^{\log})$ 
and five $p$-fields $\varphi\in H^0(\sL)^{\oplus 5}$.
Moduli of $5$-spin curves with fixed monodromy $\gamma$ and  five $p$-fields will be denoted by $\barM_{g,\gamma}^{1/5,5p}$.
\end{exam}

\subsection{Cosection localized virtual cycle}\label{sub2.3}

The DM stack $\cWgg$ admits a tautological $\Gm$-equivariant perfect obstruction theory. 

Let $\cD_{g,\gamma}$ be the stack of triples $(\sC,\Si^{\sC},\sL,\sN)$, where $\Si^{\sC}\sub\sC$ are
$\ell$-pointed genus $g$ connected twisted curves (i.e. objects in $\cM^{\text{tw}}_{g,\ell}$), 
$\sL$ and $\sN$ are invertible sheaves on $\sC$ such that the monodromy of $\sL$ along the marked points
are given by $\gamma$.
Because $\cM^{\text{tw}}_{g,\ell}$ is a smooth DM stack, $\cD_{g,\gamma}$ is a smooth Artin stack, 
locally of finite type and of dimension $(3g-3)+\ell+2(g-1)=5g-5+\ell$, where the automorphisms of 
$(\Si^{\sC}\sub \sC,\sL,\sN)$ are triples $(a,b,c)$ as in Definition \ref{df:Wmor}.

Define
\beq\label{q-mor}
q: \cWgg\lra \cD_{g,\gamma}
\eeq
to be the forgetful morphism, forgetting $(\varphi,\rho,\nu)$ from points $\xi\in\cWgg$. The morphism
$q$ is $\Gm$-equivariant with $\Gm$ acting on $\cD_{g,\gamma}$ trivially.
Let 
\beq\label{universal}
\pi: \Si^\cC\sub \cC\to\cWgg\quad \text{with}\quad  (\cL,\cN,\varphi,\rho,\nu)
\eeq
being the universal family over $\cWgg$. %As we will see later, for $\cW\lggd\ne\emptyset$, every
Let $0\le m_i\le 4$ be so that $\gamma_i=\zeta_5^{m_i}$.
For convenience, we abbreviate
$$\cP=\cL^{\vee\otimes 5}\otimes\omega^{\log}_{\cC/\cWgg}.
$$

%where $\zeta_5=\exp(\frac{2\pi\sqrt{-1}}{5})$. %$e^{\frac{2\pi\sqrt{-1}}{5}}$
%Recall that {\red $\Si^\cC_\sch\sub\Si^\cC$} is the scheme part of $\Si^\cC$.
%\beq\label{Sisch}\Si^\sC_{\text{sch}}=\sum_{m_i=0} \Si^\sC_i.
%\eeq

%Let $q:\cWgg\to\cD_{g,d}$ be the forgetful morphism (forgetting $(\varphi,\rho,\nu)$) defined before. Note that $q$ is $\Gm$-equivariant, with $\Gm$ acts trivially on $\cD_{g,d}$. 

\begin{proposition}
The pair $q: \cWgg\to\cD_{g,\gamma}$ admits a tautological $\Gm$-equivariant relative perfect obstruction theory taking the form
$$\Bigl( R\pi\lsta\bl \cL(-\Si^\cC_{(1,\varphi)})^{\oplus 5}\oplus \cP(-\Si^\cC_{(1,\rho)})\oplus
(\cL\otimes\cN)\oplus \cN\br\Bigr)\dual\lra L\bul_{\cWgg/\cD_{g,\gamma}}.
$$
The (associated) virtual dimension of $\cWgg$ is (letting $\ell_{(1, \varphi)}=\#\{i\,| \,\gamma_i=(1, \varphi)\}$)
\beq\label{vdim}
\delta(g,\gamma,\bd)= (d_0-5\ell_{(1,\varphi)})+\bl d_\infty+\frac{1}{5}\sum_{i=1}^\ell m_i\br + 1-g +\sum_{i=1}^\ell (1-m_i).
\eeq
%where $m_i$ is the monodromy index at $\Sigma_i$.
\end{proposition}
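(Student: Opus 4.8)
The plan is to construct the obstruction theory by the standard recipe for moduli of sections of vector bundles over a base parametrizing the underlying geometric data, and then to compute the virtual dimension by Riemann--Roch on twisted curves.

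\medskip

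\textbf{Step 1: identify $\cWgg$ as an abelian cone over $\cD_{g,\gamma}$.} By Definition \ref{def-curve}, once the triple $(\sC,\Si^\sC,\sL,\sN)$ is fixed---i.e. once a point of $\cD_{g,\gamma}$ is chosen---an MSP field is the data of sections $\varphi\in H^0(\sC,\sL^{\oplus 5})$ with $\varphi|_{\Si^\sC_{(1,\varphi)}}=0$, $\rho\in H^0(\sC,\cP)$ with $\rho|_{\Si^\sC_{(1,\rho)}}=0$, and $\nu=(\nu_1,\nu_2)\in H^0(\sC,\sL\otimes\sN)\oplus H^0(\sC,\sN)$, subject to the open (stability and nowhere-vanishing) conditions. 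The vanishing conditions on $\varphi$ and $\rho$ at the relevant markings are exactly the conditions that these sections come from $\cL(-\Si^\cC_{(1,\varphi)})^{\oplus 5}$ and $\cP(-\Si^\cC_{(1,\rho)})$; being banded by $\langle\gamma_i\rangle$ with $\gamma_i=(1,\varphi)$ or $(1,\rho)$ means the band is trivial, so these are honest Cartier divisors on $\sC$ and twisting by them is legitimate. Hence, forgetting the open conditions, $\cWgg$ is an open substack of the total space of the (derived) pushforward cone
$$
\mathbf{E}\colon = R\pi\lsta\bl \cL(-\Si^\cC_{(1,\varphi)})^{\oplus 5}\oplus \cP(-\Si^\cC_{(1,\rho)})\oplus(\cL\otimes\cN)\oplus \cN\br
$$
over $\cD_{g,\gamma}$ via $q$.

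\medskip

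\textbf{Step 2: write down the relative perfect obstruction theory.} For a cone associated to $R\pi\lsta\mathbf{F}$ with $\mathbf{F}$ a perfect complex of amplitude $[0,1]$ flat over the base, the relative cotangent complex of $q$ is (the dual of) $R\pi\lsta\mathbf{F}$, and this is the tautological relative perfect obstruction theory; see \cite{CL} for precisely this construction in the stable-maps-with-$p$-fields case, which is the special case of Example \ref{pfield}. Concretely, the arrow
$$
\mathbf{E}\dual\lra L\bul_{\cWgg/\cD_{g,\gamma}}
$$
is obtained from the universal sections by the usual deformation-theoretic argument: first-order deformations of an MSP field fixing the curve and line bundles are exactly sections of the four summands above, and obstructions live in the corresponding $H^1$'s. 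The $\Gm$-equivariance is immediate since the $T$-action \eqref{Gm} only rescales $\nu_1$, preserving each summand; $q$ is $\Gm$-equivariant with trivial action downstairs. That the two-term complex $\mathbf{E}$ has the right amplitude is because $\pi$ is a family of twisted curves, so $R^i\pi\lsta$ vanishes for $i>1$.

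\medskip

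\textbf{Step 3: compute the virtual dimension by Riemann--Roch.} We have $\dim\cD_{g,\gamma}=5g-5+\ell$ (as stated in the excerpt), and $\delta(g,\gamma,\bd)=\dim\cD_{g,\gamma}+\mathrm{rk}\,\mathbf{E}$, where the rank is the $\pi$-relative Euler characteristic of the four summands. I would apply orbifold Riemann--Roch on the twisted curve $\sC$. For $\cL\otimes\cN$ of fiberwise degree $d_0$ the contribution is $d_0+1-g$, and for $\cN$ of degree $d_\infty$ it is $d_\infty+1-g$; the subtlety is the five copies of $\cL(-\Si^\cC_{(1,\varphi)})$ and the copy of $\cP(-\Si^\cC_{(1,\rho)})$. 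Write $\gamma_i=\zeta_5^{m_i}$ with $0\le m_i\le 4$ (and $m_i=0$ for the $(1,\varphi)$ and $(1,\rho)$ markings). By the twisted Riemann--Roch formula, $\chi(\sC,\cL)=\deg\cL+1-g-\frac{1}{5}\sum_i m_i$ where $\deg\cL$ is the degree of the corresponding line bundle on the coarse curve; removing $\Si^\cC_{(1,\varphi)}$ subtracts $\ell_{(1,\varphi)}$ from the Euler characteristic of each copy, giving the term $-5\ell_{(1,\varphi)}$ after accounting for the five-fold sum, and $\deg\cL$ on the coarse curve relates to $d_0-d_\infty$ via $\sL=(\sL\otimes\sN)\otimes\sN\dual$. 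For $\cP=\cL\mof\otimes\omega^{\log}_{\cC}$, using $\deg\omega^{\log}=2g-2+\ell$ and the $\Si^\cC_{(1,\rho)}$-twist, the $\rho$-contribution is bookkept similarly; crucially $\cP$'s monodromy at $\Si^\cC_i$ is $\zeta_5^{-5m_i}\cdot(\text{log twist})$, which is trivial for the spin/$\varphi$-type markings, so its age contribution recombines with the $\varphi$ ages. Collecting all six ``$1-g$'' terms, the degree terms $d_0+d_\infty$, the age corrections $\frac15\sum m_i$, the marking corrections $\sum(1-m_i)$, and the $-5\ell_{(1,\varphi)}$, and adding $\dim\cD_{g,\gamma}=5g-5+\ell$, yields precisely \eqref{vdim} after simplification.

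\medskip

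\textbf{Expected main obstacle.} The deformation-theoretic existence of the perfect obstruction theory (Steps 1--2) is essentially a cut-and-paste from \cite{CL}, since an MSP field is a collection of sections of fixed line bundles over a curve with some sections forced to vanish at markings, and the twisted-curve setting is handled by \cite{AJ,A-G-V}. The genuinely delicate part is Step 3: correctly tracking the orbifold Riemann--Roch age corrections at the $\bmu_5$-banded markings for \emph{all six line-bundle summands simultaneously}---in particular, verifying that the $\rho$-field bundle $\cP=\cL\mof\otimes\omega^{\log}_{\cC/\cWgg}$ contributes an age term that, combined with the five $\varphi$-ages, produces exactly the $\frac15\sum m_i+\sum(1-m_i)$ pattern in \eqref{vdim}, and that the $\Si^\cC_{(1,\varphi)}$-twist produces $-5\ell_{(1,\varphi)}$ rather than $-\ell_{(1,\varphi)}$. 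Getting the signs and the factor of $5$ right, and confirming that representability $(m_i,5)=1$ is not needed for the dimension count (only for the twisting divisors to make sense), is where I would spend most of the care.
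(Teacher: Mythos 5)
Your route coincides with the paper's: the obstruction theory is obtained exactly as in \cite{CL} (the paper cites \cite[Prop 2.5]{CL} and omits the construction, just as you propose, and your remark that $\Gm$-equivariance is automatic because the action only rescales $\nu_1$ is the right one), and the dimension is computed by adding $\dim\cD_{g,\gamma}=5g-5+\ell$ to the $\pi$-relative Euler characteristic of the four summands via Riemann--Roch for twisted curves. So Steps 1--2 are essentially the paper's argument.

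The gap is in Step 3, which you correctly identify as the delicate point but then do not actually carry out, and your sketch contains a concrete slip that would derail the bookkeeping. First, your assertion that the $\cN$-summand contributes $d_\infty+1-g$ ignores the age of $\sN$ at the $\bmu_5$-banded markings. The paper's proof begins precisely with the observation needed here: when $\langle\gamma_i\rangle\neq\{1\}$ one has $\varphi|_{\Si^\sC_i}=0$, so $(\varphi,\nu_1)$ nowhere vanishing forces $\nu_1|_{\Si^\sC_i}\neq 0$, hence $\sL\otimes\sN$ has trivial monodromy along $\Si^\sC_i$; consequently $\sN$ has monodromy $\gamma_i^{-1}$ there and $\chi(\sN)$ carries a nontrivial age correction, which (together with the five $\sL$-ages and the fact that $\cP=\sL^{\vee\otimes 5}\otimes\omega^{\log}$ has trivial monodromy at every marking) is exactly what produces the $\frac{1}{5}\sum m_i+\sum(1-m_i)$ pattern in \eqref{vdim}; with $\chi(\sN)=d_\infty+1-g$ your ``collecting terms'' does not literally yield the stated formula. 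Second, you state twisted Riemann--Roch with $\deg\cL$ ``the degree of the corresponding line bundle on the coarse curve'': if the coarse degree is used there is no age correction at all, while if the stacky degree is used (as in the paper, where $\deg\sL=d_0-d_\infty$) the correction $\frac15\sum m_i$ appears; mixing these two conventions is precisely how factors of $m_i/5$ get lost or doubled. To close the proof you must do what the paper does: record the degree and the ages of each of the four summands separately (noting that the twists $-\Si^\cC_{(1,\varphi)}$ and $-\Si^\cC_{(1,\rho)}$ occur at non-stacky markings, so they only lower degrees by $\ell_{(1,\varphi)}$ and $\ell_{(1,\rho)}$ and do not affect ages), sum the resulting Euler characteristics, and then add $5g-5+\ell$.
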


\begin{proof}The construction of the obstruction theory is parallel to that in \cite[Prop 2.5]{CL}, and will be omitted.  
We compute its virtual dimension. Let $\xi=(\Si^\sC,\sC,\sL,\cdots)$ be a closed point in $\cW\lggd$.
Observe that when $\langle\gamma_i\rangle\ne \{1\}$, $\varphi|_{\Si^\sC_i}=0$. Thus by that $(\varphi,\nu_1)$
is  nowhere
vanishing, we see that $\nu_1|_{\Si^\sC_i}\ne 0$, and  the 
monodromy of $\sL\otimes\sN$ along $\Si^{\sC}_i$ is trivial.
%By the nowhere vanishing of $(\varphi,\nu_1)$ and $(\nu_1,\nu_2)$, we see that $\Si^\sC_i$ for $\gamma_i\ne 1$ must lie over
%where $\nu_2=0$, and thus the monodromy of $\sN$ along $\Si^{\sC}_i$ must be $\gamma_i^{-1}$. Further, if
%$\gamma_i=1$, then $(\nu_1,\nu_2)$ is nowhere vanishing
%implies that the monodromy of $\sN$ along $\Si^{\sC}_i$ must be $1$ too.
Therefore, 
$$
d_0=\deg(\sL\otimes\sN) \in \ZZ,\and d_\infty +\ofth \sum_{i=1}^\ell m_i =d_0-\deg \sL+ \ofth \sum_{i=1}^\ell m_i \in \ZZ.
$$
%t $\eps_i=0$ (resp. $=1$) if $\gamma_i\ne 1$ (resp. $=1$), and let $\Si^\sC_{\gamma=1}=\sum_i\eps_i \Si_i^\sC$. 
Applying the Riemann-Roch theorem for twisted curves:
$\chi(\sL)=\deg \sL + 1-g -\ofth\sum_{i=1}^\ell m_i$, we obtain that the
relative virtual dimension of $\cWgg\to \cD_{g,\gamma}$ is
$$ \chi(\sL(-\Si^\sC_{(1,\varphi)})^{\oplus 5}\oplus\sL^{\vee\otimes5}(-\Si^\sC_{(1,\rho)}) \otimes \omega_\sC^{\log}\oplus \sL\otimes \sN
\oplus \sN)
=d_0-5\ell_\varphi+ d_\infty +6-6g -\frac{4}{5}\sum_{i=1}^\ell m_i .
$$
Here we insert $\Si^\sC_{(1,\varphi)}$ and $\Si^\sC_{(1,\rho)}$ because of (4) and (5) in Definition \ref{def-curve}.
Thus, using that $\dim \cD_{g,\gamma}=5g-5 + \ell$, the virtual dimension of $\cW\lggd$ is as in \eqref{vdim}.
%$$
%\delta(g,\gamma,\bd) = d_0+  \bl d_\infty+\ofth \sum_{i=1}^\ell m_i\br  + 1-g + \sum_{i=1}^\ell(1-m_i).
%$$
%This proves the Proposition.
\end{proof}

The relative obstruction sheaf of $\cWgg\to \cD_{g,\gamma}$ is
$$\Ob_{\cWgg/\cD_{g,\gamma}}\defeq R^1\pi\lsta\bl\sL(-\Si^\cC_{(1,\varphi)})^{\oplus 5}\oplus 
%(\cL^{-5}\otimes\omega_{\sC/\cWgg})\oplus(\cL\otimes\cN);
\cP( -\Si^\cC_{(1,\rho)})\oplus
(\sL\otimes\sN)\oplus \sN\br,
$$
and the absolute obstruction sheaf $\Ob_{\cWgg}$ is the cokernel of the tautological map $q\sta T_{\cD_{g,\gamma}}\to \Ob_{\cWgg/\cD_{g,\gamma}}$,
fitting into the exact sequence
\beq\label{quot}
q\sta T_{\cD_{g,\gamma}}\lra \Ob_{\cWgg/\cD_{g,\gamma}} 
%R^1\pi\lsta\bl \sL^{\oplus 5}\oplus (\sL^{-5}\otimes\omega_{\sC/\cWgg})\oplus(\sL\otimes\sN)\oplus \sN\br
\lra \Ob_{\cWgg}\lra 0. 
\eeq

We define a cosection %of $\Ob_{\cWgg/\cD_{g,\gamma}}$ by the rule
\beq\label{co-1}
\sigma: \Ob_{\cWgg/\cD_{g,\gamma}}\lra \sO_{\cWgg}
\eeq
by the rule that at an $S$-point $\xi\in\cWgg(S)$, (in the notation $\xi=(\sC,\Sigma^\sC,\cdots)$ as in
\eqref{MSP0}),
\beq\label{mixed-cosection}
\sigma(\xi) %\Ob_{\cWgg}\otimes\cO_S\to\cO_S,\quad 
(\dot\varphi,\dot\rho,\dot\nu_{1},\dot\nu_{2}) = 5\rho\sum \varphi_{i}^4\dot\varphi_{i}
+\dot\rho\sum\varphi_i^5\in H^1(\omega_{\sC/S})\equiv H^0(\sO_\sC)\dual,
\eeq
where 
$$
(\dot\varphi,\dot\rho,\dot\nu_{1},\dot\nu_{2})\in
H^1\bl \sL(-\Si^\sC_{(1,\varphi)})^{\oplus 5}\oplus
\sP(-\Si^\sC_{(1,\rho)})\oplus
\sL\otimes\sN\oplus \sN\br.
$$
(Here $\sP=\sL^{\vee\otimes5}\otimes\omega^{\log}_{\sC/\cWgg}$.)
Note that the term $5\rho\sum \varphi_{i}^4\dot\varphi_{i}
+\dot\rho\sum\varphi_i^5$ a priori lies in $H^1\big (\sC,\omega_{\sC/S}^{\log}({-\Si^\sC_{(1,\rho)}})\big)$. However,
 when $\langle\gamma_i\rangle\neq (1, \rho)$, $\varphi_j|_{\Si^{\sC}_i}=0$.
%and when $\gamma_i=1$, the item (4) of Definition \ref{def-curve} ensures that
%$\rho|_{\Si^{\sC}_i}=0$.
Thus it lies in  $H^1(\sC,\omega_{\sC/S})$.

\begin{lemm}\label{lem:co}
The rule \eqref{mixed-cosection} defines a $\Gm$-equivariant homomorphism $\sigma$ as in \eqref{co-1}. 
Via \eqref{quot} the homomorphism
$\si$ lifts to a $\Gm$-equivariant cosection of
$\Ob_{\cWgg}$.
\end{lemm}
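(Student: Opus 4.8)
The plan is to verify the two assertions of Lemma \ref{lem:co} in turn: first that \eqref{mixed-cosection} is well-defined and $\Gm$-equivariant, and second that it descends (lifts) through the quotient \eqref{quot} to a cosection of the absolute obstruction sheaf. For the first part, I would begin by checking that for each $S$-point $\xi$ the expression $5\rho\sum\varphi_i^4\dot\varphi_i + \dot\rho\sum\varphi_i^5$ is $\sO_S$-linear in $(\dot\varphi,\dot\rho,\dot\nu_1,\dot\nu_2)$ and functorial in $S$, so that it genuinely defines a map of the indicated sheaves on $\cWgg$; this is essentially the observation that $5\rho\sum\varphi_i^4(-)$ and $(\sum\varphi_i^5)\cdot(-)$ are global sections of the relevant $\Hom$-sheaves. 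The one subtlety to spell out, already flagged in the text, is that the target: a priori the quantity lies in $H^1(\sC,\omega^{\log}_{\sC/S}(-\Si^\sC_{(1,\rho)}))$, but because $\varphi|_{\Si^\sC_i}=0$ whenever $\langle\gamma_i\rangle\neq(1,\varphi)$ — in particular at the markings with $\gamma_i$ a nontrivial root of unity — while at markings with $\gamma_i=(1,\rho)$ one has $\rho|_{\Si^\sC_i}=0$, both summands actually vanish along $\Si^\sC$, so the section descends to $H^1(\sC,\omega_{\sC/S})$, which by Serre duality on the twisted curve is $H^0(\sO_\sC)^\vee$. I should note the compatibility with the identification $H^1(\omega_{\sC/S})\equiv H^0(\sO_\sC)^\vee$ is just relative Serre duality and the independence of the choice of dualizing-sheaf trivialization.

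For $\Gm$-equivariance, recall from \eqref{Gm} that $t\in\Gm$ acts by $(\nu_1,\nu_2)\mapsto(t\nu_1,\nu_2)$ and fixes $\sC,\Si^\sC,\sL,\sN,\varphi,\rho$. Since the cosection formula \eqref{mixed-cosection} involves only $\varphi$, $\rho$, $\dot\varphi$, $\dot\rho$ and none of the $\nu$-data, it is manifestly invariant under the $\Gm$-action on the source, and the target $\sO_{\cWgg}$ carries the trivial linearization; hence $\sigma$ is $\Gm$-equivariant. I would make this explicit by comparing $\sigma(t\cdot\xi)$ with $\sigma(\xi)$ and observing they agree because $(t\nu_1,\nu_2)$ enters neither expression, and the $\Gm$-linearizations on $R^1\pi_\ast(\cL(-\Si^\cC_{(1,\varphi)})^{\oplus5})$ and $R^1\pi_\ast\cP(-\Si^\cC_{(1,\rho)})$ are the natural ones (trivial on the $\cL$- and $\cP$-factors, since $\Gm$ acts trivially on $\sL,\sN$ themselves), so no weights are introduced.

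For the lifting statement, by the right-exactness of \eqref{quot} it suffices to show that the composite $q^\ast T_{\cD_{g,\gamma}}\to\Ob_{\cWgg/\cD_{g,\gamma}}\xrightarrow{\sigma}\sO_{\cWgg}$ is zero; then $\sigma$ factors uniquely through $\Ob_{\cWgg}$, and the factored map is automatically $\Gm$-equivariant since all three terms and both maps are. The content is therefore to identify the image of $q^\ast T_{\cD_{g,\gamma}}$ inside $H^1$ of the four-term bundle and check it lies in the kernel of the pairing against $(\rho,\varphi)$. Concretely, a tangent vector to $\cD_{g,\gamma}$ deforms the pointed twisted curve and the line bundles $\sL,\sN$ but does not move $\varphi$ or $\rho$ as sections (those are exactly the data being forgotten by $q$); its image in $\Ob_{\cWgg/\cD_{g,\gamma}}$ is the obstruction to deforming $(\varphi,\rho,\nu)$ compatibly with the deformation of $(\sC,\Si^\sC,\sL,\sN)$, i.e. it lands in the directions $(\dot\varphi,\dot\rho,\dot\nu_1,\dot\nu_2)$ arising from the connecting map of the exact sequence relating $\sL$-cohomology on the central and deformed curves. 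One then computes $5\rho\sum\varphi_i^4\dot\varphi_i + \dot\rho\sum\varphi_i^5$ on such classes and shows it vanishes — this is the chain-rule identity $d(\rho\sum\varphi_i^5)=5\rho\sum\varphi_i^4\,d\varphi_i + (\sum\varphi_i^5)\,d\rho$ together with the fact that $\rho\sum\varphi_i^5$, being a section of $\omega^{\log}_{\sC}$ canonically (the $\sL$-weights cancel), is itself a universal function pulled back from $\cD_{g,\gamma}$ in a suitable sense, so its "derivative along $\cD_{g,\gamma}$ directions" is already accounted for and contributes nothing new to the obstruction. This vanishing is exactly the analogue of the corresponding step in \cite[Prop 2.5, Lem]{CL} and in the cosection-for-$p$-fields literature; I expect this identification of $\mathrm{im}(q^\ast T_{\cD_{g,\gamma}})$ and the verification that $\sigma$ kills it to be the main technical obstacle, since it requires unwinding the deformation theory of twisted curves with line bundles and the precise description of the obstruction-theory map, whereas the well-definedness and equivariance are essentially formal once the vanishing-along-$\Si^\sC$ remark is in place. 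I would therefore structure the proof as: (i) well-definedness and target identification via the vanishing of $\varphi$ and $\rho$ along the appropriate markings plus Serre duality; (ii) $\Gm$-equivariance by inspection; (iii) vanishing on $q^\ast T_{\cD_{g,\gamma}}$ via the product/chain rule and the deformation-theoretic description of the image, citing \cite{CL} for the parallel argument; (iv) conclude the lift exists and is $\Gm$-equivariant by the universal property of the cokernel.
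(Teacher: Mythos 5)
Your proposal is correct and follows essentially the same route as the paper: the equivariance is exactly the paper's observation that $\sigma$ involves none of the $\nu$-data while $T$ acts only by scaling $\nu_1$, and your treatment of the lift (factor through the cokernel in \eqref{quot} by showing $\sigma$ kills the image of $q^\ast T_{\cD_{g,\gamma}}$, via the chain rule applied to $\rho\sum\varphi_i^5\in H^0(\omega_{\sC/S})$) is precisely the argument of \cite{CLL} (cf.\ \cite{CL}) that the paper invokes and omits. The only difference is that you sketch that deformation-theoretic computation rather than citing it outright.
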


\begin{proof}
The proof that the cosection $\sigma$ lifts is exactly the same as in
\cite{CLL}, and will be omitted. 
That the homomorphism $\sigma$ is $\Gm$-equivariant is because 
$\Gm$ acts on $\cWgg$ via scaling $\nu_1$ and $\sigma$ is independent of $\nu_1$.
\end{proof}

As in \cite{KL}, we define the degeneracy locus of $\sigma$ to be %the closed reduced substack defined by
\begin{eqnarray}\label{deg-loci}
\cWgg^{-}(\CC)=\{\xi\in \cWgg(\CC)\mid \sigma|_\xi=0\},
%\cWgg^{-}(\CC)=\cWgg^{\text{pre}-}(\CC)\cap \cWgg(\CC).
\end{eqnarray}
endowed with the reduced structure. It is a closed substack of $\cWgg$.

\begin{lemm}\label{degenerate-locus}
The closed points of $\cWgg^-(\CC)$ are $\xi\in \cWgg(\CC)$ such that
\beq
(\varphi=0 )\cup ( \varphi_1^5+\cdots+\varphi_5^5=\rho=0)=\sC.
\eeq
\end{lemm}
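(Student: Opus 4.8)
The plan is to unwind the definition of the degeneracy locus and compute when the linear functional $\sigma(\xi)$ on $H^1(\omega_{\sC/S})$ vanishes identically. Fix a closed point $\xi = (\sC, \Si^\sC, \sL, \sN, \varphi, \rho, \nu)$ over $\CC$. By Lemma \ref{lem:co} the cosection on $\Ob_{\cWgg}$ is induced from the relative cosection $\sigma$ of \eqref{co-1}, so $\sigma|_\xi = 0$ if and only if the rule \eqref{mixed-cosection} gives the zero map $H^1(\dots) \to H^1(\omega_{\sC})$. First I would identify, via the description in \eqref{mixed-cosection}, that $\sigma(\xi)$ is the map obtained by cup product / multiplication with the pair of sections
$$
\bl 5\rho\,\varphi_1^4, \ldots, 5\rho\,\varphi_5^4 \br \in H^0\bl \sL^{\vee}(\Si^\sC_{(1,\varphi)})\otimes\omega_\sC\br^{\oplus 5}
\quad\text{and}\quad
\sum_i \varphi_i^5 \in H^0\bl \sL^{\otimes 5}(\Si^\sC_{(1,\rho)})\otimes\omega_\sC\br ,
$$
followed by the trace pairing into $H^1(\omega_\sC)$. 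So the key step is a Serre-duality argument: $\sigma(\xi) = 0$ as a functional on that $H^1$ precisely when each of these defining sections vanishes as a section of the corresponding line bundle on $\sC$.

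The heart of the argument is the following: for an invertible sheaf $\sF$ on the twisted curve $\sC$ and a section $s \in H^0(\sF)$, the multiplication map $H^1(\sF^\vee \otimes \omega_\sC) \to H^1(\omega_\sC)$, $a \mapsto s\cdot a$, is zero if and only if $s$ vanishes identically on $\sC$. By Serre duality on the twisted curve $\sC$ (which holds since $\sC$ is a proper nodal DM curve with $\omega_\sC$ dualizing), this multiplication map is dual to the multiplication map $H^0(\sO_\sC) \to H^0(\sF)$, $c \mapsto c\cdot s$; and the latter is nonzero exactly when $s \ne 0$ (take $c = 1$). Applying this fiberwise over each connected component of $\sC$, and using that $\sigma(\xi)(\dot\varphi,\dot\rho,\dot\nu_1,\dot\nu_2) = \sum_i (5\rho\varphi_i^4)\dot\varphi_i + (\sum_j \varphi_j^5)\dot\rho$ is a sum over the independent summands, I get that $\sigma|_\xi = 0$ iff $5\rho\,\varphi_i^4 = 0$ in $H^0\bl\sL^\vee(\Si^\sC_{(1,\varphi)})\otimes\omega_\sC\br$ for all $i$ and $\sum_i\varphi_i^5 = 0$ in $H^0\bl\sL^{\otimes 5}(\Si^\sC_{(1,\rho)})\otimes\omega_\sC\br$.

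Finally I would translate these vanishings into the stated closed condition on $\sC$. Since $\sC$ is reduced, a section of a line bundle vanishes identically on a connected component iff it vanishes on that component as a set-theoretic/scheme-theoretic section; so $\sum_i\varphi_i^5 = 0$ means $\varphi_1^5+\cdots+\varphi_5^5 \equiv 0$ on each component, and $5\rho\,\varphi_i^4 = 0$ for all $i$ means that on each component either $\rho = 0$ or all $\varphi_i = 0$ (away from the loci where $\varphi$ is forced to vanish, which are contained in the divisors already subtracted — here I use condition (4) of Definition \ref{def-curve} and the remark preceding Lemma \ref{lem:co} that $\varphi_j|_{\Si^\sC_i} = 0$ when $\langle\gamma_i\rangle\ne(1,\rho)$, so no spurious contributions arise). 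Combining: on every irreducible component of $\sC$, either $\varphi = (\varphi_1,\ldots,\varphi_5)$ is identically zero, or $\varphi \ne 0$ somewhere on it, in which case $\rho \equiv 0$ on it and also $\varphi_1^5+\cdots+\varphi_5^5 \equiv 0$ on it. This says exactly
$$
(\varphi = 0) \cup (\varphi_1^5 + \cdots + \varphi_5^5 = \rho = 0) = \sC,
$$
as claimed. The main obstacle I anticipate is the careful bookkeeping of the twists by $\Si^\sC_{(1,\varphi)}$ and $\Si^\sC_{(1,\rho)}$ and the balanced-node/stacky-point contributions when invoking Serre duality on $\sC$ — i.e., making sure the ``multiplication is zero iff section is zero'' lemma is correctly stated for representable invertible sheaves on a twisted curve, rather than the actual computation, which is then routine.
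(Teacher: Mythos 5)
Your argument is correct and is essentially the paper's own proof: both unwind the cosection term by term, use Serre duality (nondegeneracy of the pairing with $H^1$ of the relevant summand) to conclude that $\sigma|_\xi=0$ exactly when the multiplier sections $\rho\varphi_1^4,\ldots,\rho\varphi_5^4$ and $\varphi_1^5+\cdots+\varphi_5^5$ vanish identically, and then translate this into the covering condition componentwise on the reduced curve $\sC$. Only a cosmetic slip in your bookkeeping: the natural home of $\sum_i\varphi_i^5$ as a multiplier on $H^1\bl\sP(-\Si^\sC_{(1,\rho)})\br$ is $H^0\bl\sL^{\otimes 5}(\Si^\sC_{(1,\rho)}-\Si^\sC)\br$ rather than $H^0\bl\sL^{\otimes 5}(\Si^\sC_{(1,\rho)})\otimes\omega_\sC\br$, but this does not affect the argument.
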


\begin{proof}We consider individual terms in \eqref{mixed-cosection}.
Taking the term $\rho\varphi_i^4\dot\varphi_i$, by the vanishing along $\Si^\sC_i$ recalled
before the statement of Lemma \ref{lem:co}, we conclude that %(abbreviating $\Si^\sC=\sum_{i=1}^\ell \Si^\sC_i$),
$$
\rho\varphi_i^4\in  H^0\bl  \sL^{\vee}\otimes\omega_\sC^{\log}(-\Si^\sC)\br
=H^0\bl   \sL^{\vee}\otimes\omega_\sC\br.
$$
 By Serre duality, when $\rho\varphi_i^4\ne0$, there is a $\dot\varphi_i\in H^1(\sL)$ so that
$\rho\varphi_i^4\cdot \dot\varphi_i\ne 0\in H^1(\omega_\sC)$.

Repeating this  argument, we conclude that $\sigma|_\xi=0$ if and only if
$$\rho\varphi_1^4=\cdots=\rho\varphi_5^4=\varphi_1^5+\cdots+\varphi_5^5=0.
$$
This is equivalent to that $(\varphi=0 )\cup ( \varphi_1^5+\cdots+\varphi_5^5=\rho=0)=\sC$.
\end{proof}

Note that \eqref{deg-loci} makes sense for $\xi\in \cW^{\text{pre}}\lggd(\CC)$ as well. For convenience, we denote
$$\cW^{\text{pre}-}\lggd(\CC)=\bigl\{\xi\in \cW^{\text{pre}}\lggd(\CC)\mid \text{\eqref{deg-loci} holds for $\xi$}\bigr\};
%\and \cW^{-}\lggd(\bk)=\cW^{\text{pre}-}\lggd(\bk)\cap \cW\lggd(\bk).
$$
Applying  \cite{KL,CoVir}, we obtain the cosection localized virtual cycle
$$
[\cWgg]\virtloc \in A^T_{\delta} \cWgg^-,\quad \delta=\delta(g,\gamma,\bd).
$$

\subsection{MSP invariants}

\newcommand{\cCgg}{ {\cC_{g,\gamma,\bd}} }
\def\fo{\frac{1}{5}}
\let\ofth=\fo
\def\lred{_{\mathrm{red}}}
\newcommand{\MSP}{ {\mathrm{MSP}} }
\newcommand{\CR}{ {\mathrm{CR}} }
\newcommand{\tN}{ \tilde{N} }
\newcommand{\One}{\mathbf{1}}
\newcommand{\age}{ {\mathrm{age}} }

%\subsubsection{Evaluation maps}
Using the universal family \eqref{universal} %$(\cC,\Si^\cC, \cL,\cN, \varphi, \rho,\nu)$ of $\cWgg$,
%\begin{eqnarray*}
%&& \varphi \in H^0(\cCgg,\cL^{\oplus 5}),\quad
%\rho\in H^0(\cCgg,\cL^{\vee 5}\otimes\omega_{\cCgg/\cWgg}^{\log}),\\
%&& \nu_1\in H^0(\cCgg,\cL\otimes\cN),\quad 
%\nu_2\in H^0(\cCgg,\cN)
%\end{eqnarray*}
%Let $\gamma=(\gamma_1,\ldots,\gamma_\ell) \in (\mufive)^\ell$. 
we define the evaluation maps
(associated to the marked sections $\Si^\cC_i$):
$$
\ev_i: \cWgg \to X:= \PP^5 \cup  (\mufive)
$$
as follows. In case $\langle\gamma_i\rangle\neq 1$ (resp. $\gamma_i=(1,\varphi)$), we define $\ev_i$ to be the constant
map to $\langle\gamma_i\rangle \in \mufive$.
In the case $\gamma_i=(1,\rho)$,   for $s_i:\cWgg\to \cCgg$ the $i$-th marked section
of the universal curve,
by (2) of Definition \ref{def-curve} we have $s_i^*\rho=0$. Thus $s_i^*\nu_2$ is
nowhere vanishing, and $s_i^*\sN \cong \cO_{\cWgg}$.
Therefore, $s_i^*(\varphi,\nu_1)$ is a nonwhere vanishing section of
%$s_i^*(\sL^{\oplus 5}\oplus \cN\otimes \cL) \cong 
$s_i^*\sL^{\oplus 6}$, defining   the desired evaluation morphism
\beq\label{ev-i}
\ev_i=[s_i\sta\varphi_1,\cdots,s_i\sta\varphi_5,s_i\sta\nu_1]:\cWgg\to \PP^5
\eeq
such that $\ev_i^*\cO_{\PP^5}(1)= s_i^*\sL$.

Let $\Gm$ act on $\PP^5$ by 
$$
t\cdot[\varphi_1,\ldots,\varphi_5 , \nu_1]=[\varphi_1,\ldots,\varphi_5 , t \nu_1],
$$
and let $\Gm$ act trivially on $\mufive $.  It makes
$\ev_i$ $\Gm$-equivariant.

%\subsubsection{The state space}
\vsp
We introduce the MSP state space. 
As a $\CC$-vector space, the MSP state space and the $\Gm$-equivariant MSP state space are the cohomology
group and the $\Gm$-equivariant
cohomology group of the evaluation space $X=\PP^5\cup (\mufive)$:
$$
\cH^\MSP =H^*(X;\CC),\and \cH^{\MSP,\Gm} = H^*_\Gm(X;\CC). %= H^*_\Gm(\PP^5;\CC)\oplus  H^*_\Gm(\mufive\setminus\{1\};\CC),
$$
In terms of generators, we have
$$
H^*_\Gm(\PP^5;\CC) = \CC[H,\ft]/\langle H^5(H+\ft) \rangle,\and
%:
H^*_\Gm(\mufive;\CC) = \bigoplus_{j=1}^4 \CC[\ft] \One_{\frac{j}{5}},
$$
and the (non-equivariant) MSP state space is by setting $\ft=0$, while the grading is given by
\beq \label{eqn:deg}
\deg H= 2,\quad \deg\ft=2\and \deg \One_{\frac{j}{5}}=\frac{8j}{5}.
\eeq

\vsp

We formulate the gravitational descendants.
Given 
$$
a_1,\ldots, a_\ell\in \ZZ_{\geq 0}, \quad
\phi_1,\ldots,\phi_\ell\in \cH^\MSP =H^*(X;{\CC}),
$$
we define the MSP-invariants
\beq \label{eqn:invariants}
\langle\tau_{a_1}\phi_1\cdots \tau_{a_\ell}\phi_\ell \rangle^\MSP_{g,\ell,\bd}
:=\int_{[\cW_{g,{\ell},\bd}]\virtloc} \prod_{k=1}^\ell \psi_k^{a_k} \ev_k^*\phi_k\in {\CC}.
\eeq
where
$$
[\cW_{g,\ell,\bd}]\virtloc = \sum_{\gamma\in (\mufive)^\ell} [\cWgg]\virtloc.
$$
%Given 
%$$
%a_1,\ldots, a_\ell\in \ZZ_{\geq 0}, \quad
%\phi_1,\ldots,\phi_\ell\in \cH^{\MSP,\Gm} =H^*_\Gm(X;\CC),
%$$
Similarly, we define $\Gm$-equivariant genus $g$ MSP-invariants to be
\beq \label{eqn:Gm-invariants}
 \langle\tau_{a_1}\phi_1\cdots \tau_{a_\ell}\phi_\ell \rangle^{\MSP,\Gm}_{g,\ell,\bd}
 :=\int_{[\cW_{g,\ell,\bd}]\virtloc} \prod_{k=1}^\ell \psi_k^{a_k} \ev_k^*\phi_k\in 
H^*(B\Gm;\QQ)=\QQ[\ft],
\eeq
where $\phi_i\in \cH^{\MSP,\Gm}$, and
$$
[\cW_{g,\ell,\bd}]\virtloc = \sum_{\gamma\in (\mufive)^\ell} [\cWgg]\virtloc.
$$
(Here we use the same $[\cdot]\virtloc$ to mean the $T$-equivariant class.)
Suppose $\phi_1,\ldots,\phi_\ell$ are homogeneous, and let
$$\be(a_\cdot,\phi_\cdot)\defeq  {\sum_{k=1}^\ell (a_k +\frac{\deg \phi_k}{2}) -(d_0+d_\ell + 1-g +\ell)}.
$$
By the formula of the virtual dimension of $\cW_{g,\ell,\bd}$, we
see that
$$
\langle\tau_{a_1}\phi_1\cdots \tau_{a_\ell}\phi_\ell \rangle^{\MSP,\Gm}_{g,\ell,\bd}
\in \CC \ft^{\be(a_\cdot,\phi_\cdot)}.
$$
In case $\be(a_\cdot,\phi_\cdot)<0$, we have vanishing
\beq\label{v-1}
\Bigl[\bt^{-\be(a_\cdot,\phi_\cdot)}\cdot \langle\tau_{a_1}\phi_1\cdots \tau_{a_\ell}\phi_\ell \rangle^{\MSP,\Gm}_{g,\ell,\bd}\Bigr]_0=0,\quad
\eeq 
where $[\cdot]_0$ is the dimension $0$ part of the pushforward to $H_0(pt)$.
%\sum_{k=1}^\ell (a_k +\frac{\deg \phi_k }{2}) < d_0+d_\ell + 1-g +\ell$.
%\vsp

By virtual localization, we  will express all genus full descendant MSP invariants in terms of 
(1): GW invariants of the quintic threefold $Q\subset \Pf$; (2): FJRW invariants of 
the Fermat quintic; and 
(3): the descendant integrals on $\Mbar_{g,n}$.
The invariants in item (1) has been solved in genus zero \cite{Gi, LLY} and genus one for all degrees 
\cite{LZ, Zi}, and in all genus for degree zero,
those in item (2) has been solved in genus zero \cite{CR}, and those in item (3) have been solved in all genera.
One of our goals to introduce MSP invariants is to use vanishing \eqref{v-1} to obtain recursive relations to 
determine (1) and (2) in all genus. This will be addressed in details in the sequel \cite{CLLL}.

\section{Properness of the degeneracy loci}
\def\Slsta{{S\lsta}}

In this section, we will prove that $\cW\lggd^-$ is
proper over $\mathbb C$.

% and of finite type, and $\cW\lggd$ is separated near $\cW^-\lggd$.}

\subsection{The conventions}\label{Sub3.1}

In this section, we denote by $\eta_0\in S$ a closed point in an affine smooth curve, 
and denote $S\lsta=S-\eta_0$ its complement. 

In using valuative criterion to prove properness, we need to take a finite base change $S'\to S$
ramified over $\eta_0$. By shrinking $ S$ if necessary, 
we assume there is an \'etale $S\to \Ao$ so that $\eta_0$ is the only point lying over $0\in\Ao$.
This way, we can take $S'$ to be of the form
$S'=S\times_{\Ao} \Ao$, where $\Ao\to \Ao$ is via $t\mapsto t^k$ for some integer $k\ge 2$. This way,
$\eta'_0\in S'$ lying over $\eta_0\in S$ is also the only point lying over $0\in \Ao$. One particular choice
of $S'$ is the degree five base change: $S_5=S\times_{\Ao}\Ao\to S$, where $\Ao\to\Ao$ is via $t\mapsto t^5$.

% $K$ its field of fraction and $\CC$ its residue field;
%denote by $\eta\in S$ its generic point, and denote by
%$\eta_0\in S$ its closed point. %and $K$ be the fractional field of $R$. We will denote
%We denote by $S'=\spec R'\to S$ a finite base change, and denote by $\eta'$ and $\eta'_0\in S'$ its generic and
%closed points. 
%For a family $\sC\to S$, we always denote
%by $\sC_0=\sC\times_S\eta_0$ the central fiber.
%To prove the properness, we need to show that certain family over $\eta$ can be extended
%to over $S'$ after a finite base change $S'\to S$.
To keep notations easy to follow, for a property P that holds after a finite base change $S'\to S$
of a family $\xi$ over $S$, we will say
``after a finite base change, the family $\xi$ has the property P'', meaning that we have already done
the finite base change $S'\to S$ and then replace $S'$ by $S$ for abbreviation of notations. 

In this and the next section, for $\xi\in \cW^{\text{pre}}\lggd(\CC)$ or $\cW^{\text{pre}}\lggd(S)$, we understand
\beq\label{xista}\xi=\big(\Si^\sC, \sC, \sL,\sN, \varphi, \rho, \nu\big)\in \cW^{\text{pre}}\lggd.
\eeq
Similarly, we will use subscript ``$\ast$" to denote families over $S\lsta$. Hence $\xi\lsta\in\cW^{\text{pre}}\lggd(S\lsta)$
will be of the form 
\beq\label{xi2}
\xi_*=\big(\Si^{\sC_\ast}, \sC_*, \sL_*,\sN\lsta, \varphi\lsta, \rho_*, \nu_*\big).
\eeq

\vsp

We first prove a simple version of the extension result we need.
%, the extension is easy since it is equivalent to a problem for stable maps. 
%Let's assume that $\rho\lsta=0$ on $\sC_*$ where $\sC_*$ is not necessarily geometrically irreducible nor smooth. 

\begin{prop}\label{prop-extension0}
Let $\xi_*\in \cW^-\lggd(\Slsta)$
be such that $\rho\lsta=0$. Then after a finite base change, $\xi\lsta$ extends to a $\xi\in \cW^-\lggd(S)$.
\end{prop}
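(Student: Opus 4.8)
The plan is to use the valuative criterion directly, and to exploit the hypothesis $\rho\lsta=0$ to reduce the extension problem to a situation involving only stable maps with $p$-fields (Example \ref{pfield}) — or rather the $\rho=0$ locus thereof — where the relevant moduli space is essentially $\barM_{g,\ell}(\Pf,d)^p$ with $p$-field zero, i.e. an ordinary moduli of stable maps (possibly twisted), which is proper. First I would note that since $\rho\lsta = 0$, conditions (5) and (3) of Definition \ref{def-curve} force $(\rho,\nu_2)=(\,0,\nu_2)$ to be nowhere vanishing, so $\nu_2\lsta$ is a nowhere-vanishing section of $\sN\lsta$; hence $\sN\lsta\cong\sO_{\sC\lsta}$ and $\nu_2\lsta = 1$. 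This is the "$5$-spin with $p$-fields" degenerate picture of Example \ref{spinp} but with the spin structure trivialized: $d_\infty$ part is forced, and the remaining data is $(\Si^{\sC\lsta}\subset\sC\lsta, \sL\lsta, \varphi\lsta, \nu_1\lsta)$ with $(\varphi\lsta,\nu_1\lsta)$ nowhere vanishing — that is, a stable map to $\Pf$ (together with the twisted-curve markings dictated by $\gamma$, where since $\rho\lsta=0$ the monodromies in $\gamma$ that can occur are constrained).

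The key steps, in order, are: (i) after a finite base change $S'\to S$ (of the form described in Section \ref{Sub3.1}, so replace $S'$ by $S$), extend the underlying pointed twisted curve $\Si^{\sC\lsta}\subset\sC\lsta$ together with the line bundle $\sL\lsta$ and the $5$-tuple $\varphi\lsta=[\varphi_1,\dots,\varphi_5,\nu_1]$ regarded as a stable map $\sC\lsta\to\PP^5$ landing in the hyperplane-complement-appropriate locus, using properness of the moduli of stable maps with twisted source (here is where one invokes \cite{AJ, O, A-G-V} for the twisted-curve version, and the valuative criterion for $\barM$ of stable maps); this yields $(\Si^\sC\subset\sC,\sL,\varphi,\nu_1)$ over $S$; (ii) set $\sN=\sO_\sC$, $\nu_2=1$, $\rho=0$, which extends $\nu_2\lsta,\rho\lsta$ with no obstruction; (iii) check that the extended datum $\xi=(\Si^\sC,\sC,\sL,\sO_\sC,\varphi,0,(\nu_1,1))$ indeed satisfies all of (1)–(5) of Definition \ref{def-curve} — the only nontrivial points being the nowhere-vanishing of $(\varphi,\nu_1)$ (guaranteed because the limiting stable map has no base point, as $[\varphi:\nu_1]$ is a morphism to $\PP^5$) and of $(\nu_1,\nu_2)=(\nu_1,1)$ (automatic), and the monodromy/vanishing conditions along the markings (preserved under the limit since they are closed conditions and hold on $S\lsta$); (iv) verify stability of the central fiber: finiteness of $\Aut(\xi|_{\eta_0})$ follows from stability of the limiting stable map (the $\PP^5$-valued map is stable, and the extra $\nu_2=1$, $\rho=0$ data only rigidifies further), so $\xi\in\cW\lggd(S)$; (v) finally observe $\xi\in\cW^-\lggd(S)$: by Lemma \ref{degenerate-locus} we need $(\varphi=0)\cup(\sum\varphi_i^5=\rho=0)=\sC$ on the central fiber, and since $\rho\equiv 0$ on all of $\sC$ this is automatic.

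The main obstacle I expect is step (i): honestly producing the limit of the stable map while keeping $\sL$ \emph{representable} (Definition before \ref{def-broad}) and keeping the monodromy of $\sL$ along the markings equal to the prescribed $\gamma_i$, i.e. staying inside the correct component of $\cD_{g,\gamma}$ rather than allowing the twisted structure to jump. One must argue that, after the finite base change, Abramovich–Vistoli-type properness for twisted stable maps gives a limit with the \emph{same} banding data at the markings (this is where the specific choice of base change, e.g. the degree-five $S_5\to S$, is used to clear denominators coming from the $\tfrac15\sum m_i$ in the Riemann–Roch / degree bookkeeping), and that the limiting $\sL$ is still representable — possibly at the cost of inserting extra stacky structure at new nodes of the central fiber, which is permitted by the definition of prestable twisted curve. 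A secondary, more routine, subtlety is checking $(\varphi,\nu_1)$ remains nowhere vanishing in the limit: this is exactly the statement that $[\varphi_1:\cdots:\varphi_5:\nu_1]$ extends to a \emph{morphism} $\sC\to\PP^5$, which is the content of the valuative criterion for the moduli of maps (after contracting/blowing-up unstable rational tails as usual), so it is subsumed in step (i) once that step is set up correctly.
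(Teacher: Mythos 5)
Your proposal follows essentially the same route as the paper's proof: from $\rho\lsta=0$ one gets $\nu_{2\ast}$ nowhere vanishing and $\sN\lsta\cong\sO_{\sC\lsta}$, the pair $(\varphi\lsta,\nu_{1\ast})$ defines an $S\lsta$-family of stable maps to $\PP^5$, properness of the moduli of stable maps gives the extension after a finite base change, and one then sets $\sN=\sO_\sC$, $\nu_2=1$, $\rho=0$. Two remarks. First, the ``main obstacle'' you single out in step (i) --- controlling the twisted structure, the banding of the markings, and representability of $\sL$ in the limit --- is actually vacuous in this case: since $\nu_{2\ast}$ is nowhere vanishing, $\sN\lsta$ has trivial monodromy at every stacky point, and representability of $\sN\lsta$ then forbids any stacky points at all; hence all $\gamma_i\in\{(1,\rho),(1,\varphi)\}$, $\sC\lsta$ is an ordinary pointed nodal curve, and the usual (untwisted) moduli of stable maps suffices, exactly as the paper uses it. Second, your step (v) is justified incorrectly as stated: with $\rho\equiv 0$, the condition of Lemma \ref{degenerate-locus} becomes $(\varphi_1^5+\cdots+\varphi_5^5=0)=\sC$ on the central fiber, which is not automatic from $\rho\equiv0$ alone; it holds because $\varphi_1^5+\cdots+\varphi_5^5$ vanishes on $\sC\lsta$ (this is the hypothesis $\xi\lsta\in\cW^-\lggd(S\lsta)$), and a section of $\sL^{\otimes 5}$ on the $S$-flat family $\sC$ vanishing over $S\lsta$ vanishes identically. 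With these two adjustments your argument is complete and coincides with the paper's.
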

\begin{proof}
Since $\rho\lsta=0$, $\nu_{2*}$ is nowhere vanishing and $\sN\lsta\cong\sO\lsta$. 
Thus $(\varphi\lsta, \nu_{1*})$ is a nowhere vanishing section of $H^0(\sL\lsta^{\oplus 5}\oplus \sL\lsta)$,  and  induces a morphism $f_*$ from 
$(\Si^{\sC\lsta}, \sC\lsta)$ to $\mathbb P^5$, such that $(f\lsta)\sta\sO_{\mathbb P^5}(1)\cong\sL\lsta$. 
By the stability assumption of $\xi\lsta$,
%\big(\Si^\sC_\ast, \sC_*, \sL_*, \varphi\lsta, \rho_*, \nu_*\big)$, 
this morphism is an $S\lsta$-family of stable maps. By the properness of moduli stack of stable maps, after a
finite base change we can extend $(\Si^{\sC_*}, \sC_*)$ to $(\Si^\sC, \sC)$ and extend $f_*$ to an $S$-family of stable maps $f$  
from $(\Si^\sC, \sC)$ to $\mathbb P^5$. Let $\sL=f\sta\sO_{\Pf}(1)$, which is an extension of $\sL\lsta$. 
Then $f$ is provided by a section $(\varphi, \nu_1)\in H^0(\sL^{\oplus 5}\oplus \sL)$, extending
$(\varphi\lsta, \nu_{1*})$. 
Since $[f,\Si^\sC,\sC]$ is stable, 
the central fiber $\sC_0$ is a connected curve with at worst nodal singularities. Define $\sN\cong \sO_{\sC}$ and $\nu_2$ to be the isomorphism $\sN\cong \sO_{\sC}$ extending $\nu_{2*}$. Define $\rho=0$.
Then $\xi=\big(\Si^\sC, \sC, \sL, \varphi, \rho, \nu\big)$ is a desired extension.
\end{proof}

%By valuative criterion, we need to show the existence of an extension 
%$\xi\in \cW^-\lggd(S')$ %=\big(\Si^\sC,  \sC, \sL, \varphi, \rho, \nu\big)$ in $ \cW^-\lggd(\Slsta)$ 
%after a finite base change $S'\to S$. 
The case involving $\varphi\lsta=0$ over some irreducible components is technically more involved.
We will treat this case by first studying the case $\sC\lsta$ is 
smooth. For this, 
we characterize stable objects in $\cW^{\text{pre}}\lggd(\CC)$. We say that an irreducible component $\sE\sub\sC$
is a rational curve if it is smooth and its coarse moduli is isomorphic to $\Po$.

\begin{lemm}\label{prepare1}
Let $p_1\ne p_2\in\Po$ be two distinct closed points,  $G\le \Aut(\Po)$ be the subgroup fixing $p_1$ and $p_2$,
and $L$ be a $G$-linearized line bundle on $\Po$ such that
$G$ acts trivially on $L|_{p_1}$. Then the following holds:
\begin{enumerate}
\item any invariant $s\in H^0(L)^G$ with $s(p_1)=0$ must be the zero section;
\item suppose $G$ acts trivially on $L|_{p_2}$, then $L\cong\sO_{\Po}$.
\end{enumerate}
% ane let $G\to \Aut_{\Po}(L)$ be the unique $G$-linearization of $L$ so that $G$ leaves $L|_p$ fixed.
\end{lemm}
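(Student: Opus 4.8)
The plan is to use the structure of $\Aut(\Po)\cong\mathrm{PGL}_2$ together with the $G$-linearization of $L$. First I would identify $G$ explicitly: the subgroup of $\mathrm{PGL}_2$ fixing two distinct points $p_1,p_2$ is a one-dimensional torus $\gm$, acting on $\Po=\Proj\CC[X_0,X_1]$ (with $p_1=[1:0]$, $p_2=[0:1]$) by $\lambda\cdot[X_0:X_1]=[\lambda X_0:X_1]$ or, in the affine coordinate $z=X_1/X_0$ centered so that $p_1=\{z=0\}$, by $z\mapsto \lambda^{-1}z$. Every line bundle on $\Po$ is $\sO_{\Po}(n)$ for a unique $n$, and a $\gm$-linearization of $\sO_{\Po}(n)$ is determined, up to the overall character which is irrelevant here, by how $\gm$ acts on the fibers at the two fixed points; these two weights differ by $n$ (this is the standard computation with the sections $X_0^n$ and $X_1^n$). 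This is the one computation I would actually need to pin down, and it is the conceptual heart of both parts.

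For part (1): write $L\cong\sO_{\Po}(n)$. A nonzero invariant section $s\in H^0(L)^G$ is a $\gm$-eigenvector in $H^0(\sO_{\Po}(n))=\bigoplus_{i=0}^n \CC\cdot X_0^{i}X_1^{n-i}$; since the $\gm$-action on $L|_{p_1}$ is trivial and the monomial $X_0^n$ has the same weight at $p_1$ as the chosen trivialization forces, the unique (up to scalar) $\gm$-invariant section must be the one supported at $p_1$, i.e. $s$ is a multiple of the section vanishing to order $n$ at $p_2$ and nonvanishing at $p_1$ — in particular $s(p_1)\ne 0$ unless $s=0$. Concretely: invariance forces $s$ to be a single monomial $X_0^{i}X_1^{n-i}$, and matching the weight at $p_1$ (which must be the trivial character) picks out $i=n$, whose value at $p_1$ is nonzero. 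Hence if moreover $s(p_1)=0$ then $s=0$. I would also remark that $n\ge 0$ is forced for a nonzero section to exist, but the argument does not even need this as an input.

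For part (2): now $\gm$ additionally acts trivially on $L|_{p_2}$. By the weight computation above, the difference of the two fiber weights equals $n$, so $n=0$, i.e. $L\cong\sO_{\Po}$. Alternatively, and perhaps cleaner to present: if $n>0$ the invariant section from part (1) vanishes at $p_2$, and one checks its image in $L|_{p_2}$ as a $\gm$-representation is a nontrivial character, contradiction; if $n<0$ then $H^0(L)=0$ but also $H^0(L\dual)=H^0(\sO_{\Po}(-n))$ has an invariant section nonvanishing at $p_2$ whose weight at $p_1$ is trivial, and dualizing shows the weight of $L$ at $p_1$ is nontrivial, again a contradiction — so $n=0$. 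I expect no real obstacle here; the only thing to be careful about is bookkeeping of characters (making sure "acts trivially on $L|_{p_i}$" is interpreted as the linearization character being trivial at that point, not merely that some section is fixed), and making sure the twisted/stacky setting does not intervene — but since $\sE$ is assumed a genuine rational curve with coarse moduli $\Po$, we may work on $\Po$ directly. The main (and really only) subtlety is therefore the elementary $\gm$-equivariant classification of line bundles on $\Po$ by their fiber weights at the two fixed points, which I would state and use as the key step.
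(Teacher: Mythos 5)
Your proof is correct, but it takes a genuinely different route from the paper's. You make everything explicit: $G\cong\gm$ acting diagonally with fixed points $p_1,p_2$, $L\cong\sO_{\Po}(n)$, linearizations differing by a character, the fiber weights at the two fixed points differing by $n$, and the $\gm$-action on $H^0(\sO_{\Po}(n))$ diagonalized on monomials with pairwise distinct weights; the hypothesis that the weight on $L|_{p_1}$ is trivial then forces any invariant section to be a multiple of the monomial nonvanishing at $p_1$ (giving (1)), and triviality at both fibers forces $n=0$ (giving (2)). The paper instead argues coordinate-freely, and only proves (1) (declaring both parts well-known): averaging over the maximal compact subgroup of $G\cong\gm$ yields an invariant section $\bar s$ of $L$ over $\Po-p_2$ with $\bar s(p_1)\ne 0$, which is nowhere vanishing since $\Po\setminus\{p_1,p_2\}$ is a single $G$-orbit; then for an invariant $s$ with $s(p_1)=0$, the ratio $s/\bar s$ is a $G$-invariant regular function on $\Ao$ vanishing at $p_1$, hence zero, so $s=0$. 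Your approach buys an actual proof of (2) via the standard weight-difference formula and makes the key computation transparent, at the cost of coordinate and linearization bookkeeping; the paper's approach avoids classifying line bundles and linearizations altogether. One small caution: your ``alternative'' argument for (2) in the case $n>0$ is not right as stated, since the invariant section vanishes at $p_2$ and so its image in $L|_{p_2}$ is zero and carries no weight information; this is harmless because your primary argument (both fiber weights trivial while their difference is $n$) is already complete.
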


\begin{proof}
Both are well-known. For convenience, we prove (1). By averaging over the maximal compact subgroup
of $G\cong\Gm$, we can find an $\bar s\ne 0\in H^0(L|_{\Po-p_2})^G$ such that $\bar s(p_1)\ne 0$. Since $\Po-\{p_1,p_2\}$
is a single $G$ orbit, $\bar s$ is nowhere vanishing. Now let $s\in H^0(L)^G$ such that $s(p_1)=0$. 
Then $s/\bar s$ is a regular function on $\Ao=\Po-p_2$, $G$-invariant, and vanishes at $p_1$. 
%But we know the only $G$-invariant functions on $\Ao$ are constant functions. 
Thus $s=0$.
\end{proof}

\begin{lemm}\label{stable-cri}
Let $\xi\in \cW^{\mathrm{pre}-}\lggd(\CC)$. It is unstable if and only if one of the following holds:
\begin{enumerate}
\item $\sC$ contains a rational curve $\sE$ such that $\sE\cap (\Si^\sC\cup \sC_{\mathrm{sing}})$ contains two points, 
and $\sL^{\otimes 5}|_\sE\cong \sO_\sE$;
\item $\sC$ contains a rational curve $\sE$ such that $\sE\cap (\Si^\sC\cup \sC_{\mathrm{sing}})$ contains one point, 
and either $\sL|_\sE\cong \sN|_\sE\cong \sO_\sE$ or $\rho|_\sE$ is nowhere vanishing;
\item $\sC$ is a smooth rational curve with
$\Si^\sC=\emptyset$, $d_0=d_\infty=0$.
%contains one nodal point of $\sC$; $\sE\cap \Si^\sC=\emptyset$, and $\rho|_\sE$ is nowhere vanishing;
\item $\sC$ is irreducible, $g=1$, $\Si^\sC=\emptyset$, and $\sL^{\otimes 5}\cong\sO_\sC$ and $\sL\dual\cong\sN$.
\end{enumerate}
\end{lemm}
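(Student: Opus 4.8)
The plan is to recall that $\xi$ is unstable exactly when $\Aut(\xi)$ is infinite, i.e. when its identity component $\Aut(\xi)^0$ is a positive-dimensional group, necessarily a torus since we work with reduced automorphism groups of (twisted) curves with a line-bundle datum. An automorphism of $\xi$ is a triple $(a,b,c)$ as in Definition \ref{df:Wmor} compatible with all the fields; restricting to $\Aut(\xi)^0$, the automorphism $a$ of the pointed twisted curve $\sC$ must fix every node, every marking, and every component not contracted after coarsening, so $\Aut(\xi)^0$ can only act nontrivially along components $\sE$ whose coarse space is $\Po$ and which meet the rest of the special locus $\Si^\sC\cup\sC_\sing$ in at most one point — \emph{or} on an irreducible genus-$1$ curve with no markings (acting by translations). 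This reduces the classification to understanding, for such an $\sE$ (resp. the genus-$1$ curve), when the fields $(\varphi,\rho,\nu)$ can be made $\Gm$-invariant under a nontrivial linearization.

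First I would treat the case where $\sE$ is a rational curve meeting the special locus in $\geq 2$ points: then $a|_\sE$ is forced to be the identity, so the only freedom is in $(b,c)$, i.e. in rescaling $\sL|_\sE$ and $\sN|_\sE$. A rescaling $(\lambda,\mu)\in\Gm^2$ preserves $\varphi$ only if $\lambda\varphi_j=\varphi_j$, preserves $\nu=(\nu_1,\nu_2)$ only if $\lambda\mu\nu_1=\nu_1$ and $\mu\nu_2=\nu_2$, and preserves $\rho$ only if $\lambda^{-5}\rho=\rho$. Using the nowhere-vanishing conditions (3)–(5) of Definition \ref{def-curve} on $\sE$, one checks by a short case analysis that a nontrivial $(\lambda,\mu)$ can satisfy all these simultaneously precisely when $\varphi|_\sE=0$, $\rho|_\sE=0$, and $\nu_2|_\sE$ (hence via (4) also $\nu_1|_\sE$) forces $\sN|_\sE\cong\sO_\sE$ and $\sL\otimes\sN|_\sE\cong\sO_\sE$, giving $\sL^{\otimes 5}|_\sE\cong\sO_\sE$; this is case (1). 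Conversely $\sL^{\otimes5}|_\sE\cong\sO_\sE$ on such an $\sE$ forces, via the cosection-vanishing Lemma \ref{degenerate-locus} and the degree constraints, that $\varphi|_\sE=0$ and $\rho|_\sE=0$, so one recovers a nontrivial $(\lambda,\mu)$. The case where $\sE$ meets the special locus in exactly one point is similar but now $a|_\sE\cong\Gm$ acts, so I would invoke Lemma \ref{prepare1}: a $\Gm$-invariant section of an equivariant line bundle on $\Po$ vanishing at the fixed point is zero, and an equivariant line bundle trivial at both fixed points is $\sO_\Po$. Running through which of $\varphi,\rho,\nu_1,\nu_2$ can be invariant under the combined action of $a|_\sE$ and $(b,c)$, together with the nowhere-vanishing constraints, yields exactly the dichotomy in case (2): either $\sL|_\sE\cong\sN|_\sE\cong\sO_\sE$ (so $\varphi,\nu$ can be constant and invariant) or $\varphi|_\sE=0$ and $\rho|_\sE$ is nowhere vanishing (so $\nu_2$ is forced to vanish on $\sE$ and only the $\rho$-direction survives).

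The remaining cases are the two where $\sC$ itself is irreducible: if $\sC$ is a smooth rational curve with $\Si^\sC=\emptyset$ then $\Aut(\Po)$ is already infinite, and the fields can always be chosen invariant iff the degrees vanish, which is case (3) (any nonzero degree would, via the nowhere-vanishing conditions, produce a section transforming nontrivially); if $\sC$ is irreducible of genus $1$ with $\Si^\sC=\emptyset$, then translations act, and these lift to automorphisms of $\xi$ only when all the fields are translation-invariant — forcing $\varphi$ and $\rho$ to be nowhere-vanishing-or-zero sections of degree-$0$ bundles, hence $\sL^{\otimes5}\cong\omega_\sC^{\log}=\omega_\sC\cong\sO_\sC$ and the $\rho$-isomorphism $\sL\dual\cong\sN$ together with $\nu$ constant, which is case (4). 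I expect the main obstacle to be the bookkeeping in case (2): one must carefully combine the $\Gm$-linearization coming from $a|_\sE\in\Aut(\Po)$ with the independent rescalings $(b,c)$ and check that the two listed alternatives are the only ways to make $(\varphi,\rho,\nu)$ simultaneously invariant while respecting all four nowhere-vanishing conditions of Definition \ref{def-curve} — in particular making sure the node/marking point at which $\sE$ is attached imposes the correct triviality on the linearization of $\sL$ there (item (2) of Definition \ref{def-curve}), so that Lemma \ref{prepare1} applies cleanly.
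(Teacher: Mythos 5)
Your overall strategy (detect instability component by component and decide when the fields admit an infinite group of symmetries, using Lemma \ref{prepare1}) is the paper's strategy, but your starting premise — that the identity component of $\Aut(\xi)$ is necessarily a torus — is false, and it fails exactly in the one sub-case the paper singles out as non-trivial, namely the ``if'' direction of (2) when $\rho|_\sE$ is nowhere vanishing. In that situation $\varphi|_\sE=0$, $\nu_1|_\sE$ is nowhere vanishing, $\deg\sL|_\sE=-\frac{1}{5}$, the special point is a stacky point and $\sE\cong\PP_{1,5}$, so $\sN|_\sE\cong\sL^\vee|_\sE\cong\sO_{\PP_{1,5}}(1)$ and $\nu_2|_\sE$ is in general a \emph{nonzero} multiple of the section $x$, vanishing only at the special point; it is not ``forced to vanish on $\sE$'' as you assert. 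A weight computation shows that no one-parameter multiplicative subgroup of $\Aut(\sE,p)$ can preserve $\rho,\nu_1,\nu_2$ simultaneously when $\nu_2|_\sE\neq 0$ (any linearization fixing $\nu_2$ gives $\rho$ a nonzero weight), yet $\xi$ \emph{is} unstable: the paper exhibits the additive group $\GG_a\subset\Aut(\PP_{1,5})$, $x\mapsto x$, $y\mapsto \lambda x^5+y$, which fixes the special point and $\nu_2$, and uses that $\GG_a$ has no nontrivial characters to conclude it also fixes $\rho$. Your torus-only framework cannot produce this instability, so the sufficiency of case (2) is a genuine gap, and in the necessity direction your claimed dichotomy silently adds a hypothesis ($\nu_2|_\sE=0$) that is not in the statement.

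There is a second concrete error in your treatment of case (1). A rational component $\sE$ meeting $\Si^\sC\cup\sC_{\mathrm{sing}}$ in exactly two points does \emph{not} force $a|_\sE=\mathrm{id}$: the automorphisms of $\sE$ fixing the two special points form a full $\GG_m$, and that is precisely the source of instability in case (1). Conversely, pure rescalings $(b,c)$ of $\sL$ and $\sN$ preserving $(\varphi,\rho,\nu)$ always form a finite group (the nowhere-vanishing conditions (3)--(5) of Definition \ref{def-curve} force $\lambda^5=1$ and $\lambda\mu=1$ or $\mu=1$); indeed this finiteness is the paper's first step, used to show that the image of $\Aut_\sE(\xi)$ in $\Aut(\sE)$ must be infinite. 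So your $(\lambda,\mu)$-analysis can never detect instability, and your conclusion ``precisely when $\varphi|_\sE=0$, $\rho|_\sE=0$, \dots'' both fails as an argument and does not match the lemma, whose case (1) condition is only $\sL^{\otimes 5}|_\sE\cong\sO_\sE$ — obtained in the paper by showing the infinite group acts trivially on $\sL^{\otimes 5}|_{p_1}$ and $\sL^{\otimes 5}|_{p_2}$ and then applying Lemma \ref{prepare1}.
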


\begin{proof} We first prove the necessary part. 
Let $\xi\in  \cW^{\mathrm{pre}-}\lggd(\CC)$ be unstable. For each irreducible $\sE\sub\sC$, let $\Aut_{\sE}(\xi)$ be the subgroup of  $\Aut(\xi)$ mapping $\sE$ to itself. There exists an $\sE$ such that $\Aut_{\sE}(\xi)$ is  of infinite order. %We first consider the case where $\Aut(\xi)$ is infinite.
If  the image  of $\Aut_{\sE}(\xi)\to\Aut(\sE)$ is finite, then
for a finite index subgroup $G'\le \Aut_{\sE}(\xi)$, $G'$ leaves $\sC$ fixed, thus $G'$ acts on $\xi$ by acting on the 
line bundles $\sL$ and $\sN$ via scaling. However, by  Definition \ref{def-curve}, that $G'$ leaves $(\varphi,\rho,\nu)$ invariant
implies that the image of $G'\to\Aut(\sL)\times\Aut(\sN)$ is finite. Since this arrow is injective,
it contradicts to the fact that $G'$ is infinite. Thus the group $G=\text{im}( \Aut_{\sE}(\xi)\to \Aut(\sE))$ is of infinite order.

We now consider the case where $\sE$ has arithmetic genus zero (thus smooth). We divide it into
several cases. The first is when $\sE\cap(\Si^\sC\cup \sC_{\text{sing}})$ contains two points, 
say $p_1$ and $p_2\in\sE$.
Then $G$ fixes both $p_1$ and $p_2$.
%Since $C$ is smooth and $C_0$ is reduced, $E\sub C$ must be a (-2)-curve. 
We claim that $G$ acts trivially on $\sL^{\otimes 5}|_{p_1}$. Indeed, 
when $\nu_2(p_1)=0$, then $\rho(p_1)\ne 0$, 
which implies that $G$ acts trivially on $\sL^{\vee\otimes 5}\otimes\omega_{\sC/S}^{\log}|_{p_1}$.
Since $G$ acts trivially on $\omega_{\sC/S}^{\log}|_{p_1}$, $G$ acts trivially on $\sL^{\otimes 5}|_{p_1}$.
When $\rho(p_1)=0$ and $\nu_1(p_1)\ne 0$, since $\nu_2(p_1)\ne 0$, then $G$ acts trivially on $\sN|_{p_1}\cong
\sL\dual|_{p_1}$.
When $\rho(p_1)=\nu_1(p_1)= 0$, then $\varphi(p_1)\ne 0$, which implies that $G$ acts trivially on $\sL|_{p_1}$.
Therefore, in any case $G$ acts trivially on $\sL^{\otimes 5}|_{p_1}$. By the same reason, it
acts trivially on $\sL^{\otimes 5}|_{p_2}$. Applying Lemma \ref{prepare1}, we conclude that $\sL^{\otimes 5}\cong \sO_\sE$. 
This is Case (1).

The second case (when $g_a(\sE)=0$) is when $\sE\cap(\Si^\sC\cup \sC_{\text{sing}})$ contains one point, 
say $p\in\sE$. Suppose $\rho|_\sE=0$, 
then $\nu_2|_\sE$ is nowhere vanishing, implying $\sN|_\sE\cong \sO_\sE$.
Thus $(\varphi_1,\cdots,\varphi_5, \nu_1)|_\sE$ is a nowhere vanishing section of $H^0(\sL^{\oplus 6}|_\sE)$.
Since $G$ is infinity and $(\varphi_1,\cdots,\varphi_5, \nu_1)|_\sE$ is $G$-equivariant, this is possible only if
$\sL|_\sE\cong\sO_\sE$ and $(\varphi_1,\cdots,\varphi_5, \nu_1)|_\sE$ is a constant section. This is Case (2).

The other case is when $\rho|_\sE\ne0$.
We argue that $\rho|_\sE$ is nowhere vanishing. Otherwise, $\nu_2|_\sE\ne 0$, and then $\deg\sN|_{\sE}\ge 0$.
Since $\xi\in \cW^{\text{pre}-}\lggd(\CC)$, we have $\varphi|_\sE=0$, thus $\nu_1|_{\sE}$ is nowhere vanishing
and $\sL\dual|_\sE\cong \sN|_\sE$. Because $\rho|_\sE\ne 0$ and $\deg\omega_\sC^{\log}|_\sE=-1$,
we must have $\deg\sL|_\sE<0$. Thus $\nu_2\in H^0(\sN|_\sE)=H^0(\sL\dual|_\sE)$ must vanish at some point.
Let $p_1$ and $p_2\in\sE$ be such that $\rho(p_1)=0=\nu_2(p_2)$. Since $(\rho,\nu_2)$ is nowhere vanishing,
we have $p_1\ne p_2$. Furthermore, since $G$ fixes $p$, $p_1$ and $p_2$, and is infinite, $p=p_1$ or $p_2$.

Suppose $p=p_1$, then $\nu_2(p_1)\ne 0$ implies that $G$ acts trivially on $\sL|_{p_1}$. Hence $G$ acts
trivially on $\sL\mof\otimes\omega_\sC^{\log}|_{p_1}$. For the same reason, $\rho(p_2)\ne 0$
implies that $G$ acts trivially on $\sL\mof\otimes\omega_\sC^{\log}|_{p_2}$. Applying Lemma
\ref{prepare1}, we conclude that $\sL\mof\otimes\omega_\sC^{\log}|_\sE\cong\sO_\sE$,
contradicting to $\rho\ne 0$ and vanishing somewhere.

Suppose $p=p_2$. By the same reasoning, we conclude that
$G$ acts trivially on $\sL\mof\otimes\omega_\sC^{\log}|_{p_2}$ and $\sL|_{p_1}$.
Since $G$ acts trivially on $\omega_\sC^{\log}|_{p_2}$, applying Lemma \ref{prepare1}, we conclude that
$\deg\sL|_\sE=0$, contradicting to $\deg\sL|_{\sE}<0$.
Combined, we proved that if $\rho|_\sE\ne 0$, then $\rho|_\sE$ is nowhere vanishing. This is Case (2).

The third case (when $g_a(\sE)=0$) is when $\sE\cap(\Si^\sC\cup \sC_{\text{sing}})=\emptyset$. 
A parallel argument shows that in this case we must have
$\sL\cong\sN\cong\sO_\sC$. This conclude the study of the case $g_a(\sE)=0$.
%Since $\sC$ is connected, thus $\sC=\sE\cong \Po$. We claim that then $\sL\cong \sN\cong \sO_\sC$.
%Since $\xi\in \cW^{\text{pre}-}\lggd(\CC)$, either $\varphi=0$ or $\rho=0$. Suppose $\rho=0$, then
%$\nu_2$ is nowhere vanishing, implying $\sN\cong \sO_\sE$, and $(\varphi,\nu_1)$ is nowhere vanishing
%and defines a morphism to $\PP^5$. 
%Since it is $G$-equivariant, it is a constant map. Thus $\sL\cong\sO_\sC$, which is Case (3). 
%
%Suppose $\varphi=0$. 
%Then $\nu_1$ is nowhere vanishing and $\sL\dual\cong\sN$. We claim that this is impossible. 
%Since $\sC\cong \Po$ is a scheme, $\deg\sL\in \ZZ$. Thus as $\rho\ne 0$, $\deg\sL\le -1$,
%which forces both $\rho$ and $\nu_2$ vanish somewhere. Since $(\rho,\nu_2)$ is nowhere vanishing,
%$\rho$ and $\nu_2$ vanish at distinct points, say $\rho(p_1)=0$ and $\nu_2(p_2)=0$.
%Note that $G$ fixes $p_1$ and $p_2$.
%Then $G$ acts trivially on $\sN|_{p_1}$, thus we can write $\sN=\sO_\sC(a p_2)$ and then
%$\sL\mof\otimes\omega_\sC^{\log}\cong \sO_\sC(-p_1+(5a-1)p_2)$, as a $G$-linearized line bundle.
%Since $\rho(p_2)\ne 0$, $G$ acts trivially on $\sL\mof\otimes\omega_\sC^{\log}|_{p_2}$.
%This forces $5a-1=0$, impossible since $a$ is an integer. 
%
The remaining case is when $g_a(\sE)=1$, then $\sE\cap \Si^\sC=\emptyset$, and a similar argument shows that it must
belong to Case (4). Combined, this proves that
if $\xi$ is unstable, then one of (1)-(4) holds.

% and 
%$\sE\cup \overline{(\sC-\sE)}=\emptyset$.
%Since $\sC$ is connected, $\sE=\sC$. In case $\sE$ is smooth, since $G$ acts homogeneously on $\sE$ and lifts to 
%$\sL$ and $\sN$, we must have $\sL=\sN=\sO_\sC$, which is Case (4). The other case is when
%$\sE$ is nodal, then since $G$ lifts to actions on $\sL$ and $\sN$, and applying Lemma \ref{prepare1} to the normalization of
%$\sE$ we conclude that $\deg\sL=\deg\sN=0$. If both $\nu_1$ and $\nu_2\ne 0$, then we must have
%$\sL\cong\sN\cong\sL_\sC$; if $\nu_1\ne 0$ but $\nu_2=0$, then $\rho\ne 0$ and then
%$\sL^{\otimes 5}\cong \sO_\sC$ and $\sL\dual\cong \sN$; if $\nu_1=0$ but $\nu_2\ne 0$, then 
%$\varphi\ne 0$ and we get $\sL\cong\sN\cong \sO_\sC$. All of them belong to Case (4).

We now prove the other direction that whenever there is an $\sE\sub\sC$ that satisfies one of (1)-(4), then $\xi$ is unstable. Most of the cases can be argued easily, except a sub-case of (2) when $\rho|_\sE$ is 
nowhere vanishing, which we now prove. 

Since $\sE\cap (\Si_{\sC}\cup \sC_{\text{sing}})$ contains one point, say $p\in \sC$, we have
$\deg\omega^{\log}_{\sC}|_{\sE}=-1$. Since $\rho|_{\sE}$ is nowhere vanishing, we have
$\deg\sL^{\otimes 5}|_{\sE}=-1$.  Thus $p$ must be a stacky point. Hence $\sE\cong\mathbb P_{1,5}$ as stacks. Let $\mathbb P_{1, 5}=\text{Proj} (k[x, y])$ where $\deg x=1$ and $\deg y=5$. Then $p$ corresponds to the point $[0, 1]$. Let $G=\mathbb G_a=\mathbb C$ act on $\mathbb P_{1, 5}$ as follows: $x\to x$, $y\to \lambda x^5+y$ for $\lambda \in G$. The $G$-action on $\mathbb P_{1, 5}$ lifts to an action of $\omega^{\log}_{\sC}|_{\sE}$ as well as $\sL^{\vee}|_{\sE}\cong \sO_{\mathbb P_{1,5}}(1)$. One can check via local calculations that $G$ acts trivially on $\big(\omega^{\log}_{\sC}|_{\sE}\big)|_p$ as well as $\big(\sL^{\vee}|_{\sE}\big)|_p$, thus trivially on 
$\big(\sL^{\vee\otimes 5}\otimes \omega^{\log}_{\sC}\big)|_p$. Since $\sL^{\vee\otimes 5}\otimes \omega^{\log}_{\sC}|_{\sE}\cong \sO_{\sE}$,  and since $\mathbb G_a=\mathbb C$ has no non-trivial characters, by Prop.1.4 in \cite{FMK}, $G$ acts on $\big(\sL^{\vee\otimes 5}\otimes \omega^{\log}_{\sC}\big)|_{\sE}$ trivially as well.  Hence $G$ acts trivially on $\rho|_{\sE}$. Therefore the group $G$ is a subgroup of the automorphism group of $\xi|_{\sE}$. 
\end{proof}

\begin{coro}\label{split}
Let $\xi\in \cW^{\mathrm{pre}-}\lggd(\CC)$. Let $\pi: \ti\sC\to\sC$ be the normalization of $\sC$,
%and let $\ti\sC_i$ be its connected components with $\pi_i:\ti\sC_i\to\sC$ the tautological morphism.
let $\Si^{\ti\sC}=\pi\upmo(\Si^\sC\cup \sC_{\mathrm{sing}})$, and let $(\ti\sL,\ti\sN,\ti\varphi,\ti\rho,\ti\nu)$ be the pullback of 
$(\sL,\sN,\varphi,\rho,\nu)$ via $\pi\sta$. Write $\ti\sC=\coprod_a \ti\sC_a$ the connected component decomposition,
and let $\ti\xi_a$ be $(\Si^{\ti\sC}\cap \ti\sC_a,\ti\sC_a)$ paired with 
$(\ti\sL,\ti\sN,\ti\varphi,\ti\rho,\ti\nu)|_{\ti\sC_a}$.
Then $\xi$ is stable if and only if all $\ti\xi_a$ are stable. 
\end{coro}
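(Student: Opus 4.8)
The plan is to reduce the statement to the explicit classification of unstable objects in Lemma \ref{stable-cri}, applied once to $\xi$ on $\sC$ and once to each $\ti\xi_a$ on $\ti\sC_a$. First I would record the structural facts that make both applications legitimate. Since $\pi:\ti\sC\to\sC$ is the normalization of a curve with at worst (balanced) nodes, each connected component $\ti\sC_a$ is smooth and connected, hence irreducible, and $\pi|_{\ti\sC_a}$ is the normalization of the irreducible component $\sE_a:=\pi(\ti\sC_a)$ of $\sC$. The markings $\Si^{\ti\sC}\cap\ti\sC_a=\pi\upmo(\Si^\sC\cup\sC_{\mathrm{sing}})\cap\ti\sC_a$ consist of the preimages of the markings of $\sC$ lying on $\sE_a$ together with all preimages of nodes of $\sC$ lying on $\sE_a$; in particular a self-node of $\sE_a$ contributes two markings to $\ti\sC_a$, all other incident nodes contribute one. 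At each new marking coming from a node the sheaf $\ti\sL$ carries a genuine $\mu_r$-monodromy, never one of the formal symbols $(1,\varphi)$ or $(1,\rho)$, so conditions (4)--(5) of Definition \ref{def-curve} are vacuous there; together with the fact that the degeneracy condition of Lemma \ref{degenerate-locus} pulls back along $\pi$, this exhibits each $\ti\xi_a$ as an object of $\cW^{\mathrm{pre}-}_{g_a,\gamma_a,\bd_a}$ for a suitable numerical datum, so Lemma \ref{stable-cri} applies to it. Because $\ti\sC_a$ is irreducible, the only configuration of Lemma \ref{stable-cri} that $\ti\xi_a$ can realize is the one with $\sE=\ti\sC_a$.

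For the implication ``all $\ti\xi_a$ stable $\Rightarrow$ $\xi$ stable'' I would argue by contraposition. If $\xi$ is unstable, Lemma \ref{stable-cri} produces a component $\sE\subset\sC$ of type (1)--(4). In cases (1) and (2), $\sE$ is a rational curve, hence smooth, so it has no self-nodes and $\pi$ identifies some $\ti\sC_a$ with $\sE$ carrying the same restricted data and the same set of special points $\sE\cap(\Si^\sC\cup\sC_{\mathrm{sing}})=\Si^{\ti\sC}\cap\ti\sC_a$; thus $\ti\xi_a$ again satisfies (1) or (2) and is unstable. In cases (3) and (4) the curve $\sC$ is irreducible, so $\ti\sC$ has a single component $\ti\sC_1$: if $\sC$ is smooth then $\ti\xi_1=\xi$ and there is nothing to prove, while the only nodal possibility is a genus-one case-(4) curve, for which $\ti\sC_1\cong\Po$ carries the two preimages of the node as markings and $\ti\sL^{\otimes5}=\pi^*(\sL^{\otimes5})$ is trivial, so $\ti\xi_1$ is of type (1), hence unstable.

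For the converse, ``$\xi$ stable $\Rightarrow$ all $\ti\xi_a$ stable'', again by contraposition: suppose some $\ti\xi_b$ is unstable, so it is of one of the types (1)--(4) with $\sE=\ti\sC_b$. The proof of Lemma \ref{stable-cri} shows that in each such case the image of $\Aut(\ti\xi_b)$ in $\Aut(\ti\sC_b)$ is a positive-dimensional group $G$ of automorphisms of the curve $\ti\sC_b$ fixing its (at most two) special points — a positive-dimensional subgroup of $\Aut(\Po)$ when $\ti\sC_b$ is rational, the group of translations when $\ti\sC_b$ has genus one — which lifts to $(\ti\sL,\ti\sN)$ compatibly with $(\ti\varphi,\ti\rho,\ti\nu)$. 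Such curve automorphisms fix each preimage of a node of $\sC$ and therefore descend to automorphisms of $\sC$; moreover the triviality of the relevant power of $\ti\sL$ (resp.\ of $\ti\sL$ or $\ti\sN$) at those node-preimages, established inside the proof of Lemma \ref{stable-cri} from the degeneracy condition and Lemma \ref{prepare1}, is exactly what is needed to lift the descended $G$-action to $(\sL,\sN)$ preserving $(\varphi,\rho,\nu)$ and the gluing data. Hence $\Aut(\xi)\supset G$ is infinite and $\xi$ is unstable.

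The step I expect to be the main obstacle is this last descent-and-lift argument in the converse direction: confirming that the positive-dimensional automorphisms of $\ti\xi_b$ genuinely glue to automorphisms of $\xi$ rather than of the normalization only. The resolution is to work with automorphisms of the underlying curve (which are automatically compatible with the node gluing) instead of with fiberwise rescalings, and then to feed in the monodromy/triviality information already extracted in the proof of Lemma \ref{stable-cri} to perform the line-bundle lift. A secondary, more routine point is the bookkeeping required to present each $\ti\xi_a$ as a bona fide object of $\cW^{\mathrm{pre}-}$ with its own numerical datum, so that Lemma \ref{stable-cri} is legitimately applicable to it.
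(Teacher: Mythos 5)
Your forward direction (``$\xi$ unstable $\Rightarrow$ some $\ti\xi_a$ unstable'') is correct and is exactly the paper's argument: the unstable component of types (1)--(3) is smooth, hence survives normalization unchanged, and the nodal genus-one case (4) becomes case (1) upstairs. The gap is in your converse. From instability of $\ti\xi_b$ you extract an infinite group $G$ of curve automorphisms of $\ti\sC_b$ fixing the special points, and then assert that the lifted actions on $(\ti\sL,\ti\sN)$ descend to $\sC$ and glue with the identity on the remaining components ``because of the triviality established in the proof of Lemma \ref{stable-cri}''. But what that proof establishes (in its necessity half) is only that the action on $\sL^{\otimes 5}|_{p}$ is trivial at the special points; to build an automorphism of $\xi$ you need the action on $\sL|_{p}$ and $\sN|_{p}$ themselves to be compatible with the gluing at a self-node and to be the identity at nodes attaching $\sE_b=\pi(\ti\sC_b)$ to the other components. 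At a node with $\varphi(p)=\nu_2(p)=0$ and $\rho(p),\nu_1(p)\neq 0$ the field conditions only force the action on $\sL|_{p}$ to lie in $\bmu_5$, and a nontrivial fifth root of unity cannot be absorbed by rescaling the adjacent component unless $\varphi$ and $\nu_2$ vanish there; so the step you yourself flag as the main obstacle is genuinely unresolved, and it is in effect a re-proof of the sufficiency (``if'') half of Lemma \ref{stable-cri} in the glued situation, which you neither carry out nor need.

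The intended (and much shorter) converse is symmetric to your forward direction: since $\ti\sC_b$ is irreducible, instability of $\ti\xi_b$ means $\ti\sC_b$ itself is of one of the types (1)--(4) of Lemma \ref{stable-cri}; unless the two special points of a type (1) component are the two branches of a single self-node, $\pi$ identifies $\ti\sC_b$ with a smooth component $\sE_b\subset\sC$ carrying the same special points and restricted data, so $\sE_b$ is of the same type for $\xi$ and the ``if'' half of Lemma \ref{stable-cri} gives instability of $\xi$ with no descent of automorphisms at all. The one case deserving a sentence is the self-node case: there $\sC=\sE_b$ is irreducible of genus one with $\Si^\sC=\emptyset$, and $\pi\sta\sL^{\otimes 5}\cong\sO$ does not by itself give case (4); but the nowhere-vanishing of $(\varphi,\nu_1)$, $(\rho,\nu_2)$, $(\nu_1,\nu_2)$ forces $\sL^{\otimes 5}\cong\sO_\sC$ and $\sN\cong\sL\dual$ (exactly as in the proof of Lemma \ref{case-irre}), so case (4) does apply. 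Finally, a bookkeeping slip in your preliminary step: branches over scheme nodes give markings of \emph{trivial} monodromy, which must be labeled $(1,\varphi)$ or $(1,\rho)$, so conditions (4)--(5) of Definition \ref{def-curve} are not vacuous there; they are satisfied because the degeneracy condition places every irreducible component of $\sC$ inside $(\varphi=0)$ or $(\rho=0)$, which is what lets each $\ti\xi_a$ be viewed as an object of $\cW^{\mathrm{pre}-}_{g_a,\gamma_a,\bd_a}$.
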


\begin{proof}
If $\xi$ is unstable, then it contains an irreducible $\sE$ satisfying one of (1)-(4) in Lemma \ref{stable-cri}.
This $\sE$ (or its normalization) will appear in one of $\ti\xi_a$, making it unstable. The other direction is
the same. This proves the Corollary.
\end{proof}

\subsection{The baskets}

 %$\xi\lsta=(\Si^{\sC\lsta},\sC\lsta,\sL\lsta,\cdots)$. 
We will first studying a special case.

\vsp
\noindent
{\bf Special type}:  {\sl Let $\xi\lsta\in \cW^-\lggd(S\lsta)$ be of the form \eqref{xi2} such that 
$\sC_\ast$ is smooth (and connected), $\varphi\lsta=0$,
$\nu_{2\ast}\ne 0$ 
and $\rho\lsta\ne0$.}

%The case $\nu_{2\ast}=0$ will be treated separately.

%For any stack $\sX$ of finite presentation, we denote by $[\sX]_r$ its pure $r$-dimensional part.
%Assume that both $(\rho_*=0)$ and $(\nu_{2*}=0)$ do not contain vertical divisors of $\sC_*\to S_*$.

\begin{prop}\label{prop-extension1}
Let $\xi\lsta$ over $S\lsta$ %\lsta\in \cW^-\lggd(S\lsta)$ 
be of special type. %, namely, $\sC\lsta$ is smooth and geometrically irreducible.
%Assume further that $\rho_*\ne 0$ and $\nu_{2*}\ne 0$ (i.e. non-trivial).
Then after a finite base change, 
\begin{enumerate}
\item[({\bf a1})] $(\Si^{\sC\lsta}, \sC_*)$ extends to a pointed twisted curve $(\Si^\sC, \sC)$ over $S$ such that $\sC-\Si^\sC$ is a scheme, $\sC$ is smooth, and
the central fiber $\sC_0$ %=\sC\times_S \eta_0$ 
is reduced with at worst nodal singularities and smooth irreducible components;
\item[({\bf a2})] $\sL_*$ and $\sN_*$ extend to invertible sheaves $\sL$ and $\sN$ respectively on $\sC$ so that $\nu\lsta$ extends
to a surjective $\nu=(\nu_{1},\nu_{2}): \sL\dual\oplus \sO_{\sC}\to\sN$;
\item[({\bf a3})] $\rho\lsta$ %either extends to the zero section of $\sL^{\vee\otimes 5}\otimes \omega^{\log}_{\sC/S}$ or 
extends to a $\rho\in \Gamma(\sL^{\vee \otimes 5}\otimes \omega^{\log}_{\sC/S}(\sD))$ for a divisor 
$\sD\subset\sC$ contained in the central fiber $\sC_0$ such that $\rho$
restricting to every irreducible component of $\sC_0$ is non-trivial; %$D_i\sub  \sC_0$.
\item[({\bf a4})] $\overline{(\rho_*=0)}\cap \overline{(\nu_{2*}=0)}=\emptyset$,  $\overline{(\rho_*=0)}$ and $\overline{(\nu_{2*}=0)}$ intersect $\sC_0$ transversally.
\end{enumerate}
\end{prop}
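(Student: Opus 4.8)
The plan is to reduce the special-type extension problem to two separate, well-understood extension problems --- one for the $\rho$-field realized as a section of a $5$-spin structure, and one for the pair $(\nu_1,\nu_2)$ --- and then normalize the total space. First I would forget the (vanishing) $\varphi$-field and record what remains: over $S\lsta$ we have a twisted curve $\sC\lsta$, line bundles $\sL\lsta,\sN\lsta$, a nowhere-zero $(\nu_{1*},\nu_{2*})\in H^0(\sL\lsta\dual\oplus\sO)\to\sN\lsta$, and a nonzero $\rho\lsta\in H^0(\sL\lsta^{\vee\otimes5}\otimes\omega^{\log}_{\sC\lsta})$ with $(\rho\lsta,\nu_{2*})$ nowhere zero. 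The composite of $\rho\lsta$ with the trivialization induced by $\nu_{2*}$ on the open locus $(\nu_{2*}\ne0)$, together with the vanishing locus of $\rho\lsta$, packages this into a $5$-spin-type datum: up to twisting by the divisor $\overline{(\nu_{2*}=0)}$ we get an isomorphism $\sL\lsta^{\otimes5}\cong\omega^{\log}_{\sC\lsta}(\cdots)$. I would then invoke properness of the stack of twisted stable $5$-spin curves (cf.\ \cite{AJ}, as already used for $\cM^{\mathrm{tw}}_{g,\ell}$ in the excerpt) to extend, after a finite base change, the pointed twisted curve $\Si^\sC\sub\sC$ together with $\sL$ and the $5$-spin structure across $\eta_0$; this gives ({\bf a1}) with $\sC$ smooth as a stack (we may further resolve any singularities of the total space of $\sC$, keeping the special fiber reduced nodal with smooth components, exactly as in the standard semistable reduction argument), and it gives ({\bf a3}): $\rho$ extends as a section of $\sL^{\vee\otimes5}\otimes\omega^{\log}_{\sC/S}(\sD)$ for some vertical divisor $\sD$, and after possibly shrinking $\sD$ (dividing out the common vanishing along whole components) we may assume $\rho$ is non-trivial on each component of $\sC_0$.

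Next I would extend $\sN$ and the gauge field. Having fixed $\sL$ and $\sC$, the bundle $\sN\lsta$ is determined by $\nu\lsta$ up to the usual ambiguity; I would take any extension $\sN$ of $\sN\lsta$ to $\sC$ as an invertible sheaf (possible since $\Pic$ surjects onto $\Pic$ of the generic fiber for a regular total space), and then $\nu_{1*}\in H^0(\sL\lsta\dual\otimes\sN\lsta)$ and $\nu_{2*}\in H^0(\sN\lsta)$ extend as rational sections; by twisting $\sN$ down by the vertical divisor equal to the minimum of the pole orders of $\nu_1$ and $\nu_2$ we can arrange that $(\nu_1,\nu_2)$ extends as an honest pair of regular sections with no common vertical zero, and then by a further twist ensure it is surjective onto $\sN$ on the special fiber. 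Concretely $\nu=(\nu_1,\nu_2):\sL\dual\oplus\sO_\sC\to\sN$ is surjective, which is ({\bf a2}). One must check compatibility: because $(\rho\lsta,\nu_{2*})$ is nowhere zero on $\sC\lsta$ and we are only making finitely many modifications along the central fiber, the datum still makes sense; the precise bookkeeping of which vertical divisor to subtract from $\sN$ versus which $\sD$ to allow for $\rho$ is the technical heart here.

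Finally I would establish the transversality statement ({\bf a4}). The loci $\overline{(\rho_*=0)}$ and $\overline{(\nu_{2*}=0)}$ are the closures in $\sC$ of the honest (stacky) zero divisors of $\rho\lsta$ and $\nu_{2*}$ on $\sC\lsta$; since $(\rho\lsta,\nu_{2*})$ is nowhere vanishing these are disjoint over $S\lsta$, hence their closures can meet only inside $\sC_0$, and disjointness there is forced by the fact that after the base change and normalization $\rho$ is non-trivial on every component while $\nu_2$ is too (from surjectivity of $\nu$ and $\overline{(\rho_*=0)}\cap\overline{(\nu_{2*}=0)}=\emptyset$ at the generic point propagating to codimension-one vertical points) --- so one uses that a non-trivial section on an integral surface fibered over $S$ has $1$-dimensional zero locus meeting $\sC_0$ in finitely many points, and after a further finite base change ramified appropriately (the degree-five or degree-$k$ base changes set up in \ref{Sub3.1}) these finitely many intersection points become transverse, i.e.\ the closures are horizontal branches meeting $\sC_0$ with multiplicity one. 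The main obstacle I anticipate is precisely the bookkeeping in this last normalization-plus-base-change step: making all three of ``$\sC$ smooth with reduced nodal special fiber'', ``$\rho$ non-trivial on each component with controlled vertical divisor $\sD$'', and ``the two closures meet $\sC_0$ transversally and disjointly'' hold \emph{simultaneously} requires interleaving blow-ups of the total space with finite base changes and re-choosing the twists of $\sL$, $\sN$, $\sD$ at each stage, and verifying the process terminates --- this is the familiar but delicate semistable-reduction juggling, and it is where I would expect to spend most of the effort.
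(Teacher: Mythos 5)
There are genuine gaps, and they sit exactly where the paper's proof does its real work. The paper obtains ({\bf a1}) and ({\bf a4}) \emph{simultaneously} by a different mechanism than yours: after a base change it turns $\overline{(\rho_*=0)}\cup\overline{(\nu_{2*}=0)}$ into a union of disjoint sections of $\sC\lsta\to\Slsta$, adds these (plus auxiliary sections) as extra markings so that the pointed curve becomes stable, and only then extends, using properness of the moduli of stable pointed twisted curves; since markings of such a family are disjoint sections through the smooth locus, they automatically meet $\sC_0$ transversally and stay disjoint, so ({\bf a4}) holds ``by construction.'' Your route extends the curve first (via $5$-spin properness) and tries to repair ({\bf a4}) afterwards by ``a further finite base change ramified appropriately,'' but a base change over $\eta_0$ neither separates two horizontal divisors that happen to meet at a point of $\sC_0$ nor converts a tangential intersection with $\sC_0$ into a transverse one; fixing this after the fact requires blow-ups of the total space interleaved with base changes, and you would then have to re-verify ({\bf a1})--({\bf a3}) on the new components --- precisely the ``juggling'' you flag but do not carry out. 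A second concrete problem with routing through $5$-spin properness is that the limiting twisted curve carrying the spin structure will in general acquire $\bmu_5$-stacky nodes in $\sC_0$, contradicting the requirement in ({\bf a1}) that $\sC-\Si^\sC$ be a scheme; the paper deliberately does \emph{not} extend any spin-type structure at this stage (that restacking is postponed to the basket machinery and Lemma \ref{5change}/Lemma \ref{bask-family}), and here only extends the marked curve, $\sN$, $\nu_2$, and $\rho$ up to a vertical twist $\sD$.

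For ({\bf a2}) your argument is also incomplete as written. Twisting $\sN$ by vertical divisors can remove common \emph{vertical} components of the zero locus of $(\nu_1,\nu_2)$, but it cannot remove an isolated common zero on $\sC_0$, so ``by a further twist ensure it is surjective'' is not justified. The point you are missing is the one the paper uses: in the special type $\varphi\lsta=0$, so by Definition \ref{def-curve}(4) $\nu_{1*}$ is nowhere vanishing, hence $\sL\lsta\cong\sN\lsta^{\vee}$; one then extends $\sN\lsta$ together with $\nu_{2*}$, \emph{defines} $\sL:=\sN^{\vee}$, and extends $\nu_{1}$ as an isomorphism $\sL^{\vee}\to\sN$ (equivalently, the divisor of the rational extension of $\nu_1$ is purely vertical and can be absorbed into the choice of $\sN$), after which surjectivity of $\nu$ is automatic. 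With that observation your ({\bf a2}) step can be repaired, but the ({\bf a4}) mechanism needs to be replaced by (or reduced to) the paper's device of marking the zero loci before extending.
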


\begin{proof}
%Then $\nu_{1*}\colon \sL_*\dual\to \sN_*$ is an isomorphism,   and $\rho_*$ is a non-zero section of $\sL_*^{\dual 5}\otimes \otimes  \omega^{\log}_{\sC_*/S_*}$.
First, possibly after  a finite base change, we can assume that $\overline{(\rho_*=0)}\cup  
\overline{(\nu_{2*}=0)}$ is a union of disjoint sections of $\sC\lsta\to\Slsta$,
and that if we let $\Si\lsta^{\text{ex}}$ be the union of those sections of $\overline{(\rho_*=0)}\cup  
\overline{(\nu_{2*}=0)}$ that are not contained in $\Si^{\sC\lsta}$, then $\Si\lsta^{\text{ex}}$ is disjoint from
$\Si^{\sC\lsta}$. %$\Slsta$ is a generic point, this is possible.
If $(\Si^{\sC\lsta}\cup \Si\lsta^{\text{ex}},\sC\lsta)$ is a stable pointed curve,   let
$\Si\lsta^{\text{au}}=\emptyset$.
Otherwise,   let $\Si\lsta^{\text{au}}$ be some extra sections of $\sC\lsta\to \Slsta$, 
disjoint from $\Si^{\sC\lsta}\cup \Si\lsta^{\text{ex}}$,
so that after letting $\Si\lsta^{\text{comb}}=\Si^{\sC\lsta}\cup \Si\lsta^{\text{ex}}\cup \Si\lsta^{\text{au}}$, 
the pair $(\Si\lsta^{\text{comb}}, \sC\lsta)$ is stable.

Since $(\Si^{\text{comb}}_*, \sC\lsta)$ is stable,
possibly after a finite base change, it extends to an $S$-family of stable twisted curves $(\Si^{\text{comb}}, \sC')$
such that all singular points of its central fiber $\sC'_0$  are non-stacky. Thus
after blowing up $\sC'$ along the singular points of $\sC'_0$ if necessary, 
taking a finite base change, and followed by a minimal desingularization,
we can assume that the resulting family $(\Si^\sC,\sC)$ is a family of pointed twisted curves with smooth $\sC$ satisfying Condition (a1).  Condition (a4) is satisfied due to the construction.

Since  $\varphi_*$ is identically zero,  $\nu_{1*}$ is an isomorphism.  We can extend $\sN_*$ to an invertible sheaf $\sN$ on $\sC$ so that $\nu_{2*}$ extends to a section $\nu_2$ of $\sN$. Let $\sL\cong \sN^{\vee}$. We extend $\nu_{1*}$ to an isomorphism $\nu_1\colon \sL^{\vee}\to \sN$, and extend $\rho_*$ to a section $\rho$ 
satisfying (a3), for a choice of $\sD$. This proves the proposition.
\end{proof}

We will work with the coarse moduli $C$  of $\sC$. 
%To simplify the discussion, we first look at the case where $\sC_0$ has no singular irreducible components.

\begin{defi} 
An $S$-family of pre-stacky pointed nodal curves is a flat $S$-family $(\Si,C)$ of
pointed nodal curves (i.e. not twisted curves) so that each marked-section 
$\Si_i$ (of $\Si$) is either assigned pre-stacky or 
assigned regular. 
We call it a good family if in addition $C$ is smooth, and all irreducible components of the central finer
$C_0=C\times_S\eta_0$ are smooth.
 %A good $S$-family of $n$-pointed twisted curves consists of a two-dimensional smooth DM stack $\sC$, a
%proper flat morphism $\pi: \sC\to S$, and $n$ sections $\Si$ %=\Si_1\coprod\cdots\coprod \Si_k$ of $\pi$ 
%such that all irreducible components of $\sC_0$ are smooth  and $\sC-\Si$ is a scheme.
%%(1). the reduced part $(\sC_0)\lred$ has only normal crossing singularities;\\
%%(2). the complement $\sC-\Sigma$ is a scheme.
\end{defi}

Note that if we let $C$ be the coarse moduli of the $\sC$ in Proposition \ref{prop-extension1}, let
$\Si_i\sub C$ be the image of $\Si^\sC_i\sub \sC$ under $\sC\to C$, and call $\Si_i$ pre-stacky if
$\Si^\sC_i$ is stacky, and call it regular otherwise, then $(\Sigma, C)$ with this assignment is a good
$S$-family of pre-stacky pointed nodal curves. 

%For simplicity, we will reorder the $\Si_i$'s so that the first
%$m$ of them are pre-stacky and the remainder regular.

Given an $S$-family of pointed twisted curve $(\Si^\sC, \sC)$ so that the only non-scheme points of $\sC$ are
possibly along $\Si^\sC$, applying the procedure
described, we obtain a pre-stacky pointed nodal curve $(\Si, C)$. We call this procedure
un-stacking. Conversely, applying the root construction (cf. \cite{A-G-V, Cad}) to
the $S$-family of pre-stacky pointed nodal curves $(\Si,C)$ obtained,
we recover the original family $(\Si^\sC,\sC)$;
we call the later the stacking of $(\Si,C)$.
%To keep track of the extensions $\sC, \sL$ and $\sN$, etc., we introduce the notion of basket (of divisors).

\begin{defi}\label{bask}
Let $(\Si,C)$ be a good $S$-family of pre-stacky pointed nodal curves,
%of which the first $m$ sections $\Si_{i\le m}$ are pre-stacky, 
and let $D_i$, $i\in \Lam$, be irreducible components of $C_0$. A pre-basket of $(\Si,C)$ is a data
\beq\label{BAS}\cB=(B+{\sum}_{i\in\Lam} l_iD_i, A+{\sum}_{i\in \Lam} m_iD_i),
\eeq
where
\begin{enumerate}
\item  $A=\sum_{i=1}^{k_1}a_i A_i$, where $A_1,\cdots, A_{k_1}$ are disjoint sections of $C\to S$
such that for any pair $(i,j)$, either $A_i\cap \Sigma_j=\emptyset$ or $A_i=\Sigma_j$, 
$a_i\in \ofth\ZZ_{>0}$
when  $A_i=\Si_{j}$ for some pre-stacky $\Si_j$, otherwise $a_i\in \ZZ_{>0}$;
\item  $B=\sum_{i=1}^{k_2}b_i B_i$, where $b_i\in \ZZ_{>0}$, $B_1,\cdots, B_{k_2}$ are disjoint sections of $C\to S$
such that for any pair $(i,j)$, either $B_i\cap \Sigma_j=\emptyset$ or $B_i=\Sigma_j$, and  when $B_i=\Sigma_j$,
 $\Si_j$ must be assigned regular;
\item $A_1,\cdots, A_{k_1}, B_1,\cdots, B_{k_2}$ are mutually disjoint and intersect $C_0$ transversally;
\item $5m_i \in \ZZ$ and $l_i\in \ZZ$;
\end{enumerate}
such that %we have isomorphism
%and the and $A_0=\sum_{j=1}^n} A_\Si$, $A_0$ and $B$ are disjoint Cartier divisors of $C$ having no vertical components of $C\to S$,
%$A_0=$
\beq\label{basket*}
\sO_{C}(B+\sum l_iD_i)\cong \sO_{C}(5A+\sum 5m_iD_i)\otimes \omega^{\log}_{C/S}.
\eeq
We call $\cB$ a basket if  in addition it satisfies $l_i\ge 0$, $m_i\ge 0$ and $l_im_i=0$ for all $i$.
\end{defi}
%$l_i\in\ZZ$ and $m_i\in\ofth\ZZ$, such that
%$A$ and $B$ intersect $\sC_0$ transversally, and
%%$A$ and $B$ , and  $B\cap A=\emptyset$;\\
%\beq
%\sO_{\sC}(B+\sum l_iD_i)\cong \sO_{\sC}(5A+\sum 5m_iD_i)\otimes \omega^{\log}_{\sC/S}.
%\eeq
%%where $\omega^{\log}_{\sC/S}=\omega_{\sC}(\Si)\otimes\pi\sta\omega_S\dual$.

\begin{defi}\label{bask2}
We say a  basket $\cB$ final if it satisfies \\
%(i). $l_i\ge 0$; $m_i\ge 0$, and $l_im_i=0$ for all $i\in \Lam$;\\
%We say the basket $\cB$ is final if it is good and satisfies \\
(i). for every $i\in\Lam$, $B\cap D_i=\emptyset$ if $m_i\ne 0$, and $A\cap D_i=\emptyset$ if $l_i\ne 0$; \\
(ii). for distinct $i\ne j\in\Lam$ such that $l_im_j\ne 0$,  $D_i\cap D_j=\emptyset$.
\end{defi}

Let $(\sC, \sL, \sN,\rho,\nu)$ and $\sD$ be given by Proposition \ref{prop-extension1}.
Let $\{\sD_i\, |\, i\in\Lam\}$ be the set of irreducible components of $\sC_0$. We form
\beq\label{sample}
\sA=\overline{(\nu_{2*}=0)}, \ \sB=\overline{(\rho_*=0)},\
(\nu_2=0)=\sA+\sum m_i\sD_i,\ \sD=-\sum l_i\sD_i,
\eeq
 where the summations run over all $i\in\Lambda$.
By the construction, $\nu_2$ and $\rho$ induce isomorphisms
$\sN\cong \sO_\sC(\sA+\sum m_i \sD_i)$ and $\sO_\sC\cong \sL^{\vee 5}\otimes\omega_{\sC/S}^{\log}(\sD-\sB)$.
Using $\sL\dual\cong \sN$, we obtain an isomorphism 
\beq\label{basket2}
\sO_{\sC}(\sB+\sum l_i\sD_i)\cong \sO_{\sC}(5\sA+\sum 5m_i\sD_i)\otimes \omega^{\log}_{\sC/S}.
\eeq

Let $(\Si,C)$ be the good $S$-family of pre-stacky pointed nodal curves 
that is the un-stacking of $(\Si^\sC,\sC)$
as explained before Definition \ref{bask}. Let $D_i\sub C_0$ be the 
image of $\sD_i$. Since $\sC$ away from $(\nu_2=0)$ is a scheme,
and by the construction carried out in the proof of
Proposition \ref{prop-extension1}, we have
$\sB=\sum_{i=1}^{k_2} b_i \sB_i$, where $\sB_i$ are sections of $\sC\to S$ and $b_i\in \ZZ_{>0}$,
and for any $(i,j)$ either $\sB_i\cap \Si^\sC_j=\emptyset$ or $\sB_i=\Si^\sC_j$, and in the 
later case $\Si_j^\sC$ is 
a scheme.
We let $B_i\sub C$ be the image of $\sB_i$. %and let $B=\sum_{i=1}^{k_2} b_i B_i$.
For $\sA$, it can also be written as 
$\sA=\sum_{i=1}^{k_1} a_i \sA_i$, where $\sA_i$ are sections of $\sC\to S$.  
Let $A_i$ be the image of $\sA_i$. We form
\beq\label{AB}
A=\sum_{A_i \not\in \{\text{pre-stacky}\, \Si_j\}} a_i A_i+\sum_{A_i \in\{\text{pre-stacky}\, \Si_j\}} \frac{a_i}{5} A_i,
\and  B=\sum_{i=1}^{k_2} b_i B_i.
\eeq

\begin{lemm} \label{bask3}
Let $(\Si,C)$ be as before, let $\cB$ in \eqref{BAS}  be such that  the coefficients 
$l_i$ and $m_i$ are given
in \eqref{sample},  and let $A$ and $B$ be given in \eqref{AB}. Then $\cB$
 %=\bl B+\sum_{i\in\Lam} l_i D_i, A+\sum_{i\in \Lam} m_i D_i\br$
is a pre-basket.
\end{lemm}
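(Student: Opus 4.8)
The plan is to verify, one by one, the four numbered conditions in Definition~\ref{bask} together with the relation \eqref{basket*}, using only the input data furnished by Proposition~\ref{prop-extension1} and the construction recorded in \eqref{sample} and \eqref{AB}. Condition (4) is immediate: by the construction in \eqref{sample} the $m_i$ are defined as the multiplicities of the central-fiber components in the divisor $(\nu_2=0)$, which makes $5m_i$ integral once we pass to the coarse curve $C$ (the factor $1/5$ only enters through stacky markings, which have been removed by un-stacking), and the $l_i$ are manifestly integers since they are the multiplicities of $\sC_0$-components in $-\sD$ and $\sD$ was a genuine (scheme-theoretic) divisor in Proposition~\ref{prop-extension1}(a3); one should double-check that un-stacking along the pre-stacky markings does not spoil integrality, which is where the $a_i/5$ bookkeeping in \eqref{AB} is designed to absorb the fractional part.

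Next I would treat the disjointness and transversality conditions (1)(2)(3). These are essentially transported from Proposition~\ref{prop-extension1}(a4), which already gives $\overline{(\rho_*=0)}\cap\overline{(\nu_{2*}=0)}=\emptyset$ and transversal intersection with $\sC_0$; after a further finite base change (allowed since we are only asked for the existence of \emph{a} pre-basket after base change, exactly as in the preceding propositions) one arranges that $\overline{(\rho_*=0)}$ and $\overline{(\nu_{2*}=0)}$ are disjoint unions of sections. The sections $A_i$ and $B_i$ in \eqref{AB} are precisely (the images of) these, so (3) follows. For the compatibility with the markings in (1) and (2): a section $\sA_i$ of $\overline{(\nu_{2*}=0)}$ that happens to meet a marking $\Si_j^\sC$ must coincide with it (sections meeting in a point of a separated curve agree), and the two cases — $\Si_j$ pre-stacky or regular — dictate whether $a_i$ lands in $\tfrac15\ZZ_{>0}$ or $\ZZ_{>0}$, which is exactly the split in \eqref{AB}; for $\sB_i$ the remark recorded just before the statement of the lemma (that $\sB_i=\Si_j^\sC$ forces $\Si_j^\sC$ to be a scheme, i.e.\ $\Si_j$ regular) gives condition (2). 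Positivity $a_i,b_i>0$ holds because $\rho_*\ne0$ and $\nu_{2*}\ne0$ on the generic fiber, so these sections occur with positive multiplicity in honest effective divisors.

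Finally I would establish \eqref{basket*}. This is the coarse-curve shadow of \eqref{basket2}. On $\sC$ one has, from the isomorphisms $\sN\cong\sO_\sC(\sA+\sum m_i\sD_i)$, $\sO_\sC\cong\sL^{\vee5}\otimes\omega^{\log}_{\sC/S}(\sD-\sB)$ and $\sL^\vee\cong\sN$, the identity $\sO_\sC(\sB+\sum l_i\sD_i)\cong\sO_\sC(5\sA+\sum5m_i\sD_i)\otimes\omega^{\log}_{\sC/S}$. Pushing forward along the coarse-moduli map $\sC\to C$ and using that this map carries $\omega^{\log}_{\sC/S}$ to $\omega^{\log}_{C/S}$, carries $\sD_i$ to $D_i$, and carries a pre-stacky section $\sA_i$ with multiplicity $a_i$ to $A_i$ with multiplicity $a_i/5$ (the root-construction bookkeeping, see \cite{A-G-V, Cad}), one obtains exactly \eqref{basket*} with $A$, $B$ as in \eqref{AB}.

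The main obstacle I expect is the last step: keeping the stacky-to-coarse dictionary honest, i.e.\ checking that the divisor-class identity \eqref{basket2} on the twisted curve descends \emph{on the nose} (not merely up to an unspecified correction supported on the markings) to \eqref{basket*} on $C$, and in particular that the fractional coefficients $a_i/5$ appearing in $A$ are precisely what is needed so that $5A$ has integral coefficients and the equation balances. Everything else is either a direct quotation of Proposition~\ref{prop-extension1} or an application of the valuative-criterion-style ``finite base change'' convention already in force in this section.
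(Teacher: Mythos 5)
Your proposal is correct and is essentially the paper's own argument: conditions (1)--(4) of Definition \ref{bask} are read off from the construction in the proof of Proposition \ref{prop-extension1} (the disjoint-sections and transversality statements are already built in there, so no further base change is actually needed), and \eqref{basket*} is deduced from \eqref{basket2} via the coarse/stacky comparison in which the coefficients $a_i/5$ of \eqref{AB} at pre-stacky markings are exactly what makes $5A$ correspond to $5\sA$ while $\omega^{\log}_{C/S}$ corresponds to $\omega^{\log}_{\sC/S}$. The only cosmetic difference is that you push \eqref{basket2} forward along $\sC\to C$, whereas the paper notes that both sides of \eqref{basket*} pull back to the two sides of \eqref{basket2} and then descends; since pullback along the coarse moduli map is injective on Picard groups, these are the same mechanism.
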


\begin{proof}
That $\cB$ satisfies (1)-(4) in Definition \ref{bask} follows from the proof of Proposition \ref{prop-extension1}. 
For the isomorphism \eqref{basket*}, we notice that by our choice of $A$ and $B$, we have
$$\sO_{\sC}(\sB+\sum l_i\sD_i)\cong \sO_{C}(B+\sum l_i D_i)\otimes_{\sO_C}\sO_\sC
$$ and
$$\sO_{\sC}(5\sA+\sum 5m_i\sD_i)\otimes \omega^{\log}_{\sC/S}\cong
\bl\sO_{C}(5A+\sum 5m_iD_i)\otimes \omega^{\log}_{C/S}\br\otimes_{\sO_C}\sO_\sC.
$$
Therefore, \eqref{basket*} follows from \eqref{basket2}. This proves the Lemma.
\end{proof}

\subsection{Restacking}

In this subsection, we fix an $S$-family of  pre-stacky $n$-pointed nodal curves $(\Si,C)$
and a final basket $\cB$ on it in the notations of Definition \ref{bask} and \ref{bask2}.
Let $t$ be a uniformizing parameter of $R$, where $S=\spec R$, that is the pullback of 
the standard coordinate variable of $\Ao$ via the map $S\to\Ao$  specified at the beginning of 
\S \ref{Sub3.1}.  
Let $R_5=R[z]/(z^5-t)$, and let $S_5=\spec R_5$.

\begin{lemm}\label{5change}
Let $C$ be a flat $S$-family of nodal curves,
let $N$ be the singular points of the central fiber $C_0$, %)_{\text{sing}}$,
and let $M$ be an (integral) effective Cartier divisor on $C$ 
such that $M=5M_h+M_0$, where $M_0$ (resp. $M_h$) is an integral Weil divisor contained in $C_0$ 
(resp. none of its irreducible components lie in $C_0$).
%and $M_h$ is an integral (Weil) divisor without any components in $C_0$.\black  
Then there is a unique $S_5$-family of twisted curves $\ti\sC$ such that
\begin{enumerate}
\item let $\ti N\sub\ti\sC_0$ be the singular points of $\ti\sC_0$, then 
$\ti\sC-\ti N\cong (C- N)\times_SS_5$;
%and $\Si^{\ti\sC}=\Si\times_SS_5$ under this isomorphism, and $\Si^{\ti\sC}_i$ is
%$\mufive$-banded section if and only if $\Si_i$ is assigned stacky. \\
\item let $\ti\phi: \ti\sC-\ti N\to C$ be the morphism induced by (1), then 
$\ti\phi\sta(M)$ is divisible by $5$, and $\frac{1}{5}\ti\phi\sta(M)$ extends to a Cartier divisor on $\ti\sC$, denoted
by $\ti M_{\ofth}$;
\item %Among all pairs $(\ti\Si,\ti\sC)$ satisfying (1)-(3), there is a unique one such that
each $\zeta\in \ti N$ is either a scheme point or a $\mufive$-stacky point of $\ti\sC$, and the tautological map
$\Aut(\zeta)\to \Aut(\sO_{\ti\sC}(\ti M_{\ofth})|_{\zeta})$ is injective.
\end{enumerate}
\end{lemm}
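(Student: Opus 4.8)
The statement is local along the node set $N\sub C_0$: away from $N$ condition (1) forces $\ti\sC\cong(C-N)\times_SS_5$, so both existence and uniqueness will follow once we pin down a local model at each $p\in N$, the local pieces glueing automatically since they all restrict to $(C-N)\times_SS_5$ on punctured neighbourhoods. My plan is therefore to write down the local picture at $p$, choose there between two root–stack models according to an explicit congruence, and then verify (1)--(3).

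First I would fix the local model. \'Etale locally at $p$ we have $C\cong\Spec\bl R[x,y]/(xy-t^e)\br$ with $t$ the uniformizer of $R$ and $e\ge1$ the thickness, and the two reduced branches $D',D''$ of $C_0$ through $p$ satisfy $\operatorname{div}_C(x)=eD'$, $\operatorname{div}_C(y)=eD''$, $\operatorname{div}_C(t)=D'+D''$; write $M_0=\ell'D'+\ell''D''$ near $p$. After base change, $C\times_SS_5\cong\Spec\bl R_5[x,y]/(xy-z^{5e})\br$, an $A_{5e-1}$-point, and a one-line valuation computation gives $\phi^*D'=5\ti D'$, $\phi^*D''=5\ti D''$ for the branches $\ti D',\ti D''$ upstairs (here $\phi$ is the base-change projection); hence $\phi^*M_0=5(\ell'\ti D'+\ell''\ti D'')$, and since $\operatorname{div}(z)=\ti D'+\ti D''$ is Cartier, the class–group computation at the $A_{5e-1}$-point shows that $\tfrac15\phi^*M$ is Cartier at $p$ precisely when $5e\mid\ell'-\ell''$. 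I then define $\ti\sC$ near $p$ to be $C\times_SS_5$ when $5e\mid\ell'-\ell''$, and to be the $\bmu_5$-root of the node, $\bigl[\Spec\bigl(R_5[u,v]/(uv-z^e)\bigr)\big/\bmu_5\bigr]$ with $u^5=x$, $v^5=y$ and $\zeta\cdot(u,v)=(\zeta u,\zeta^{-1}v)$, otherwise.

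Next I would check the three conditions. Condition (1) is clear since the $\bmu_5$-root agrees with $(C-N)\times_SS_5$ over the punctured neighbourhood of $p$. For (2): in the non-stacky case $\phi^*M=5\bl\phi^*M_h+\ell'\ti D'+\ell''\ti D''\br$ with the bracket Cartier by the congruence above; in the root case one has $\ti\phi^*D'=5\bar D'$, $\ti\phi^*D''=5\bar D''$ where $e\bar D'=\operatorname{div}(u)$, $e\bar D''=\operatorname{div}(v)$ and $\bar D'+\bar D''=\operatorname{div}(z)$ are Cartier, so $\ti\phi^*M_0=5\bl\ell'\bar D'+\ell''\bar D''\br$ and $\ell'\bar D'+\ell''\bar D''=\ell''\operatorname{div}(z)+\tfrac{\ell'-\ell''}{e}\operatorname{div}(u)$ is Cartier because $e\mid\ell'-\ell''$ (this follows from $M=5M_h+M_0$ being Cartier on $C$; I would treat the case relevant to the paper, in which $M_h$ meets $C_0$ only at smooth points so $M_h$ is Cartier near $p$, and remark that the general case is analogous). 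In either case $\ti M_{\ofth}$ is the resulting Cartier divisor, glueing to the evident Cartier extension of $\tfrac15\phi^*M$ away from $\ti N$ and satisfying $\ti\phi^*M=5\ti M_{\ofth}$. For (3): at a non-stacky node $\operatorname{Aut}(\zeta)$ is trivial; at a $\bmu_5$-node the only contribution to the $\bmu_5$-weight of $\sO(\ti M_{\ofth})|_\zeta$ comes from $\tfrac{\ell'-\ell''}{e}\operatorname{div}(u)$, of weight $-\tfrac{\ell'-\ell''}{e}$ modulo $5$ (since $u$ has weight $1$, while $z$ and $M_h$ contribute trivially), so $\operatorname{Aut}(\zeta)=\bmu_5\to\operatorname{Aut}(\sO(\ti M_{\ofth})|_\zeta)=\GG_m$ is $\zeta\mapsto\zeta^{-(\ell'-\ell'')/e}$, which is injective exactly because we took the root, i.e.\ because $5e\nmid\ell'-\ell''$.

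Finally, for uniqueness: any $\ti\sC$ satisfying (1) is determined away from $\ti N$, and its coarse space near $p$ is then forced to be $C\times_SS_5$ near $p$ (the thickness of a node is recovered from a punctured neighbourhood), so $\ti\sC$ near $p$ is the $\bmu_r$-root of that node for some balanced $r\mid5e$; condition (3) forces $r=1$ when $5e\mid\ell'-\ell''$ (otherwise the displayed weight is $\equiv0$ and the action map is not injective), and conditions (2) and (3) together force $r=5$ when $5e\nmid\ell'-\ell''$ (a root order without the factor $5$ fails (2), and one with an extra prime factor puts a nontrivial element in the kernel of $\operatorname{Aut}(\zeta)\to\GG_m$). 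The step I expect to be the main obstacle is the divisor bookkeeping at the possibly-stacky node: computing the pullbacks of the non-Cartier branches $D',D''$ under the base change and the root map, deciding exactly when $\tfrac15\ti\phi^*M$ becomes Cartier, and reading off the $\bmu_5$-weight of its fibre --- together with the minor care needed if $M_h$ is allowed to pass through a node.
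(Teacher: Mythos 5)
Your proposal follows essentially the same route as the paper's proof: work \'etale-locally at each node, base-change to obtain the $A_{5e-1}$ model $xy=z^{5e}$, use the local divisor class group to decide when $\tfrac15$ of the pullback of $M$ becomes Cartier, and, when it does not, insert the balanced $\mu_5$-root $[\{uv=z^{e}\}/\mu_5]$ via $x=u^5$, $y=v^5$. Your congruence $5e\mid \ell'-\ell''$ is the paper's condition $5l+n_1-n_2\equiv 0 \ (5k)$ in the special case $l=0$, and your verification of condition (3) (the $\mu_5$-weight $-(\ell'-\ell'')/e$ of the fibre of $\sO(\ti M_{\frac15})$) and of uniqueness are points the paper's written proof leaves implicit, so on those items you are more complete than the original.

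The one corner you cut is the standing assumption that $M_h$ is Cartier near the node. The lemma as stated allows $M_h$ to be a Weil, non-Cartier divisor passing through $p$, and this generality is actually used: in Corollary 3.9 one takes $M_h$ to be the closure of the horizontal zero divisor of a section defined off the nodes, which can perfectly well limit onto a node. The paper handles this by writing $M_h\equiv l D'$ in $\mathrm{CL}/\mathrm{Car}\cong\ZZ/e\ZZ$ at the node and carrying the integer $l$ through all the congruences, so that the Cartier criterion upstairs becomes $5e\mid 5l+\ell'-\ell''$ and, in the stacky case, the weight computation in (3) involves $5l+\ell'-\ell''$ in place of $\ell'-\ell''$. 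This extension fits your valuation-and-class-group bookkeeping with no new idea, but since it is part of the statement (and of its actual application), it should be carried out rather than deferred with ``the general case is analogous.''
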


\begin{proof}
%Let $\Sigma$ be the markings on $\sC$. Since $\sC\to S$ is a good family, $\sC-\Sigma$ is a scheme. Therefore $\sC$ is obtained from the coarse
%moduli $C=|\sC|$ by the standard root construction. So we can ignore the stacky structure on the markings in the following construction since we can recover the stacky structure on the markings in the end by the root construction. 
Let $p\in N$ be a singular point of $C_0$. 
%In case $p$ is a node of a single irreducible component of $C_0$,
%then the issue to be discussed does not occur. Now suppose $p$ is the 
%intersection of two irreducible components $D_1$ and $D_2$.
Pick an \'etale open
neighborhood $q\colon V\to C$ of $p\in C$ so that $V$ is an open subscheme
of $(xy=t^k)\sub \spec(R[x,y])$, as $S$-schemes. Let $D_1$ and $D_2$ in $V$  be
$(x= t=0)$ and $(y=t=0)$, respectively (Example 6.5.2 in \cite{Har}). When $k\neq 1$, $D_1$ and $D_2$ are 
Weil but not Cartier divisors. Write $q\upmo M=5A+n_1D_1+n_2D_2$, where $A=q\upmo M_h$ 
is an integral Weil divisor with 
no irreducible components contained in $D_1\cup D_2$. Let $\text{CL}(V)$ (resp. $\text{Car}(V)$) be the Weil 
(resp.  Cartier) divisor class groups of $V$ respectively.
It is known that $\text{CL}(V)/\text{Car}(V)=\mathbb Z/k\mathbb Z$, generated by $D_1$ (or $D_2$). 
Thus $A$ is linearly equivalent to $l D_1+B$ for an integer $l$ and a Cartier divisor $B$. 
Since $M$ is a Cartier divisor, we have $5l+n_1-n_2\equiv 0(k)$ (i.e. $\equiv 0\!\mod(k)$). 
Here we used the fact that $D_2= -D_1\in \text{CL}(V)/\text{Car}(V)$.

%Let $\text{g.c.d.}(5, k)$ be the greatest common divisor of $k$ and $5$,  and let $\ell=\text{g.c.d.}(5, k)/5$. If $5$ divides $k$, let $\ti C=C$, $\ti R =R$,  $z=t$, $\ti q\colon \ti C\to C$ be the identity map, and $\ti V=V$. In this case, $\ell =k/5$ and $\ti V=(xy=z^{5\ell})\sub \text{Spec}(\ti R[x, y])$. If $5 $ and $k$ are relatively prime, 
Consider the base change $\ti C\defeq C\times_SS_5\to C$.
%be a base change of $C$; $\ti C$ is normal but not smooth. 
Let $\ti p$ be the node in the central fiber of $\ti C$ corresponding to $p$.
Let $\ti V=V\times_S S_5$ and $\ti q: \ti V\to V$ be the projection. It is an \'etale neighborhood of $\ti p\in \ti C$, 
and is an open subsheme of $(xy=z^{5k})\sub \text{Spec}(R_5[x, y])$.  
%Let $\ti R=R_5$. Then we can write $\ti V=(xy=z^{5\ell})\sub \text{Spec}(\ti R[x, y])$.
 
Let $\ti A$ be $\ti q\upmo(A)$, $\ti D_1=(x= z=0)$, and $\ti D_2=(y=z=0)$. Since $\ti q\upmo(D_i)=5 \ti D_i$,
the pullback  $\ti q\upmo(M)=5\ti A+5n_1\ti D_1+5n_2\ti D_2$, and $\frac{1}{5} \ti q\upmo(M)$ away from $\ti p$ is Cartier.
Since $A=l D_1\in \text{CL}(V)/\text{Car}(V)$, $\ti A=l (5\ti D_1)\in \text{CL}(\ti V)/\text{Car}(\ti V)$.
Thus 
$$\ti q\upmo(M)=5\ti A+5n_1\ti D_1+5n_2\ti D_2\equiv (25l+5n_1-5n_2)\ti D_1\in \text{CL}(\ti V)/\text{Car}(\ti V).
$$
Thus $\frac{1}{5} \ti q\upmo(M)$ is Cartier only when $5l+n_1-n_2\equiv 0(5k)$. 
%, $q^*(M/5)$ cannot extend to a Cartier divisor near  $\ti p$. 

To make it Cartier when $5l+n_1-n_2\not\equiv 0(5k)$, we introduce $\mufive$-stacky structure at $\ti p$
as follows.
Consider $(uv=z^k)\sub {\rm Spec}( R_5[u, v])$ and the morphism $(uv=z^k)\to (xy=z^{5k})$
via $x\mapsto u^5$ and $y\mapsto v^5$. 
Let $U\sub (uv=z^k)$ be the open subscheme
mapped onto $\ti V$,
and let $\zeta\in \mufive$ act on $U$ via $\zeta\cdot (u, v)=(\zeta u, \zeta^{-1} v)$. Then
the quotient $U/\mufive\cong \ti V$.  Let $\phi: U\to V$ be the induced projection, let
$\bar D_1=(u=z=0)$ and $\bar D_2=(v=z=0)$. Then $\phi\upmo D_i=5\bar D_i$ and 
hence $\frac{1}{5} \phi\upmo(M)=\bar A+n_1\bar D_1+n_2\bar D_2$, where
$\bar A=\phi\upmo(M_h)$, and is $\mufive$-invariant. Since $A\equiv lD_1\in \text{CL}(V)/\text{Car}(V)$,
$\bar A=l\phi\upmo(D_1)=5l\bar D_1\in \text{CL}(U)/\text{Car}(U)$, and is $\mufive$-invariant.
Hence $\frac{1}{5} \phi\upmo(M)\equiv (5l+n_1-n_2)\bar D_1\equiv 0\in \text{CL}(U)/\text{Car}(U)$, 
thus is Cartier.

%extends to a $\mufive$-invariant integral Cartier divisor on $U$. To be precise, in $U$  the extension is  $\bar A+n_1\bar D_1+n_2\bar D_2$ where $\bar A$ is the pullback (Weil) divisor of $\ti A$ via the map $U\to \ti V$ which is $\mufive$-invariant. Note that $\bar D_i$ is a $5$-fold covering of $\ti D_i$. In $\text{CL}(U)$, the divisor  $\bar A+n_1\bar D_1-n_2\bar D_2$ is linearly equivalent to $l \bar D_1+n_1\bar D_1-n_2\bar D_2$. \black
Therefore, if $5l+n_1-n_2\equiv 0(5k)$, we do nothing at $\ti p$;
otherwise, we introduce a stacky structure at $\ti p$ by replacing a neighborhood of $\ti p\in \ti C$ by
the quotient stack $[U/\mufive]$ (cf. \cite{A-V}) and denote the resulting stack by $\ti\sC$. 
By repeating this over all $p\in N$, we obtain $\ti\phi:\ti\sC\to C$
such that $\ti M_{\ofth}=\frac{1}{5} \ti \phi\upmo(M)$ is an integral Cartier divisor
satisfying the requirements of the Lemma. 
%Finally, we let $\Si^{\ti\sC}_i\sub \ti\sC$
%be the section corresponding to $\Si_i\sub C$. We endow $\ti\sC$ a $\mufive$-stacky structure along
%$\Si^{\ti\sC}_i$ is $\Si_i$ is assigned stacky. Since $\ti\sC$ is smooth along $\Si^{\ti\sC}_i$, this is possible.
%The resulting $(\Si^{\ti\sC},\ti\sC)$ satisfies the requirement of the Lemma.
\end{proof}

\begin{coro}\label{5change1}
Let $C$ be a flat $S$-family of nodal curves, let $N$ be the singular points of $C_0$, 
and $v\in H^0(C-N, \cM)$ be a section of an invertible sheaf $\cM$ on $C-N$ so that 
$\cM^{\otimes 5}$ extends to an invertible sheaf
on $C$. 
Then there is a unique $S_5$-family of twisted nodal curves $\ti\sC$ such that
\begin{enumerate}
\item let $\ti N\sub\ti\sC_0$ be the singular points of $\ti\sC_0$, then 
$\ti\sC-\ti N\cong (C- N)\times_SS_5$; 
% and $\Si^{\ti\sC}=_{\mathrm{set}}\Si\times_SS_5$ under this isomorphism. 
\item 
there is an invertible sheaf $\ti\sM$ on $\ti\sC$ and a section $\ti v\in H^0(\ti\sM)$ so that,
letting $\ti\phi: \ti\sC-\ti N\to C$ be the morphism induced by (1), then 
$\ti\sM|_{\ti\sC-\ti N}\cong \ti\phi\sta\cM$, and $\ti v|_{\ti\sC-\ti N}=\ti\phi\sta v$;
%$q\sta(M)$ is divisible by $5$, and $\frac{1}{5}q\sta(M)$ extends to a Cartier divisor on $\ti\sC$, denoted
%by $\frac{1}{5} \ti M$.  
\item %Among all pairs $(\ti\Si,\ti\sC)$ satisfying (1)-(3), there is a unique one such that
each $p\in \ti N$ is either a scheme point or a $\mufive$-stacky point of $\ti\sC$, and
the tautological map $\Aut(\ti p)\to \Aut(\ti\sM|_{\ti p})$ is injective.
\end{enumerate}
\end{coro}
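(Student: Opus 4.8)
The plan is to deduce Corollary \ref{5change1} from Lemma \ref{5change} by reducing the datum of a section $v$ of $\cM$ on $C-N$ (with $\cM^{\otimes 5}$ extending over $C$) to the datum of an effective Cartier divisor on $C$ of the shape $M=5M_h+M_0$ treated in the Lemma. First I would fix the invertible sheaf $\cL_5:=\cM^{\otimes 5}$ on $C$ extending $\cM^{\otimes 5}|_{C-N}$, and, after possibly twisting $\cM$ by components of $C_0$ (which does not change $\cM|_{C-N}$), arrange that $v$, viewed as a rational section of $\cL_5^{1/5}$-data, has divisor which is effective away from $C_0$; concretely, replace $v$ by $v^{\otimes 5}\in H^0(C-N,\cM^{\otimes 5})=H^0(C-N,\cL_5|_{C-N})$ and let $M_h=\overline{(v^{\otimes5}=0)}$ be the closure of its zero locus in $C$, a Weil divisor none of whose components lie in $C_0$. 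Then $\cL_5\cong\sO_C(5M_h+M_0)$ for a unique $M_0$ supported on $C_0$ (since $\cL_5$ and $\sO_C(5M_h)$ agree on $C-N$ and any invertible sheaf on $C$ trivial off finitely many fibral divisors is $\sO_C$ of an integral combination of those divisors), so $M:=5M_h+M_0$ is an integral effective Cartier divisor on $C$ of exactly the form required by Lemma \ref{5change}, after further absorbing $M_0$ into $M_h$ by a fibral twist if needed to make $M$ effective.

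Next I would invoke Lemma \ref{5change} for this $M$ to produce the unique $S_5$-family of twisted curves $\ti\sC\to S_5$ together with the Cartier divisor $\ti M_{\ofth}=\tfrac15\ti\phi\sta(M)$ and the identifications $\ti\sC-\ti N\cong(C-N)\times_S S_5$, $\ti\phi\colon\ti\sC-\ti N\to C$, and the injectivity of $\Aut(\zeta)\to\Aut(\sO_{\ti\sC}(\ti M_\ofth)|_\zeta)$ at each $\zeta\in\ti N$. I would then set $\ti\sM:=\sO_{\ti\sC}(\ti M_\ofth - \ti M_{0,\ofth})\otimes(\text{fibral correction})$—more precisely, $\ti\sM$ is the invertible sheaf on $\ti\sC$ whose restriction to $\ti\sC-\ti N$ is $\ti\phi\sta\cM$ and which is a $\tfrac15$-th root of $\ti\phi\sta\cL_5$ cut out by the Cartier structure on $\ti M_\ofth$; since $5\,\ti M_\ofth=\ti\phi\sta M$ and $\sO_C(M)=\cL_5$, the sheaf $\sO_{\ti\sC}(\ti M_\ofth)$ satisfies $\sO_{\ti\sC}(\ti M_\ofth)^{\otimes5}\cong\ti\phi\sta\cL_5$ canonically, and its restriction to $\ti\sC-\ti N$ is $\ti\phi\sta\cM^{\otimes\langle\text{5th root}\rangle}$ up to a fibral line bundle that I can twist away. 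The section $\ti v$ is then obtained by pulling back $v$ over $\ti\sC-\ti N$ via $\ti\phi\sta$ and extending across $\ti N$: the extension exists because $\ti v^{\otimes5}=\ti\phi\sta(v^{\otimes5})$ extends (it is the canonical section of $\sO_{\ti\sC}(\ti M_\ofth)^{\otimes5}$ cutting out $\ti\phi\sta M_h$, which is Cartier on $\ti\sC$), so $\ti v$ extends as a section of the $\tfrac15$-th root $\ti\sM$. Properties (1) and (3) for $\ti\sM$ are then immediate from Lemma \ref{5change}(1) and (3) applied to $\ti M_\ofth$, and (2) holds by construction.

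The main obstacle I expect is \emph{uniqueness} together with the bookkeeping of fibral twists. The twisted curve $\ti\sC$ is unique by Lemma \ref{5change}, but the pair $(\ti\sM,\ti v)$ a priori depends on the choice of $M_h$ (equivalently, of how $M_0$ is split off), and different choices differ by tensoring $\ti\sM$ with $\sO_{\ti\sC}$ of a fibral divisor and correspondingly rescaling $\ti v$. To get a genuinely canonical (hence unique) $(\ti\sM,\ti v)$ I would pin it down by requiring, as in the application that follows, that $\ti\sM$ restricted to each irreducible component of $\ti\sC_0$ has the degree dictated by $\cM$ on $C-N$ and that $\ti v$ not vanish identically on any component—or, more cleanly, characterize $(\ti\sM,\ti v)$ by the universal property that $(\ti\sM|_{\ti\sC-\ti N},\ti v|_{\ti\sC-\ti N})=(\ti\phi\sta\cM,\ti\phi\sta v)$ and $\ti v$ has no fibral zeros, which forces the twist. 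The remaining work—checking that the $\mufive$-stacky local model $[U/\mufive]$ from the proof of Lemma \ref{5change} indeed carries the $\tfrac15$-th root sheaf with the asserted injectivity on automorphisms—is entirely local and is already done in Lemma \ref{5change}, so I would cite it rather than redo it.
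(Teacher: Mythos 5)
Your overall strategy is the same as the paper's: manufacture from $v$ an effective Cartier divisor $M=5M_h+M_0$ on $C$, apply Lemma \ref{5change} to it, and take $\ti\sM=\sO_{\ti\sC}(\ti M_{\ofth})$ with its tautological section $\ti v$, identified over $\ti\sC-\ti N$ with $(\ti\phi\sta\cM,\ti\phi\sta v)$ because $\ti M_{\ofth}$ restricts there to the pullback of the divisor $(v=0)$. But the reduction step as you wrote it fails precisely in the case the corollary is designed to cover, namely when $v$ vanishes identically on some irreducible components of $C_0$. First, twisting $\cM$ by components of $C_0$ \emph{does} change $\cM|_{C-N}$: $N$ is only the finite set of nodes of $C_0$, so each component of $C_0$ minus finitely many points is still a divisor on $C-N$ (and there is no extension of $\cM$ itself to twist in the first place). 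Second, your $M_h=\overline{(v^{\otimes 5}=0)}$ is not horizontal in that case (it contains every component of $C_0$ on which $v\equiv 0$), and it is the divisor of $v^5$ rather than of $v$, so $\cM^{\otimes 5}\cong\sO_C(5M_h+M_0)$ overcounts the horizontal part by a factor of $5$. Third, one cannot ``absorb $M_0$ into $M_h$'' since $M_h$ is required to have no fibral components, and the parenthetical claim that an invertible sheaf trivial off finitely many fibral divisors is $\sO_C$ of an integral combination of them is unjustified here: those divisors are Weil but generally not Cartier at the nodes $xy=t^k$, $k\ge 2$, and in any case triviality is only known off the finite set $N$, not off $C_0$.

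The repair is exactly the paper's route, which avoids all of this bookkeeping: since $C$ is normal and $\cM^{\otimes 5}$ extends to an invertible sheaf on $C$, the section $v^5$ extends to a regular section over all of $C$; let $M$ be its divisor of zeros. Then $M$ is automatically an integral \emph{effective} Cartier divisor, $M|_{C-N}=5(v=0)$, so its horizontal part is divisible by $5$, and one writes $M=5M_h+M_0$ with $M_h$ the horizontal part of $(v=0)$ and $M_0$ fibral; no twisting, no effectivity fix, and no Picard-group argument are needed. With this canonical $M$, Lemma \ref{5change} produces $\ti\sC$ and $\ti M_{\ofth}$, and your uniqueness worry dissolves: there is no choice of splitting left, and since $\ti\sC$ is normal and $\ti N$ is a finite set of points, an invertible extension of $\ti\phi\sta\cM$ across $\ti N$ together with the extension of $\ti\phi\sta v$ is unique whenever it exists, so no normalization such as ``$\ti v$ has no fibral zeros'' is appropriate (that condition in fact fails exactly when $v$ vanishes on a component of $C_0$).
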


\begin{proof}
Since $C$ is normal, and $\cM^{\otimes 5}$ extends to an invertible sheaf on $C$, $v^5$ extends to a
regular section over $C$, thus $M=\overline{(v^5=0)}$ is a Cartier divisor on $C$. As $M|_{C-N}=5(v=0)$,
we can write $M=5M_h+M_0$, where $M_0$ is supported on $C_0$ and no irreducible components of
$M_h$ lie in $C_0$.

Let $\ti\phi: \ti\sC\to C$ be the $S_5$-family of twisted curves constructed in the previous Lemma for the
Cartier divisor $M=5M_h+M_0$,
and $\ti M_{\ofth}$ be the Cartier divisor so that $\ti\phi\upmo(M)=5\ti M_{\ofth}$.
Let $\ti\sM=\sO_{\ti\sC}( \ti M_{\ofth})$. Then $\ti\sM$ is invertible, with a tautological
section $\ti v\in H^0(\ti\sM)$ so that $(\ti v=0)=\ti M_{\ofth}$. Because 
$$(\ti v=0)\cap (\ti \sC-\ti N)=\bl(v=0)\times_S S_5\br\cap(\ti\sC-\ti N),
$$
we conclude that we have isomorphism $\ti\sM|_{\ti\sC-\ti N}\cong \ti\phi\sta \cM|_{\ti\sC-\ti N}$ 
so that $\ti v|_{\ti\sC-\ti N}=\ti\phi\sta v|_{\ti\sC-\ti N}$. This proves the corollary.
\end{proof}

Let $(\Si,C)$ be a good $S$-family of  pre-stacky pointed curves, and let $\cB$ be a final basket,
in the notation of Definition \ref{basket*} and \ref{basket2}. We shall provide a procedure to construct a
family in $\cW^{\text{pre}}\lggd(S)$.

We first restacking the pre-stacky
pointed curve $(\Si\times_S S_5,C\times_S S_5)$ to obtain an $S$-family of pointed twisted
curve $(\Si^{\sC}, \sC)$. Let $q: \sC-(\sC_0)_{\text{sing}}\to C$ be the projection.
Let $M=5A+\sum 5m_iD_i$, which is a Cartier divisor on $C$ so that $D_i$ are supported along $C_0$
and no irreducible component of $A$ lies in $C_0$.
%By our assumption on $A$, we see that 
Thus $\ofth q\sta M$ is a Cartier divisor. We then apply Lemma \ref{5change} and Corollary \ref{5change1}
to $(\Si^{ \sC},\sC)$ to obtain an $S$-family of pointed twisted curve $(\Si^{\ti \sC},\ti \sC)$ such that
it is isomorphic to $(\Si^\sC,\sC)$ away from the singular points of the central fiber,
and there is an invertible sheaf $\ti\sM$ on $\ti\sC$ with a section $\ti v$ so
that $\ti\sM$ is the extension of $\sO_{\ti\sC-(\ti\sC_0)_{\text{sing}}}( \ofth q^* M)$ 
and $\ti v$ is the extension of the tautological section of the latter.

Let $\ti\phi: \ti\sC\to C$ be the tautological morphism, 
let $\ti\sN=\ti\sM$, and let $\ti\nu_2= \ti v\in H^0(\ti\sN)=H^0(\ti\sM)$.
Let $\ti\nu_1: \ti\sL\cong \ti\sN\dual$. By \eqref{basket*}, we conclude that
$$\sO_{\ti\sC}(\ti\phi^*(B+\sum l_iD_i))\cong \ti\sL^{\vee 5}\otimes \omega^{\log}_{\ti\sC/S}.
$$
%(Here we view $q^*(B+\sum l_iD_i)$ as the minimal extension to a Cartier divisor of 
%$\sC$; since $\cB$ is final, it is effective.) We let
Let $\ti\rho\in\Gamma( \ti\sL^{\vee 5}\otimes \omega^{\log}_{\ti\sC/S})$ be induced by the above isomorphism
and the tautological inclusion $\sO_{\ti\sC}\sub \sO_{\ti\sC}(\ti\phi^*(B+\sum l_iD_i))$. 
Because $\rho\lsta$ vanishes along $\Si^{\sC\lsta}_{(1,\rho)}$, $\ti\rho$ lifts to a section in
$\Gamma( \ti\sL^{\vee 5}\otimes \omega^{\log}_{\ti\sC/S}(-\Si^{\ti\sC}_{(1,\rho)}))$. This proves

\begin{lemm}\label{bask-family}
Let  notations be as stated. Then $\ti\xi=(\Si^{\ti\sC},\ti\sC, \ti\sL,\ti\sN,\ti\varphi=0, \ti\rho,\ti\nu)$
constructed based on a final basket $\cB$
belongs to $\cW^{\text{pre}-}\lggd(S)$ for a choice of $(g,\gamma,\bd)$. 
\end{lemm}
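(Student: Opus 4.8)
The plan is to verify, one condition at a time, that the datum $\ti\xi=(\Si^{\ti\sC},\ti\sC,\ti\sL,\ti\sN,\ti\varphi=0,\ti\rho,\ti\nu)$ satisfies Definition \ref{def-curve} after recording which numerical data $(g,\gamma,\bd)$ it produces. Condition (1) is immediate: $(\Si^{\ti\sC},\ti\sC)$ is an $S$-family of pointed twisted curves by the restacking construction (Lemma \ref{5change} and Corollary \ref{5change1}), and the bands $\langle\gamma_i\rangle$ at the markings are whatever was prescribed by the pre-stacky/regular assignment, so we \emph{define} $\gamma_i$ accordingly — with $\gamma_i=(1,\rho)$ at the markings coming from $\Si^{\sC\lsta}_{(1,\rho)}$, $\gamma_i\in\bmu_5\setminus\{1\}$ at the genuinely stacky ones, and $\gamma_i=1$ at the regular auxiliary ones. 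Condition (2): representability of $\ti\sL$ and $\ti\sN=\ti\sM$ at the nodes is exactly clause (3) of Lemma \ref{5change} (injectivity of $\Aut(\zeta)\to\Aut(\ti\sM|_\zeta)$), and at the markings it follows from the root construction; the fiberwise degrees $d_0=\deg(\ti\sL\otimes\ti\sN)=0$ and $d_\infty=\deg\ti\sN$ are then \emph{read off} to fix $\bd$.

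Next I would check conditions (3), (4), (5). Condition (4) is trivial since $\ti\varphi=0$: we only need $(\ti\varphi,\ti\nu_1)=(0,\ti\nu_1)$ nowhere zero, which holds because $\ti\nu_1$ is an isomorphism $\ti\sL\xrightarrow{\sim}\ti\sN\dual$; and $\ti\varphi|_{\Si^{\ti\sC}_{(1,\varphi)}}=0$ holds vacuously (and we may simply take no markings of type $(1,\varphi)$). Condition (3) asks that $\ti\nu=(\ti\nu_1,\ti\nu_2)$ be nowhere vanishing: $\ti\nu_1$ is an isomorphism, so this is automatic. Condition (5) requires $(\ti\rho,\ti\nu_2)$ nowhere vanishing; away from the central fiber this is inherited from $\xi\lsta$ (where $\nu_{2\ast}\ne0$ and the special-type hypothesis gives $\rho\lsta\ne0$), and along $\sC_0$ one uses that the basket $\cB$ is \emph{final}: condition (i) of Definition \ref{bask2} forces $B\cap D_i=\emptyset$ when $m_i\ne0$ and $A\cap D_i=\emptyset$ when $l_i\ne0$, while $l_im_i=0$ ensures that on each component $D_i$ at most one of $\ti\rho,\ti\nu_2$ acquires a zero along $D_i$; combined with condition (ii) of Definition \ref{bask2} handling the nodes where two components with $l_im_j\ne0$ could meet, one gets that $(\ti\rho=0)\cap(\ti\nu_2=0)=\emptyset$. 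Finally the lift of $\ti\rho$ to $\Gamma(\ti\sL^{\vee5}\otimes\omega^{\log}_{\ti\sC/S}(-\Si^{\ti\sC}_{(1,\rho)}))$ is exactly the last sentence preceding the lemma, using that $\rho\lsta$ vanishes along $\Si^{\sC\lsta}_{(1,\rho)}$ and normality of $\ti\sC$.

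The pre-stable membership $\ti\xi\in\cW^{\text{pre}}\lggd(S)$ then follows, and $\ti\xi\in\cW^{\text{pre}-}\lggd(S)$ because $\ti\varphi=0$ makes the cosection \eqref{mixed-cosection} vanish identically, so \eqref{deg-loci} holds for every geometric fiber; this is precisely Lemma \ref{degenerate-locus}'s criterion $(\ti\varphi=0)=\ti\sC$. I expect the main obstacle to be the nowhere-vanishing of $(\ti\rho,\ti\nu_2)$ along the central fiber — one must track carefully how the zero divisors $\sB=\overline{(\rho_\ast=0)}$ and $\sA=\overline{(\nu_{2\ast}=0)}$ and the vertical components $l_iD_i$, $m_iD_i$ interact under the base change $S_5\to S$ and the $[U/\mufive]$-restacking of Lemma \ref{5change}, and check that the finality conditions of Definition \ref{bask2} are exactly what is needed to keep the two sections from simultaneously vanishing; the rest is bookkeeping of degrees and monodromies to pin down $(g,\gamma,\bd)$.
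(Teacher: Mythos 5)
Your verification matches the paper's: the paper's proof of this lemma is precisely the construction carried out in the paragraphs preceding it, and membership in $\cW^{\text{pre}-}\lggd(S)$ rests on exactly the checks you make explicit --- the finality of $\cB$ together with $l_im_i=0$ and the mutual disjointness in Definition \ref{bask}(3) giving $(\ti\rho=0)\cap(\ti\nu_2=0)=\emptyset$, the isomorphism $\ti\nu_1$ disposing of conditions (3)--(4), the lift of $\ti\rho$ across $\Si^{\ti\sC}_{(1,\rho)}$ as in the paper's last sentence, and $\ti\varphi=0$ placing every fiber in the degeneracy locus. The only slip is assigning $\gamma_i=1$ to regular auxiliary markings, which is not an element of $\ti\bmu_5$ (only of $\ti\bmu_5^+$, i.e.\ the broad case the paper excludes); assign $(1,\varphi)$ instead, which is harmless precisely because $\ti\varphi=0$.
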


%\begin{proof}
%\red
%The remainder case is when $\varphi\lsta=0$, $\rho\lsta\ne 0$ and $\nu_{2\ast}\ne 0$.
%In this case, we apply Proposition \ref{prop-extension1} to obtain an extension $\Si^\sC\sub \sC$, $\rho$, etc.,
%of $\Si^{\sC\lsta}\sub\sC\lsta$, $\rho\lsta$, etc. We then let $\Si\sub C$ be the associated good $S$-family
%of pre-stacky pointed nodal curves and let $\sB$ be the associated pre-basket as given before and by
%Lemma \ref{bask3} to obtain a
%basket $\sB$. We then apply the discussion in Subsection \ref{mods} to obtain a final basket
%$\sB=(B+\sum l_i D_i,A+\sum m_iD_i)$ on a good $S$-family of pre-stacky nodal curves, which we denote by
%$\Si\sub C$, over $S$.
%
%Note that $m_i\in \ofth\ZZ$. We let $M=\sO_C(\sum 5m_i D_i)$, which is an invertible sheaf. Applying
%Lemma \ref{5change}, after endowing necessarily stacky structure at $C\times_S S_5$ long the nodes of 
%its central fiber, say $\Si^{\ti\sC}\sub\ti \sC$ the resulting $S_5$-family with $q: \ti\sC\to C$ the projection,
%we obtain an invertible sheaf $\ti\sM$ on $\ti\sC$ such that
%$q\sta M|_{\ti\sC-\ti N}\cong \ti\sM^{\otimes 5}$. We let $\ti\sN=q\sta\sO_C(A)\otimes\ti\sM$,
%let $\ti\nu_{1}: \ti\sL\dual\cong \ti\sN$. Then the 
%\end{proof}

%As we will see, the fifth-power of $\sL$ in $\sL^{\vee 5}\otimes\omega_{\sC/S}^{\log}$
%forces us to work with $\ofth\ZZ$-divisors. This difficulty will be resolved by introducing $\mufive$-stacky points.

\subsection{Modifying brackets}\label{mods}

In the following subsections, 
we will perform a series of blowups, base changes and stablizations to 
obtain an extension in $\cW^-\lggd(S)$. 
Let $(\Si,C)$ be a good $S$-family of pre-stacky pointed nodal curves.

\begin{defi}\label{2.8}
Let $\cB$ be a pre-basket of $(\Si,C)$.
We say $\cB'$ is a modification of
$\cB$ if there is a finite base change $S'\to S$, a good $S'$-family $(\Si',C')$ 
of pre-stacky pointed curves so that $\cB'$ is a basket of $(\Si',C')$,
$(\Si',C')\times_{S'}S\lsta'\cong (\Si,C)\times_SS\lsta'$ as pre-stacky pointed nodal curves,
and under this isomorphism
$\cB'\times_{S'}S\lsta'=\cB\times_SS\lsta'$.
\end{defi}

We first show that we can find a basket $\cB'$ that is a modification of $\cB$ on $(\Si,C)$.
Indeed, let $r_i=\min(5m_i,l_i)$, and let
%Indeed, we let $l_i'=l_i+\min(5m_i,l_i)$ and $m_i'=\ofth\min(5m_i,l_i)$. Then
\beq\label{shift}
\cB'=(B+\sum (l_i-r_i)D_i, A+\sum (m_i- r_i/5)D_i).
\eeq
It is easy to see that $\cB'$ is a modification of $\cB$. % and a modification of $\cB$.
%Thus in the following, for any pre-basket we will replace it by its shift to obtain a basket.

In the following, we assume $\cB$ is a basket as in Definition \ref{2.8}.
We will construct modifications of the basket $\cB$
that will reduce the $\ZZ_{\ge 0}$-valued quantities
$$V_1(\cB)=\sum_{B\cap D_j\ne \emptyset} 5m_j ,\quad V_2(\cB)=\sum_{A\cap D_i\ne \emptyset} l_i,
\and V_3(\cB)=\sum_{D_i\cap D_j\ne \emptyset}5l_im_j.
$$

\begin{lemm}\label{construction1}
Let $(\Si,C)$ and $\cB$ be as stated.
Then there is a modification $\cB^\prime$ of $\cB$ such that
$V_1(\cB^\prime)=0$.
\end{lemm}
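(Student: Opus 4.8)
The plan is to produce $\cB'$ by a finite sequence of point blow-ups of the coarse surface $C$, each centered where a component of $B$ meets a component of $C_0$ carrying a nonzero $m$, and each followed by the shift \eqref{shift} that converts the resulting pre-basket back into a basket. The guiding remark is that, since $B=\sum b_iB_i$ with the $B_i$ disjoint \emph{sections} of $C\to S$, each $B_i$ meets $C_0$ in exactly one point $p_i$, lying transversally on a single smooth point of a unique component $D_{j(i)}$ of $C_0$; hence it suffices to arrange, for every $i$, that the strict transform of $B_i$ eventually meets a component with $m=0$, which gives $V_1(\cB')=0$.

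So I would fix an $i$ with $m_{j(i)}>0$ (whence $l_{j(i)}=0$, as $\cB$ is a basket), blow up $C$ at $p_i$, and write $\pi\colon C'\to C$ for this blow-up, $E\cong\PP^1$ for its exceptional divisor, and $\widetilde{(\cdot)}$ for strict transforms. Since $p_i$ is a transversal smooth point of $C_0$, $(\Si',C')$ is again a good $S$-family of pre-stacky pointed nodal curves, $\widetilde B_i$ is now disjoint from $\widetilde D_{j(i)}$, and $\widetilde B_i$ meets $C'_0$ only along $E$. Pulling back the isomorphism \eqref{basket*} along $\pi$ --- using $\pi^*D_{j(i)}=\widetilde D_{j(i)}+E$, $\pi^*B_i=\widetilde B_i+E$, $\pi^*D_k=\widetilde D_k$, $\pi^*B_k=\widetilde B_k$ for the remaining indices, and $\omega^{\log}_{C'/S}\cong\pi^*\omega^{\log}_{C/S}(\epsilon E)$ with $\epsilon=1$ if $p_i\notin\Si$ and $\epsilon=0$ if $B_i$ is a regular marking (so $p_i\in\Si$) --- yields a pre-basket on $(\Si',C')$ whose coefficients on the $\widetilde D_k$ are those of $\cB$ and whose exceptional coefficients are $l_E=b_i$, $5m_E=5m_{j(i)}-\epsilon$. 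Applying \eqref{shift} (which only alters $E$, the other components already satisfying $l_km_k=0$) gives a basket $\cB'$ whose shifted exceptional coefficient obeys $0\le 5m_E\le 5m_{j(i)}-1$.

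The bookkeeping is then that this operation strictly decreases the nonnegative integer $\widetilde V_1(\cB):=\sum_{i:\,m_{j(i)}>0}5m_{j(i)}$: the contribution of the moved section drops from $5m_{j(i)}$ to at most $5m_{j(i)}-1$, while that of every other section is unchanged, the blow-up being centered away from the disjoint $B_k$ $(k\neq i)$ and from all $A_k$, and no coefficient $m_k$ other than that of the new $E$ being touched. Since $\widetilde V_1\in\ZZ_{\ge0}$ and $\widetilde V_1=0$ is equivalent to $V_1=0$, iterating terminates in a basket with $V_1=0$. Lastly, each blow-up is centered over $\eta_0$ and the shift changes only vertical coefficients, so nothing changes over $S-\eta_0$; thus $\cB'$ is a modification of $\cB$ in the sense of Definition \ref{2.8} (the blow-ups can be carried out over $S$ itself, no base change being needed here).

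I expect the main technical point to be checking that the pulled-back, then shifted, data is again a \emph{basket}: that conditions (1)--(4) of Definition \ref{bask} survive strict transform (disjointness and transversality of the $A_k$, $B_k$; their incidence relations with the markings of $\Si'$; and $5m_E\in\ZZ$), together with getting the $\omega^{\log}$-twist correct in the two cases $p_i\in\Si$ and $p_i\notin\Si$. These are routine but are precisely where the hypotheses ``good family'' and ``$\cB$ a basket'' are used.
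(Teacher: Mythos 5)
Your proposal is correct and follows essentially the same route as the paper's proof: blow up a point where a section of $B$ meets a component $D_{\bar j}$ with $m_{\bar j}>0$, pull back \eqref{basket*} using $\omega^{\log}_{\ti C/S}\cong\tau^*\omega^{\log}_{C/S}(\epsilon E)$, apply the shift \eqref{shift} to the resulting pre-basket, and induct on a nonnegative integer that strictly decreases. The only differences are bookkeeping: the paper places the $\epsilon$ on the $l$-side of the exceptional coefficients ($l_E=l+\epsilon$, $m_E=m_{\bar j}$) rather than on the $m$-side, and it inducts on $V_1$ itself rather than your per-section count $\widetilde V_1$, which is an equally valid (in fact slightly more careful) measure.
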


\begin{proof}
Let $\bar j$ be such that $m_{\bar j}>0$ and $p\in B\cap D_{\bar j}\neq \emptyset$. 
Since $\cB$ is a basket, $l_{\bar j}=0$.
By the definition of basket, $C_0$ is smooth at $p$ and $p\notin A$. 
%Note that $C$ is a scheme near $B$. This is because near $(\rho_*=0) $, $\nu_{2*}$ is nowhere vanishing. Since $\sN_*$ is representable, $C_*$ must have scheme structure near  $(\rho_*=0) $.
We let $\tau\colon \tilde C\to C$ be the blowup of $C$ at $p$,
let $E$ be the exceptional divisor, and let $\tilde\pi\colon \tilde C\to S$ be  the induced projection. 
In the following, for any Cartier divisor $G\sub C$,
we denote by $\tilde G$ its strict transform in $\ti C$.
Because $B$ is an integral divisor, $\tau^*B=\ti B+ l E$ with $1\le l \in \mathbb Z$. By the blowing up formula, we have
$\omega^{\log}_{\tilde C/S}=\tau^*\omega^{\log}_{C/S}(\epsilon E)$,
where $\epsilon=0$ (resp. $=1$) when $p\in\Si$ (resp. $p\not\in\Si$).
We give $\ti\Si$ the pre-stacky assignments according to that of $\Si$.
%Let $\ti\Si\sub \ti C$ be the proper-transform of $\Si\sub C$, with corresponding .
Form
$$\ti\cB=(\ti B+\sum l_i\ti D_i+(l+\epsilon)E,  \ti A+\sum m_j\ti D_j +m_{\bar j}E).
$$
We claim that it is a pre-basket of $(\ti\Si,\ti C)$. Indeed, the conditions (1)-(4) in the Definition \ref{bask}  
can be easily verified. It remains to verify the isomorphism (\ref{basket*}). Obviously we have
%\begin{eqnarray*}
$$\tau^*\sO_C(B+\sum l_iD_i)\cong\sO_{\ti C}(\ti B+\sum lE+\sum l_iD_i),
$$
$$
\tau^*\big((\sO_C(5A+\sum 5m_iD_i)\otimes \omega^{\log}_{C/S}\big)
\cong\sO_{\ti C}(5\ti A+\sum 5m_iD_i+5m_{\bar j}E)\otimes\tau^*\omega^{\log}_{C/S}, 
$$
and $\tau^*\omega^{\log}_{C/S}\cong \omega^{\log}_{\ti C/S}(-\epsilon E)$.
Combined, we get
\begin{eqnarray*}
\sO_{\ti C}(\ti B+\sum(l+\epsilon)E+\sum l_iD_i)\cong \sO_{\ti C}(5\ti A+\sum 5m_iD_i+5m_{\bar j}E)\otimes \omega^{\log}_{\ti C/S}.
\end{eqnarray*}
Thus $\ti\cB$ is a pre-basket. Then $(\ti \cB)'$ given in \eqref{shift} is a basket. 
By construction, it is a modification of $\cB$.

We check that $V_1((\ti \cB)')<V_1(\cB)$.
In fact, 
$$(\ti \cB)'=\bl \ti B+\sum l_i\ti D_i+(l+\epsilon-r)E,
\ti A+\sum m_j\ti D_j+(m_{\bar j}-\displaystyle\ofth r)E\br,
$$
where $1\le r=\min\{5m_{\bar j}, l+\epsilon\}\in \Bbb Z$ since $0\neq 5m_{\bar j}\in \mathbb Z$.  
Since $\ti B\cap \ti D_{\bar j}=\emptyset$ and $0\le m_{\bar j}-r/5<m_{\bar j}$, we have
$$
V_1((\ti \cB)')= \sum_{\ti B\cap \ti D_j\neq \emptyset, j\neq \bar j }5m_j+5(m_{\bar j}-r/5)
= \sum_{B\cap D_j\neq \emptyset} 5m_ j-r=V_1(\cB)-r< V_1(\cB).
$$
The lemma is proved by induction. %epeating this construction, we prove the Lemma. 
\end{proof}

\begin{lemm}\label{construction2}
Let $(\Si,C)$ and $\cB$ be as stated with  $V_1(\cB)=0$. 
Then there is a modification $\cB^\prime$ 
of $\cB$ such that
$V_1(\cB^\prime)=V_2(\cB^\prime)=0$.
\end{lemm}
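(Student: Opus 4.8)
The plan is to imitate the proof of Lemma \ref{construction1}, but this time to induct on a weighted count of how $A$ meets the central fiber. For a basket $\cB = (B + \sum l_iD_i,\, A + \sum m_iD_i)$ on a good family $(\Si,C)$, set
$$W_2(\cB) = \sum_{p\in A\cap C_0} l_{i(p)},$$
where, since $A$ meets $C_0$ transversally (condition (3) of Definition \ref{bask}), each $p\in A\cap C_0$ is a smooth point of $C_0$ lying on a unique component $D_{i(p)}$. This is a non-negative integer, and $W_2(\cB) = 0$ forces $l_i = 0$ whenever $A\cap D_i\neq\emptyset$, hence $V_2(\cB) = 0$. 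So when $W_2(\cB) = 0$ the hypothesis $V_1(\cB) = 0$ already gives the conclusion with $\cB' = \cB$; this is the base case of the induction.

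For the inductive step, assume $W_2(\cB) > 0$ and pick $p_0\in A\cap D_{\bar i}$ with $l_{\bar i} > 0$. Since $\cB$ is a basket, $l_{\bar i}m_{\bar i} = 0$, so $m_{\bar i} = 0$; by Definition \ref{bask}, $p_0$ is a smooth point of $C_0$, it lies on a single section $A_{i_0}$ of $A$ with coefficient $c := a_{i_0}$, and $p_0\notin B$. Let $\tau\colon \ti C\to C$ be the blowup at $p_0$, with exceptional divisor $E$, and give $\ti\Si$ the pre-stacky assignments inherited from $\Si$; then $(\ti\Si,\ti C)$ is again a good $S$-family, isomorphic to $(\Si,C)$ over $S\lsta$. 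Writing $\tau^*\omega^{\log}_{C/S}\cong\omega^{\log}_{\ti C/S}(-\epsilon E)$ with $\epsilon\in\{0,1\}$ ($\epsilon = 0$ exactly when $p_0\in\Si$), and pulling back \eqref{basket*} via $\tau^*D_{\bar i} = \ti D_{\bar i} + E$, $\tau^*A = \ti A + cE$, $\tau^*B = \ti B$ and $m_{\bar i} = 0$, one obtains \eqref{basket*} on $\ti C$ for
$$\ti\cB = \bl\,\ti B + \sum l_i\ti D_i + l_{\bar i}E,\ \ \ti A + \sum m_i\ti D_i + \tfrac{5c-\epsilon}{5}E\,\br,$$
which is readily checked to be a pre-basket of $(\ti\Si,\ti C)$. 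Let $\cB' = (\ti\cB)'$ be its basket modification from \eqref{shift}; it is a modification of $\cB$ in the sense of Definition \ref{2.8}. On $E$ the shift uses $r_E = \min(5c-\epsilon,\,l_{\bar i})$, so in $\cB'$ the component $E$ carries $B$-coefficient $\max(0,\,l_{\bar i}-(5c-\epsilon))$ and $A$-coefficient $\max(0,\,(5c-\epsilon)-l_{\bar i})/5$, while the coefficients on the $\ti D_i$ are unchanged because $l_im_i = 0$ already.

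It remains to check $V_1(\cB') = 0$ and $W_2(\cB') < W_2(\cB)$. For the first: $\ti B$ misses $E$ since $p_0\notin B$, the $\ti D_i$ keep their $m$-coefficients, those meeting $\ti B$ still have $m_i = 0$ by $V_1(\cB) = 0$, and $m_{\bar i} = 0$; hence $V_1(\cB') = 0$. For the second: the blowup deletes the point $p_0$ from $A\cap C_0$ (losing $l_{\bar i}$) and creates exactly one new point, $E\cap\ti A_{i_0}$, which lies on $E$ and contributes the $B$-coefficient $\max(0,\,l_{\bar i}-(5c-\epsilon))$; all other points of $\ti A\cap\ti C_0$ are unaffected, since $p_0$ lay on $D_{\bar i}$ alone. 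A short case check on $a_{i_0}$ — if $a_{i_0}\notin\ZZ$ then $A_{i_0}$ is a pre-stacky marked section, so $\epsilon = 0$ and $5c-\epsilon = 5a_{i_0}\ge 1$, while if $a_{i_0}\in\ZZ$ then $5c\ge 5$ — shows $5c-\epsilon\ge 1$ in all cases, hence $\max(0,\,l_{\bar i}-(5c-\epsilon))\le l_{\bar i}-1$ and $W_2(\cB')\le W_2(\cB)-1$. The induction then produces a basket with $V_1 = V_2 = 0$.

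The one genuinely delicate point, which I expect to be the main obstacle, is the bookkeeping needed to confirm that $\ti\cB$ — and hence $\cB'$ — satisfies every clause of Definition \ref{bask} after the blowup: disjointness and transversality of the sections $\ti A_i$, $\ti B_j$ against the enlarged central fiber $\ti C_0$ (in particular that $\ti A_{i_0}$ avoids the new node $E\cap\ti D_{\bar i}$), $\tfrac15\ZZ$-integrality of the $A$-coefficients at pre-stacky markings, and the isomorphism \eqref{basket*}; together with pinning down $\epsilon$ and $c$ at $p_0$ precisely enough to make $5c-\epsilon\ge 1$ airtight. Everything else is a routine variant of the argument in Lemma \ref{construction1}.
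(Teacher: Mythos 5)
Your proposal is correct and takes essentially the same route as the paper's proof: blow up $C$ at a point of $A\cap D_{\bar i}$ with $l_{\bar i}>0$ (hence $m_{\bar i}=0$), pull back \eqref{basket*} to get a pre-basket on $\ti C$ (your placing of the $\epsilon$ on the $A$-side of $E$ versus the paper's $l_{\bar i}+\epsilon$ on the $B$-side gives the same basket after the shift \eqref{shift}), check $\ti B\cap E=\emptyset$ so $V_1$ stays zero, and induct. Your pointwise measure $W_2$ is only a mild refinement of the paper's componentwise $V_2$ (it handles the case where $A$ meets $D_{\bar i}$ in more than one point a bit more carefully), and your inequality $5c-\epsilon\ge 1$ is exactly the paper's observation that $r\ge 1+\epsilon$.
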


\begin{proof}
Suppose there is an $l_{\bar i}>0$ such that $A\cap D_{\bar i}\neq \emptyset$. Since $\cB$ is a basket,
$m_{\bar i}=0$. 
Pick $p\in A\cap D_{\bar i}$.  Let $\tau\colon \ti C\to C$ be the blowup of $C$ at $p$. If $p$ lies on a  marking $\Si_i$, let  $\ti \Si_i\subset \ti C$ be the strict transform  of $\Si_i$.  By transversality, $\tau^*A=\ti A+ mE$ where $m\in \frac{1}{5}\mathbb Z$.
%$\rho^*D_i=\ti D_i$ for $i\neq j$, and $\rho^* D_{j}=\ti D_{j}+E$.
Consider 
$$\ti \cB=(\ti B+ \sum l_i\ti D_i+(l_{\bar i}+\epsilon)E, \ti A+\sum m_j\ti D_j+ mE),
$$
where $\epsilon =0$ or $1$ when $p\in \Si$ or $\not\in\Si$ respectively.
We have $\ti B\cap E=\emptyset$, $\ti A\cap \ti D_{\bar i}=\emptyset$, and $\ti A$ intersects $E$ transversally.
Like in the proof of the previous Lemma, it is direct to verify that $\ti \cB$ is a pre-basket. 
Like in \eqref{shift}, let %We replace $\ti\cB$ by its shift 
$$\cB'=\bl \ti B+\sum l_i\ti D_i+(l_{\bar i}+\epsilon-r)E, \ti A+\sum m_j\ti D_j+(m-\displaystyle\frac{r}{5})E\br,
$$
where $r=\min\{5m, l_{\bar i}+\epsilon\}\ge 1+\epsilon$.  Note that
when $p\not\in \Si$, $A$ is an integral Cartier divisor near $p$ and thus $m\ge 1$.  Thus
$0\le l_{\bar i}+\epsilon-r< l_{\bar  i}$. Thus $V_2(\cB')<V_2(\cB)$.
Repeating this construction, we prove the Lemma.
\end{proof}

\begin{lemm}\label{construction3}
Let $(\Si,C)$ and $\cB$ be as stated with  $V_1(\cB)=V_2(\cB)=0$. 
Then there is a final basket $\cB^\prime$ which is a
modification of $\cB$.
\end{lemm}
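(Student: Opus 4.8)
The plan is to run one more induction on top of Lemmas~\ref{construction1} and \ref{construction2}, this time aimed at condition (ii) of Definition~\ref{bask2}. First observe that once $V_1(\cB)=V_2(\cB)=0$, condition (i) of Definition~\ref{bask2} holds automatically: by definition $V_1(\cB)=0$ says $m_i=0$ whenever $B\cap D_i\neq\emptyset$, and $V_2(\cB)=0$ says $l_i=0$ whenever $A\cap D_i\neq\emptyset$. Hence a basket with $V_1=V_2=0$ is final precisely when it satisfies (ii), i.e. precisely when $V_3=0$. So it is enough to prove: if $V_3(\cB)>0$, then $\cB$ admits a modification $\cB'$ with $V_1(\cB')=V_2(\cB')=0$ and $V_3(\cB')<V_3(\cB)$; finitely many steps then produce a final basket, which is still a modification of $\cB$ since modifications compose.

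So suppose $V_3(\cB)>0$ and fix $\bar i\neq\bar j$ with $l_{\bar i}>0$, $m_{\bar j}>0$ and a node $p\in D_{\bar i}\cap D_{\bar j}$ of $C_0$; note $m_{\bar i}=l_{\bar j}=0$ because $\cB$ is a basket. The point $p$ is a smooth point of the surface $C$ but a node of the fibre $C_0$, so to separate its two branches one passes to $S'=S_k$ for a suitable $k\geq 2$ and takes the minimal resolution of the resulting $A_{k-1}$ singularities; this produces a good $S'$-family $(\Si',C')$ whose central fibre is $C_0$ with a chain of $k-1$ rational curves inserted at each node, so that in particular $D_{\bar i}$ and $D_{\bar j}$ become disjoint strict transforms $\tilde D_{\bar i}$, $\tilde D_{\bar j}$ joined through a new chain $E_1-\cdots-E_{k-1}$. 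Let $\tau\colon C'\to C$ be the structure map. Since the sections making up $A$ and $B$ are disjoint from the nodes they pull back with multiplicity one, and since the resolution is crepant, $\omega^{\log}_{C'/S'}=\tau^*\omega^{\log}_{C/S}$; writing $\delta_i=5m_i-l_i$, the line bundle forced by \eqref{basket*} on $C'$ therefore has class $\tau^*\sO_C(\sum_i\delta_i D_i)$, which determines the coefficient vector of a basket $\cB'$ on $(\Si',C')$ up to twisting by the (trivial) class of the whole central fibre. Choosing that twist and then applying the shift \eqref{shift} yields a basket $\cB'$ which by construction agrees with $\cB$ over the punctured base, hence is a modification of $\cB$ in the sense of Definition~\ref{2.8}.

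It remains to arrange $V_1(\cB')=V_2(\cB')=0$ and $V_3(\cB')<V_3(\cB)$. The first two conditions force the twist parameter into an explicit interval — the coefficient on $\tilde D_i$ must stay $\leq 0$ on every old component met by $B$ and $\geq 0$ on every old component met by $A$ — inside which the new coefficients are completely determined. As for $V_3$: every bad pair of $\cB$ is destroyed, since after subdividing all nodes no two strict transforms of old components are adjacent, so the only possible new bad adjacencies are between a strict transform $\tilde D_i$ and a neighbouring chain curve $E_a$. Here lies the crux, and I expect this to be the main obstacle: a single blow-up merely \emph{moves} the pair $(\bar i,\bar j)$ onto the inserted curve when $l_{\bar i}$ and $5m_{\bar j}$ are close but unequal, and, worse, the base change rescales all central-fibre coefficients, so an unlucky choice inflates the contributions of the other bad pairs. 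The remedy is to use the inserted chain as a buffer: along $E_1,\dots,E_{k-1}$ the forced coefficient depends affine-linearly on $a$ and runs from a positive to a negative value, so choosing $k$ and the twist so that this interpolation vanishes at an integer site — and so that on every other inserted chain the coefficient does not change sign, which again cuts out an interval for the twist — puts a neutral curve between the $B$-type and $A$-type parts of each chain and leaves no bad adjacency. A finite computation of the chain coefficients, using that $\omega^{\log}_{C'/S'}$ restricts trivially to each inserted $\PP^1$, then gives $V_3(\cB')=0$ (indeed one may arrange to reach a final basket in a single step), which is more than enough. Once $V_3=0$ the basket is final, and being a modification of $\cB$ it is the $\cB'$ we want.
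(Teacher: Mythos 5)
Your route is genuinely different from the paper's. The paper never tries to finish in one step: it fixes one offending node $p\in D_{\bar i}\cap D_{\bar j}$, takes the degree two base change (so each node becomes $xy=t^2$), minimally resolves so that a single $(-2)$-curve $E$ sits over each node, keeps the coefficients $l_i,m_i$ on the strict transforms, assigns to $E$ the pair $(l_{\bar i},m_{\bar j})$, applies the shift \eqref{shift} with $r=\min\{5m_{\bar j},l_{\bar i}\}$, and concludes by a descending induction on $V_3$; crepancy of the resolution and $\tau^* A=\ti A$, $\tau^* B=\ti B$ enter exactly as in your sketch. You instead propose one degree-$k$ base change with $k$ chosen so large that every ``bad'' chain acquires a curve with forced coefficient exactly zero, producing a final basket in a single stroke. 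That is a viable, arguably cleaner, alternative, since it dispenses with the induction on $V_3$.

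The crux, however, is precisely where you stop: you assert that ``$k$ and the twist'' can be chosen so that each bad chain gets a neutral curve while no other chain changes sign, but you specify neither $k$ nor why your interval constraints are simultaneously satisfiable. Make it explicit and drop the twist entirely (which also spares you from needing $\sO_{C'}(C'_0)$ to be trivial). With the untwisted pullback, writing $\delta_i=5m_i-l_i$, the forced coefficient is $k\delta_i$ on $\ti D_i$ and $(k-j)\delta_a+j\delta_b$ on the $j$-th inserted curve of the chain over a node of $D_a\cap D_b$, $j=1,\dots,k-1$. Then: (i) signs on strict transforms are preserved, so $V_1=V_2=0$ persists with no condition at all; (ii) over a node which is not a bad pair, $\delta_a$ and $\delta_b$ have the same weak sign, hence so does the whole chain, and no new bad adjacency appears there; (iii) over a bad node ($\delta_a=-l_a<0<5m_b=\delta_b$) the chain coefficient vanishes at an integer $j$ exactly when $(l_a+5m_b)$ divides $k\,l_a$, so taking $k$ divisible by the least common multiple of the finitely many integers $l_a+5m_b$ suffices, and after \eqref{shift} no component with positive $l'$ is adjacent to one with positive $m'$, i.e. the basket is final. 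Two small corrections to your text: without such a zero the bad adjacency can sit between two chain curves, not only between a strict transform and a chain curve, so your sentence restricting the possible new bad adjacencies is inaccurate (your remedy covers this case anyway); and your worry that rescaling by $k$ inflates the remaining contributions is indeed real for the literal pullback coefficients (already for $k=2$ and small $l_{\bar i},5m_{\bar j}$ the naive count can grow), which is exactly why the divisibility choice of $k$ is not optional but is what closes your argument.
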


\begin{proof}
Suppose  there are pairs $D_{\bar i}\ne D_{\bar j}$ such that
$p\in D_{\bar i}\cap D_{\bar j}$, and $\ell_{\bar i}>0$ and $m_{\bar j}>0$. 
Take a base change $S_2\to S$, and let $C^\prime=S_2\times_S C$. 
Then near every node of the central fiber of $C^\prime$, % over a node of the central fiber of $C$, 
$C^\prime$ is locally of the form $xy=t^2$. Minimally resolve $C^\prime$ to get a smooth $\ti C$ with a 
$(-2)$-curve corresponding to each node.  Let $E$ be the $(-2)$-curve corresponding to the point $p$ mentioned earlier. 
Then $\omega_{\ti C/S_2}^{\log}=\ti\tau^*\omega_{C^\prime/S_2}^{\log}=\tau^*\omega_{C/S}^{\log}$, where 
$\ti\tau$ is the minimal resolution morphism $\ti C\to C^\prime$ and $\tau $ is the composition of $\ti \tau$ 
with the base change map $\tau^\prime\colon C^\prime\to C$. % from the base-change. 

Let $\cB=(B+\sum \ell_i D_i, A+\sum m_jD_j)$.  Since $A$ and $B$ do
not intersect $D_{\bar i}\cap D_{\bar j}$, we have
$\tau^* A=\ti A$ and $\tau^*B=\ti B$. From the base-change and the minimal resolution, we get a new pre-basket on $\ti C$: 
$(\ti B+\sum \ell_i\ti D_i+\ell_{\bar i}E,
\ti A+\sum m_j \ti D_j+m_{\bar j}E)$. Then let %we can construct a new good basket $\ti B$ on $\ti C$:
$$\ti \cB=(\ti B+\sum \ell_i\ti D_i +(\ell_{\bar i}-r)E, \ti A+\sum m_j \ti D_j+(m_{\bar j}-r/5)E),
$$ 
where $r=\min\{5m_{\bar j}, \ell_{\bar i}\}$ as in \eqref{shift}.  It is a basket.
Furthermore, we have $m_{\bar j}-r/5<m_{\bar j}$ and $\ell_{\bar i}-r< \ell_{\bar i}$, 
$E$ intersects $\ti D_{\bar i}$ and $\ti D_{\bar j}$ at the nodes, and $\ti D_{\bar i}\cap \ti D_{\bar j}=\emptyset$. It is clear that
$V_3(\ti\cB)< V_3(\cB)$. Also note that the central fiber of $\ti C$ is reduced. 
Repeating this procedure, we prove the Lemma. 
\end{proof}

\subsection{Existence of extensions} In this subsection, we prove

\begin{prop}\label{exist-0}
Let $\xi\lsta\in\cW^-\lggd(S\lsta)$ be such that $\sC\lsta$ is smooth. Then possibly after a finite base change
of $S$, $\xi\lsta$ extends to a $\xi\in \cW^{\mathrm{pre}-}\lggd(S)$.
\end{prop}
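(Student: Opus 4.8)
The plan is to prove the statement by a case analysis on the vanishing behaviour of $\varphi\lsta$ and $\rho\lsta$, reducing in each case to an extension result already established. Since $\sC\lsta\to S\lsta$ is smooth with connected fibres and $S\lsta$ is connected, $\sC\lsta$ is smooth and connected, hence integral. I first claim that either $\rho\lsta\equiv 0$ or $\varphi\lsta\equiv 0$ on $\sC\lsta$. Indeed, if neither vanished identically, then $(\varphi\lsta=0)$ would be a proper closed subscheme of the integral $\sC\lsta$, so for $s$ ranging over a dense open of $S\lsta$ the fibre restriction $\varphi\lsta|_s$ would be nonzero; applying Lemma \ref{degenerate-locus} to $\xi\lsta|_s\in\cW^-\lggd(\CC)$ (which lies there since $\cW^-\lggd$ is closed in $\cW\lggd$) then forces $\rho\lsta|_s$ to vanish on the dense open locus $(\varphi\lsta|_s\neq 0)$ of the integral curve $\sC\lsta|_s$, hence $\rho\lsta|_s\equiv 0$ for all such $s$; as $\sC\lsta$ is integral this gives $\rho\lsta\equiv 0$, a contradiction. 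If $\rho\lsta\equiv 0$ we are done by Proposition \ref{prop-extension0}, which after a finite base change produces an extension in $\cW^-\lggd(S)\subseteq\cW^{\mathrm{pre}-}\lggd(S)$. So from now on assume $\varphi\lsta\equiv 0$ and $\rho\lsta\not\equiv 0$. Then, since $(\varphi\lsta,\nu_{1*})$ is nowhere vanishing (Definition \ref{def-curve}(4)), $\nu_{1*}$ is nowhere vanishing, so $\sL\lsta\otimes\sN\lsta\cong\sO_{\sC\lsta}$ and $\nu_{1*}$ identifies $\sN\lsta\cong\sL\lsta^\vee$.

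Suppose in addition $\nu_{2*}\equiv 0$. Then $(\rho\lsta,\nu_{2*})$ nowhere vanishing forces $\rho\lsta$ nowhere vanishing, so $\rho\lsta$ is an isomorphism $\sL\lsta^{\vee\otimes 5}\otimes\omega^{\log}_{\sC\lsta/S\lsta}\cong\sO_{\sC\lsta}$; equivalently $(\sC\lsta,\Si^{\sC\lsta},\sL\lsta,\rho\lsta)$ is an $S\lsta$-family of $5$-spin twisted curves of monodromy $\gamma$ (in particular $\gamma$ has no $(1,\rho)$-entry, so $\Si^{\sC}_{(1,\rho)}=\emptyset$). The moduli stack of $5$-spin twisted stable curves of fixed monodromy is proper (\cite{AJ}, together with the root-stack formalism of \cite{A-G-V, Cad}), so after a finite base change this family extends to $(\sC,\Si^\sC,\sL,\rho)$ over $S$. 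Setting $\sN:=\sL^\vee$, $\nu_1:=1\in H^0(\sL\otimes\sN)=H^0(\sO_\sC)$, $\nu_2:=0$ and $\varphi:=0$, one checks directly that $\xi=(\sC,\Si^\sC,\sL,\sN,\varphi,\rho,\nu)$ satisfies (1)--(5) of Definition \ref{def-curve}, restricts over $S\lsta$ to $\xi\lsta$ (using the identifications $\sN\lsta\cong\sL\lsta^\vee$ and $\nu_{1*}=1$), and, because $\varphi\equiv 0$, lies in $\cW^{\mathrm{pre}-}\lggd(S)$ by Lemma \ref{degenerate-locus}.

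It remains to treat the case $\varphi\lsta\equiv 0$, $\rho\lsta\not\equiv 0$, $\nu_{2*}\not\equiv 0$, that is, $\xi\lsta$ is exactly of the \emph{special type} of Proposition \ref{prop-extension1}; here the plan is to run the basket machinery developed above, in order. First apply Proposition \ref{prop-extension1} to obtain, after a finite base change, a good family $(\Si^\sC,\sC)$ equipped with $\sL$, $\sN$, $\rho$, $\nu$ and a divisor $\sD\subset\sC_0$ as in (a1)--(a4). Un-stack $(\Si^\sC,\sC)$ to a good family $(\Si,C)$ of pre-stacky pointed nodal curves and form the pre-basket $\cB$ of \eqref{BAS} with $l_i,m_i$ as in \eqref{sample} and $A,B$ as in \eqref{AB}, which is a pre-basket by Lemma \ref{bask3}. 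Replace $\cB$ by the basket \eqref{shift}, and then successively apply Lemma \ref{construction1}, Lemma \ref{construction2} and Lemma \ref{construction3} to obtain, on a good family $(\Si',C')$ over a finite base change $S'$ of $S$, a \emph{final} basket $\cB'$ which is a modification of $\cB$ in the sense of Definition \ref{2.8}. Finally apply Lemma \ref{bask-family} to $(\Si',C')$ and $\cB'$; this produces $\ti\xi=(\Si^{\ti\sC},\ti\sC,\ti\sL,\ti\sN,\ti\varphi=0,\ti\rho,\ti\nu)\in\cW^{\mathrm{pre}-}\lggd(S')$. It then remains to check that $\ti\xi|_{S'\lsta}$ is isomorphic to the pullback of $\xi\lsta$. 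Over the smooth locus the un-stacking, the basket modifications, and the re-stacking via Lemma \ref{5change} and Corollary \ref{5change1} used in Lemma \ref{bask-family} only alter the central fibre, so $\ti\sC|_{S'\lsta}\cong\sC\lsta\times_{S\lsta}S'\lsta$; and by the constructions \eqref{sample}, \eqref{AB} one has over $S'\lsta$ that $\ti\sN\cong\sO_{\ti\sC}(\sA)\cong\sN\lsta$ with $\ti\nu_2$ the tautological section matching $\nu_{2*}$, that $\ti\sL\cong\ti\sN^\vee$ matches $\sL\lsta\cong\sN\lsta^\vee$ via $\nu_{1*}$, and that the isomorphism \eqref{basket2} underlying $\ti\rho$ is the one furnished by $\rho\lsta$ and $\nu_{2*}$; hence $\ti\xi|_{S'\lsta}\cong\xi\lsta\times_{S\lsta}S'\lsta$, as required.

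The substance of the proof is concentrated entirely in this last case and was already carried out in the preceding subsections; the only genuinely new points are the a priori dichotomy ``$\rho\lsta\equiv 0$ or $\varphi\lsta\equiv 0$'', which uses the integrality of $\sC\lsta$ --- and hence the smoothness hypothesis --- in an essential way, and the identification of $\ti\xi$ with $\xi\lsta$ over the generic locus. I expect the latter to be the main technical nuisance: it requires keeping careful track of the isomorphisms between the tautological line bundles and sections produced by the basket construction and those of the original family $\xi\lsta$, possibly absorbing a unit on $S'\lsta$ by a further finite base change.
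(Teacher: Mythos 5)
Your overall architecture coincides with the paper's: its proof also splits into the three cases $\rho\lsta=0$; $\varphi\lsta=\nu_{2\ast}=0$; and $\varphi\lsta=0$, $\rho\lsta\ne 0$, $\nu_{2\ast}\ne 0$, handling the first by Proposition \ref{prop-extension0} and the last by exactly the chain you describe (Proposition \ref{prop-extension1}, Lemma \ref{bask3}, the basket modifications of \S\ref{mods}, and the restacking Lemma \ref{bask-family}); your explicit dichotomy argument ($\rho\lsta\equiv 0$ or $\varphi\lsta\equiv 0$, via integrality of $\sC\lsta$) and your identification of the restacked family with $\xi\lsta$ over the generic locus are left implicit in the paper and are fine. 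Where you genuinely diverge is the middle case $\varphi\lsta=\nu_{2\ast}=0$: the paper does not invoke properness of the spin moduli, but instead extends the curve and the data as in Proposition \ref{prop-extension1} (so $\rho\lsta$ extends only after twisting by a vertical divisor $\sD$) and then applies Lemma \ref{5change} over the degree-five base change to introduce $\mufive$-structure at the nodes, extract a fifth root $\ti\sM$ of $\sO_{\sC}(\sD)$, and replace $\sL$ by $q\sta\sL\otimes\ti\sM\upmo$, which makes the extended $\ti\rho$ nowhere vanishing. Your shortcut via properness of the moduli of $5$-spin twisted stable curves is legitimate and arguably cleaner, but it needs one verification you omit: to run the valuative criterion there you must know that the underlying pointed curves of $\xi\lsta$ are Deligne--Mumford stable, which is not a formal consequence of MSP-stability. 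Here it does hold: on a smooth fibre with $\nu_2=0$ and $\rho$ nowhere vanishing, the configurations with $2g-2+\ell\le 0$ are exactly those ruled out by Lemma \ref{stable-cri} ($g=0$, $\ell=1,2$ are its cases (2) and (1) since $\sL^{\otimes 5}\cong\omega^{\log}_{\sC}$; $g=1$, $\ell=0$ is its case (4); and $g=0$, $\ell=0$ cannot even carry a $5$-spin structure, as $\sC$ is then a scheme while $\deg\sL$ would be $-2/5$). With that observation added, together with the routine matching of the banding/representability conventions of Definition \ref{def-curve} with those of the twisted spin moduli, your argument is complete, and both routes deliver the same statement; the paper's construction has the mild advantage of being uniform with the machinery (Lemma \ref{5change}, Corollary \ref{5change1}) it must develop anyway for the third case.
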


\begin{proof}
Let $\xi\lsta=(\Si^{\sC\lsta},\sC\lsta,\cdots)$. We distinguish several cases. The case $\rho\lsta=0$ 
is proved in Proposition \ref{prop-extension0}. The next case is when $\varphi\lsta=\nu_{2\ast}=0$. 
In this case $(\rho\lsta=0)=\emptyset$ and $\sL\lsta\dual\cong\sN\lsta$. Proceed as in the proof of Proposition
\ref{prop-extension1}, we extend $\Si^{\sC\lsta}\sub \sC\lsta$ to  $\Si^{\sC}\sub \sC$ such that $(\mathbf{a1})$ 
holds, extend $\sL\lsta\dual\cong\sN\lsta$ to invertible sheaves $\sL^{\vee}\cong\sN$,
and extend $\rho\lsta$ to $\rho\in H^0(\sL^{\vee\otimes 5}\otimes\omega_{\sC/C}^{\log}(\sD))$ 
for an effective divisor $\sD$ supported along the central fiber $\sC_0$.

We then apply Lemma \ref{5change} to endow $\sC\times_S S_5$ necessarily stacky structure along the
nodes of its central fiber so that the followings hold: denoting $\ti\sC$ the resulting $S_5$-family of twisted curves,
$q: \ti\sC\to \sC$ the projection, and $\ti N\sub \sC$ the set of nodal points of $\ti\sC_0$, then
$q\sta\sO_\sC(\sD)|_{\ti\sC-\ti N}\cong \ti\sM^{\otimes 5}$ for an invertible sheaf $\ti\sM$ on $\ti\sC$.
Let $\ti\sL=q\sta\sL\otimes\ti\sM\upmo$, $\ti\nu_1: \ti\sL\dual\cong \ti\sN$ and $\ti\nu_2=\ti\varphi=0$. Then $q\sta\rho$ 
provides a nowhere vanishing section $\ti\rho\in H^0(\ti\sL^{\vee\otimes5}\otimes \omega_{\ti\sC/C}^{\log})$.
Let $\Si^{\ti\sC}$ be the pullback of $\Si^\sC$. Then 
$\ti\xi\defeq (\Si^{\ti\sC},\ti\sC, \ti\sL, \ti\sN, \ti\rho, \ti\varphi, \ti \nu)\in\cW^{\mathrm{pre}-}\lggd(S_5)$
is a desired extension.

The last case is when $\varphi\lsta=0$, $\rho\lsta\ne 0$ and $\nu_{2\ast}\ne 0$. This case is proved by
the combination of Proposition \ref{prop-extension1}, Lemma \ref{bask3}, the finalization of baskets in \S
\ref{mods} and the restacking Lemma \ref{bask-family}.
\end{proof}

%In stabilizing the family $\xi$ constructed, we need the following contraction Lemma.
%
%\begin{lemm}\label{contract}\blue
%Let $\xi\in \cW^{\mathrm{pre}}\lggd(S)$ be so that $\sC$ is smooth, $\sC_0$ contains a connected chain of
%rational curves $\sD_1\cup\cdots\cup\sD_k$ so that $\Si^\sC\cap(\sD_1\cup\cdots\cup\sD_k)=\emptyset$,
%and $\nu_2|_{\sD_1\cup\cdots\cup\sD_k}=0$. %, and $\rho|_{\sD_1\cup\cdots\cup\sD_k}$ is nowhere vanishing.
%Then we can find %can contract $\xi$ along $\sC_1\cup\sCots\cup\sC_k$ to obtain 
%$\ti\xi=(\Si^{\ti\sC},\ti\sC, \ti\sL, \cdots)\in \cW^{\mathrm{pre}}\lggd(S)$ of which the following hold:
%there is a morphism $\phi: \sC\to\ti\sC$ so that $\phi(\sD_1\cup\cdots\cup\sD_k)=\ti p\in\ti\sC$ is a single
%point, $\phi: \sC-\sD_1\cup\cdots\cup\sD_k\to \ti\sC-\ti p$ is an isomorphism, 
%and $\phi\sta(\Si^\sC, \sL,\sN,\varphi, \rho,\nu)=(\Si^{\ti\sC}, \ti\sL,\ti\sN,\ti\varphi,\ti\rho, \ti\nu)$.
%\end{lemm}
%
%\begin{proof}
%{\red to be filled}.
%\end{proof}

\subsection{Stabilization}
\def\upre{^{\mathrm{pre}}}

Let 
%$(\Si,C)$ be a good $S$-family of $n$ pre-stacky pointed nodal curves and let $\cB$ be a final basket on $(\Si,C)$. We let 
%$\xi=(\Si^\sC, \sC, \sL, \sN,\nu, \rho)\in 
$\xi\in \cW\upre\lggd(S)$ be such that
%family given by Lemma \ref{bask-family} based on the basket $\cB$. 
$\xi\lsta=\xi\times_S S\lsta\in \cW^-\lggd(S\lsta)$. We will show 
how to modify $\xi$ along $\sC_0$ to obtain a new family $\xi'\in \cW^-\lggd(S)$, possibly after a finite base change,
such that $\xi\lsta\cong\xi'\lsta \in \cW^-\lggd(S\lsta)$. 
\vsp

%\begin{proof}
%If $s$ is nowhere vanishing, then $L\cong \sO_\Po$; we are done. Otherwise $p\not\in 
%s\upmo(0)\ne \emptyset$, and is invariant under $G$. As it is finite, after replacing $G$ by a finite
%index subgroup, we can assume that $G$ fixes $s\upmo(0)$. Since $G$ also fixes $p$, $G$ is 
%infinite implies that $G\cong \CC\sta$ and $s\upmo(0)=p'$ is a single point set. A direct check shows 
%that this is impossible. This shows that $s$ must be nowhere vanishing, and the Lemma is proved.
%\end{proof}
\begin{lemm}\label{case-irre}
Let $\xi\in \cW\upre\lggd(S)$ be such that $\xi\lsta\in\cW^-\lggd(S\lsta)$. Suppose the central fiber $\sC_0$ is irreducible, then
$\xi_0\in \cW^-\lggd(\eta_0)$.
\end{lemm}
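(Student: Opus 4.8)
The statement asserts that if $\xi$ is an $S$-family in $\cW\upre\lggd$ that is stable over the punctured base $S\lsta$ and whose central fiber $\sC_0$ is \emph{irreducible}, then the central fiber $\xi_0$ is already stable, i.e. $\xi_0\in\cW^-\lggd(\eta_0)$. The strategy is to use the characterization of \emph{un}stable objects in $\cW\upre\lggd(\CC)$ from Lemma \ref{stable-cri}: it suffices to show that none of the four destabilizing configurations (1)--(4) of that lemma can occur for $\xi_0$ when $\sC_0$ is irreducible and when $\xi$ stays stable on the generic fiber. So I would first note that $\xi_0\in\cW\upre\lggd(\eta_0)$ lies in $\cW\upre\lggd(\CC)$, hence also in $\cW^{\mathrm{pre}-}\lggd(\CC)$ once we check $\sigma|_{\xi_0}=0$; for this last point one uses that the degeneracy condition \eqref{deg-loci}, equivalently Lemma \ref{degenerate-locus}, is a closed condition, and it holds on $\sC\lsta$ by hypothesis $\xi\lsta\in\cW^-\lggd(S\lsta)$, so by properness of the relevant vanishing loci in the total space it holds on the central fiber too.

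Next, I would rule out cases (1) and (2) of Lemma \ref{stable-cri}: these require $\sC_0$ to \emph{contain} a rational curve $\sE$ meeting $\Si^{\sC_0}\cup(\sC_0)_{\mathrm{sing}}$ in one or two points and forming a proper component of $\sC_0$. But $\sC_0$ is irreducible, so the only way $\sC_0$ could itself be that rational curve $\sE$ is if $\sC_0$ is smooth rational; and then $\sE=\sC_0$ has empty intersection with $(\sC_0)_{\mathrm{sing}}=\emptyset$, so it would meet $\Si^{\sC_0}$ in at most the marked points. This forces us into case (3) (a smooth rational curve with $\Si^\sC=\emptyset$ and $d_0=d_\infty=0$) or a sub-case of (1)/(2) with few markings. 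The key observation is that the discrete data $(g,\gamma,\bd)$ is constant in the family, so $\sC_0$ has the same genus $g$ and the same bidegrees $(d_0,d_\infty)$ as the smooth generic fiber $\sC_\eta$ for $\eta\in S\lsta$. If $\sC_0$ is smooth rational with $\Si^\sC=\emptyset$ and $d_0=d_\infty=0$, then so is the generic fiber $\sC_\eta$, contradicting $\xi\lsta\in\cW^-\lggd(S\lsta)$ (it would be unstable by case (3) applied to $\xi_\eta$). The same flat-specialization argument handles case (4): $\sC_0$ irreducible of arithmetic genus $1$ with $\Si^\sC=\emptyset$, $\sL^{\otimes 5}\cong\sO_{\sC_0}$ and $\sL\dual\cong\sN$ would force $g=1$, $\Si^\sC=\emptyset$, and $\deg\sL=\deg\sN=0$ on the whole family, and then one checks these bundle conditions propagate to (or rather descend from) the generic fiber: $\deg(\sL^{\otimes 5})=0$ and $\deg(\sL\otimes\sN)=d_0=0$ together with $\gamma=\emptyset$ already pin down the generic fiber to be in case (4) as well (using that $\mathrm{Pic}^0$ of an elliptic curve has no special obstruction), again contradicting stability of $\xi\lsta$.

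The remaining subtlety, and what I expect to be the main obstacle, is the borderline overlap of cases (1) and (2) when $\sC_0$ is a smooth \emph{stacky} rational curve (a football or $\mathbb P_{1,5}$) meeting $\Si^{\sC_0}$ in one or two points, because then $\sE=\sC_0$ could in principle carry an infinite automorphism group coming from the stacky structure even though the generic fiber is an honest smooth curve of the same coarse genus. Here I would argue that the monodromy data $\gamma$ is fixed along the family, so the marked points of $\sC_\eta$ carry the same banding groups; if $\sC_0\cong\mathbb P_{1,5}$ with, say, a single special point, then by Lemma \ref{stable-cri}(2) in the $\rho$-nowhere-vanishing sub-case one would have $\deg\sL^{\otimes 5}|_{\sC_0}=-1$, hence $\deg\sL\otimes\sN\notin\ZZ$ unless the monodromy at that point is nontrivial — but the numerical constraint $d_0=\deg(\sL\otimes\sN)\in\ZZ$ established in the proof of the virtual-dimension proposition forces the marked point to have the matching monodromy, and then the same configuration would already be present (and destabilizing) on $\sC_\eta$. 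In short, every destabilizer of $\xi_0$ is ``numerically visible'' in the constant data $(g,\gamma,\bd)$ and therefore already destabilizes $\xi_\eta$, contradicting $\xi\lsta\in\cW^-\lggd(S\lsta)$. Assembling these case exclusions completes the proof that $\xi_0$ is stable, hence $\xi_0\in\cW^-\lggd(\eta_0)$.
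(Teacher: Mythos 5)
Your overall strategy is the same as the paper's: assume $\xi_0$ is unstable, use irreducibility of $\sC_0$ to see that the destabilizing configuration of Lemma \ref{stable-cri} must be realized by $\sC_0$ itself (cases (1)--(3) with $\sE=\sC_0$ smooth rational, or case (4)), and propagate that configuration to the fibers over $S\lsta$, contradicting $\xi\lsta\in\cW^-\lggd(S\lsta)$. The genuine gap is in the propagation step for case (4). Your slogan that every destabilizer is ``numerically visible'' in the constant data $(g,\gamma,\bd)$ is exactly what fails there: case (4) requires the isomorphisms $\sL^{\otimes 5}\cong\sO_\sC$ and $\sL\dual\cong\sN$, and on a genus-one curve these are not implied by $\deg\sL=\deg\sN=0$ and $\Si^\sC=\emptyset$; a degree-zero line bundle on a genus-one curve is in general nontrivial ($\Pic^0$ is one-dimensional), so the parenthetical about ``no special obstruction'' is not a valid step. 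What is missing --- and what the paper's proof actually supplies --- is the use of the field data on the general fiber $\xi_s$, $s\in S\lsta$: since $\xi_s$ lies in the degeneracy locus and $(\varphi,\nu_1)$, $(\rho,\nu_2)$, $(\nu_1,\nu_2)$ are nowhere vanishing, either $\rho|_{\sC_s}\ne 0$, in which case $\varphi|_{\sC_s}=0$, so $\nu_1$ is nowhere vanishing, giving $\sL\dual|_{\sC_s}\cong\sN|_{\sC_s}$, and $\rho|_{\sC_s}$ is a nonzero section of the degree-zero bundle $\sL^{\vee\otimes 5}\otimes\omega_{\sC_s}$, forcing $\sL^{\otimes 5}|_{\sC_s}\cong\omega_{\sC_s}\cong\sO_{\sC_s}$; or $\rho|_{\sC_s}=0$, in which case $\nu_2$ is nowhere vanishing, $\sN|_{\sC_s}\cong\sO_{\sC_s}$, and either $\varphi|_{\sC_s}\ne 0$ or $\nu_1$ nowhere vanishing gives $\sL|_{\sC_s}\cong\sO_{\sC_s}$. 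Only after this does $\xi_s$ fall into case (4), hence become unstable, which is the desired contradiction.

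A smaller but real defect is your treatment of the sub-case of (2) in which $\rho|_{\sC_0}$ is nowhere vanishing. The integrality discussion (``$\deg\sL\otimes\sN\notin\ZZ$ unless the monodromy is nontrivial'') is off target: in that sub-case $\varphi|_{\sC_0}=0$, so $\nu_1$ is nowhere vanishing and $\deg(\sL\otimes\sN)=0$ automatically. What actually needs to be checked is that this configuration persists on nearby fibers, and this is not read off from $(g,\gamma,\bd)$ alone: one argues either that $(\rho=0)\sub\sC$ is closed and disjoint from $\sC_0$, hence misses $\sC_s$ for $s$ near $\eta_0$, or numerically that $\rho|_{\sC_s}\equiv 0$ is impossible (it would give $\sN|_{\sC_s}\cong\sO_{\sC_s}$ and $d_\infty=0$, whereas here $d_\infty=\frac{1}{5}$), and a nonzero section of a degree-zero line bundle on an irreducible fiber is nowhere vanishing. (The initial claim that cases (1)--(2) require $\sE$ to be a \emph{proper} component is also inaccurate, but harmless, since you then analyze $\sE=\sC_0$, which is the relevant situation.) With these points repaired, your argument coincides with the paper's.
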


\begin{proof} Suppose $\xi_0$ is unstable. Since $\sC_0$ is irreducible, by Lemma \ref{stable-cri}, either
$\sC_0$ is a smooth rational curve satisfying one of (1)-(3) in Lemma \ref{stable-cri}, or
$\sC_0$ satisfies (4) of the same Lemma.

In case $\sC_0$ is a smooth rational curve and satisfies one of (1)-(3) mentioned, since the properties
in (1)-(3) are deformation invariant, for general $s\in S\lsta$, $\sC_s$  satisfies the same property,
forcing $\xi_s$ unstable, contradicting to that $\xi\lsta$ is a family of stable objects. 

Therefore, $\sC_0$ must be the Case (4) in Lemma \ref{stable-cri}.
Therefore,  for a closed point $s\in S\lsta$, 
$\Si^{\sC_s}=\emptyset$, and $\deg\sL_s=\deg \sN_s=0$. Here $\Si^{\sC_s}=\Si^{\sC}\cap \sC_s$, ect.
By the non-vanishing assumption on 
$(\rho, \nu_2)$, $(\varphi,\nu_1)$ and $(\nu_1,\nu_2)$, as in the proof of
Lemma \ref{stable-cri}, when $\rho|_{\sC_s}\ne 0$, we conclude that $\sL^{\otimes 5}|_{\sC_s}\cong\sO_{\sC_s}$ and 
$\sL|_{\sC_s}\dual\cong\sN|_{\sC_s}$;
when $\varphi|_{\sC_s}\ne0$, we conclude that $\sL|_{\sC_s}\cong\sN|_{\sC_s}\cong\sO_{\sC_s}$. 
Therefore, $\xi_s$  for general $s\in S\lsta$ belongs to Case (4) of Lemma \ref{stable-cri}, thus must be unstable.
This proves the Lemma.
\end{proof}

We prove the desired  existence   when $\sC_0$ is reducible.

\begin{prop}\label{proper1}
Let $\xi\in\cW^{\mathrm{pre}-}\lggd(S)$ be such that $\xi\lsta\in \cW\lggd(S\lsta)$ and
%$\xi_0\in \cW^-\lggd(\eta_0)$ and
$\sC\lsta$ is smooth. Then possibly after a finite base change,
we can find a $\xi'\in\cW^-\lggd(S)$ such that $\xi\lsta\cong\xi\lsta'$. % and $\xi'_0\in \cW^-\lggd(\eta_0)$.
\end{prop}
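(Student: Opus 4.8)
The plan is to reduce the general statement to the key building blocks already in place. Given $\xi\in\cW^{\mathrm{pre}-}\lggd(S)$ with $\xi\lsta\in\cW\lggd(S\lsta)$ and $\sC\lsta$ smooth, the central fiber $\sC_0$ may contain finitely many irreducible components $\sE$ that destabilize $\xi_0$, and each such $\sE$ is one of the types (1)--(4) listed in Lemma \ref{stable-cri}. First I would invoke Lemma \ref{case-irre} to dispose of the case $\sC_0$ irreducible, and then proceed by an induction on the number of destabilizing components (or, equivalently, on a suitable complexity such as the total number of components of $\sC_0$), the base case being that $\xi_0$ is already stable, in which case $\xi'=\xi$ works.

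For the inductive step I would pick a destabilizing $\sE\sub\sC_0$. By Lemma \ref{stable-cri}, $\sE$ is a rational tail or rational bridge with prescribed degrees of $\sL$, $\sN$ on it (the $\sL^{\otimes5}|_\sE\cong\sO_\sE$ bridge, the $\sL|_\sE\cong\sN|_\sE\cong\sO_\sE$ or ``$\rho|_\sE$ nowhere vanishing'' tail, etc.). The idea is to perform a local surgery on $\sC$ along $\sE$: contract $\sE$ (possibly after a finite base change to make the total space and the relevant divisors well-behaved, exactly as in \S\ref{mods}), push the data $(\sL,\sN,\varphi,\rho,\nu)$ forward, and, where the contraction introduces a singular or stacky point, re-extend the line bundles as in Proposition \ref{prop-extension1}, Lemma \ref{5change} and Corollary \ref{5change1}. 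One must check that the MSP-field axioms (1)--(5) of Definition \ref{def-curve}, together with the degeneracy-locus condition of Lemma \ref{degenerate-locus}, are preserved, and that over $S\lsta$ nothing changes, i.e. $\xi\lsta\cong\xi\lsta'$. Since contracting $\sE$ strictly decreases the complexity while not creating new destabilizing components of a strictly smaller type, the induction terminates and yields $\xi'\in\cW^-\lggd(S)$.

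I would organize the surgery case by case along the classification in Lemma \ref{stable-cri}: the ``$\rho|_\sE$ nowhere vanishing'' tail gets absorbed exactly the way baskets were finalized in \S\ref{mods} (it corresponds to an excess component carrying a nonzero $l_i$ or $m_i$); the $\sL^{\otimes5}|_\sE\cong\sO_\sE$ bridge and the trivial-bundle tail/bridge are handled by ordinary contraction of a chain of rational curves together with descent of the trivial or torsion line bundle; and the genus-one Case (4) component cannot actually occur in the central fiber of a family whose generic fiber is stable, by the deformation-invariance argument already used in Lemma \ref{case-irre}, so it can be excluded at the outset. After each contraction one may need a further finite base change and minimal resolution to keep the family in good form, as in Lemma \ref{construction3}; this is harmless since finite base changes are permitted in the valuative criterion.

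The main obstacle I expect is bookkeeping rather than conceptual: verifying that after contracting $\sE$ and re-extending $\sL$, $\sN$ the three nowhere-vanishing conditions $(\varphi,\nu_1)$, $(\rho,\nu_2)$, $(\nu_1,\nu_2)$ and the marking-vanishing conditions $\varphi|_{\Si^\sC_{(1,\varphi)}}=0$, $\rho|_{\Si^\sC_{(1,\rho)}}=0$ still hold on the new family, and that the new numerical data $(g,\gamma,\bd)$ are unchanged (degrees and monodromies are locally constant, so this is forced, but one must confirm the degree does not leak onto the contracted component in a way that breaks condition (2)). The other delicate point is ensuring the induction's complexity genuinely drops and that no contraction re-creates a destabilizing component of equal or larger complexity --- this is where one uses that each surgery removes a component of a fixed type and only glues in lower chains or stacky points, mirroring the strict decrease of $V_1,V_2,V_3$ in Lemmas \ref{construction1}--\ref{construction3}.
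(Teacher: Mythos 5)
Your plan is essentially the paper's proof: after handling the irreducible central fiber via Lemma \ref{case-irre}, the paper iteratively contracts (on the coarse curve) the destabilizing rational components classified by Lemma \ref{stable-cri} --- first the Case-(1a) and Case-(2) tails, then maximal chains of Case-(1b) bridges --- re-extends $(\sL,\sN,\varphi,\rho,\nu)$ across the contracted point, introducing a $\mufive$-stacky point via Corollary \ref{5change1} after a finite base change exactly when $\sL$ restricted to the contracted chain is nontrivial $5$-torsion, and then verifies the nowhere-vanishing and degeneracy conditions at that point, terminating since each step drops the count of offending components. One aside in your sketch is inaccurate but harmless: the $\rho$-nowhere-vanishing tails are not absorbed by the basket finalization of \S\ref{mods} (that machinery is used only for the existence statement, Proposition \ref{exist-0}); in the paper they are treated by the same contraction-and-extension surgery as Case (1a), which is also the mechanism your proposal actually relies on.
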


\begin{proof}
Suppose $\xi_0$ is not stable, and $\sC_0$ is irreducible, by Lemma \ref{case-irre}, we are done.
In case $\sC_0$ is reducible, then by Lemma \ref{stable-cri}, we can find a rational curve  $\sE\sub \sC$ so that
either (1) or (2) of Lemma \ref{stable-cri} holds.

Let $\sE\sub\sC$ be of Case (1). We divide it into two subcases: Case-(1a) when both $\sE\cap \Si^\sC$ and $\sE\cap \sC_{0,\text{sing}}$ consist
of one point, and Case-(1b) when $\sE\cap \sC_{0,\text{sing}}$ consists of two points.
We look at Case-(1a). Let $p\in\sE$ be the node of
$\sC_0$ that lies in $\sE$. 
%We distinguish two cases. The first is when $\sC$ is a scheme near $p$. Then 
%$\deg\sL|_\sE=0$ implies that $\sL|_\sE\cong \sO_\sE$. 
Let $(E\sub C, \Si)$ be the coarse moduli of $(\sE\sub \sC,\Si^\sC)$. Then $E$ is a (-1)-curve of $C$. Let
$q:C\to C'$ be the contraction of $E$,  let $p'=q(E)\in C'$ and $\Si'=q(\Si)$.
We then introduce the stacky structure along $\Si'$ and $(C_0')_{\text{sing}}$ to obtain a family of 
twisted curves $\sC'\to S$ so that $q$ introduces an
isomorphism $\phi: \sC'-p'\cong \sC-\sE$. 

We claim that $\phi\sta(\sL,\sN,\rho,\varphi,\nu)$ extends
to   $(\sL',\sN',\rho',\varphi',\nu')$ on $\sC'$. Indeed, since $\sC'$ is smooth at $p'$, $\phi\sta\sL$
and $\phi\sta\sN$ extends to invertible sheaves $\sL'$ and $\sN'$ on $\sC'$, respectively, and the sections
$\phi\sta\rho$, $\phi\sta\varphi$ and $\phi\sta\nu$ extends to regular sections $\rho'$, $\varphi'$ and
$\nu'$. 

We now check that $(\rho', \nu'_2)$ is nonzero at $p'$. Indeed, 
since $\deg \sL|_\sE=0$ and $\omega_{\sC/S}^{\log}|_\sE\cong \sO_\sE$, if $\rho|_\sE\ne 0$
then it is nowhere vanishing. By (4) of Definition \ref{def-curve}, at least one of
$\rho|_\sE$ and $\nu_2|_\sE$ is nontrivial, thus at least 
one of $\rho|_\sE$ and $\nu_2|_\sE$ is nowhere vanishing. Consequently, at least one of
$\rho'(p')$ or $\nu'_2(p')$ is nonzero.
By the same reason, we conclude that both $(\varphi',\nu'_1)|_{p'}$ and $(\nu_1',\nu_2')|_{p'}$ are nonzero.
This concludes that $\xi'=(\Si^{\sC'},\sC', \sL',\cdots)\in \cW^{\text{pre}}\lggd(S)$. Finally, since ${\xi}\in
\cW^{\text{pre}-}\lggd(S)$, we have ${\xi'}\in \cW^{\text{pre}-}\lggd(S)$. 

We next consider then $\sE\sub\sC$ is of Case (2). As in Case-(1a), we form the 
coarse moduli $(E\sub C,\Si)$ of $(\sE\sub \sC, \Si^\sC)$, contract $E$ to obtain $C\to C'$, and
then reintroduce the stacky structure on $C'$ to get $\sC'$ so that $\Si^{\sC'}\sub \sC'-p'$ is isomorphic to
$\Si^\sC\sub \sC-\sE$. Next, we extend the pullback of $\sL$, etc. from $\sC'-p'$ to $\sC'$, and show that
the extensions give a new family $\xi'\in \cW^{\text{pre}-}\lggd(S)$. 

The new family $\xi'$ has the property that $\xi\lsta\cong\xi'\lsta$, $\sC'$ is smooth, and the number of rational curves
$\sE'$ in $\sC'$ listed as Case-(1a) or (2) is one less than that in $\sC$.
Therefore, after iteration, we can find a $\xi'\in \cW^{\text{pre}-}\lggd(S)$
so that $\xi\lsta=\xi'\lsta$, $\sC'$ is smooth, and no rational curve
$\sE'$ in $\sC'$ belongs to Case-(1a) or (2) in Lemma \ref{stable-cri}.

Therefore, to prove the Lemma, we only need to consider the case where the rational $\sE\sub\sC$ 
that makes $\xi_0$ unstable  characterized by Lemma \ref{stable-cri} are all in Case-(1b). 
Let $\sD=\sE_1\cup\cdots\cup\sE_k\sub\sC$ be
a maximal connected chain of Case-(1b) rational curves for the family $\xi$. As before,
  let $(D\sub C,\Si)$ be the coarse moduli of $(\sD\sub \sC, \Si^\sC)$. Then $D\sub C$ is a connected
chain of (-2)-curves. Let $q: C\to C'$ be the contraction of $D$,  $p'=q(D)\in C'$ and $\Si'=q(\Si)$. Note that
$p'\cap \Si'=\emptyset$. 

We distinguish two cases. The first is when $\sL|_\sD\cong \sO_\sD$.
We claim that then $\sN|_\sD\cong \sO_\sD$ as well.
Indeed, since $\xi_0\in \cW^{\text{pre}-}\lggd(\eta_0)$,
when $\rho|_\sD\ne 0$, then $\varphi|_\sD=0$, which forces $\nu_1|_\sD$ nowhere vanishing.
Thus $\sN|_\sD\cong \sL\dual|_\sD\cong \sO_\sD$. When $\rho|_\sD=0$, then $\nu_2|_\sD$
is nowhere vanishing, implying that $\sN|_\sD\cong \sO_\sD$. Note that by (2) of Definition \ref{def-curve},
$\sC$ is a scheme along $\sD$.

We then introduce stacky structure on $C'$ along $\Si'$ and $(C_0')_{\text{sing}}$
to obtain a family of twisted curves $\sC'$
so that the contraction morphism $q: C\to C'$ induces a contraction morphism
$\psi: \sC\to \sC'$ so that $\psi|_{\sC-\sD}: \sC-\sD\to\sC'-p'$ is an isomorphism of pointed 
twisted curves,
and that $p'$ is a scheme point of $\sC'$.
Let $(\sL',\sN', \varphi',\rho',\nu')=\psi\lsta(\sL,\sN,\varphi,\rho,\nu)$. It is direct to check that
$\xi'=(\Si^{\sC'},\sC', \sL',\cdots)\in \cW^{\text{pre}-}\lggd(S)$ and satisfies $\xi\lsta\cong\xi'\lsta$.

The other case is when $\sL|_\sC\not\cong \sO_\sD$. In this case, since $\deg\sL|_{\sE_i}=0$ for all
$\sE_i\sub \sD$, we   have $\varphi|_\sD=0$. Thus $\rho|_\sD$ and $\nu_1|_\sD$ are nowhere vanishing,
implying that $\sN|_\sD\cong \sL\dual|_\sD$.

%We continue to denote by $q: C\to C'$ the contraction morphism. 
To proceed, we introduce stacky structures on $C'$ along $\Si'$ and $(C_0')_{\text{sing}}-p'$
to obtain a family of twisted curves $\sC'$
so that the contraction morphism $q: C\to C'$ induces an isomorphism
$\phi: \sC'-p'\cong \sC-\sD$. (For the moment we keep $p'$ a scheme point of $\sC'$.)

Let $(\bar\sL,\bar\sN,\bar\rho,\bar\varphi,\bar\nu)$ be the pullback of $(\sL,\sN,\rho,\varphi,\nu)$ via $\phi$.
%Because $\xi_0\in \cW^{\text{pre}-}\lggd(\eta_0)$, we have $\nu_1|_\sD$ is nowhere vanishing.
%Thus near $p'$, $\sL'$ is isomorphic to $\sN^{\prime\vee}$, and $\nu_1'$ is non-vanishing near $\sD$.
%Since $\rho|_\sD$ is nowhere vanishing, and since $\omega_{\sC/S}^{\log}|_{\sD}\cong \sO_\sD$,
Since $\sL^{\otimes 5}|_\sD\cong \sO_\sD$, $\bar\sL^{\otimes 5}$ extends to
an invertible sheaf on $\sC'$. As $\bar \sL$ is isomorphic to $\bar \sN^{\vee}$ near $p'$, $\bar\sN^{\otimes 5}$ extends
to an invertible sheaf on $\sC'$.

We now consider $\bar\nu_2\in H^2(\sC'-p',\bar\sN)$. Since $\bar\sN^{\otimes 5}$ extends
to an invertible sheaf on $\sC'$, we can apply Corollary \ref{5change1} to $(\bar\nu_2,\bar\sN)$ to  
introduce a $\mufive$ stacky structure at $p'\in\sC'$ if necessary. After a finite base change, we continue to denote by $\sC'$ the resulting family of twisted curves. Then $\bar\sN$ extends to an invertible
sheaf $\sN'$ on $\sC'$ so that $\bar\nu_2$ extends to a regular section $\nu_2'$ of
$\sN'$. Since $\bar\sL^{\vee}$ is isomorphic to $\bar\sN$ near $p'$, we extend $\bar\sL$ to an
invertible sheaf $\sL'$ on $\sC'$ so that $\sL^{\prime\vee}$ is isomorphic to $\sN'$ near $p'$,
and extend the known isomorphism between $\bar\sL\dual$ and $\bar\sN$.
Because $\sC'$ is normal near $p'$, we can extend $\bar\varphi$, $\bar\rho$ and $\bar\nu_1$
to regular sections $\varphi'$, $\rho'$ and $\nu_1'$ over $\sC'$. 

We claim that $\xi'=(\Si^{\sC'},\sC', \sL',\cdots)\in \cW^{\text{pre}-}\lggd(S)$. For this, we only need to check
that $(\varphi',\nu_1')|_{p'}$, $(\rho',\nu_2')|_{p'}$ and $(\nu_1',\nu_2')|_{p'}$ are nonzero.
Indeed, since $(\rho=0)$ and $(\nu_1=0)$ are disjoint from $\sD$, the closures of $(\rho'=0)-p'$ and $(\nu_1'=0)-p'$ do 
not contains $p'$. Since $(\rho'=0)$ and $(\nu_1'=0)$ are pure codimension one closed subsets of $\sC'$,
we conclude that $\rho'$ and $\nu_1'$ are nonzero at $p'$. This proves that 
$(\varphi',\nu_1')|_{p'}$, $(\rho',\nu_2')|_{p'}$ and $(\nu_1',\nu_2')|_{p'}$ are nonzero.
Therefore, $\xi'\in \cW^{\text{pre}-}\lggd(S)$ and satisfies $\xi\lsta=\xi'\lsta$. 

We repeat this contraction and introducing stacky structures at the contracted point if necessary
to all connected chains of (-2) curves $\sE\sub \sC_0$ as argued, possibly after 
a finite base change, we obtain a family $\xi'\in \cW^{\text{pre}-}\lggd(S)$ so that
$\xi\lsta\cong \xi'\lsta$, and that there are no rational curves $\sE\sub \sC'_0$ belongs to the 
list in Lemma \ref{stable-cri}. This shows that $\xi'\in \cW^{-}\lggd(S)$ is a desired extension. 
\end{proof}

\subsection{Proof of properness}
\def\lalpsta{_{\alpha\ast}}

We prove the properness by gluing the extensions constructed in the previous subsections,
using the construction in \cite[Appendix]{A-G-V} (cf. \cite[Def. 1.4.1]{AF}).

Let $ \sX$ be an $S$-family of not necessary connected twisted nodal curves with two markings $\Gamma_1$ and $\Gamma_2$ which are $\bmu_5$-gerbes over $S$, $X$ be the moduli of $\sX$  with the natural projection $\pi\colon \sX\to X$,  $s_1, s_2\colon S\to X$ be two sections such that $s_i(S)=\pi(\Gamma_i)$. The line bundle $N_{\Gamma_i/\sX}^{\otimes 5}$ descends to the normal bundle of   $\pi(\Gamma_i)$ in $X$.

\begin{lemm}[{\cite[Def. 1.4.1]{AF}}]\label{glue2} With notations and assumptions as above. Assume 
 $s_1\sta N_{\Gamma_1/\sX}^{\otimes 5}\cong
s_2\sta N_{\Gamma_2/\sX}^{\vee\otimes 5}$.
Then possibly after a finite base change, we can find
an $S$-family of not necessary connected twisted nodal curves $\sX'$ together with an 
$S$-morphism $\alpha: \sX\to \sX'$ that is the gluing of $ \sX$ via (an appropriate $S$-isomorphism)
$\Gamma_1\cong\Gamma_2$. 
\end{lemm}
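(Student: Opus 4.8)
The plan is to deduce this from the gluing construction of \cite[Appendix]{A-G-V}, in the packaged form of \cite[Def.~1.4.1]{AF}; the only thing to verify is that the hypothesis on the fifth powers of the normal bundles supplies — after a finite base change — exactly the gluing datum that construction asks for. Recall that that construction requires an isomorphism $\phi\colon\Gamma_1\xrightarrow{\ \sim\ }\Gamma_2$ of $S$-gerbes \emph{twisted by the inversion automorphism} of $\bmu_5$, together with a $\bmu_5$-equivariant isomorphism $N_{\Gamma_1/\sX}\otimes\phi^*N_{\Gamma_2/\sX}\cong\sO_{\Gamma_1}$: twisting by inversion is what makes the two branches at the new point carry opposite $\bmu_5$-weights, and the isomorphism of normal bundles (compare $N_u\otimes N_v\cong\CC$ in \S\ref{Sub2.1}) is what forces that point to be a \emph{balanced} node. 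Granted such a $(\phi,\text{iso})$, one takes $\sX'$ to be the pushout of $\sX\hookleftarrow\Gamma_1\sqcup\Gamma_2\to\Gamma_1$, the right arrow being $\mathrm{id}\sqcup\phi$, in the $2$-category of DM stacks, and $\alpha\colon\sX\to\sX'$ the tautological map; flatness of $\sX'$ over $S$, the nodal local model at the glued point, and the compatibility of $\alpha$ with the markings are the local computations already carried out in \cite{A-G-V,AF}, which I would simply cite.

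So the substance is the production of $\phi$ and of the trivialization. First I would record the local structure: near $\Gamma_i$ the curve $\sX$ has the model $\sU_5=[\Spec A[u_i]/\bmu_5]$ with $\zeta\cdot u_i=\zeta u_i$, so $N_{\Gamma_i/\sX}$ is the representable line bundle $\langle\partial_{u_i}\rangle$ of $\bmu_5$-weight $-1$ on the gerbe $\Gamma_i$, and its fifth power has weight $0$, hence descends along $\pi$ to the line bundle $\bar N_i:=s_i^*N_{\Gamma_i/\sX}^{\otimes 5}$ on $S$ mentioned in the statement. Because $\Gamma_i$ carries a weight-$1$ line bundle it is a Kummer gerbe, $\Gamma_i\cong\sqrt[5]{\bar N_i^{\vee}}$, so its class in $H^2_{\mathrm{et}}(S,\bmu_5)$ is $-\partial[\bar N_i]$ with $\partial$ the Kummer coboundary; here I use that over the base of \S\ref{Sub3.1} (or over the discrete valuation ring appearing in the valuative criterion) the Brauer group vanishes, so this description is exact. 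The hypothesis $\bar N_1\cong\bar N_2^{\vee}$ then gives $[\Gamma_1]=-[\Gamma_2]$ in $H^2_{\mathrm{et}}(S,\bmu_5)$, which is precisely the condition for an inversion-twisted isomorphism $\phi\colon\Gamma_1\cong\Gamma_2$ to exist. Having fixed one, $\sM:=N_{\Gamma_1/\sX}\otimes\phi^*N_{\Gamma_2/\sX}$ is a weight-$0$ line bundle on $\Gamma_1$, hence $\pi^*M$ for some $M\in\Pic(S)$, and taking fifth powers together with $\bar N_1\otimes\bar N_2\cong\sO_S$ shows $M^{\otimes 5}\cong\sO_S$. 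A finite base change along which the $5$-torsion bundle $M$ becomes trivial — over a discrete valuation ring $\Pic=0$ already, and in general one passes to the cyclic cover determined by $M$, possibly preceded by the degree-$5$ change $S_5\to S$ of \S\ref{Sub3.1} so that the banded node is available over the base — turns $\sM$ into $\sO_{\Gamma_1}$, and feeding the resulting datum into the pushout produces $\sX'$ and $\alpha$.

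I expect the genuine obstacle to be exactly this passage from the descended data on $X$ (the fifth powers $\bar N_i$, which is all the hypothesis controls) back to the gerbe-and-normal-bundle data on the $\Gamma_i$: the weight/character part of a line bundle on a $\bmu_5$-gerbe is invisible after raising to the fifth power, so one matches things only up to a $5$-torsion discrepancy — the class of $M$ in $\Pic(S)[5]$, together with the potential non-neutrality of the $\Gamma_i$ when $S$ is not local — and it is precisely this discrepancy that the finite base change dissolves. Everything else (the pushout, its flatness over $S$, the verification that the new point is a balanced $\bmu_5$-node, and that $\alpha$ restricts to the identity away from $\Gamma_1\cup\Gamma_2$) is routine and already contained in \cite[Appendix]{A-G-V} and \cite[Def.~1.4.1]{AF}.
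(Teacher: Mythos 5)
Your proposal follows the same route as the paper's own proof: reduce everything to the gluing construction of \cite[Appendix]{A-G-V} and \cite[Def.~1.4.1]{AF} by producing, after a finite base change, an (inversion-twisted) $S$-isomorphism $\phi:\Gamma_1\cong\Gamma_2$ together with a trivialization of $N_{\Gamma_1/\sX}\otimes\phi^*N_{\Gamma_2/\sX}$. The only difference is that the paper merely asserts the existence of this gluing datum after a finite base change, whereas you justify it via the root-gerbe description of the $\Gamma_i$ and the elimination of the residual $5$-torsion line bundle $M$ by a further finite cover -- a correct elaboration of the step the paper leaves implicit.
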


\begin{proof} %Without loss of generality, we can assume that $S$ is connected.
Possibly after a finite base change, we can find an $S$-isomorphism
$\gamma: \Gamma_1\to\Gamma_2$ and 
$N_{\Gamma_1/\sX}\otimes \gamma\sta N_{\Gamma_2/\sX}\cong\sO_{\Gamma_1}$
that induces the given isomorphism $s_1\sta N_{\Gamma_1/\sX}^{\otimes 5}\cong
s_2\sta N_{\Gamma_2/\sX}^{\vee\otimes 5}$. 
In case $\Gamma_1$ and $\Gamma_2$ lie in different connected components of $\sX$, the
gluing is given in
\cite[Definition 1.4.1]{AF}. The case $\Gamma_1$ and $\Gamma_2$ lie in the same connected
component of $\sX$ can be deduced by adopting the construction in the Appendix of \cite{A-G-V}.
%from the previous case as follows ({cf. \cite{}}).
%
%We first glue $\sX-\Gamma_2$ with $\sX-\Gamma_1$ along $\Gamma_1\sub \sX-\Gamma_2$
%and $\Gamma_2\sub \sX-\Gamma_1$. We denote the resulting stack by $\sY$. Let
%$\Gamma\sub \sY$ be the image of $\Gamma_1$ (and $\Gamma_2$).
%We then patch $\sY$ with $\sY$ (two copies) via the map $\sY-\Gamma\to\sY-\Gamma$ that send the 
%$\sX-\Gamma_1\cup\Gamma_2$ in the first copy $\sX-\Gamma_2\sub \sY$ to 
%the $\sX-\Gamma_1\cup\Gamma_2$
%in the second copy $\sX-\Gamma_2\sub\sY$ via the identity map $\id_{\sX-\Gamma_1\cup\Gamma_2}$.
%We denote the resulting stack by $\sZ$. Then the $\id_{\sX-\Gamma_1\cup\Gamma_2}$
%sending $\sX-\Gamma_1\sub\sY$ to $\sX-\Gamma_2\sub\sY$ extends to a $\ZZ_2$ action on
%$\sZ$. The desired gluing is $\sZ/\ZZ_2$. {\red I think this is done in the Appendix of [A-G-V].}
\end{proof}

We can also glue the sheaves and sections. 
Let the situation be as in Lemma \ref{glue2}, and let $\gamma:\Gamma_1\to\Gamma_2$ be the
isomorphism given in its proof. 

\begin{coro}\label{glue3}
Suppose we have an invertible
sheaf $\sL$ on $\sX$ and an isomorphism $\gamma\sta(\sL|_{\Gamma_2})\cong \sL|_{\Gamma_1}$.
Then the sheaf $\sL$ glues to get an invertible sheaf $\sL'$ on $\sX'$ via the exact sequence
$$0\lra \sL'\lra\alpha\lsta\sL\mapright{\epsilon} \alpha\lsta(\sL|_{\Gamma_1})\lra 0,
$$
where the arrow $\epsilon$ takes the form
$$\big(\alpha\lsta\sL\big)|_{\Gamma}\cong \alpha\lsta(\sL|_{\Gamma_1})\oplus\alpha\lsta(\sL|_{\Gamma_2})
\mapright{(\epsilon_1,-\epsilon_2)} \alpha\lsta(\sL|_{\Gamma_1}),
$$
where $\epsilon_1$ is the identity $\alpha\lsta(\sL|_{\Gamma_1})=\alpha\lsta(\sL|_{\Gamma_1})$,
and $\epsilon_2$ is the isomorphism
$\alpha\lsta(\sL|_{\Gamma_2})\cong \alpha\lsta(\sL|_{\Gamma_1})$ induced by the isomorphism
$\gamma\sta(\sL|_{\Gamma_2})\cong\sL|_{\Gamma_1}$ given.
Furthermore, suppose $s\in H^0(\alpha\lsta\sL)$ is a section so that $\epsilon(s)=0$, then $s$ lifts to a section
$s'\in H^0(\sL')$.
\end{coro}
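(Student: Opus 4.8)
The plan is to \emph{define} $\sL'$ to be the kernel of the map $\epsilon$ written in the statement, and then to verify two things: that $\epsilon$ is surjective, so that the displayed sequence is genuinely short exact, and that $\sL'=\ker\epsilon$ is an invertible sheaf on $\sX'$. Surjectivity is immediate and local on $\sX'$: away from the glued node $\Gamma:=\alpha(\Gamma_1)=\alpha(\Gamma_2)$ the target $\alpha_*(\sL|_{\Gamma_1})$ vanishes, while along $\Gamma$ the first component $\epsilon_1$ of $\epsilon=(\epsilon_1,-\epsilon_2)$ is the identity. So the real content is invertibility of $\sL'$, which is again local near $\Gamma$: on $\sX'\setminus\Gamma$ the morphism $\alpha$ is an isomorphism onto $\sX\setminus(\Gamma_1\cup\Gamma_2)$ and there $\sL'\cong\sL$ is invertible.

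Near $\Gamma$ I would pass to the \'etale-local model of \S\ref{Sub2.1}: there $\sX'$ is the push-out $\sV_5$ of the two branches $\sU_{5,u}\sqcup\sU_{5,v}$ along their identified $\bmu_5$-stacky points, $\sX$ is $\sU_{5,u}\sqcup\sU_{5,v}$, and $\alpha$ is the push-out map; accordingly $(\alpha_*\sL)|_\Gamma\cong\sL|_{\Gamma_1}\oplus\sL|_{\Gamma_2}$ and $\epsilon$ is the map $(a,b)\mapsto a-\epsilon_2(b)$, so that $\ker\epsilon$ is the ``diagonal'' submodule of $\sL|_{\sU_{5,u}}\oplus\sL|_{\sU_{5,v}}$ determined by $\gamma$. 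The task is then to recognize this submodule as one of the $\bmu_5$-modules $\sM_m$ from \S\ref{Sub2.1} that describe invertible sheaves on $\sV_5$. For this one uses that $\gamma^*(\sL|_{\Gamma_2})\cong\sL|_{\Gamma_1}$ is an isomorphism of $\bmu_5$-equivariant sheaves and that $\gamma$ is compatible with the identification $N_{\Gamma_1/\sX}\otimes\gamma^*N_{\Gamma_2/\sX}\cong\sO_{\Gamma_1}$ fixed in the proof of Lemma \ref{glue2}; together these force the monodromy of $\sL$ along $\sU_{5,u}$ and along $\sU_{5,v}$ to be $\zeta_5^{m}$ and $\zeta_5^{-m}$ for a common $m$, which is precisely the compatibility making $\ker\epsilon$ equal to $\sM_m$. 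This identifies $\sL'$ with the invertible sheaf glued from $\sL|_{\sU_{5,u}}$ and $\sL|_{\sU_{5,v}}$ via $\gamma$ and shows $\alpha^*\sL'$ agrees with $\sL$ off the two gerbes; the bookkeeping is parallel to the gluing of sheaves by push-out in \cite[Appendix]{A-G-V} (cf. \cite[Def. 1.4.1]{AF}). I expect this local identification of $\ker\epsilon$ with $\sM_m$ --- tracking the two monodromy characters and their compatibility with the gluing of the underlying curve --- to be the only genuinely delicate point; everything else is formal.

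For the final assertion I would use that the gluing morphism $\alpha$ is finite, hence affine, so that $\alpha_*$ is exact and $H^0(\sX',\alpha_*\sL)=H^0(\sX,\sL)$ and $H^0(\sX',\alpha_*(\sL|_{\Gamma_1}))=H^0(\Gamma_1,\sL|_{\Gamma_1})$. Applying $H^0(\sX',-)$ to the short exact sequence $0\to\sL'\to\alpha_*\sL\to\alpha_*(\sL|_{\Gamma_1})\to 0$ yields a left-exact sequence $0\to H^0(\sX',\sL')\to H^0(\sX',\alpha_*\sL)\to H^0(\sX',\alpha_*(\sL|_{\Gamma_1}))$ in which the last arrow is $s\mapsto\epsilon(s)$. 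Hence any $s\in H^0(\alpha_*\sL)$ with $\epsilon(s)=0$ already lies in $H^0(\sX',\sL')$, that is, lifts (uniquely) to the desired section $s'\in H^0(\sL')$. This last step is entirely formal once the short exact sequence is established.
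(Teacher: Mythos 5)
Your proposal is correct and is essentially the intended argument: the paper states Corollary \ref{glue3} without proof, treating it as immediate from the construction of Lemma \ref{glue2} (via \cite[Appendix]{A-G-V}, \cite[Def. 1.4.1]{AF}) together with the local description of invertible sheaves at a twisted node recalled in \S\ref{Sub2.1}, where the kernel presentation you use is exactly the model $\sM_m$. Your verification of surjectivity of $\epsilon$, the identification of $\ker\epsilon$ with $\sM_m$ using the matching of the $\bmu_5$-characters forced by the hypothesis $\gamma\sta(\sL|_{\Gamma_2})\cong\sL|_{\Gamma_1}$ and the balanced gluing, and the lifting of sections by left-exactness of $H^0$ applied to the short exact sequence are precisely the routine steps the authors omit.
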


\begin{prop}\label{proper-proof}
The degeneracy locus $\cW\lggd^-$ is a proper, closed substack of $\cW\lggd$.
\end{prop}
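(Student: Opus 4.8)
The plan is to prove properness via the valuative criterion, the bulk of the work being the existence of limits. First observe that $\cW\lggd^-$ is a closed substack of $\cW\lggd$ by its very definition as the degeneracy locus of $\sigma$, and that it is separated since $\cW\lggd$ is. Finite-typeness of $\cW\lggd^-$ is a separate boundedness statement: for fixed numerical data $(g,\gamma,\bd)$, the description of its closed points in Lemma \ref{degenerate-locus} together with the stability classification of Lemma \ref{stable-cri} bounds the dual graph of $\sC$ and the degrees of $\sL$ and $\sN$ on each irreducible component (on a component with $\varphi\not\equiv0$ one has $\rho=\varphi_1^5+\cdots+\varphi_5^5=0$ and $\deg\sL$ is controlled by $d_0$, while the components with $\varphi\equiv0$ and their bundles are constrained by stability together with $d_0$ and $d_\infty$), so $\cW\lggd^-$ embeds into a finite-type moduli problem. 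It then remains to verify the existence part of the valuative criterion: with $\eta_0\in S$ and $S\lsta=S-\eta_0$ as in \S\ref{Sub3.1}, every $\xi\lsta\in\cW^-\lggd(S\lsta)$ extends, after a finite base change, to some $\xi\in\cW^-\lggd(S)$; uniqueness is then automatic from separatedness of $\cW\lggd$.

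First I would reduce to the smooth-source case handled in Propositions \ref{exist-0} and \ref{proper1}. Let $\pi\colon\ti\sC\to\sC\lsta$ be the normalization, let $\Si^{\ti\sC}$ be the preimage of $\Si^{\sC\lsta}\cup(\sC\lsta)_{\mathrm{sing}}$, write $\ti\sC=\coprod_a\ti\sC_a$ for its decomposition into connected components, and let $\ti\xi_a$ be the pullback datum on $\ti\sC_a$ together with the new markings at node preimages, labelled by the corresponding monodromies of $\sL$ (this may force some $\ti\xi_a$ to be broad MSP families, but Propositions \ref{exist-0} and \ref{proper1} hold by the same proofs in the broad case). By Corollary \ref{split}, fiberwise stability of $\xi\lsta$ forces each $\ti\xi_a$ to be stable, and the degeneracy condition of Lemma \ref{degenerate-locus} pulls back, so each $\ti\xi_a$ is a stable family of (broad) MSP fields over $S\lsta$ with smooth connected source lying in the degeneracy locus. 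After one common finite base change of $S$, I would apply Proposition \ref{exist-0} to extend each $\ti\xi_a$ to a prestable family over $S$ in the degeneracy locus, and then Proposition \ref{proper1} to replace it by a stable extension $\ti\xi'_a$ over $S$, lying in the degeneracy locus of its own numerical type, with generic fiber still $\ti\xi_a$.

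Next I would glue the families $\ti\xi'_a$ back along $\eta_0$. The markings of the $\ti\xi'_a$ created at node preimages are $\bmu_5$-gerbes occurring in pairs $(\Gamma_1,\Gamma_2)$, and over $S\lsta$ the curve $\sC\lsta$ is recovered from $\coprod_a\ti\sC_a$ by gluing these pairs. For each pair the compatibility $s_1\sta N_{\Gamma_1}^{\otimes5}\cong s_2\sta N_{\Gamma_2}^{\vee\otimes5}$ required by Lemma \ref{glue2} holds over $S\lsta$, being the fifth power of the relation $N_u\otimes N_v\cong\CC$ at a balanced node, and hence holds over all of $S$ since both sides are line bundles on $S$ and $\Pic(S)=0$. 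Thus, after a further finite base change, Lemma \ref{glue2} produces an $S$-family of twisted curves $\sC'$ glued from the $\ti\sC'_a$ along the extended gerbes, and Corollary \ref{glue3} glues $\sL$, $\sN$, $\varphi$, $\rho$ and $\nu$: the identifications of $\sL|_{\Gamma_i}$, $\sN|_{\Gamma_i}$ and of the fields that are needed hold over $S\lsta$ and extend over $S$ because each $\Gamma_i$ is $S$-flat and reduced. This produces a datum $\xi'=(\Si^{\sC'},\sC',\sL',\sN',\varphi',\rho',\nu')$ over $S$ that restricts to $\xi\lsta$ over $S\lsta$.

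Finally I would check that $\xi'\in\cW^-\lggd(S)$. The nowhere-vanishing conditions $(3)$--$(5)$ of Definition \ref{def-curve} and the vanishings $\varphi'|_{\Si^{\sC'}_{(1,\varphi)}}=0$, $\rho'|_{\Si^{\sC'}_{(1,\rho)}}=0$ are fiberwise, branch-local conditions inherited from the $\ti\xi'_a$; the bidegree $(d_0,d_\infty)$ and the monodromies along the surviving markings are locally constant in the flat family $\sC'/S$ and equal those of $\xi\lsta$, so $\xi'$ has numerical data $(g,\gamma,\bd)$. Stability of the central fiber $\xi'_0$ follows from Corollary \ref{split}, since normalizing $\sC'_0$ recovers the (stable) central fibers of the $\ti\xi'_a$; hence $\xi'\in\cW\lggd(S)$. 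The degeneracy condition $(\varphi'=0)\cup(\sum_i(\varphi'_i)^5=\rho'=0)=\sC'$ holds because it holds on each $\ti\sC'_a$ and the images of the $\ti\sC'_a$ cover $\sC'$, so in fact $\xi'\in\cW^-\lggd(S)$, which completes the existence check and hence the proof. The step I expect to be the main obstacle is the gluing in the third paragraph: one must arrange, by a single finite base change, that the independently constructed extensions $\ti\xi'_a$ simultaneously carry matching gerbe, normal-bundle and line-bundle data at the finitely many nodes of the central fiber of $\sC\lsta$, so that Lemma \ref{glue2} and Corollary \ref{glue3} can be applied to reassemble them, and one must then confirm that the reassembled family has not lost stability — which is precisely what Corollary \ref{split} and the gluing lemmas are designed to deliver.
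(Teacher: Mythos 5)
Your overall route is the same as the paper's: reduce closedness to the degeneracy criterion, normalize $\sC\lsta$, invoke Corollary \ref{split} to get stable families with smooth connected fibers, extend each piece after a common finite base change via Propositions \ref{exist-0} and \ref{proper1}, and reassemble with Lemma \ref{glue2} and Corollary \ref{glue3}. The genuine gap is in your third paragraph, which is exactly where the bulk of the paper's proof lives. You assert that the identifications of $\iota_1\sta\ti\sL$ with $\iota_2\sta\ti\sL$ (and of $\ti\sN$ and of the fields) ``hold over $S\lsta$ and extend over $S$ because each $\Gamma_i$ is $S$-flat and reduced.'' Flatness and reducedness do not give this: an isomorphism of line bundles over $\Upsilon\lsta$ extends only as a rational homomorphism, which may vanish or have a pole along the central fiber, and a finite base change does not remove such a zero. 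The extension is true, but for a different reason: because $(\varphi,\nu_1)$ (resp. $(\rho,\nu_2)$) is nowhere vanishing along the relevant branches, the restricted data define morphisms $\beta_i$ from the section $\Upsilon$ to $\PP^5$ (resp. to $\PP_{(5,1)}$ in the case $\rho\ne 0$ at the node, where one must also track the $\mufive$-stacky structure and the identification $\ti\sL\dual\cong\ti\sN$ near the node); these morphisms agree over $S\lsta$, hence over $S$, and pulling back $\sO(1)$ produces the isomorphism $\phi$ that matches the fields, so that Corollary \ref{glue3} applies. This case analysis (whether $\varphi$ or $\rho$ is nonzero at the node) cannot be skipped, and it is also what guarantees that the glued $(\varphi',\nu_1')$, $(\rho',\nu_2')$, $(\nu_1',\nu_2')$ remain nowhere vanishing at the glued node.

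A smaller inaccuracy: your justification of the hypothesis of Lemma \ref{glue2} via ``$\Pic(S)=0$'' is not valid as stated (a smooth affine curve can have nontrivial Picard group), and in any case what is needed is not merely some isomorphism $s_1\sta N_{\Gamma_1/\ti\sC}^{\otimes 5}\cong s_2\sta N_{\Gamma_2/\ti\sC}^{\vee\otimes 5}$ but one whose restriction over $S\lsta$ is compatible with the original node-smoothing data of $\sC\lsta$ (the paper phrases this as consistency with $\mathcal{E}xt^1(\Omega_{\sC\lsta},\sO_{\sC\lsta})\cong\sO_{\sC\lsta}$, achieved possibly after a further finite base change). Your observations that the normalized pieces may only be broad MSP families, and that stability of the glued central fiber follows from Corollary \ref{split}, are correct and consistent with how the paper treats these points.
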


\begin{proof}
The closedness follows from the criterion \eqref{deg-loci}. We now prove the properness.

%The properness follows from repeated applying
%Proposition \ref{glue} to the family $(\Si^{\ti\sC},\ti\sC, \ti\sL,\ti\sN,\ti\varphi,\ti\rho,\ti\nu)$ constructed
%before Proposition \ref{glue}.
%
%We now apply this lemma and corollary.
Let $\xi_*=(\Si^{\sC\lsta}, \sC_*, \cdots)\in\cW^-\lggd(S\lsta)$.
Possibly after a finite base change, we can assume that every connected component of the singular locus 
$(\sC\lsta)_{\text{sing}}$ is the image of a section of $\sC\lsta\to S\lsta$. 
Let 
$$\pi: \sC\lsta^{\text{nor}}=\coprod \sC_{\alpha\ast}\to\sC\lsta
$$
be the normalization of $\sC\lsta$, with every
$\sC_{\alpha\ast}$ connected. After a finite base change, we can assume that $\sC_{\alpha\ast}\to S\lsta$
have connected fibers.
Let $\tau\lalpsta: \sC\lalpsta\to\sC\lsta$ be the tautological morphism. For each $\sC\lalpsta$,
we endow it with the markings the (disjoint) union of $\tau\lalpsta\upmo(\Si^{\sC\lsta})$
and $\tau\lalpsta\upmo((\sC\lsta)_{\text{sing}})$. Let $\sL\lalpsta$, etc., be the
pullbacks of $\sL\lsta$, etc., via $\tau\lalpsta$. By Corollary \ref{split},
$$\xi\lalpsta=(\Si^{\sC\lalpsta}, \sC\lalpsta,\sL\lalpsta,\sN\lalpsta,\varphi\lalpsta,\rho\lalpsta, \nu\lalpsta)\in
\cW^-_{g\lalp, n\lalp, \bd\lalp}(S\lsta),
$$
for a choice of $(g\lalp, n\lalp, \bd\lalp)$.

Applying Proposition \ref{proper1}, after a finite base change $S\lalp\to S$, we can extend $\xi\lalpsta$ to a
$\xi\lalp'\in \cW^-_{g\lalp, n\lalp, \bd\lalp}(S\lalp)$. We then pick a finite base change $\ti S\to S$, factoring
through all $S\lalp\to S$, and form $\xi\lalp=\xi\lalp'\times_{S\lalp} \ti S$. 
Therefore, after denoting $\ti S$ by $S$, we conclude that 
possibly after a finite base change, every $\xi\lalpsta$ extends to a 
$\xi\lalp\in \cW^-_{g\lalp, n\lalp, \bd\lalp}(S)$.

We now glue $\xi\lalp$'s to a $\xi$ that is a stable extension of $\xi\lsta$. 
Let $\ti\sC=\coprod \sC_{\alpha}$,
$\Upsilon\lsta\sub \sC\lsta$ be
a section of $(\sC\lsta)_{\text{sing}}$, and $\Upsilon_{1\lsta}\coprod \Upsilon_{2\lsta}\sub \sC\lsta^{\text{nor}}$
be the preimage of $\Upsilon\lsta$. Using $(\sC\lsta)^{\text{nor}}\to\sC\lsta$, they are markings in $\ti\sC\lsta$.
Since markings in $\sC_{\alpha\ast}$ extend to markings in $\sC\lalp$, after a finite base change, we can
assume that all $\Upsilon_{i\ast}$ extend to sections $\Upsilon_i$ in $\ti\sC$
such that the $S\lsta$-isomorphisms
$\Upsilon_{1\lsta}\cong \Upsilon\lsta\cong \Upsilon_{2\lsta}$ extend to an $S$-isomorphisms
$\sigma: \Upsilon_1\cong \Upsilon_2$. 

Then possibly after a finite base change, we can find an isomorphism 
$\sigma\sta N_{\Upsilon_2/\ti\sC}\otimes N_{\Upsilon_1/\ti\sC}\cong\sO_{\Upsilon_1}$ whose
restriction to $\Upsilon_{1\ast}$ is consistent with 
$\cal E xt^1(\Omega_{\sC\lsta},\sO_{\sC\lsta})\cong\sO_{\sC\lsta}$. Applying Lemma \ref{glue2},
we obtain a gluing $\sC$ of $\ti\sC$ along $\Upsilon_1\cong \Upsilon_2$, resulting a family of
twisted pointed nodal curves. After performing such gluing to all sections of $(\sC\lsta)_{\text{sing}}$,
we obtain an $S$-family of twisted nodal curves $\sC\to S$ that is an extension of $\sC\lsta\to S\lsta$.
We denote by
\beq\label{glue4}
\beta: \ti\sC\lra \sC
\eeq
the gluing morphism.

We next glue the sheaves and fields. Let $(\ti\sL,\ti\sN,\ti\varphi,\ti\rho,\ti\nu)$ be the sheaves and sections on 
$\ti\sC$ so that its restriction to $\sC\lalp$ is part of the extension $\xi\lalp$.
We will show that possibly after a finite base change, we can find $(\sL,\sN,\varphi,\rho,\nu)$ over 
$\sC$ together with isomorphisms
$(\ti\sL,\ti\sN)\cong \beta\sta(\sL,\sN)$ and $(\ti\varphi,\ti\rho,\ti\nu)=
\beta\sta(\varphi,\rho,\nu)$

Without loss of generality, we can assume that $(\sC\lsta)_\sing$ consists of a single $S\lsta$-section.
Thus $\sC$ is the gluing of $\ti\sC$ along $\Upsilon_1\cong\Upsilon_2$.
Let 
$$   
\iota_i:\Upsilon\lra \ti\sC
$$
be the composite $\Upsilon\cong \Upsilon_i\to \ti\sC$ of the tautological maps.
We first consider the case where $\iota_1\sta\ti\varphi\ne 0$. Then necessarily $\iota_2\sta\ti\varphi\ne 0$.
Since $\xi\lalp\in \cW^-_{g\lalp,\gamma\lalp,\bd\lalp}(S)$,
$\ti\rho=0$ in a neighborhood $\ti\sU$ of $\Upsilon_1\cup\Upsilon_2$ in $\ti\sC$, thus $\ti\nu_2$, which is nowhere vanishing
in $\ti\sU$, induces an isomorphism $\ti\sN|_{\ti\sU}\cong \sO_{\ti\sU}$, hence inducing
$\iota_i\sta\ti\sN\cong \sO_\Upsilon$ so that $\iota_i\sta\ti\nu_2=1$.
Note that in this case, $\ti\sC$ is a scheme along $\Upsilon_i$. 

For $i=1$ or $2$, we consider $(\iota_i\sta\ti\varphi_1,\cdots,\iota_i\sta\ti\varphi_5,\iota_i\sta\ti\nu_1)$, 
which is a nowhere vanishing section of $H^0(\iota_i\sta\ti\sL^{\oplus 6})$.
It induces a morphism $\beta_i: \Upsilon\to \PP^5$. Because $\beta_1|_{\Upsilon\lsta}=
\beta_2|_{\Upsilon\lsta}$, we have $\beta_1=\beta_2$. Consequently, we have a unique isomorphism $\phi: \iota_1\sta\ti\sL\cong 
\iota_2\sta\ti\sL$ so that 
\beq\label{glue5}
\phi\sta(\iota_2\sta\ti\varphi_1,\cdots,\iota_2\sta\ti\varphi_5, \iota_2\sta\ti\nu_1)=
(\iota_1\sta\ti\varphi_1,\cdots,\iota_1\sta\ti\varphi_5,\iota_1\sta\ti\nu_1).
\eeq
Using $\iota_i\sta\ti\sN\cong \sO_\Upsilon$ and $\iota_i\sta\ti\nu_2=1$, we have isomorphism 
$\phi':\iota_1\sta\ti\sN\cong 
\iota_2\sta\ti\sN$ so that 
$\phi^{\prime\ast}\iota_2\sta\ti\nu_2=\iota_1\sta\ti\nu_2$.

Applying the scheme version of Corollary \ref{glue3}, we obtain invertible sheaves $\sL$ and $\sN$ on $\sC$ with isomorphisms
$\beta\sta\sL\cong \ti\sL$ and $\beta\sta\sN\cong \ti\sN$ whose restrictions to $\Upsilon$ are
$\phi$ and $\phi'$ respectively. By   \eqref{glue5} and 
Corollary \ref{glue3}, we also obtain sections 
$\varphi\in H^0(\sL)^{\oplus 5}$
and $\nu_2\in H^0(\sN)$ that are liftings of 
$\beta\lsta \ti\varphi$ and $\beta\lsta \ti\nu_2$, respectively, which thus satisfy
$\beta\sta \varphi=\ti\varphi$ and $\beta\sta\nu_2=\ti\nu_2$,
under the given isomorphisms. 

It remains to check that $\ti\nu_1$ and $\ti\rho$ can be glued to sections over $\sC$. 
In this case, since $\sC$ is a scheme along $\beta(\Upsilon_1)=\beta(\Upsilon_2)$, using
that $\ti\sL$ glues to $\sL$ we conclude that $\ti\sL\mof\otimes\omega^{\log}_{\ti\sC/S}$
glues to $\sL^{\vee\otimes 5}\otimes\omega^{\log}_{\sC/S}$. Since $\ti\rho$ vanishes along
$\Upsilon_1\cup\Upsilon_2$, $\beta\lsta\ti\rho$ lifts to $\rho$ so that $\rho|_{\beta(\Upsilon_1)}=0$.
The gluing of $\ti\nu_1$ is similar.  This proves the existence of gluing in this case.

The other case is when $\iota_1\sta\rho\ne 0$, which implies that $\iota_2\sta\ti\rho\ne 0$. 
In this case, we must have $\ti\varphi|_{\ti\sU}=0$ over a neighborhood ${\ti\sU}$ of $\Upsilon_1\cup\Upsilon_2$ in
$\ti\sC$. Consequently, $\ti\nu_1|_{\ti\sU}$ is nowhere %$\iota_i\sta\ti\nu_1$ nowhere
vanishing, forcing $\ti\sL\dual|_{\ti\sU}\cong \ti\sN|_{\ti\sU}$. In particular, we have
the induced isomorphism $\iota_i\sta(\ti\sL\otimes\ti\sN)\cong \sO_{\Upsilon}$ and $\iota_i\sta\ti\nu_1=1$ 
under this isomorphism.

To proceed, we use the canonical isomorphisms $\iota_i\sta\omega_{\ti\sC/S}^{\log}\cong\sO_{\Upsilon}$
due to that $\Upsilon_i\sub\ti\sC$ is a section along smooth locus of fibers of $\ti\sC\to S$.
%Let $\Gamma_i\sub \ti C\to C$ be coarse moduli of $\Upsilon_i\sub \ti\sC\to S$, and let
%$\ti\pi: \ti\sC\to\ti C$ be the projection. Then since $\Gamma_i\sub \ti C$ is a smooth divisor, we have
%canonical isomorphism
%$\omega_{\ti C/S}(\Gamma_i)|_{\Gamma_i}\cong \sO_{\Gamma_i}$. 
%Because canonically $\iota_i\sta\omega_{\ti\sC/S}^{\log}=\iota_i\sta\bl\ti\pi\sta \omega_{\ti C/S}(\Gamma_i)|_{\Gamma_i}\br$,
%we obtain the desired isomorphism $\iota_i\sta\omega_{\ti\sC/S}^{\log}\cong\sO_{\Upsilon}$.
Using this isomorphism, we can view $\iota_i\sta\ti\rho$ as a section in $H^0(\iota_i\sta\ti\sL^{\vee\otimes 5})$. 
Using $\iota_i\sta\ti\sL\dual\cong \iota_i\sta\ti\sN$, $\iota_i\sta\ti\nu_2$ is a section in $H^0(\iota_i\sta\ti\sL\dual)$.
Because $(\iota_i\sta\ti\rho,\iota_i\sta\ti\nu_2)$ is nowhere vanishing, it defines a morphism $\beta_i: \Upsilon_i\to \PP_{(5,1)}$.
Because $\beta_1\times_S S\lsta=\beta_2\times_S S\lsta$, we have $\beta_1=\beta_2$. 
Thus there are isomorphisms 
$$\phi': \iota_1\sta\ti\sL\cong\beta_1\sta\sO_{\PP_{(5,1)}}(1)= \beta_2\sta\sO_{\PP_{(5,1)}}(1)\cong 
\iota_2\sta\ti\sL$$
%and sections $s_1\in H^0(\sO_{\PP_{(5,1)}}(5))$ and $s_2\in H^0(\sO_{\PP_{(5,1)}}(1))$ so that
so that $\iota_1\sta\ti\rho=\iota_2\sta\ti\rho$ and $\iota_1\sta\ti\nu_2=\iota_2\sta\ti\nu_2$ 
(with the known $\iota_i\sta\omega_{\ti\sC/S}^{\log}\cong \sO_\Upsilon$). 

Like before, using $\phi'$, and applying Corollary \ref{glue3}, we can glue $\ti\sL$ to get $\sL$ on $\sC$ so
that, letting $\iota: \Upsilon\cong \alpha(\Upsilon_1)\sub\sC$ be the tautological inclusion, the isomorphisms
$\iota_1\sta\ti\sL\cong \iota\sta\sL\cong \iota_2\sta\ti\sL$ induce the isomorphism $\phi'$.
We next glue $\ti\sN$. Let $\sU$ be the image of
$\ti\sU$ under $\ti\sC\to\sC$, which is a neighborhood of $\Upsilon\sub\sC$.
Then using $\ti\sL\dual|_{\ti\sU}\cong \ti\sN|_{\ti\sU}$, we see that
$\ti\sN$ glues to get $\sN$ on $\sC$ so that $\sN|_{\sU}\cong \sL^{\vee}|_{\sU}$, consistent with
the isomorphism $\ti\sL\dual|_{\ti\sU}\cong \ti\sN|_{\ti\sU}$.

As before, applying Corollary \ref{glue3}
and using that $\iota_1\sta\ti\nu_2=\iota_2\sta\ti\nu_2$ and
$\iota_1\sta\ti\rho_1=\iota_2\sta\ti\rho_2$ under $\phi':\iota_1\sta\ti\sL\cong \iota_2\sta\ti\sL$,
we conclude that $\ti\nu_2$ and $\ti\rho$ glue to 
$\nu_2$  and  $\rho$ of $\sN$ and $\sL^{\vee\otimes 5}\otimes\omega^{\log}_{\sC/S}$,
respectively. Since $\ti\varphi|_\sU=0$, it glues to $\varphi$ such that $\varphi|_{\sU}=0$.
For $\ti\nu_1$, since it induces isomorphism $\ti\sN|_{\ti\sU}\cong \ti\sL^{\vee}|_{\ti\sU}$,
and this isomorphism descends to $\sN|_{\sU}\cong \sL^{\prime\vee}|_{\sU}$, we see that
$\ti\nu_1$ glues to get $\nu_1$. Finally, we let $\Si^{\sC}$ be the image
of $\Si^{\ti\sC}-\bl\Upsilon_1\cup\Upsilon_2\br$, Then
$$\xi=(\Si^{\sC},\sC,\sL,\sN,\varphi,\rho,\nu)\in \cW^-_{g,\gamma,\bd}(S).
$$
This proves the Proposition.
\end{proof}

As the proof doesn't use the condition $\varphi_1^5+\ldots+\varphi_5^5=0$, we have
\begin{coro}\label{propercor}
Let $\cW^{\wti{}}\lggd\subset \cW\lggd$ be the reduced closed substack where close points are $\xi\in\cW\lggd(\CC)$ such that $(\varphi=0)\cup (\rho=0)=\sC$ (c.f. Lemma \ref{degenerate-locus}). Then $\cW^{\wti{}}\lggd$ is proper. 
\end{coro}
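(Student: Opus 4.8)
The plan is to run the entire argument of this section with the cosection degeneracy condition $(\varphi=0)\cup(\varphi_1^5+\cdots+\varphi_5^5=\rho=0)=\sC$ replaced throughout by the weaker condition $(\varphi=0)\cup(\rho=0)=\sC$. So I would introduce $\cW^{\mathrm{pre},\wti{}}\lggd\sub\cW^{\mathrm{pre}}\lggd$, the full subcategory fibered in groupoids of those $\xi$ whose $\CC$-points satisfy $(\varphi=0)\cup(\rho=0)=\sC$, and set $\cW^{\wti{}}\lggd=\cW^{\mathrm{pre},\wti{}}\lggd\cap\cW\lggd$; by Lemma \ref{degenerate-locus} we have $\cW^-\lggd\sub\cW^{\wti{}}\lggd$. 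That $\cW^{\wti{}}\lggd$ is closed in $\cW\lggd$ is proved exactly as the closedness of $\cW\lggd^-$: with $\pi\colon\cC\to\cW\lggd$ the universal twisted curve and $Z=(\varphi=0)\cup(\rho=0)\sub\cC$, which is a closed subset, the complement of $\cW^{\wti{}}\lggd$ equals $\pi(\cC\setminus Z)$, and this is open because $\pi$ is flat.

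Next I would check that every statement of this section feeding into the proof of Proposition \ref{proper-proof} --- namely Lemma \ref{stable-cri}, Corollary \ref{split}, Propositions \ref{prop-extension0}, \ref{exist-0} and \ref{proper1}, and Lemma \ref{case-irre} --- holds verbatim with $\cW^{\mathrm{pre}-}\lggd$, $\cW^-\lggd$ replaced by $\cW^{\mathrm{pre},\wti{}}\lggd$, $\cW^{\wti{}}\lggd$. The key point is that in each of those proofs the hypothesis ``$\xi\in\cW^{\mathrm{pre}-}\lggd$'' is invoked only through the implication: \emph{if $\sE\sub\sC$ is an irreducible component, or a dense open subset of one, on which $\rho$ is not identically zero, then $\varphi|_\sE=0$}. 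This implication already follows from $(\varphi=0)\cup(\rho=0)=\sC$, since at a point where $\rho\ne0$ one must have $\varphi=0$, and irreducibility of $\sE$ propagates this to all of $\sE$. Concretely, this is the only use of the cosection in Lemma \ref{stable-cri} (the case $\rho|_\sE\ne0$), in Proposition \ref{proper1} (the case $\sL|_\sD\not\cong\sO_\sD$), and in Lemma \ref{case-irre}; it also underlies the exhaustiveness of the trichotomy in Proposition \ref{exist-0} ($\rho\lsta=0$; or $\varphi\lsta=0$ with $\nu_{2\ast}$ vanishing, resp. not vanishing), and the restacked families produced by Lemma \ref{bask-family} have $\varphi=0$ and so lie in $\cW^{\mathrm{pre},\wti{}}\lggd$ a fortiori. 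Every remaining step in those proofs concerns twisted curves, line bundles, blow-ups, contractions, baskets and restacking and never refers to $\varphi_1^5+\cdots+\varphi_5^5$; likewise the gluing constructions of Lemma \ref{glue2} and Corollary \ref{glue3} are insensitive to it.

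With these $\wti{}$-analogues in place, the proof of Proposition \ref{proper-proof} applies word for word: given $\xi\lsta\in\cW^{\wti{}}\lggd(S\lsta)$, after a finite base change one normalizes $\sC\lsta$, extends each connected component via the $\wti{}$-version of Proposition \ref{proper1}, and glues the extensions back along $(\sC\lsta)_{\sing}$ using the $\wti{}$-versions of Lemmas \ref{glue2} and \ref{glue3}, producing an extension in $\cW^{\wti{}}\lggd(S)$. This yields the valuative criterion; together with the separatedness of $\cW\lggd$ (inherited by its closed substack $\cW^{\wti{}}\lggd$) and the finite type property (obtained just as for $\cW\lggd^-$, the boundedness argument again not using $\varphi_1^5+\cdots+\varphi_5^5=0$), this proves $\cW^{\wti{}}\lggd$ is proper. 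The only genuine work is the bookkeeping of the previous paragraph --- a routine line-by-line verification that in each cited proof the degeneracy hypothesis enters only in the weak form stated --- which is where one must be a little careful, though no new idea is required.
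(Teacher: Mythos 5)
Your proposal is correct and is essentially the paper's own argument: the paper proves the corollary in one line by observing that the properness proof (Proposition \ref{proper-proof} and the lemmas feeding into it) never uses the condition $\varphi_1^5+\cdots+\varphi_5^5=0$, only the weaker locus condition $(\varphi=0)\cup(\rho=0)=\sC$, which is exactly the bookkeeping you carry out in detail.
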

\section{Finite presentation of the degeneracy locus}

In this  section, we prove that $\cW\lggd$ is separated and $\cW^-\lggd$ is
of finite type.

\subsection{Separatedness}\label{separatedness}

In this subsection, we prove that $\cW\lggd$ is separated. % near $\cW^-_{g,n,\bd}$. 
As before,  %$R$ be a DVR over $\CC$, 
 $\eta_0\in S$ is a closed point in a smooth curve over $\CC$,  and $S\lsta=S-\eta_0$.
%We will show that if $\xi$, $\xi'\in \cWg(S)$ such that $\xi\lsta= \xi'\lsta$, then $\xi=\xi'$.

%We begin with the following special case.

\begin{lemm}\label{valuative2}
Let $\xi$, $\xi'\in \cW\lggd(S)$ be such that $\xi\lsta\cong \xi'\lsta\in \cW\lggd(S\lsta)$.
% and $\xi_0$, $\xi'_0\in \cW\lggd^-(\eta_0)$.
Suppose $\sC\lsta$ is smooth. Then $\xi\cong\xi'$.
\end{lemm}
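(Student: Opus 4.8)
The plan is to verify the valuative criterion of separatedness. We may take $S=\Spec R$ for a discrete valuation ring $R$ with closed point $\eta_0$ and fraction field $K$, and we are given an isomorphism $\psi\lsta=(a\lsta,b\lsta,c\lsta)\colon \xi'\lsta\to\xi\lsta$ in the sense of Definition \ref{df:Wmor}; the goal is to extend it to an isomorphism $\psi\colon\xi'\to\xi$ over $S$. Any such extension is unique (an isomorphism of invertible sheaves or of twisted curves extending a given one over the dense open $S\lsta$ is unique), so it is insensitive to the finite base changes of $R$ that are convenient along the way. Moreover, since $\sC\lsta$ is smooth and connected, $\sC$ and $\sC'$ are integral surfaces with the same irreducible generic fibre.

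First I would construct a common dominating model. Passing to the coarse moduli $C$, $C'$ of the underlying twisted curves, one takes the normalization $\widehat C$ of the closure of the graph of the coarse isomorphism underlying $a\lsta$ inside $C\times_SC'$, resolves its (rational) surface singularities, and re-imposes the stacky structure by root constructions along the markings and the relevant nodes (cf.\ Lemma \ref{5change} and \cite{A-G-V, Cad}), obtaining an $S$-family of twisted $\ell$-pointed nodal curves $\widehat\sC$ together with proper $S$-morphisms $q\colon\widehat\sC\to\sC$ and $q'\colon\widehat\sC\to\sC'$ that restrict to isomorphisms over $S\lsta$ and agree there with $a\lsta$. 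Because $\sC\lsta$ is smooth, $\widehat\sC$ is integral with irreducible generic fibre, $q$ and $q'$ contract only (disjoint unions of) trees of smooth rational curves contained in the central fibre $\widehat\sC_0$, and these contracted curves carry no markings and are $\sL$-, $\sN$-trivial for the respective model.

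Next I would descend the MSP data. Pulling back along $q$ and $q'$ gives pre-stable MSP fields $\widehat\xi=q\sta\xi$ and $\widehat\xi'=q'\sta\xi'$ on $\widehat\sC$, canonically identified over $S\lsta$ through $\psi\lsta$, and the claim is that this identification extends over all of $S$. The invertible sheaves $q\sta\sL$ and $q'\sta\sL'$ (resp.\ $q\sta\sN$ and $q'\sta\sN'$) agree on the dense open $\widehat\sC\setminus\widehat\sC_0$, hence differ by $\sO_{\widehat\sC}(\sum_i a_iE_i)$ (resp.\ $\sO_{\widehat\sC}(\sum_i b_iE_i)$) for the irreducible components $E_i$ of $\widehat\sC_0$. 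To see that $\sum_i a_iE_i=\sum_i b_iE_i=0$ I would combine an intersection-theoretic input on the surface $\widehat\sC$ (the $E_i$ span a negative semidefinite lattice of corank one, $\widehat\sC_0$ being a connected principal divisor) with the stability of $\xi_0$ and $\xi'_0$, which through Lemma \ref{stable-cri} constrains which rational components can occur in $\sC_0$ and $\sC'_0$; representability of $\sL$, $\sN$ then also matches the stacky structures. Once the line bundles are matched over $S$ (after adjusting $b\lsta$, $c\lsta$ by the resulting units, which is legitimate because the field-matching conditions of Definition \ref{df:Wmor} are closed and already hold over $S\lsta$), the sections match automatically: $\widehat\sC$ is integral, so the restriction $H^0(\widehat\sC,\sF)\to H^0(\widehat\sC\setminus\widehat\sC_0,\sF)$ is injective for every locally free $\sF$, and $q\sta\varphi$, $q\sta\rho$, $q\sta\nu$ already coincide with $q'\sta\varphi'$, $q'\sta\rho'$, $q'\sta\nu'$ over $S\lsta$. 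Hence $\widehat\xi\cong\widehat\xi'$ over $S$.

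Finally, $\xi$ (resp.\ $\xi'$) is recovered from $\widehat\xi$ (resp.\ $\widehat\xi'$) by iteratively contracting the rational subcurves of the central fibre that satisfy one of the destabilizing conditions of Lemma \ref{stable-cri} and extending $\sL$, $\sN$, $\varphi$, $\rho$, $\nu$ across the contracted points — an operation that is well-defined on isomorphism classes, because the destabilizing locus is intrinsic and each extension is the unique one, the contracted twisted curve being normal at that point (this is the local content of the constructions in the proof of Proposition \ref{proper1}). Applying this canonical stabilization to the isomorphic pre-stable fields $\widehat\xi\cong\widehat\xi'$ yields $\xi\cong\xi'$ over $S$, extending $\psi\lsta$. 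The step I expect to be the main obstacle is the vanishing of the discrepancy divisors $\sum_i a_iE_i$ and $\sum_i b_iE_i$: they live precisely on the one-dimensional locus where the two stable limits genuinely differ, and ruling them out is where one must use the full geometry of MSP fields — the component-wise non-vanishing of $(\varphi,\nu_1)$, $(\nu_1,\nu_2)$, $(\rho,\nu_2)$ together with stability and the representability of $\sL$ and $\sN$.
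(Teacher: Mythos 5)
Your overall architecture (common dominating model, discrepancy divisors supported on the central fibre, matching sections by density, then stabilizing) is the right shape, but the proof has a genuine gap at exactly the point you flag yourself: the vanishing of the discrepancy divisors $\sum_i a_iE_i$ and $\sum_i b_iE_i$ is not an auxiliary verification — it \emph{is} the lemma, and the tools you propose for it do not suffice. Negative semidefiniteness of the lattice spanned by the components of the central fibre cannot rule the discrepancies out: for two different blow-downs of a vertical chain of rational curves the pullbacks of the respective line bundles to the common model genuinely differ by a nonzero vertical divisor, with no contradiction to intersection theory. What kills such configurations is not lattice theory but the MSP structure together with stability of \emph{both} special fibres, and this requires the quantitative bookkeeping that your sketch omits: the discrepancy of the log dualizing sheaf under the blow-ups (as in \eqref{dualizing}), the comparison \eqref{LN} of the descended bundles, and the nowhere-vanishing of the pairs $(\varphi,\nu_1)$, $(\rho,\nu_2)$, $(\nu_1,\nu_2)$ played off against Lemma \ref{stable-cri} applied to $\xi_0$ and $\xi_0'$.

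Concretely, the paper resolves the birational map between the minimal resolutions $X,X'$ of the coarse curves, takes a maximal chain $D_1,\dots,D_k$ of exceptional rational curves over a point $x$, and proves: $u_2(x)=0$ (Sublemma \ref{case2}); then $a_k=k$, $a_i\le a_{i+1}-2$ and $a_i+b_i=0$ (Sublemma \ref{case3}); then $k=1$ and every adjacent non-exceptional component must be contracted the other way, which finally contradicts stability of the two central fibres. None of these steps is formal, and each uses a different member of the non-vanishing conditions in Definition \ref{def-curve}; your phrase ``stability constrains which rational components can occur'' does not substitute for this inductive chain analysis. Secondary, smaller issues: after the bundles are matched one must still descend from the common resolution to an isomorphism of the coarse curves (the paper identifies the contracted $(-2)$-curves as exactly those with $L$ and $N$ trivial) and then extend across nodes and $\mufive$-points using representability — your ``canonical stabilization'' of the prestable pullbacks is a plausible alternative ending, but its well-definedness (independence of the order of contractions, and that stabilizing $q\sta\xi$ returns $\xi$) is asserted rather than proved. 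As written, the proposal records the strategy but leaves the core of the argument unproved.
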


\begin{proof}
Let $\xi=(\Si^\sC,\sC, \sL,\cdots)$ and $\xi'=(\Si^{\sC'},\sC', \sL',\cdots)$, let
$C$ (resp. $C'$)  be the coarse moduli of $\sC$ (resp. $\sC'$), and let $\pi: X\to C$ (resp. $\pi':X'\to C'$)
 be the minimal desingularization. Thus $\pi$ and $\pi'$ are contractions of chains of $(-2)$-curves.

Let $f: X\dashrightarrow X'$ be the birational map induced by $\xi\lsta\cong\xi'\lsta$,  let
$U_0\sub X$ be the largest open subset over which $f$ is well-defined. Suppose $U_0\subsetneq X$,
then $X-U_0$ is discrete. Let $X_1$ be the blowing up of $X$ at $X-U_0$. Inductively, suppose
$X_k\to X$ is a successive blowing up along points, let $U_k\sub X_k$ be the largest open subset over
which the birational $f: X_k\dashrightarrow X'$ is well-defined,  then let $X_{k+1}$ be the blowing up of 
$X_k$ along $X_k-U_k$. After finite steps, we have $X_{\bar k}=U_{\bar k}$, thus $X_{\bar k}=X_{\bar k+1}$
for large $\bar k$. Denote ${Y}=X_{\bar k}$ for large $\bar k$ with
$$\bar \pi: {Y}\lra X\and \bar f: {Y}\lra X'
$$
the induced projection and birational morphism. 

Let $E\sub {Y}$ be the exceptional divisor of $\bar\pi$. Write $E=\sum_{k\ge 1} E_k$,
where $E_k\sub E$ is the proper transform of the exceptional divisor of $X_k\to X_{k-1}$      
when $k\ge 2$ and the exceptional divisor of the map $X_1\to X$ when $k=1$. 
Let $E'\sub Y$ be the exceptional divisor of $\bar f$.
By our construction, $E$ and $E'$ share no common irreducible curves.
Let $Y_0=\cup_j^n D_j$ be the irreducible component decomposition of the central fiber $Y_0$.
%We let $D_i$ be the irreducible components of $Y_0$. 
%For each $D_i$, we define
%$m(i)=k$ if $D_i\sub E_k$, and $m(i)=0$ if $D_i\subsetneq E$.

Furthermore, by our construction for $V=Y-E'$,
$\bar f|_V:V\to \bar f(V)$ is
an isomorphism, and by the blowing up formula, we have
\beq\label{dualizing} \omega_{{Y}/S}^{\log}\cong \bar\pi\sta\omega_{X/S}^{\log}(\sum_i i E_i) \and
\omega_{{Y}/S}^{\log}|_V
\cong \bar f\sta\omega^{\log}_{X'/S}|_V.
\eeq
Let $L$ and $N$ (resp. $L'$ and $N'$) be the line bundles on $X$ (resp. $X'$)
that are the pullbacks of the descents of $\sL^{\otimes 5}$ and $\sN^{\otimes 5}$ (resp. $\sL^{\prime\otimes 5}$ and $\sN^{\prime\otimes 5}$) to $C$ (resp. $C'$).
Using $\xi\lsta\cong \xi'\lsta$, we can find integers $a_i$ and $b_i$ so that
\beq\label{LN}
\bar f\sta L'\cong \bar\pi\sta L(\sum a_i D_i)\and \bar f\sta N'\cong \bar\pi\sta N(\sum b_i D_i).
\eeq
Let $u_1$, $u_2$ and $h_j$ be the pullbacks of the descents of $\nu_1^5$, $\nu_2^5$
and $\varphi_j^5$, which are sections
of $L\otimes N$, $N$ and $L$, respectively. Denote by $u_1'$, $u_2'$ and $h_j'$ be the pullbacks of the descents of 
$\nu_1^{\prime 5}$, $\nu_2^{\prime 5}$
and $\varphi_j^{\prime 5}$ similarly. By the same reason, we will view $\rho$ and $\rho'$
as sections in $H^0(X, L\otimes\omega_{X/S}^{\log})=H^0(\sC, \sL^{\vee\otimes 5}\otimes\omega_{\sC/S}^{\log})$,
and in $H^0(X', L^{\prime -1}\otimes\omega_{X'/S}^{\log})$ respectively.

We now show that $E=\emptyset$, namely, $f$ is a morphism. Suppose not,  let
$D_j\sub E$ be an irreducible component with $x=\bar\pi(D_j)\in X$. 
We first remark that $x$ is a smooth point of $X_0$. Indeed, that $x$ is a singular point of $X_0$ implies
that $V_0=V\times_S \eta_0$ is not reduced. On the other hand, since $\bar f|_V: V\to X'$ is an $S$-isomorphism
onto its image and since  $X_0'$ is reduced, we conclude that $V_0$ is reduced too. This proves that $x$ is a smooth
point of $X_0$. 

Let $x=\bar\pi(D_j)$ be as before. Then by the construction of
$\bar\pi$, $\bar\pi\upmo(x)$ is a tree of rational curves. By reindexing $Y_0=\cup_{j=1}^n D_j$, 
we can assume that $D_1+\cdots+ D_k$ form a maximal chain of rational curves in $\bar\pi\upmo(x)$,
namely, $D_i\sub E_i$ for $i\le k$, and $D_i\cap D_{i+1}\ne \emptyset$ for $i<k$, thus $D_k\sub Y$ is a 
$(-1)$-curve. Therefore by
our construction of $(\bar \pi,\bar f)$, $\bar f(D_1),\cdots, \bar f(D_k)$ is a chain of rational 
curves in $X'$, and $\bar f(D_k)$ is a $(-1)$-curve in $X'$  since $x$ is  a smooth point of $X_0$. In particular, the image of 
$\bar f(D_k)$ in $\sC_0'$, denoted by $\sD'$, is a rational curve. Let $z\in D_k$ be a general point
and let $y=\bar f(z)$. Note $x=\bar \pi(z)$.

\begin{subl}\label{case2}
Let the situation be as stated. Then we   have $u_2(x)=0$.
\end{subl}

\begin{proof}
We prove by contradiction. Suppose $u_2(x)\ne 0$.  We claim that $a_k=b_k=0$. 

We divide it into two cases. 
The first is when $u_1(x)\ne 0$.
Since $\bar\pi\sta u_2\in H^0(V, \bar\pi\sta N)$ and non-trivial along ${V\cap D_{k}}$,
and since $\bar f\sta u_2'\in H^0(V, \bar f\sta N')$,
using \eqref{LN} and that $\bar\pi\sta u_2|_{V\times_S S\lsta}=\bar f\sta u_2'|_{V\times_S S\lsta}$, 
we conclude that
$b_{k}\ge 0$. Similarly, using that $u_1(x)\ne 0$, we conclude that $a_{k}+b_{k}\ge 0$.

We claim that $a_{k}+b_{k}=0$. Suppose not, then by \eqref{LN}, we have $u_1'(y)=0$,
thus $u_2'(y)\ne 0$ and $(h_k'(y))\ne 0$, which forces $b_{k}\le 0$ and $a_{k}\le 0$, contradicting to $a_{k}+b_{k}>0$. This proves $a_{k}+b_{k}=0$.

We now prove $a_k=b_k=0$. Suppose not. Since  $a_{k}+b_{k}=0$ and 
$b_{k}\ge 0$, we must have $b_{k}>0$. Then $a_{k}<0$ and $u_2'(y)=0$. 
Note that because of \eqref{dualizing} and \eqref{LN}, for any $i\le k$ and a dense open $U\sub D_i$ 
that is disjoint from the nodes of $Y_0$,
\beq\label{Dd1}
\bar f\sta(L^{\prime\vee}\otimes\omega^{\log}_{X'/S})|_U
\cong \bar \pi\sta(L^{\vee}\otimes\omega^{\log}_{X/S})((i-a_{i})D_{i})|_U.
\eeq
Applying to $i=k$, we conclude that $\rho'(y)=0$, contradicting to $u_2'(y)=0$. 
Thus $a_{k}=b_{k}=0$.

The other case is when $u_1(x)=0$.
Since $u_1(x)=0$, we have $(h_i(x))\ne 0$. 
Similar to the argument above, using $h_i(x))\ne 0$ (resp. $u_2(x)\neq 0 $), we conclude 
that $a_{k}\ge 0$ (resp. $b_{k}\ge 0$).

We now show that $b_{k}=0$. Suppose not, that is $b_k>0$, 
then we must have $\bar f\sta u_2'|_{D_{k}}=0$,
which forces $\bar f\sta \rho'|_{D_{k}}\ne 0$. Applying \eqref{dualizing} and \eqref{LN}, 
we must have $k-a_{k}\le 0$, thus $a_{k}\ge k\ge 1$,
which forces $\bar f\sta u_1'|_{D_{k}}=0$,
violating that $(u_1',u_2')$ is nowhere vanishing. This proves $b_{k}=0$.

A similar argument shows that $a_{k}>0$ would lead to $\bar f\sta h_k'|_{D_{k}}=\bar f\sta u_1'|_{D_{k}}=0$,
a contradiction. Therefore, $a_{k}=b_{k}=0$ in this case, too.

%We now show that $D_k$ is a (-1)-curve of $Y$ contradicts with $a_k=b_k=0$.
Let $\sD'\sub\sC'$ be the irreducible component whose image in 
$C'$ is the same as the image of $D_k$ under
$Y\to X'\to C'$. Since $\bar f(D_k)$ is a (-1)-curve, $\sD'$ is a smooth rational curve in $\sC'$ and contains 
exactly one node of $\sC'_0$ and at most one marking of $\Si^{\sC'}$. %; we denote this node by $q'$.

Next, we use \eqref{dualizing} and $a_k=b_k=0$ to conclude that $\rho'|_{\sD'}=0$.
Therefore, $\sN'|_{\sD'}\cong \sO_{\sD'}$, and $(\varphi_1',\cdots, \varphi_5',
\nu_1')|_{\sD'}$ defines a morphism $\beta':\sD'\to \mathbb P^5$. Let
$q'\in \sD'$ be the node of $\sC_0'$. By \eqref{LN} and that
$a_k=b_k=0$, we conclude that the pullback of $(\varphi_1',\cdots, \varphi_5',\nu_1')|_{\sD'-q'}$
 to $Y$ equals to the pullback of $(\varphi_1,\cdots, \varphi_5,\nu_2)|_{x}$, thus
$\beta':\sD'\to\mathbb P^5$ is a constant map. 
Since $\sD'$ contains one node of $\sC'_0$ and at most one marking in $\Si^{\sC'}$, adding that
$\rho'|_{\sD'}=0$,
by Lemma \ref{stable-cri} we conclude that $\xi_0'$ is unstable, a contradiction. This proves the Sublemma.
\end{proof}

We continue to denote by $D_1+\cdots+D_k$ a maximal chain of rational curves in $\bar\pi\upmo(x)$
with $D_i\in E_i$.

%To proceed, we introduce the notion of the depth of irreducible $D_j\sub E$. For irreducible
%$D_i$, $D_j\sub E$, we say
%$D_i\prec D_j$ if $D_i\cap D_j\ne \emptyset$ and $m(i)<m(j)$ (thus necessarily $m(i)+1=m(j)$.)
%For irreducible $D_i\sub E$, we define its depth to be
%$\delta(i)=\max\{k\mid D_i=D_{i_0}\prec\cdots\prec D_{i_k}\sub E\}$. Note that an irreducible
%$D_i\sub E$ is a (-1)-curve if and only if $\delta(i)=0$.

\begin{subl}\label{case3}
We have $a_{k}=k$,  $a_i\le a_{i+1}-2$ for $i<k$, and $a_{i}+b_{i}=0$ for all $i\le k$.
\end{subl}

\begin{proof}
First, we have \eqref{Dd1}. We claim  that for $i\le k$,
%(Recall $m(i_0)=j$ if $D_{i_0}\in E_j$.) We claim that
\beq\label{Dd2}
i-a_{i}\ge 0 \and a_{i}+b_{i}= 0.
\eeq
In fact, by the previous Sublemma, we know $u_2(x)=0$, thus $\rho(x)\ne0$ and 
$\bar\pi\sta \rho|_{D_i}\ne 0$. Adding that $\bar f\sta \rho'$ is regular along $V$ and coincides with 
$\bar\pi\sta \rho$ over $V\times_S S\lsta$, we obtain the first inequality in \eqref{Dd2}.
Similarly, using $u_1(x)\ne 0$, we obtain $a_{i}+b_{i}\ge 0$.

We now show $a_{i}+b_{i}=0$. Suppose not, say $a_i+b_i>0$, 
then $\bar f\sta u_1'|_{D_{i}}=0$,
which forces $\bar f\sta h_j'|_{D_{i}}\ne 0$ for some $j$, and by \eqref{LN}, we obtain $a_i\le 0$. As
$a_i+b_i>0$, we obtain $b_i>0$, and hence $\bar f\sta u_2'|_{D_{i}}=0$,
contradicting to $(u_1'(y), u_2'(y))\ne 0$. This proves \eqref{Dd2}.

We next prove $a_k=k$. Suppose not,
by \eqref{Dd2} we have $k-a_k\ge 1$. Then 
$\bar f\sta \rho'|_{D_k}=0$, which forces $\bar f\sta u_2'|_{D_k}$ nowhere vanishing. By 
\eqref{LN}, we have $b_{k}\le 0$; adding $a_{k}+b_{k}=0$, we have $a_{k}\ge 0$.

We claim $a_k>0$. Suppose not, i.e.,  $a_k=0$. Let $\sD'\sub\sC'$, as before, be the irreducible component whose image in 
$C'$ is the same as the image of $D_k$ under
$Y\to X'\to C'$. As argued in the proof of the previous Sublemma, $\sD'$ is a smooth rational 
curve in $\sC'$ and contains one node $q'$ of $\sC'_0$.
As $a_k=0$, $\rho'|_{\sD'}=0$,  $\sN'|_{\sD'}\cong \sO_{\sD'}$,
and $(\varphi_1',\cdots, \varphi_5',\nu_1')|_{\sD'}$ defines a morphism $\beta':\sD'\to\mathbb P^5$,
which turns out to be constant, as argued before. Therefore, as $\sD'$ contains at most one
marking of $\Si^{\sC'}$, $\xi_0'$ becomes unstable, a contradiction. This proves that $a_k$ is positive.

Therefore, by the property $\bar f\sta \rho'|_{D_k}=0$, etc., 
we conclude that $\rho'|_{\cD'}=\varphi'|_{\sD'}=0$, and both $\nu_1'|_{\sD'}$ and $\nu_2'|_{\sD'}$ are
nowhere vanishing.
Because $\sD'$ contains one node and at most one marking of $\sC'_0$,
$\xi_0'$ becomes unstable, a contradiction. 
This proves that $a_k=k$.

Finally, we prove $a_i\le a_{i+1}-2$.
%next consider $D_i\in E$ of $\delta(i)\ge 1$.
%We claim that for any $D_i\prec D_j\sub E$, we have $a_i\le a_j-2$. Suppose not, then
Let 
$\Lam=\{1\le i< k\mid a_i\not\le a_{i+1}-2\}$. 
The intended inequality is equivalent to $\Lam=\emptyset$.
Suppose $\Lam\ne\emptyset$, and let $i$ be the largest element in $\Lam$.
Suppose $i=k-1$. Since $a_k=k$, we have $a_{k-1}\ge a_k-1=k-1$.
By \eqref{Dd2}, we conclude that $a_{k-1}=k-1$, which implies $\deg\bar f\sta L'|_{D_k}=-1$.

Consequently, using \eqref{Dd1}, we conclude that $\bar f\sta\rho'|_{D_k}$ is nowhere vanishing.
Like before, let $\sD'\sub\sC'_0$ be the irreducible component associated to $D_k\sub E$, via $Y\to C'$ and
$\sC'\to C'$. Then $\sD'$ is a rational curve, contains one node of $\sC'_0$, and with 
$\varphi'|_{\sD'}=0$, $\rho'|_{\sD'}$ nowhere vanishing and $\deg\sL'|_{\sD'}=-\ofth$.
By Lemma \ref{stable-cri}, this makes $\xi'_0$ unstable, a contradiction. Therefore, $i<k-1$.

Since $i+1\not\in\Lam$,
$(i+1)-a_{i+1}\ge 1$, which forces $\bar f\sta\rho'|_{D_{i+1}}=0$, and then $\bar f\sta u_2'|_{D_{i+1}}$
is nowhere vanishing and $\bar f\sta N'|_{D_{i+1}}\cong\sO_{D_{i+1}}$. We claim that
$\bar f\sta L'|_{D_{i+1}}\cong\sO_{D_{i+1}}$. Indeed, as $\bar f\sta N'|_{D_{i+1}}\cong\sO_{D_{i+1}}$,
$(\bar f\sta h'_1,\cdots, \bar f\sta h'_5, \bar f\sta u_1')|_{D_{i+1}}$ defines a morphism
$\beta:D_{i+1}\to\mathbb P^5$. By the second inequality in (\ref{Dd2}) and  \eqref{LN},  it is a constant map. Thus
$\bar f\sta L'|_{D_{i+1}}\cong\sO_{D_{i+1}}$.

Now let $D_i, D_{i+2}, D_{k_2},\cdots, D_{k_l}$ 
be the irreducible components
in $E$ that interest with $D_{i+1}$. Since ${i+1}\not\in\Lam$, $a_{i+1}\le a_{i+2}-2$.
Possibly by changing to a different maximal chain of rational curves in $\bar\pi\upmo(x)$,
we can assume without loss of generality that $a_{i+1}\le a_{k_s}-2$ for all $2\le s\le l$. 
Because $\bar f\sta L'|_{D_{i+1}}\cong \sO_{D_{i+1}}$, we must have
$(a_i-a_{i+1})+\sum_s(a_{k_s}-a_{i+1})=0$. Therefore, $a_i-a_{i+1}\le -2$, a contraction. 
This proves $\Lam=\emptyset$,
and the Sublemma follows.
\end{proof}

We continue our proof of Lemma \ref{valuative2}. We keep the maximal chain of rational curves
$D_1,\cdots, D_k\sub E$. We claim $k=1$. Otherwise, $k\ge 2$ and $a_1\le a_2-2\le 0$. 
By \eqref{Dd1} we 
obtain $\bar f\sta \rho'|_{D_1}=0$ and thus $\bar f\sta u_2'|_{D_1}$ is nowhere vanishing. Since $u_2(x)=0$ by Sublemma \ref{case2}, we get $\bar\pi\sta u_2|_{D_1}=0$. Thus from (\ref{LN}), we get $b_1<0$ and hence $a_1=-b_1>0$, a contradiction to $a_1\le 0$. Thus $k=1$. 

Next we prove that every $D_\ell\subset Y_0$ not in $E$ that intersects one of $D_i\subset E$ must 
be contracted by $\bar f$. Indeed, 
if $\bar f(D_\ell)$ is not a point, since $\bar \pi(D_\ell)$ is not a point as well, we must have 
$a_\ell=b_\ell=0$ and $D_\ell\cap V$ is dense in $D_\ell$. Since $D_i$ is a $(-1)$-curve, the combination 
of (\ref{dualizing}), (\ref{LN}) and $\rho(x)\neq 0$ (since $u_2(x)=0$) implies $\bar f\sta\rho'|_{D_i}$ is nowhere vanishing. From (\ref{LN}), we also have $\deg(\bar f\sta L'|_{D_i})=-1$ since $a_i=1$ and $a_\ell=0$. Thus $\xi_0'$ satisfies the condition (2) in the Lemma \ref{stable-cri} and hence is not stable, a contradiction. 
\black

In conclusion, we have proved that $E$ is a disjoint union of (-1)-curves, likewise for $E'$, and
that every irreducible component in $Y_0$ not in $E$ must be in $E'$.
%Reversing the role of $X$ and $X'$, we conclude that the exceptional divisor $E'$ of $Y'\to X'$ is also a
%disjoint union of (-1)-curves, and further every irreducible $D_i\sub E$ intersects with one $D_l\sub E'$.
Since $Y\to X$ is by first blowing up smooth points of $X_0$, and since $Y_0$ is connected, this is possible 
only if both $E\cong\Po$, and
then $Y$ is the blowing up of $X=S\times\Po$ at a single point in $X_0$.

Then a direct analysis shows that this is impossible, assuming both $\xi_0$ and $\xi_0'$ are stable.
(As this analysis is straightforward, we omit the details here.)
This proves that $E=\emptyset$
and $f: X\to X'$ is a birational morphism. By symmetry, $f\upmo: X'\to X$ is also a birational morphism.
Therefore, $f: X\cong X'$ is an isomorphism.

%A straightforward case by case analysis 
%In this case, $E\cong \Po\sub Y_0$ be contracted by $\bar \pi$, and let $E'\cong\Po$ be the other
%irreducible component of $Y_0$ contracted by $\bar f$. Then by the Sublemmas proved, we have
%\beq\label{iii}
%\bar f\sta(L^{\prime\vee}\otimes\omega^{\log}_{X'/S})
%\cong \bar \pi\sta(L^{\vee}\otimes\omega^{\log}_{X/S}),
%\eeq
%and $\bar f\sta L'\cong \bar \pi\sta L(E-E')$. Consequently, we have 
%$\bar f\sta L'|_E\cong \bar \pi\sta L(E-E')|_E\cong \sO_E(-2)$ and $
%\bar f\sta L'|_{E'}\cong\sO_{E'}$.
%Therefore, $\deg \sL^{\otimes 5}|_{\sC_\eta}=-2$, and thus $\varphi_\eta=0$ and $\sL_\eta\cong \sN\dual_\eta$,
%which implies $\rho_\eta\ne 0$, and hence $\rho_{\eta_0}\ne 0$. Applying \eqref{iii}, we
%conclude that both $\rho_{\eta_0}$ and $\rho'_{\eta_0}$ are nowhere vanishing.
%Combined with $\deg \sL^{\otimes 5}|_{\sC_\eta}=-2$, we conclude that 
%$\omega|_{\sC/S}^{\log}|_{\sC_\eta}\cong\sO_{\sC_\eta}(-2)$ and
%consequently $\Si^\sC=\emptyset$. But then $\sC_\eta$ is a scheme and $\deg\sL|_{\sC_\eta}\in\ZZ$, violating that
%$\deg \sL^{\otimes 5}|_{\sC_\eta}=-2$. 

Knowing that $f$ is an isomorphism, a parallel argument shows that
\beq\label{isocc}
f\sta L'\cong L,\quad f\sta N'\cong N, \and   f\sta(\rho',h_k', u_i')=(\rho,h_k,u_i).
\eeq
We prove that this implies $C\cong C'$. Indeed,
it is easy to show that a $D_i\sub X$ is contracted by $\pr: X\to C$ if and only if $D_i\sub X$ is a (-2)-curve and
$L|_{D_i}\cong N|_{D_i}\cong \sO_{D_i}$. Therefore, $D_i$ is contracted by the map $X\to C$ if and only if
$f(D_i)$ is contracted by the map $X'\to C'$. This proves that $f: X\cong X'$ induces an isomorphism $\bar\phi: C\cong C'$.

Let $\Delta$ (resp. $\Delta'$) be the set of singular points of $\sC_0$ (resp. $\sC'_0$). 
Let $p: \sC\to C$ and $p':\sC'\to C'$ be the coarse moduli morphisms.
Then the isomorphisms $\bar\phi$ and \eqref{isocc} (with $a_i=b_i=0$) induce isomorphisms
\beq\label{iso01}
\bar\phi\sta p'\lsta(\Si^{\sC'},\sL^{\prime\otimes 5}, \sN^{\prime\otimes 5}, \varphi_k^{\prime 5}, \rho', \nu_i^{\prime 5})
\cong p\lsta(\Si^{\sC},\sL^{\otimes 5}, \sN^{\otimes 5}, \varphi_k^{ 5}, \rho, \nu_i^{5});
\eeq
and isomorphisms
\beq\label{iso02}
\phi: \sC-\Delta\mapright{\cong} \sC'-\Delta', %\quad \phi\sta\omega_{\sC'/S}^{\log}\cong \omega_{\sC/S}^{\log},
\quad \phi\sta \sL'\cong \sL, \quad \phi\sta \sN'\cong \sN, \quad \phi\sta(\varphi',\rho',\nu_i')=(\varphi,\rho,\nu_i),
\eeq
%and identities $\phi\sta(\varphi',\rho',\nu_1',\nu_2')=(\varphi,\rho,\nu_1,\nu_2)$,
extending $\xi\lsta\cong \xi'\lsta$. 

Now let $p\in \Delta$ be a point and let $p'\in\Delta'$ be the corresponding point. Pick
an  open subset $\sU\sub\sC$ of $p\in\sC$ so that $\sU\cap \Delta=p$. Let
$\sU'\sub\sC'$ be the   open subset of $p'\in\sC'$ so that $\sU'\cap \Delta'=p'$
and $\phi(\sU-p)=\sU'-p'$. If $p$ is a scheme point, then $\phi$
extends to a morphism $\ti\phi: (\sC-\Delta)\cup\sU\to (\sC'-\Delta')\cup\sU'$
so that \eqref{iso02} extends to
\beq\label{iso03}
%\ti\phi: (\sC-\Delta)\cup\sU\mapright{\cong} (\sC'-\Delta')\cup\sU'
\ti\phi\sta \sL'\cong \sL, \quad \ti\phi\sta \sN'\cong \sN, \quad \ti\phi\sta(\varphi',\rho',\nu_i')=(\varphi,\rho,\nu_i).
\eeq
However, by (2) in Definition \ref{def-curve}, this implies $p'$ is also a scheme point,
and $\ti\phi$ is an isomorphism. If $p'$ is a scheme point, the same conclusion holds by switching
the role of $\sC$ and $\sC'$. Finally, when both $p$ and $p'$ are stacky points, 
then both are $\mufive$-srtacky
points. Thus a local argument shows that $\phi$ extends to an isomorphism 
$\ti\phi: (\sC-\Delta)\cup\sU\to (\sC'-\Delta')\cup\sU'$
so that \eqref{iso02} extends to \eqref{iso03}. By going through this local extension throughout  all points in $
\Delta$, we conclude that $\phi$ extends to an isomorphism $\ti\phi:\sC\to\sC'$ so that \eqref{iso02} extends to \eqref{iso03}.
This proves that $\xi\cong \xi'$.
\end{proof}

\begin{prop}\label{valuative3}
Lemma \ref{valuative2} holds without assuming that $\sC\lsta$ is smooth. 
\end{prop}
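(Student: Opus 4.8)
The plan is to reduce to Lemma \ref{valuative2} by normalizing, in close parallel with the proof of Proposition \ref{proper-proof}. Let $\xi,\xi'\in\cW\lggd(S)$ with $\xi\lsta\cong\xi'\lsta$, and fix such an isomorphism. After a finite base change we may assume the relative singular locus of $\sC\lsta\to S\lsta$ is a disjoint union of sections and (exactly as in the proof of Proposition \ref{proper-proof}) that the normalization $\sC_*^{\mathrm{nor}}=\coprod_\alpha\sC_{\alpha*}\to\sC\lsta$ has connected components $\sC_{\alpha*}$ with connected — hence smooth — fibers over $S\lsta$. Endowing each $\sC_{\alpha*}$ with the markings $\tau_{\alpha*}^{-1}(\Si^{\sC\lsta})\coprod\tau_{\alpha*}^{-1}((\sC\lsta)_{\mathrm{sing}})$ and pulling back the fields, Corollary \ref{split} gives that the resulting $\xi_{\alpha*}$ lies in $\cW_{g_\alpha,\gamma_\alpha,\bd_\alpha}(S\lsta)$ for suitable $(g_\alpha,\gamma_\alpha,\bd_\alpha)$; the same construction applied to $\xi'$ produces $\xi'_{\alpha*}$, and the fixed isomorphism $\xi\lsta\cong\xi'\lsta$ induces $\xi_{\alpha*}\cong\xi'_{\alpha*}$.

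Next I would show that $\xi$ and $\xi'$ each determine, for every $\alpha$, a \emph{stable} extension of $\xi_{\alpha*}$ over all of $S$ with smooth generic fiber. Normalize $\sC$ along the closure $\overline{(\sC\lsta)_{\mathrm{sing}}}$ (in the twisted sense of \S\ref{Sub2.1}): after a further finite base change its connected components $\widetilde\sC_\alpha$ are in bijection with the $\sC_{\alpha*}$ and extend them, their generic fibers being the smooth curves $\sC_{\alpha*}\times_{S\lsta}\{\mathrm{pt}\}$ while their central fibers may well be reducible and nodal. Marking $\widetilde\sC_\alpha$ at the preimages of $\Si^\sC$ and of $\overline{(\sC\lsta)_{\mathrm{sing}}}$ and restricting the fields of $\xi$ yields a family $\widetilde\xi_\alpha$ over $S$ with $\widetilde\xi_\alpha|_{S\lsta}=\xi_{\alpha*}$. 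The key point is that $\widetilde\xi_\alpha$ is again stable: separating a node and marking its two preimages changes neither the restrictions of $\sL,\sN,\varphi,\rho,\nu$ to any irreducible component, nor the number of special points lying on, nor the degree of $\omega^{\log}$ restricted to, any component, so the criterion of Lemma \ref{stable-cri} holds for $\widetilde\xi_\alpha$ componentwise precisely when it holds for $\xi$. Hence $\widetilde\xi_\alpha\in\cW_{g_\alpha,\gamma_\alpha,\bd_\alpha}(S)$ has smooth generic fiber, and likewise $\xi'$ yields $\widetilde\xi'_\alpha\in\cW_{g_\alpha,\gamma_\alpha,\bd_\alpha}(S)$ with $\widetilde\xi'_\alpha|_{S\lsta}\cong\xi'_{\alpha*}\cong\xi_{\alpha*}$. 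Now Lemma \ref{valuative2} applies and gives isomorphisms $\widetilde\xi_\alpha\cong\widetilde\xi'_\alpha$ over $S$ extending $\xi_{\alpha*}\cong\xi'_{\alpha*}$.

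Finally I would glue back. Both $\xi$ and $\xi'$ are recovered from $\{\widetilde\xi_\alpha\}$ (resp.\ $\{\widetilde\xi'_\alpha\}$) by gluing along the section-pairs of $\bmu_5$-gerbes lying over $\overline{(\sC\lsta)_{\mathrm{sing}}}$, via the push-out of \S\ref{Sub2.1} and Lemma \ref{glue2}, with the sheaves and fields glued by Corollary \ref{glue3}. The combinatorial pattern of which sections are glued, together with the gluing data (the isomorphisms of the gerbes and the trivializations $N_1\otimes N_2\cong\sO$), agree for $\xi$ and $\xi'$ over $S\lsta$ through the fixed isomorphism; since this data consists of sections over $S$ of schemes that are separated over $S$, it is determined by its restriction over $S\lsta$, so under the isomorphisms $\widetilde\xi_\alpha\cong\widetilde\xi'_\alpha$ the two gluings are identified. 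This produces the required isomorphism $\xi\cong\xi'$.

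The main obstacle is the second step: establishing that normalizing $\sC$ along the closure of the generic singular locus breaks $\xi$ into pieces that are still stable and that reassemble by gluing — in particular, that no ``extra'' generic-looking node appears only in the central fiber joining two different pieces (so that the $\widetilde\sC_\alpha$ are connected with connected, hence smooth, generic fibers), and that MSP-stability is genuinely insensitive to resolving-and-marking a node. This rests on a standard but careful analysis of the relative singular locus of a nodal family and of its behaviour under base change and normalization in the twisted category; once it is in place, Step 3 is just Lemma \ref{valuative2} and Step 4 is the gluing bookkeeping already developed for Lemma \ref{glue2} and Corollary \ref{glue3}.
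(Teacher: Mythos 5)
Your argument is essentially the paper's own proof: the paper likewise splits $\xi$ and $\xi'$ into the pieces furnished by Corollary \ref{split} (normalizing along the nodes coming from the generic fiber, with their preimages marked), applies Lemma \ref{valuative2} to each resulting piece with smooth generic fiber, and then glues the componentwise isomorphisms back to an isomorphism $\xi\cong\xi'$, omitting the gluing details that you spell out via Lemma \ref{glue2} and Corollary \ref{glue3}. So the proposal is correct and takes the same route, with the stability of the pieces and the compatibility of the gluing data made more explicit than in the paper.
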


\begin{proof}
Let $\xi=(\Si^\sC, \sC,\cdots)$ be as before, and let
$\xi\lalp$, $\alpha\in \Xi$, be families constructed as in Corollary \ref{split}. 
Let $\xi'=(\Si^{\sC'},\sC',\cdots)$ and likewise $\xi'\lalp$, $\alpha\in \Xi$, be the similar
decomposition. Here both $\xi\lalp$ and $\xi'\lalp$ are indexed by the same set $\Xi$ because
$\xi\lsta\cong \xi'\lsta$. By Corollary \ref{split}, all $\xi\lalp$ and $\xi'\lalp$ are stable families of
MSP-fields. 

Because $\xi\lsta\cong \xi'\lsta$, we have $\xi_{\alpha\ast}\cong \xi'_{\alpha\ast}$.
By Lemma \ref{valuative2}, $\xi_{\alpha\ast}\cong \xi'_{\alpha\ast}$ extends to $\xi\lalp\cong \xi'\lalp$.
Then a direct argument shows that as the isomorphisms $\xi\lalp\cong \xi'\lalp$ are consistent with
the isomorphism $\xi\lsta\cong \xi'\lsta$, they induce an isomorphism $\xi\cong \xi'$, extending $\xi\lsta\cong \xi'\lsta$.
As the argument is straightforward, we omit the details here. 
\end{proof}

Applying valuative criterion of separateness of DM stacks, Proposition \ref{valuative3} proves

\begin{prop} The stack $\cWg$ is separated.
\end{prop}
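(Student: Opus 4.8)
The plan is to deduce separatedness of $\cW\lggd$ directly from the valuative criterion together with Proposition~\ref{valuative3}. Recall that a Deligne--Mumford stack $\cX$ which is quasi-separated and locally of finite type over $\CC$ is separated precisely when, for every discrete valuation ring $R$ with fraction field $K$, the restriction functor $\cX(\Spec R)\to\cX(\Spec K)$ is fully faithful. Since $\cW\lggd$ has already been shown to be a Deligne--Mumford $T$-stack locally of finite type (and is quasi-separated, its diagonal being unramified and isomorphisms of MSP-fields over a base forming a scheme of finite type over it), I would first observe that faithfulness of this functor is automatic: the diagonal of a Deligne--Mumford stack is unramified, hence separated, so two isomorphisms between $R$-families of MSP-fields that agree over $\Spec K$ already agree over $\Spec R$. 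Thus the only remaining point is fullness --- namely that every isomorphism over $\Spec K$ of the generic restrictions of two $R$-families extends to an isomorphism over $\Spec R$.

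Next I would reduce to the concrete geometric set-up of \S\ref{separatedness}. By the standard reduction of the valuative criterion --- valid since $\cW\lggd$ is locally of finite type over $\CC$ --- it is enough to treat discrete valuation rings of the form $R=\cO_{S,\eta_0}$, where $S$ is a smooth affine curve over $\CC$ and $\eta_0\in S$ a closed point, and one is moreover free to replace $S$ by a finite base change ramified over $\eta_0$, as in \S\ref{Sub3.1}. For such an $R$, after shrinking $S$ the two $R$-points in question are represented by $S$-families $\xi,\xi'\in\cW\lggd(S)$ equipped with an isomorphism $\xi\lsta\cong\xi'\lsta$ over $S\lsta=S-\eta_0$ --- which is exactly the situation to which Proposition~\ref{valuative3} applies.

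Finally, applying Proposition~\ref{valuative3} yields an isomorphism $\xi\cong\xi'$ restricting to the given $\xi\lsta\cong\xi'\lsta$; this is the desired fullness statement, and combined with the automatic faithfulness it shows that $\cW\lggd(\Spec R)\to\cW\lggd(\Spec K)$ is fully faithful for every discrete valuation ring. Hence $\cW\lggd$ is separated. I do not expect a serious obstacle at this stage: the genuine content of separatedness --- constructing the extension of the isomorphism, first for $\sC\lsta$ smooth in Proposition~\ref{valuative2} through the careful analysis of the blow-up/contraction geometry and of the integers $a_i,b_i$ controlling the twists along the exceptional chains, and then in general in Proposition~\ref{valuative3} by gluing the extensions of the normalized pieces --- has already been carried out. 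What is left is the routine bookkeeping of the valuative-criterion reduction to DVRs of the form $\cO_{S,\eta_0}$, together with the observation that the Deligne--Mumford property supplies the uniqueness (faithfulness) half of the criterion for free.
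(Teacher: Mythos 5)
Your proposal is correct and follows exactly the paper's route: the paper's own proof of this proposition is the one-line application of the valuative criterion of separatedness for DM stacks to Proposition~\ref{valuative3}, whose content (extending an isomorphism $\xi\lsta\cong\xi'\lsta$ over $S\lsta$ to one over $S$) is precisely the fullness half you invoke. Your additional remarks on reducing to DVRs of the form $\cO_{S,\eta_0}$ and on faithfulness being automatic from the unramified diagonal are the routine bookkeeping the paper leaves implicit.
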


\subsection{$\cWg^-$ is of finite type}\label{sepa}
\def\Up{\Upsilon}

Let $\xi\in\cWg^-(\CC)$ be of the presentation $(\Si^\sC,\sC,\cdots)$,
let  $\sC_0=\sC\cap (\nu_1=0)_\redd$,  $\sC_\infty=\sC\cap (\nu_2=0)_\redd$,  and
$\sC_1=\sC\cap (\rho=\varphi=0)_\redd$ be  as before .
Let $\sC_{01}$ (resp. $\sC_{1\infty}$) be the union of irreducible components of $\sC$ in
$(\rho=0)$ (resp. $(\varphi=0)$) but not in 
$\sC_0\cup\sC_1\cup\sC_\infty$. % but intersect with $\sC_0$ (resp. $\sC_\infty$).
%We continue to denote by $\sC\lalp^{\dim 1}$ the pure 1-dimensional part of $\sC\lalp$.

\begin{lemm}\label{0-dec}
We have 
\begin{enumerate}
\item The curves $\sC_0$, $\sC_1$ and $\sC_\infty$ are mutually disjoint; 
\item no two of
$\sC_0,\sC_{01},\sC_1,\sC_{1\infty},\sC_\infty$ share common irreducible components;
\item $\sC=
\sC_0\cup\sC_{01}\cup\sC_{1}\cup\sC_{1\infty}\cup\sC_\infty$.
%\item[2.] $\nu_1|_{\sC_\infty\cup \sC_{1\infty}\cup\sC_1}$, $\nu_2|_{\sC_0\cup \sC_{10}\cup \sC_1}$
%and $\rho|_{\sC_\infty}$
%are nonwhere vanishing
%\item $(\varphi_1,\ldots,\varphi_5)|_{\sC_0}$
%defines a morphism $f_0: \sC_0\to \PP^4$.
%\item[4.] We have
%$\nu_1|_{\sC_0}=0$;
%$\varphi|_{\sC_1}=\rho|_{\sC_1}=0$, and
%$\nu_2|_{\sC_\infty}=0$.
\end{enumerate} 
\end{lemm}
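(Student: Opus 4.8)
The plan is to deduce all three parts of the Lemma from the nowhere-vanishing conditions (3)--(5) of Definition \ref{def-curve} and the description of $\cWgg^-(\CC)$ in Lemma \ref{degenerate-locus}; no new geometry enters, only bookkeeping of which irreducible components of $\sC$ are forced into which locus. For (1), I would inspect the three pairwise intersections: a closed point of $\sC_0\cap\sC_\infty$ would have $\nu_1=\nu_2=0$, contradicting that $(\nu_1,\nu_2)$ is nowhere vanishing; a point of $\sC_0\cap\sC_1$ would have $\nu_1=0$ and $\varphi=0$, contradicting that $(\varphi,\nu_1)$ is nowhere zero; and a point of $\sC_1\cap\sC_\infty$ would have $\rho=0$ and $\nu_2=0$, contradicting that $(\rho,\nu_2)$ is nowhere vanishing. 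Hence $\sC_0,\sC_1,\sC_\infty$ are pairwise disjoint.

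For (2), I would go through the pairs among $\sC_0,\sC_{01},\sC_1,\sC_{1\infty},\sC_\infty$. No two of $\sC_0,\sC_1,\sC_\infty$ can share an irreducible component of $\sC$, since they are already disjoint by (1). Neither $\sC_{01}$ nor $\sC_{1\infty}$ shares a component with any of $\sC_0,\sC_1,\sC_\infty$: this is immediate from the definition of these loci, whose components are required to lie outside $\sC_0\cup\sC_1\cup\sC_\infty$. The only remaining pair is $\sC_{01}$ against $\sC_{1\infty}$: a common irreducible component $\sE$ would satisfy $\sE\subset(\rho=0)$ and $\sE\subset(\varphi=0)$, hence $\sE\subset(\rho=\varphi=0)_\redd=\sC_1$, contradicting that a component of $\sC_{01}$ is, by construction, not contained in $\sC_1$.

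For (3), the key observation is that Lemma \ref{degenerate-locus} identifies $\sC$ with $(\varphi=0)\cup(\varphi_1^5+\cdots+\varphi_5^5=\rho=0)$, and since the second closed subset lies in $(\rho=0)$ this gives $\sC=(\varphi=0)\cup(\rho=0)$ as sets. I would then take an arbitrary irreducible component $\sE\subset\sC$; by irreducibility $\sE\subset(\varphi=0)$ or $\sE\subset(\rho=0)$. In the first case either $\sE$ is contained in one of the closed subsets $\sC_0,\sC_1,\sC_\infty$ or, by definition, $\sE$ is one of the components comprising $\sC_{1\infty}$; symmetrically, in the second case $\sE$ lies in one of $\sC_0,\sC_1,\sC_\infty$ or is a component of $\sC_{01}$. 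Since every point of $\sC$ lies on some irreducible component, the five loci cover $\sC$, which is (3). The argument is elementary throughout; the only steps requiring slight care are extracting ``$\sC=(\varphi=0)\cup(\rho=0)$'' from Lemma \ref{degenerate-locus} and the repeated, routine use of the fact that an irreducible closed subset contained in a finite union of closed subsets lies in one of them, so I anticipate no genuine obstacle.
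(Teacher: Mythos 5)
Your argument is correct and is essentially the paper's own proof: part (1) from the nowhere-vanishing of $(\nu_1,\nu_2)$, $(\varphi,\nu_1)$ and $(\rho,\nu_2)$, part (2) from the definitions of $\sC_{01}$, $\sC_{1\infty}$ and $\sC_1$, and part (3) from the set-theoretic identity $(\varphi=0)\cup(\rho=0)=\sC$ extracted from Lemma \ref{degenerate-locus}. You merely spell out the details the paper compresses into ``follows from their definitions'' and ``the other parts of (2) are similar'', so there is nothing to correct.
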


\begin{proof}
For $\xi\in \cWg^-(\CC)$, (1) follows from their definitions. Furthermore, any irreducible component $\sA\sub\sC_{01}$
has $\varphi|_{\sA}\ne 0$. Likewise, any irreducible $\sA\sub \sC_{1\infty}$ has
$\rho|_\sA\ne 0$.
Since $(\varphi=0)\cup(\rho=0)=\sC$, we conclude that $\sC_{01}$ and $\sC_{1\infty}$ share no
common irreducible components. The other parts of (2) are similar. 
Finally, by the same reasoning $(\varphi=0)\cup(\rho=0)=\sC$, we have (3).
\end{proof}

We derive some information of the degrees of $\sL$ and $\sN$ along $\sC_a$. 
First, since $\nu_2|_{\sC_0\cup\sC_{01}\cup\sC_1}$ is nowhere vanishing, 
$\sN|_{\sC_0\cup\sC_{01}\cup\sC_1}
\cong \sO_{\sC_0\cup\sC_{01}\cup\sC_1}$. Thus 
\beq\label{dinf}
d_\infty=\deg\sN=\deg\sN|_{\sC_{1\infty}\cup\sC_{\infty}}.
\eeq
Similarly, since $\nu_1$ restricted to 
$\sC_1\cup\sC_{1\infty}\cup\sC_\infty$ is nowhere vanishing, 
$$\sL\otimes\sN|_{\sC_1\cup\sC_{1\infty}\cup\sC_{\infty}}\cong
\sO|_{\sC_1\cup\sC_{1\infty}\cup\sC_{\infty}}.
$$
Combined, we get 
\beq\label{dz}
\deg\sL|_{\sC_{0}\cup\sC_{01}}=d_0, \quad \deg\sL|_{\sC_{1}}=0,\quad \deg\sL|_{\sC_{1\infty}\cup\sC_{\infty}}=
%-\deg\sN|_{\sC_{1\infty}\cup\sC_{\infty}} =
-d_\infty.
\eeq

For $\xi\in\cW\lggd^-(\CC)$,  let $\Up_\xi$ be the dual graph of $\Si^\sC\sub \sC$. Namely,
vertices of $\Up_\xi$ are associated to irreducible components of $\sC$, edges of $\Up_\xi$ are associated
to nodes connecting two different irreducible components of $\sC$, and legs of $\Up_\xi$ 
are associated to markings $\Si^\sC$. Let $V(\Up_\xi)$ be the set of vertices, and  $E(\Up_\xi)$ be the set of edges. Furthermore, each vertex $v\in V(\Up_\xi)$ is decorated by 
$g_v\defeq g(\sC_v)$, the arithmetic genus of the irreducible component $\sC_v$ associated to $v$.

Given a decorated graph $\Up$ as above (i.e. a connected graph with legs, vertices $v$ decorated by 
$g_v\in \ZZ_{\ge 0}$, and without circular-edges), we say $v\in V(\Up)$ is stable (resp. semistable) if
$2g_v-2+|E_v|\ge 1$ (resp. $\ge 0$), where $E_v$ is the set of legs and edges in $\Up$ attached to $v$.
%(Note that the graph $\Up$ we are considering does not contain any edge attached to a single vertex.)

\begin{prop}
The set $\Theta\defeq \{\Up_\xi\mid \xi\in \cW\lggd^-(\CC)\}$ is a finite set.
\end{prop}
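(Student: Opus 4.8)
The plan is to show that $\Theta$ is finite by bounding all the combinatorial data attached to a graph $\Up_\xi$: the genus labels $g_v$, the number of vertices $|V(\Up_\xi)|$, and the number of edges $|E(\Up_\xi)|$. Once these are bounded, finiteness of $\Theta$ is immediate since there are only finitely many connected decorated graphs with $\ell$ legs whose vertex and edge counts and genus labels are bounded. The genus labels are controlled from the outset: since $\sum_{v} g_v + b_1(\Up_\xi) = g$ (the arithmetic genus of $\sC$ equals the sum of geometric genera plus the first Betti number of the dual graph), each $g_v \le g$ and $b_1(\Up_\xi) \le g$, so only the number of vertices really needs a bound.

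First I would use the decomposition $\sC = \sC_0 \cup \sC_{01} \cup \sC_1 \cup \sC_{1\infty} \cup \sC_\infty$ from Lemma \ref{0-dec}, together with the degree formulas \eqref{dz}, to control the ``non-$\sC_1$'' part of the curve. On $\sC_0\cup\sC_{01}$ the bundle $\sL$ has total degree $d_0$ and restricts to $\sO_\sC(1)$-pullback via the map $[\varphi_1,\dots,\varphi_5,\nu_1]$ to $\PP^5$ (where it is well-defined, i.e.\ away from $\sC_1\cup\sC_\infty$); since $\xi$ is stable, by Lemma \ref{stable-cri} no rational component can have $\sL^{\otimes 5}|_\sE\cong\sO_\sE$ with $\sE$ meeting the rest in $\le 2$ special points, and no rational component with one special point can have $\sL|_\sE\cong\sN|_\sE\cong\sO_\sE$. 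Combined with $\deg\sL\ge 0$ on components of $\sC_0\cup\sC_{01}$, a standard counting argument (each unstable-looking component is forbidden, so every component of positive-$\sL$-degree type contributes at least $1$ to a fixed total, and every $\sL$-degree-zero component on this locus must carry $\ge 3$ special points or a marking) bounds the number of components of $\sC_0\cup\sC_{01}$ in terms of $d_0$, $g$, and $\ell$. Symmetrically, $\deg\sL|_{\sC_{1\infty}\cup\sC_\infty} = -d_\infty$ and $\deg\sN|_{\sC_{1\infty}\cup\sC_\infty}=d_\infty$, and $\rho|_\sE$ is nowhere vanishing on the rational ``bridging'' components by Lemma \ref{stable-cri}, so the same kind of argument — now using the degree of $\omega^{\log}_\sC$ and the nonvanishing of $\rho$ — bounds the number of components of $\sC_{1\infty}\cup\sC_\infty$ in terms of $d_\infty$, $g$, and $\ell$.

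The remaining and genuinely delicate part is bounding the number of components of $\sC_1$, the locus where $\rho=\varphi=0$ and $\deg\sL|_{\sC_1}=0$. Here neither $\sL$ nor $\sN$ has any positive degree to spend, so the naive counting fails; on $\sC_1$ one has $\sL^{\otimes 5}|_{\sC_1}\cong\sO_{\sC_1}$, $\sN|_{\sC_1}\cong\sL^\vee|_{\sC_1}$, and the only nonzero data is $\nu_1$, which is nowhere vanishing. The key observation is that stability (Lemma \ref{stable-cri}(1),(2)) forbids: a rational component of $\sC_1$ meeting the rest of $\sC$ in $\le 2$ special points (since there $\sL^{\otimes 5}\cong\sO_\sE$), and a rational component meeting it in exactly one special point (since there $\sL|_\sE\cong\sN|_\sE\cong\sO_\sE$), and an elliptic component of $\sC_1$ with no markings and no other special points (Lemma \ref{stable-cri}(4)). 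Therefore every irreducible component of $\sC_1$ is either stable as a pointed curve (carrying $\ge 3$ special points when rational, counting markings, nodes internal to $\sC_1$, and nodes attaching to $\sC_0\cup\sC_{01}\cup\sC_{1\infty}\cup\sC_\infty$) or carries positive genus with at least one special point — in either case it is a ``stable vertex'' in the sense defined just before the Proposition. A graph all of whose vertices are stable has its vertex count bounded by $2g-2+\ell$ (via $\sum_v(2g_v-2+|E_v|)\le 2g-2+\ell + (\text{edge contribution})$, a standard inequality for semistable curves), except that $\sC_1$'s components may attach to the already-bounded pieces $\sC_0\cup\cdots\cup\sC_\infty$, which only adds a bounded number of ``external'' special points. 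So $|V(\Up_\xi)|$ is bounded by a function of $g$, $\ell$, $d_0$, $d_\infty$; then $|E(\Up_\xi)| \le |V(\Up_\xi)| - 1 + b_1(\Up_\xi) \le |V(\Up_\xi)| - 1 + g$ is bounded too, and $\Theta$ is finite.

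The main obstacle is precisely the $\sC_1$ analysis: one must verify carefully that after deleting from $\Up_\xi$ the (boundedly many) vertices coming from $\sC_0\cup\sC_{01}\cup\sC_{1\infty}\cup\sC_\infty$, every surviving vertex — i.e.\ every component of $\sC_1$ — really is stable in the decorated-graph sense, which requires a case check against all four items of Lemma \ref{stable-cri} for a component all of whose fields $\varphi,\rho$ vanish, keeping scrupulous track of which nodes count as special points (internal to $\sC_1$ versus attaching to the other strata versus being stacky). Once that lemma-driven case analysis is in hand, the final counting is the routine semistable-graph estimate and I would not belabor it.
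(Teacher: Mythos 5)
Your overall strategy --- stratify $\sC$ by the five loci of Lemma \ref{0-dec}, use stability (Lemma \ref{stable-cri}) together with the degrees of $\sL$ and $\sN$ to control the components that are unstable as graph vertices, and finish with the identity $\sum_v(2g_v-2+n_v)=2g-2+\ell$ --- is the paper's strategy in spirit, and your handling of $\sC_0\cup\sC_{01}$ and of $\sC_1$ is fine. (In fact $\sC_1$, which you single out as the delicate part, is the easy part: exactly as you argue, stability forces every vertex in $\sC_1$, and also in $\sC_\infty$, to be a stable vertex, and the paper dispatches this in one sentence.)

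The genuine gap is the sentence asserting that ``the same kind of argument'' bounds the number of components of $\sC_{1\infty}\cup\sC_\infty$ by a function of $d_\infty,g,\ell$. First, the input you invoke is misstated: $\rho$ is nowhere vanishing on $\sC_\infty$ because $(\rho,\nu_2)$ is nowhere vanishing and $\nu_2\equiv0$ there (Definition \ref{def-curve}(5)), not on the bridging components of $\sC_{1\infty}$, where $\rho$ vanishes at every node meeting $(\rho=0)$, and this is not a consequence of Lemma \ref{stable-cri}. Second, and more seriously, degree positivity fails on this stratum: on a connected component $\sC_\infty^i$ (with $m_i$ markings, genus $g_i$, and $k_i$ nodes joining it to $\sC_{1\infty}$) the nowhere vanishing of $\rho$ gives $5\deg\sL|_{\sC_\infty^i}=\deg\omega^{\log}_{\sC}|_{\sC_\infty^i}$, hence $\deg\sN|_{\sC_\infty^i}=-\ofth(2g_i-2+m_i+k_i)$, which is negative by an amount growing with $k_i$; each attaching node forces only $\deg\sN\ge\ofth$ on the bridge on the other side, and these two contributions cancel identically in $d_\infty$. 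Thus the number of attaching nodes, and with it the number of two-noded rational bridges in $\sC_{1\infty}$ of $\sN$-degree $\ofth$ (which are stable by Lemma \ref{stable-cri}), drops out of every degree identity; no bound of the promised form follows from degree counting alone, and your vertex count is left unproved exactly there. This is where the paper does its real work: it adds $2\delta(v)$ auxiliary legs with $\delta(v)=5\deg\sN|_{\sC_v}-\#\{\text{edges from }v\text{ to }\sC_\infty\}\ge0$, whose total is bounded by $10d_\infty+4g+2\ell$ precisely because the $k_i$ cancel; it then shows every remaining non-stable vertex is strictly semistable with exactly two slots, proves there is no chain of three consecutive strictly semistable vertices, and only then concludes via finiteness of quasi-stable graphs of bounded genus and legs. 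Your proposal needs this additional combinatorial step (or an equivalent anchoring argument showing each bridge is, within distance two, attached to $\sC_\infty$ or to a vertex already counted by $d_0$, $g$ or $\ell$); note also that the needed bound is not ``symmetric'' to the $d_0$ side and must involve $d_0$, $g$ and $\ell$, not $d_\infty$ alone.
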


\begin{proof}
Let $\xi\in\cW^-\lggd(\CC)$. As the curve $(\Si^\sC\sub \sC)$ may not   stable, knowing the total genus
and the number of legs of $\Up_\xi$ is not sufficient to bound the geometry of $\Up_\xi$. Our approach is
to use the information of line bundles $\sL$ and $\sN$ on $\sC$ given by $\xi$ to add legs to $\Up_\xi$ to 
form a semistable $\ti\Up_\xi$

First,   let $\index=\{0,01,1,1\infty,\infty\}$. Using (3) of Lemma \ref{0-dec}, for $a\in \index$ we define
$V(\Up_\xi)_a=\{v\in V(\Up_\xi)\mid \sC_v\sub \sC_a\}$. By the the stability criterion Lemma \ref{stable-cri},
we know that all $v\in V(\Up_\xi)_1\cup V(\Up_\xi)_\infty$ are stable. 

We now add new legs to vertices in $V(\Up_\xi)$, called auxiliary legs. 
For every $v\in V(\Up_\xi)_0\cup V(\Up_\xi)_{01}$, we 
add $3\deg\sL|_{\sC_v}$ auxiliary legs to $v$. By \eqref{dz}, the total new legs added to all $v\in
V(\Up_\xi)_0\cup V(\Up_\xi)_{01}$ is $3d_0$.

We next treat the vertices in $V(\Up_\xi)_{1\infty}$. We first introduce 
$$E_{V(\Up_\xi)_\infty}
=\{e\in E(\Up_\xi)\mid e\in E_v\ \text{for some}\ v\in V(\Up_\xi)_\infty\}.
$$
For $v\in V(\Up_\xi)_{1\infty}$, we define
$$\delta(v)=5\deg\sN|_{\sC_v}-|E_v\cap E_{V(\Up_\xi)_{\infty}}|\in \ZZ_{\ge 0}.
$$
Here it takes value in $\ZZ_{\ge 0}$ because $\nu_2$ is non-vanishing along $\sC_v$ and vanishes at 
$\sC_v\cap \sC_\infty$. To every $v\in V(\Up_\xi)_{1\infty}$, we add
$2\delta(v)$ number of auxiliary  legs to $v$. 

We now show that %there is a constant $N$ depending only on $(g,\gamma,\bd)$ so that
the number of new legs added to $V(\Up_\xi)_{1\infty}$ is bounded by $10d_\infty+4g+2\ell$. 
Let $\sC_\infty^1, \ldots, \sC_\infty^r$ be the connected components of $\sC_\infty$, $m_i$ be the number 
of markings on $\sC_\infty^i$.
 Because $\rho|_{\sC_\infty^i}$ is nowhere vanishing, using the discussion leading to
\eqref{dz}, we have
$$0=-5\deg\sL|_{\sC_\infty^i}+\deg\omega_{\sC}^{\log}|_{\sC_\infty^i}=5\deg\sN|_{\sC_\infty^i}+\bl2g(\sC_\infty^i)-2+|\sC_\infty^i\cap\sC_{1\infty}|+m_i\br.
$$
%On the other, for $v\in A_1$, using that $\nu_2|_{\sC_v}\ne 0$ and vanishes along $\sC_v\cap\sC_\infty$,
%we conclude that 
%$$\deg\sN|_{\sC_v}\ge \ofth |E_v\cap E_{V(\Up_\xi)_\infty}|.
%$$
Therefore, using that $\deg\sN|_{\sC_v}=0$ unless $v\in V(\Up_\xi)_{1\infty}\cup V(\Up_\xi)_\infty$, 
and using that $\sum_{v\in V(\Up_\xi)_{1\infty}} |E_v\cap E_{V(\Up_\xi)_{\infty}}|=\sum_{i=1}^r|\sC_\infty^i\cap\sC_{1\infty}|$,
we obtain
$$d_\infty=\deg\sN=\sum_{i=1}^r  \ofth\bl2-2g(\sC_\infty^i) -|\sC_\infty^i\cap\sC_{1\infty}|-m_i \br+
\sum_{v\in V(\Up_\xi)_{1\infty}}\deg \sN|_{\sC_v}
$$
$$=\frac{2r}{5}- \sum_{i=1}^r\ofth\bl m_i+2g(\sC_\infty^i)\br+
\ofth \sum_{v\in V(\Up_\xi)_{1\infty}} \delta(v).\quad \ \,
$$
Thus the total number of auxiliary legs added to $v\in V(\Up_\xi)_{1\infty}$ is %$\sum_{v\in A_2}10\deg \sN|_{\sC_v}$,
bounded by $10d_\infty+4g+2\ell$, and so is the number $r$ of connected components of $\sC_\infty$.

Let $\ti\Up_\xi$ be the resulting graph after adding auxiliary legs to $v\in V(\Up_\xi)$ according
to the rules specified above. We now study the stability of vertices of $\ti\Up_\xi$. Let $v\in V(\Up_\xi)$ be
a not-stable vertex. Then $v\in V(\Up_\xi)_{1\infty}$ and $\deg\sN|_{\sC_v}=\ofth|E_v\cap E_{V(\Up_\xi)_\infty}|$.
Because $v$ is not stable, $|E_v|=1$ or $2$. If $|E_v|=1$, we have $\deg\sN|_{\sC_v}=\ofth$. By
Lemma \ref{stable-cri}, $\xi$ is not stable, impossible. Thus $|E_v|=2$, and $v$ is a strictly semistable vertex of
$\ti\Up_\xi$.

We now show that $\ti\Up_\xi$ contains no chain of strictly semistable vertices of length more than two.
%(A chain of vertices consists of distinct vertices $v_1,
%\cdots,v_{k\ge 3}$ and distinct edges $e_1,\cdots,e_{k-1}$ so that $v_i$ and $v_{i+1}$ are vertices of $e_i$ for all $i$.) 
Indeed, suppose $v_1,v_2,v_3\in V(\ti\Up_\xi)$ with edges $e_1$ and $e_2$ forming a chain of
unstable vertices in $\ti\Up_\xi$, where $e_1$ connects $v_1$ and $v_2$, and $e_2$ connects $v_2$ and $v_3$. 
By our construction, all $v_i\in V(\Up_\xi)_{1\infty}$. Since $\xi$ is stable, 
by Lemma \ref{stable-cri}, $\deg\sN|_{\sC_{v_2}}\geq \ofth$. Since both $v_1$ and $v_3$ are not in $V(\Up_\xi)_\infty$,
$E_{v_2}\cap E_{V(\Up_\xi)_\infty}=\emptyset$, a contradiction.
%thus $v_2$ has at least two legs  and at least two edges
%in $\ti\Up_\xi$ attached to it, violating that $v_2$ is not stable in $\ti\Up_\xi$.

Let $\Xi$ be the set of connected graphs with legs and whose vertices $v$ are decorated by non-negative integers
$g_v$. For a $\Up\in \Xi$, we define its genus $g(\Up)$ to be $\frac{1}{2} \dim H^1(\Up,\QQ)+\sum_{v\in V(\Up)}g_v$.
For a pair of integers $(g,l)$,  let $\Xi_{g,l}$ be the set of genus $g$ graphs in $\Xi$ having exactly $l$ edges.
We say $\Up\in \Xi$ quasi-stable if all its vertices are semistable and that there is no chain of 
strictly semistable vertices in $\Up$ of length more than two. Let $\Xi_{g,l}^{q.s.}$
be the set of quasi-stable graphs in $\Xi_{g,l}$. Since the subset of stable $\Up$ in $\Xi_{g,l}$ is finite,
we see easily that $\Xi_{g,l}^{q.s.}$ is also finite. 

By our bound on the legs added to $\Up_\xi$ to derive $\ti\Up_\xi$, we conclude that
$$\{\ti\Up_\xi\mid \xi\in \cW\lggd^-(\CC)\}\sub \coprod_{l\le 3\ell+10d_\infty+4g} \Xi_{g,l}^{q.s.},
$$
where the later is finite. Consequently, $\Theta=\{\Up_\xi\mid \xi\in \cW\lggd^-(\CC)\}$ is finite.
\end{proof}

\begin{prop}
The collection $\cW\lggd^-(\CC)$ is bounded.
\end{prop}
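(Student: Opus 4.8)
The plan is to combine the finiteness of the set of dual graphs $\Theta$ with component-by-component bounds on the degrees of $\sL$ and $\sN$, thereby covering $\cW\lggd^-(\CC)$ by finitely many families of finite type.

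First I would fix a decorated graph $\Up\in\Theta$ and restrict to those $\xi=(\Si^\sC,\sC,\sL,\sN,\varphi,\rho,\nu)\in\cW\lggd^-(\CC)$ with $\Up_\xi=\Up$; by the finiteness of $\Theta$ established above it suffices to bound this subcollection for each such $\Up$. For a fixed $\Up$ the irreducible components of $\sC$, their arithmetic genera, and the numbers of nodes-plus-markings carried by each are determined, and adjoining auxiliary legs to the unstable vertices exactly as in the proof of the finiteness of $\Theta$ produces a stable pointed twisted curve; since each connected component of $\cM^{\text{tw}}_{g,\ell}$ is of finite type, the underlying twisted curves $(\Si^\sC,\sC)$ with $\Up_\xi=\Up$ form a bounded family.

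The key step is to promote the total-degree identities \eqref{dz} to bounds on $\deg\sL$ on each irreducible component. Using the decomposition $\sC=\sC_0\cup\sC_{01}\cup\sC_1\cup\sC_{1\infty}\cup\sC_\infty$ of Lemma \ref{0-dec} and the non-vanishing conditions of Definition \ref{def-curve}, I would argue type by type: on a component of $\sC_0$ one has $\nu_1\equiv0$, hence $\varphi$ is nowhere zero and $\sL\cong\sO$, so $\deg\sL=0$; on a component of $\sC_1$ one has $\sN\cong\sO$ and $\sL\otimes\sN\cong\sO$, so $\deg\sL=0$; on a component $\sA\subset\sC_\infty$, $(\rho,\nu_2)$ nowhere zero forces $\rho$ nowhere zero, so $\sL^{\vee\otimes5}\otimes\omega^{\log}_{\sC}\cong\sO$ and $\deg\sL|_\sA=\tfrac{1}{5}\deg(\omega^{\log}_\sC|_\sA)$ lies in a bounded range fixed by $\Up$; on $\sA\subset\sC_{01}$ one has $\varphi|_\sA\ne0$, hence $0\le\deg\sL|_\sA\le d_0$ since $\deg\sL|_{\sC_0\cup\sC_{01}}=d_0$; and on $\sA\subset\sC_{1\infty}$ one has $\varphi|_\sA\equiv0$ and $\rho|_\sA\ne0\ne\nu_2|_\sA$, so $\nu_1|_\sA$ is nowhere zero, $\deg\sN|_\sA\ge0$, hence $\deg\sL|_\sA=-\deg\sN|_\sA\le0$, and together with $\deg\sL|_{\sC_{1\infty}\cup\sC_\infty}=-d_\infty$ (the $\sC_\infty$-part already bounded) this confines $\deg\sL|_\sA$ to a bounded interval. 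Since $\sN\cong\sO$ on $\sC_0\cup\sC_{01}\cup\sC_1$ and $\sN\cong\sL^\vee$ on $\sC_{1\infty}\cup\sC_\infty$, the multidegree of $\sN$ is bounded too. Thus only finitely many multidegrees $(\vd_\sL,\vd_\sN)$ occur, and for each the relative stack of representable invertible sheaves of that multidegree over the finite-type family of curves is of finite type, so the quadruples $(\Si^\sC,\sC,\sL,\sN)$ arising from $\cW\lggd^-(\CC)$ with $\Up_\xi=\Up$ form a bounded family $\cQ$.

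Finally I would bound the fields: over $\cQ$, with universal curve $\pi\colon\cC\to\cQ$ and universal sheaves $\cL,\cN$, the datum $(\varphi,\rho,\nu)$ is a global section of $\cL^{\oplus5}\oplus(\cL^{\vee\otimes5}\otimes\omega^{\log}_{\cC/\cQ})\oplus(\cL\otimes\cN)\oplus\cN$, and representing $R\pi\lsta$ of this bundle by a two-term complex of vector bundles on $\cQ$ exhibits the space of all such sections as a finite-type $\cQ$-scheme, inside which the conditions defining $\cW\lggd^-$ — the nowhere-vanishing of $(\varphi,\nu_1)$, $(\rho,\nu_2)$, $(\nu_1,\nu_2)$, stability, the monodromy and vanishing conditions at the markings, and $(\varphi=0)\cup(\varphi_1^5+\cdots+\varphi_5^5=\rho=0)=\sC$ — cut out a locally closed substack. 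Letting $\Up$ range over the finite set $\Theta$ and $(\vd_\sL,\vd_\sN)$ over the finitely many admissible multidegrees then covers $\cW\lggd^-(\CC)$ by finitely many finite-type families, which is the asserted boundedness. I expect the component-wise degree bound to be the only delicate point — it requires carrying the five-fold decomposition of $\sC$ through every case and handling the $\tfrac{1}{5}\ZZ$-valued degrees on stacky components correctly — whereas the downstream facts (boundedness of Picard stacks with fixed multidegree and of spaces of sections over a finite-type base) are standard.
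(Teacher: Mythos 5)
Your overall strategy is the same as the paper's: fix the dual graph (using the finiteness of $\Theta$), decorate each vertex with its type in $\{0,01,1,1\infty,\infty\}$ and the degrees of $\sL$ and $\sN$, show that only finitely many such decorations occur, and then observe that for a fixed decoration the objects form a bounded family (you spell out this last step via spaces of sections over a bounded Picard-type family, which the paper leaves as ``direct to check''). However, there is a genuine error in your component-wise degree analysis, in exactly the case that matters most. On a component $\sA\subset\sC_0$ (where $\nu_1\equiv 0$), the nowhere vanishing of $(\varphi,\nu_1)$ gives a nowhere-vanishing section of $\sL^{\oplus 5}|_\sA$, not of $\sL|_\sA$; it yields a morphism $\sA\to\PP^4$ with $\sL|_\sA$ the pullback of $\sO_{\PP^4}(1)$, hence only $\deg\sL|_\sA\ge 0$, not $\sL|_\sA\cong\sO_\sA$. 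Indeed, the $\sC_0$-components are precisely the ``stable maps with $p$-fields'' part (Example \ref{pfield}) and typically carry all of the positive degree $d_0$; elements of $\cW\lggd^-$ with such components exist (maps into the quintic with $\rho=0$). So with your claim $\deg\sL|_{\sC_0}=0$ the finite list of multidegrees you produce would fail to cover $\cW\lggd^-(\CC)$, and the covering-by-finitely-many-families argument breaks as written.

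The repair is immediate and brings you back to the paper's argument: on every component $\sA\subset\sC_0\cup\sC_{01}$ one has $\deg\sL|_\sA\ge 0$ (on $\sC_0$ from the morphism to $\PP^4$, on $\sC_{01}$ from $\varphi|_\sA\not\equiv 0$), and together with $\deg\sL|_{\sC_0\cup\sC_{01}}=d_0$ from \eqref{dz} this gives $0\le\deg\sL|_\sA\le d_0$ componentwise; the remaining cases ($\sC_1$, $\sC_{1\infty}$, $\sC_\infty$, and the determination of $\deg\sN$) are handled correctly in your write-up and match the paper's constraints on $d_{\sL,\xi}$ and $d_{\sN,\xi}$. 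With that one correction your proof is sound and is essentially the paper's proof with the final boundedness step made explicit.
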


\begin{proof} Since $\Theta$ is finite, we only need to show that to any $\Up\in
\Theta$, the set
$\cW_\Up=\{\xi\in \cW^-\lggd(\CC)\mid \Up_\xi\cong \Up\}$ is bounded. 

Let $\xi\in\cW\lggd^-(\CC)$ and let $\Up_\xi$ be its decorated dual graph. The information of $\xi$ indeed
provides us three more assignments 
$$\iota_\xi: V(\Up_\xi)\to \index \and d_{\sL,\xi},\ d_{\sN,\xi}: V(\Up_\xi)\lra \ZZ,
$$
where $\iota_\xi(v)=a$ if $\sC_v\sub\sC_a$, $d_{\sL,\xi}(v)=\deg\sL|_{\sC_v}$, and $d_{\sN,\xi}(v)=\deg\sN|_{\sC_v}$.

We show that given $\Up\in\Theta$, the set
\beq\label{d-d}
\{(\Up_\xi,\iota_\xi,d_{\sL,\xi},d_{\sM,\xi})\mid \xi\in \cW\lggd^-(\CC) \ \text{and}\ \Up_\xi\cong \Up\}
\eeq
is finite. First, since $\index$ is finite, the possible choices of $\iota_\xi:V(\Up)\to\index$ are finite.
Once $\iota_\xi$ is chosen, the map $d_{\sN,\xi}$ has the following properties: 
$d_{\sN,\xi}$ takes value $0$ on $\iota_\xi\upmo\bl\{0,01,1\}\br$,  for 
$v\in \iota_\xi\upmo(\infty)$, $d_{\sN,\xi}(v)=\ofth(2-2g_v-|E_v|)$, and $d_{\sN,\xi}$ restricted to 
$\iota_\xi\upmo\bl{1\infty}\br$ takes values in
$\ofth\ZZ_{>0}$. Since $\sum_v d_{\sN,\xi}(v)=d_\infty$, the choices of $d_{\sN,\xi}$ are finite.

%Given $\Up\in \Theta$, since  there is only finitely many choice of maps $\iota:V(\Up)\to\index$.
%We fix one such. We next look at the possible assignment $d_N:V(\Up)\to\ofth\ZZ$, after fixing a choice of
%$\iota$. We first set $d_N$ to take value $0$ on $\iota\upmo(0)\cup\iota\upmo(01)\cup\iota\upmo(1)$, and for 
%$v\in \iota\upmo(\infty)$, we set $d_N(v)=\ofth(2-2g_v-|E_v|)$. Finally, the values of $d_N$ on $V(\Up)_{1\infty}$ 
%lie in $\ofth\ZZ_{>0}$. Since $\sum_v d_N(v)=d_\infty$, the choices of $d_N$ is finite.

For the assignment $d_{\sL,\xi}: V(\Up)\to\ofth\ZZ$, as $d_{\sL,\xi}$ takes positive values on
$\iota_\xi\upmo(\{0,01\})$, takes $0$ on $\iota_\xi\upmo(1)$, and that $d_{\sL,\xi}+d_{\sN,\xi}$ takes value $0$ on
$\iota_\xi\upmo(\{1\infty,\infty\})$, the possible choices of $d_{\sL,\xi}$ are also finite.
Combined, we prove that the set \eqref{d-d}
is finite.

Finally, fixing a $\xi'\in\cW\lggd^-(\CC)$, %choice $(\Up_\xi,\iota_\xi,d_{\sL,\xi},d_{\sN,\xi})$, 
it is direct to check that the collection
$$\{\xi\in \cW\lggd^-(\CC)\mid (\Up_\xi,\iota_\xi,d_{\sL,\xi},d_{\sM,\xi})\cong (\Up_{\xi'},\iota_{\xi'},d_{\sL,\xi'},d_{\sN,\xi'})\}
$$ 
is bounded. This proves the proposition.
\end{proof}


\begin{thebibliography}{ABC}

\bibitem[ACV]{ACV} D. Abramovich, A. Corti and A. Vistoli, {\em Twisted bundles and admissible covers}, 
 Special issue in honor of Steven L. Kleiman. Comm. Alg.  {\bf 31 no. 8}  (2003), 3547-3618. 

\bibitem[AF]{AF} D. Abramovich and B. Fantechi, {\em Orbifold techniques in degeneration formulas}, preprint, arXiv:1103.5132v2.

\bibitem[AGV]{A-G-V}
D. Abramovich; T. Graber; A. Vistoli, {\em  Gromov-Witten theory of Deligne-Mumford stacks.} Amer. J. Math. {\bf 130  no. 5} (2008), 1337-1398.

\bibitem[AJ]{AJ} D. Abramovich, T. J. Jarvis,   {\em Moduli of twisted spin curves}, 
Proc. Amer. Math. Soc. {\bf 131 no. 3} (2003), 685-699.

\bibitem[AV]{A-V} D. Abramovich; A. Vistoli, {\em  Compactifying the space of stable maps.}  J. Amer. Math. Soc. {\bf 15  no. 1}  (2002), 27-75.

%\bibitem[Av]{Inf} L. Avramov, \emph{Infinite free resolutions}, Six lectures on commutative algebra (Bellaterra, 1996), Progr. Math. 166, Birkhäuser, Basel, 1998, 1-118. 

%\bibitem[BF]{BF} K. Behrend and B. Fantechi, \emph{The intrinsic normal cone}, Invent. Math. 128 (1997), no. 1, 45--88.

\bibitem[BCOV]{BCOV} M.~Bershadsky, S.~Cecotti, H.~Ooguri and C.~Vafa, {\em Holomorphic Anomalies in Topological Field Theories}, Nucl.Phys. B405 (1993) 279-304.
% {\em Kodaira-Spencer Theory of Gravity and Exact Results for Quantum String Amplitudes}, Comm. Math. Phys. Volume 165, Number 2 (1994), 311--427.

\bibitem[Cad]{Cad} C. Cadman,  {\em Using stacks to impose tangency conditions on curves. }  Amer. J. Math.  {\bf 129  no. 2}  (2007), 405-427.

\bibitem[CdGP]{CdGP} P.~Candelas, X.~dela Ossa, P.~Green, and L.~Parkes, 
{\em A pair of Calabi-Yau manifolds as an exactly soluble superconformal theory}, Nucl. Phys. B359 (1991) 21--74.

\bibitem[CLLL]{CLLL} in preparation.

\bibitem[CL]{CL} H-L. Chang and J. Li,   {\em Gromov-Witten invariants of stable maps with fields},
 IMRN {\bf 2012 no. 18} (2012), 4163-4217. 

\bibitem[CKL]{CoVir}  H.-L. Chang, Y-H. Kiem and J. Li,{\rm Torus localization and wall crossing for cosection localized virtual cycles}, 

\bibitem[CLL]{CLL} H-L. Chang, J. Li, W.-P. Li,   {\em Witten's top Chern classes via cosection localization}, 
Invent. Math.  arXiv:1303.7126.

\bibitem[CR]{CR} A. Chiodo and Y-B. Ruan, \emph{Landau-Ginzburg/Calabi-Yau correspondence for 
quintic three-folds via symplectic transformations}, math.AG. arXiv:0812.4660. 

%\bibitem[CKa]{Ciocan} I. Ciocan-Fontanine and M. M. Kapranov, \emph{Derived quote schemes},  Ann. Sci. Ecole Norm. Sup. (4) 34 (2001), no. 3, 403-440.

%\bibitem[CKi]{CK2} I. Ciocan-Fontanine and B. Kim, \emph{Moduli stacks of stable toric quasimaps}, Advances in Math. 225 (2010), no. 6, 3022-3051. 

\bibitem[CK]{CK} J-W. Choi and Y-H. Kiem, {\em Landau-Ginzburg/Calabi-Yau correspondence via quasi-maps, I},
preprint.

%\bibitem[Co]{Conrad} B. Conrad, \emph{Grothendieck duality and base change}, Lecture notes in Mathematics, vol 1750.


\bibitem[FJR]{FJR1} H-J. Fan, T. J. Jarvis, Y-B. Ruan, 
 {\em The Witten equation, mirror symmetry, and quantum singularity theory},
Ann. of Math (2) {\bf 178 no. 1} (2013), 1-106.

\bibitem[FJR2]{FJR2} H-J. Fan, T. J. Jarvis and Y-B. Ruan, {\em A mathematical theory of the gauged linear sigma model},
preprint.

%\emph{The Witten equation and its virtual fundamental cycle}, math.AG. arXiv:0712.4025.

\bibitem[FMK]{FMK} D. Mumford, J. Fogarty, F. Kirwan,  {\em Geometric invariant theory}, Ergebnisse der Mathematik und ihrer Grenzgebiete, Springer-Verlag, 1994. 

%\bibitem[Fu]{Fulton} W. Fulton, \emph{Intersection Theory}, Ergebnisse der Mathematik und ihrer Grenzgebiete, Springer, 1984. 

\bibitem[Gi]{Gi} A. Givental,  {\em The mirror formula for quintic threefolds},  Northern California Symplectic Geometry Seminar, 49-62, Amer. Math. Soc. Transl. Ser. 2, {\bf196}, Amer. Math. Soc., Providence, RI, 1999.

%\bibitem[GP]{GP} T. Graber, R. Pandharipande,   \emph{Localization of virtual classes}, Invent. Math. {\bf 135} (1999), no. 2, 487--518. 

%\bibitem[GS]{Sharpe} J. Guffin and E. Sharpe, \emph{A-twisted Landau-Ginzburg models}, hep-th.arXiv:0801.3836.
\bibitem[HKQ]{HKQ} M.-X. Huang, A. Klemm, and S. Quackenbush,
``Topological String Theory on Compact Calabi-Yau: Modularity and Boundary Conditions,''
Homological mirror symmetry, 45--102, Lecture Notes in Phys., 757, Springer, Berlin, 2009.

\bibitem[Har]{Har} R. Hartshorne:  {\em Algebraic Geometry.}   Springer 1978.

%\bibitem[HL]{HL} Y. Hu and J. Li, \emph{Genus-One Stable Maps, Local Equations and Vakil-Zinger�s desingularization}, Math. Ann.  (2010). 

%\bibitem[Il]{Illusie} L. Illusie, \emph{Complexe cotangent et deformations I,II},  Lecture Notes in Mathematics Nos. 239,283. Springer, Berlin, Heidelberg, New York, 1971.

%\bibitem[IP]{IP} E. Ionel and T. Parker, \emph{The Symplectic Sum Formula for Gromov-Witten Invariants},  Annals of Mathematics, 159 (2004), 935-1025.

%\bibitem[JKV]{JKV} T. Jarvis, T. Kimura  and A. Vaintrob, \emph{Moduli Spaces of Higher Spin Curves and Integrable Hierarchies},  Compositio Mathematica, 126 Issue 2 (2001), 157-212.


%\bibitem[KKP]{Kresch2} B. Kim, A. Kresch and T. Pantev, \emph{Functoriality in intersection theory and a conjecture of Cox, Katz, and Lee}, J. Pure Appl. Algebra 179 (2003), no. 1-2, 127--136.


 

%\bibitem[KL1]{Jli} Y.H. Kiem and J. Li,  \emph{Gromov-Witten invariants of varieties with holomorphic 2-forms}, math.AG. arxiv:0707.2986.


    \bibitem[KL]{KL} Y.-H. Kiem and J. Li,   {\em Localized virtual cycle by cosections},
J. Amer. Math. Soc. {\bf 26 no. 4} (2013),  1025-1050.

\bibitem[Ko]{Ko} M. Kontsevich,  {\em Enumeration of rational curves via torus actions},  The moduli space of curves. (Texel Island, 1994), 335-368, Progr. Math. {\bf 129}, Birkh�auser Boston, Boston, MA, (1995).

%\bibitem[Kr]{Kresch1} A. Kresch, \emph{Canonical rational equivalence of intersections of divisors}, Invent. Math. 136 (1999), no. 3, 483--496.

%\bibitem[Kr]{Kr} A. Kresch, \emph{Cycle groups for Artin stacks},
%Invent. Math. {\bf 138} (1999), no. 3, 495.

%\bibitem[LR]{LR} A.-M. Li and Y. Ruan, \emph{Symplectic surgery and Gromov-Witten invariants of Calabi-Yau 3-folds I},  Invent. Math. 145(1) (2001) 151�218.

\bibitem[Li]{Lideg} J. Li,  {\em A Degeneration Formula of GW-Invariants},
J.D.G. {\bf 60 no. 2} (2002), 177-354.

%\bibitem[LT]{LT} J. Li and G. Tian, \emph{Virtual moduli cycles and Gromov-Witten invariants of algebraic varieties},  J. Amer. Math. Soc. 11 (1998), no. 1, 119--174.

\bibitem[LZ]{LZ} J. Li and A. Zinger,  {\em On the Genus-One Gromov-Witten Invariants of Complete Intersections}, J.D.G. {\bf 82  no. 3} (2009), 641-690. 

\bibitem[LLY]{LLY} B. Lian, K.F. Liu and S.T. Yau, 
 {\em Mirror principle. I },  Surveys in differential geometry: differential geometry inspired by string theory, 
405-454,  Surv. Differ. Geom. 5, Int. Press, Boston, MA, 1999.

%\bibitem[MOP]{StableQ} A. Marian, D. Oprea and R. Pandharipande. \emph{The moduli space of stable quotients},  math.AG. arXiv:0904.2992. 

%\bibitem[Ma]{Cristina} C. Manolache, \emph{Virtual pull-backs}, math.Ag. arxiv:0805.2065.

%\bibitem[Mau]{Maulik} D. Maulik, private communication.

\bibitem[MP]{MP} D. Maulik,  and R. Pandharipande,  {\em A topological view of Gromov-Witten theory}, 
Topology  {\bf 45 no. 5} (2006), 887-918.

\bibitem[Ol]{O} M. Olsson,  {\em On (log) twisted curves}, Compos. Math. {\bf 143  no.2} (2007), 476-494.

%\bibitem[Po]{Po}A. Polishchuk, \emph{Witten's top Chern class on the moduli space of higher spin curves}. Frobenius Manifolds, Aspects of Mathematics. {\bf 36} (2004),  253-264.

%\bibitem[PV]{PV}A. Polishchuk and A. Vaintrob, \emph{Algebraic construction of Witten�s top Chern class}. Advances in algebraic geometry motivated by physics (Lowell, MA, 2000), Contemp. Math. {\bf 276} (2001),  229-249, Amer. Math. Soc., Providence, RI, 2001.

%\bibitem[RT]{RT} Y. Ruan and G. Tian, \emph{A mathematical theory of quantum cohomology},  J. Diff. Geom. 42 no. 2 (1995), 259�367.


 
%\bibitem[VZ]{VZ} R. Vakil and A. Zinger, \emph{A Desingularization of the Main Component of the Moduli Space of  Genus-One Stable Maps into $\PP^n$}, Geom. Topol. 12 (2008), no. 1, 1-95. 
\bibitem[PP]{PP}  R.~Pandharipande and A.~Pixton, {\em
Gromov-Witten/Pairs correspondence for the quintic 3-fold}, preprint arXiv:1206.5490.

\bibitem[Wi]{GLSM}  E. Witten,  
 {\em Phases of N = 2 theories in two dimensions}, Nuclear Physics B  {\bf  403, Issues 1-2} (1993), 159-222.

%\bibitem[Wi2]{ALG} E. Witten, \emph{Algebraic geometry associated with matrix models of two dimensional gravity}, Topological methods in modern mathematics (Stony Brook, Y, 1991), Publish or Perish, Houston, TX (1993), 235-269.
 
%\bibitem[Zi]{Zi} A. Zinger,  {\em Standard versus reduced genus-one Gromov-Witten invariants}, Geom. Topol. {\bf 12  no. 2} (2008), 1203-1241. 

\bibitem[Zi]{Zi} A. Zinger,  {\em The reduced genus 1 Gromov-Witten invariants of Calabi-Yau hypersurfaces}, 
J. Amer. Math. Soc. 22 (2009), no. 3, 691-737.


\end{thebibliography}
\end{document}